\numberwithin{equation}{section}
\newcommand{\oset}[2]{%
  {\mathop{#2}\limits^{\vbox to -.5\ex@{\kern-\tw@\ex@
   \hbox{\scriptsize #1}\vss}}}}
\newtheorem{theorem}{Theorem}[section]
\newtheorem{definition}[theorem]{Definition}
\newtheorem{proposition}[theorem]{Proposition}
\newtheorem{corollary}[theorem]{Corollary}
\newtheorem{lemma}[theorem]{Lemma}
\title{Lacunarity, Kakeya-type sets and directional maximal operators}
\author{\textsc{Edward Kroc and Malabika Pramanik}}
\begin{document}  
\maketitle
{\allowdisplaybreaks

\begin{abstract}
We develop a notion of finite order lacunarity for direction sets in $\mathbb R^{d+1}$. Given a direction set $\Omega$ that is sublacunary according to this definition, we construct random examples of Euclidean sets that contain unit line segments with directions from $\Omega$ and enjoy analytical features similar to those of traditional Kakeya sets of infinitesimal Lebesgue measure. This generalizes to higher dimensions a planar result due to Bateman~\cite{Bateman}. Combined with earlier work of Alfonseca \cite{Alfonseca}, Bateman \cite{Bateman}, Parcet and Rogers \cite{ParcetRogers}, this notion of lacunarity and Kakeya-type sets also yields a characterization in all dimensions for directional maximal operators to be $L^p$-bounded. 
\end{abstract}
\renewcommand{\thefootnote}{\fnsymbol{footnote}} 
\footnotetext{2010 \emph{Mathematics Subject Classification.} 28A75, 42B25 (primary), and 60K35 (secondary).}     
\renewcommand{\thefootnote}{\arabic{footnote}}

\tableofcontents

\section{Introduction}

\subsection{Background}\label{background}

This paper is concerned with a generalization of the classical Euclidean Kakeya set, also called a Besicovitch set.  In $\mathbb{R}^{d+1}$, a Kakeya set is one that contains a unit line segment in every direction.  Here, we are concerned with sets that contain a unit line segment in every direction of a given subset of directions $\Omega$.  For certain ``large enough" subsets $\Omega$, the geometric and analytical structure of these sets is remarkably similar to that of traditional Kakeya sets.  The bulk of this paper is devoted to making this idea precise. 

In the study of Kakeya sets and for quantification purposes, it is often convenient to work with a $\delta$-neighborhood of the set rather than the set itself, where $\delta$ is an infinitesimal positive constant. This neighborhood is therefore a set of small but positive Lebesgue measure built of roughly $\delta^{-d}$ thin tubes of unit length and spherical cross-section of radius $\delta$. For thickenings of Kakeya sets resulting from many concrete classical constructions, these constituent tubes enjoy certain structural properties that have proved to be of considerable analytical and geometric significance \cite[Chapter 10]{SteinHA}, \cite{{KatzTao}, {KatzLabaTao}}. The present article focuses on one of them (see Definition \ref{Kakeya-type set} below). We study this property in a context similar but not identical to classical Kakeya sets, investigate its applications in analytical problems of independent interest, and obtain a characterization of direction sets $\Omega$ for which such structure can hold. 

\begin{definition}\label{Kakeya-type set}
	Fix a set of directions $\Omega\subseteq \mathbb R^{d+1}$.  We say a cylindrical tube is oriented in direction $\omega \in \Omega$ if the principal axis of the cylinder is parallel to $\omega$.  If for some fixed constant $A_0 \geq 1$ and any choice of integer $N\geq 1$, there exist 
\begin{enumerate}[-]
\item a number $0 < \delta_N \ll 1$, $\delta_N \searrow 0$ as $N \nearrow \infty$, and 
\item a collection of tubes $\{P_t^{(N)}\}$ with orientations in $\Omega$, length at least 1 and cross-sectional radius at most $\delta_N$ 
\end{enumerate} 
obeying  
\begin{equation}\label{Kakeya-type condition}
\lim_{N\rightarrow\infty}\frac {|E^*_N(A_0)|}{|E_N|} = \infty, \quad \text{ with }\quad E_N := \bigcup_t P_t^{(N)},\quad E_N^*(A_0) := \bigcup_t A_0P_t^{(N)},
\end{equation}
then we say that $\Omega$ \textit{admits Kakeya-type sets}.  Here, $|\cdot|$ denotes $(d+1)$-dimensional Lebesgue measure, and $A_0P_t^{(N)}$ denotes the tube with the same centre, orientation and cross-sectional radius as $P_t^{(N)}$ but $A_0$ times its length. The tubes that constitute $E_N$ may have variable dimensions subject to the restrictions mentioned above. We refer to $\{E_N : N \geq 1 \}$ as sets of Kakeya type. 
\end{definition}

The inadmissibility of Kakeya-type sets is related to the boundedness on Lebesgue spaces of certain maximal averages widely studied in harmonic analysis. Given a set of directions $\Omega\subseteq \mathbb R^{d+1}$, we consider the directional maximal operator $D_{\Omega}$ defined by
\begin{equation}\label{max op}
	D_{\Omega}f(x) := \sup_{\omega\in\Omega}\sup_{h>0}\frac 1{2h}\int_{-h}^{h} |f(x+\omega t)|dt,
\end{equation}
where $f:\mathbb{R}^{d+1}\rightarrow\mathbb{R}$ is a function that is locally integrable along lines.  We also consider the Kakeya-Nikodym maximal operator $M_{\Omega}$ defined by
\begin{equation}\label{Kakeya max op}
	M_{\Omega}f(x) := \sup_{\omega\in\Omega}\sup_{\substack{P\ni x\\ P\parallel\omega}} \frac 1{|P|} \int_P |f(y)|dy,
\end{equation}
for any locally integrable function $f:\mathbb{R}^{d+1}\rightarrow\mathbb{R}$. The inner supremum in the definition \eqref{Kakeya max op} above is taken over all cylindrical tubes $P$ that contain the point $x$ and are oriented in the direction $\omega$.  The tubes are taken to be of arbitrary length $\ell$ and have circular cross-section of arbitrary radius $r$, with $r \leq \ell$. 

If $\Omega$ is a set with nonempty interior, then due to the existence of Kakeya sets with $(d+1)$-dimensional Lebesgue measure zero~\cite{Besicovitch}, $D_{\Omega}$ and $M_{\Omega}$ are unbounded as operators on $L^p(\mathbb{R}^{d+1})$ for $p\in [1,\infty)$.  More generally, if $\Omega$ admits Kakeya-type sets, then both these operators are unbounded on $L^p(\mathbb{R}^{d+1})$ for $p \in [1,\infty)$. Indeed, a standard argument shows that for any tube $P$ of length $\ell$ oriented along a unit vector $\omega$, 
\[\frac{1}{2A_0 \ell} \int_{-A_0\ell}^{A_0 \ell} 1_{P}(x + t\omega) \, dt \geq \frac{\text{length of } P}{2A_0 \ell} = \frac{1}{2A_0} \quad \text{ for all } x \in A_0P. \] 
If $P$ is chosen to be one of the tubes that constitute the set $E_N$ defined as in \eqref{Kakeya-type condition},  the inequality above implies that $M_{\Omega}1_{E_N}(x) \geq D_{\Omega}1_{E_N}(x) \geq c_0 = (2A_0)^{-1} > 0$ for any $x \in E_N^{\ast}(A_0)$. Hence
\begin{equation} \label{Kakeya sets and D} ||M_{\Omega}||_{p \rightarrow p} \geq ||D_{\Omega}||_{p \rightarrow p} \geq \frac{c_0||1_{E_{N}^{\ast}(A_0)}||_p}{||1_{E_N}||_p} \geq c_0 \left(\frac{|E_N^{\ast}(A_0)|}{|E_N|} \right)^{\frac{1}{p}}.\end{equation} 
If $\Omega$ admits Kakeya-type sets, Definition \ref{Kakeya-type set} ensures that the sets $E_N$ can be chosen so that the right hand side approaches infinity as $N \rightarrow \infty$ for $p \in [1, \infty)$. This establishes the claimed unboundedness of both $M_{\Omega}$ and $D_{\Omega}$.


\subsection{Results}\label{results}

All results to date suggest that the direction set $\Omega$ will admit Kakeya-type sets if it is of suitably large size.  Of course, the notion of ``size" here begs clarification.  For us, a direction set will be small when it is sufficiently \textit{lacunary}.  Precise definitions of lacunarity needed in this paper are deferred to Section \ref{lacunarity section}, but the general idea is easy to describe. In one dimension, a relatively compact set $\{a_i\}$ is lacunary of order 1 if there is $a \in \mathbb R$ and some positive $\lambda <1$ such that $|a_{i+1} - a|\leq \lambda |a_i-a|$ for all $i$. Such a set has traditionally been referred to as a lacunary sequence with lacunarity constant (at most) $\lambda$.  A lacunary set of order 2 consists of a single (first-level) lacunary sequence $\{a_i\}$, along with a collection of disjoint (second-level) lacunary sequences; a second-level sequence is squeezed between two adjacent elements of $\{a_i\}$. The lacunarity constants of all sequences are uniformly bounded by some positive $\lambda < 1$. Roughly speaking, a set on the real line is lacunary of finite order if there is a decomposition of the real line by points of a lacunary sequence such that the restriction of the set to each of the resulting subintervals is lacunary of lower order. All lacunarity constants implicit in the definition are assumed to be uniformly bounded away from unity. 
A set is then said to be \textit{sublacunary} if it does not admit a finite cover by lacunary sets of finite order.

\begin{figure}[h!]
\setlength{\unitlength}{0.8mm}
\begin{picture}(0,0)(-40,100)

	\put(0,0){\special{sh 0.99}\ellipse{3}{3}}
	\dottedline{1}(72,9)(72,1)
	\allinethickness{.08cm}\dottedline{2}(0,-1)(100,-1)
	\path(0,0)(40,85)
	\path(0,0)(80,50)
	\path(0,0)(98,15)

	\put(105,13){\Large\shortstack{$\lambda^{3}$}}
	\put(87,50){\Large\shortstack{$\lambda^{2}$}}
	\put(45,82){\Large\shortstack{$\lambda$}}

	\allinethickness{0.03cm}\path(0,0)(15,95)
	\path(0,0)(25,92)
	\path(0,0)(32,89)
	\dottedline{1}(26,70)(30,68)

	\put(12,98){\tiny\shortstack{$\lambda+\gamma^{k_1}$}}
	\put(24,94){\tiny\shortstack{$\lambda+\gamma^{k_1+1}$}}
	\put(31,90){\tiny\shortstack{$\lambda+\gamma^{k_1+2}$}}

	\path(0,0)(59,72)
	\path(0,0)(67,65)
	\path(0,0)(73,59)
	\dottedline{1}(57,45)(61,40)

	\put(59,74){\tiny\shortstack{$\lambda^{2}+\gamma^{k_2}$}}
	\put(68,66){\tiny\shortstack{$\lambda^{2}+\gamma^{k_2+1}$}}
	\put(74,59){\tiny\shortstack{$\lambda^{2}+\gamma^{k_2+2}$}}

	\path(0,0)(88,39)
	\path(0,0)(93,30)
	\path(0,0)(96,23)
	\dottedline{1}(73,16)(74,13)

	\put(90,39){\tiny\shortstack{$\lambda^{3}+\gamma^{k_3}$}}
	\put(95,30){\tiny\shortstack{$\lambda^{3}+\gamma^{k_3+1}$}}
	\put(98,23){\tiny\shortstack{$\lambda^{3}+\gamma^{k_3+2}$}}


\end{picture}
\vspace{8.5cm}
\caption{\label{Fig:lacunary set of order 2} A direction set in the plane, represented as a collection of unit vectors, with parameters $0 < \gamma < \lambda < 1/2$. The set of angles made by these vectors with the positive horizontal axis is $\{(\lambda^{j}+\gamma^{k}) : k\geq j\}$, which is lacunary of order 2.}
\end{figure}
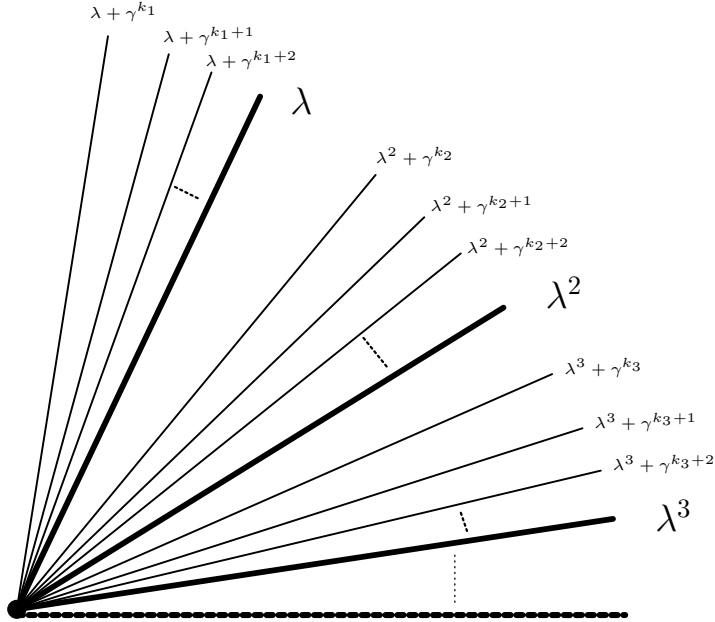
 In higher dimensions, the idea of lacunarity is not immediately clear.  For $d\geq 2$, Nagel, Stein, and Wainger \cite{NagelSteinWainger} considered lacunary sets of the form $\Omega = \{(\theta_j^{m_1},\ldots, \theta_j^{m_d}) : j \geq 1 \}$, where $0<m_1<\cdots <m_d$ are fixed constants and $\{ \theta_j \}$ is a lacunary sequence with lacunarity constant $0<\lambda<1$, i.e., $0<\theta_{j+1}\leq\lambda \theta_j$.  For such direction sets $\Omega$, they showed that the operator $D_{\Omega}$ is bounded on all $L^p(\mathbb R^d)$, $1<p\leq\infty$. A two-dimensional extension of this result was obtained by Sj\"ogren and Sj\"olin~\cite{SjogrenSjolin}, where they formulated a generalized notion of lacunarity that was to prove the basis of a body of subsequent work. Carbery \cite{Carbery} considered coordinate-wise lacunary sets of the form $\Omega = \{(r^{k_1},\ldots,r^{k_d}) : k_1,\ldots,k_d\in\mathbb{Z}^+\}$ for some $0<r<1$, and showed that the corresponding directional maximal operator is bounded on all $L^p(\mathbb R^d)$, $1<p\leq\infty$.  Very recently, Parcet and Rogers \cite{ParcetRogers} have generalized an almost-orthogonality result of Alfonseca \cite{Alfonseca}, building on previous work of Alfonseca, Soria, and Vargas~\cite{AlfonsecaSoriaVargas}, Carbery \cite{Carbery}, Nagel, Stein, and Wainger~\cite{NagelSteinWainger}, to recover these results and to extend the $L^p$-boundedness of $D_{\Omega}$, $1<p\leq\infty$, to sets $\Omega$ that are lacunary of finite order in a broader sense. 

On the other hand, direction sets $\Omega$ that are sufficiently sublacunary have been shown to admit Kakeya-type sets, and thus lead to unbounded directional maximal operators, per the argument at the end of Section \ref{background}.  There is an extensive body of work in this direction \cite{{DuoandikoetxeaVargas}, {Vargas}, {Katz}, {BatemanKatz}}, authored in part by Duoandikoetxea, Vargas, Bateman and Katz. A fundamental and representative example, examined by Bateman and Katz \cite{BatemanKatz}, is a direction set in the plane where the slopes belong to the standard middle-third Cantor set. Combining the aforementioned positive results with strategies developed to treat the reverse direction, the $L^p$-boundedness of $D_{\Omega}$ and $M_{\Omega}$ has been completely characterized in the plane by Bateman \cite{Bateman} and remains one of the most definitive results in the subject.  In higher dimensions, the authors~\cite{KrocPramanik} have recently constructed Kakeya-type sets over certain Cantor-type subsets of a curve on the sphere $\mathbb S^d$, $d\geq 2$.  What seems clear is that any attempt to bridge the gap between these negative and positive results in general dimensions will require a precise and appropriate notion of finite order lacunarity.  We will provide our definition in Definition~\ref{definition of lacunary direction sets}.  For now, we state our main results with the precise terminology deferred until Section \ref{lacunary direction set section}.

\begin{theorem}\label{MainThm1}
Let $d \geq 2$. If the direction set $\Omega \subseteq \mathbb R^{d+1}$ is sublacunary in the sense of Definition \ref{definition of lacunary direction sets}, then $\Omega$ admits Kakeya-type sets. 
\end{theorem}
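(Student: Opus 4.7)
The plan is to extend Bateman's two-dimensional probabilistic construction \cite{Bateman,BatemanKatz} to $\mathbb R^{d+1}$ by exploiting the multi-scale branching that sublacunarity forces on $\Omega$. The proof will be probabilistic: rather than specify explicit tubes, I would construct, for each $N$, a random collection $\{P_t^{(N)}\}$ whose realization, with positive probability, satisfies the growth condition~\eqref{Kakeya-type condition}.

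The first step is to convert the qualitative hypothesis --- that $\Omega$ is not a finite union of lacunary sets of any finite order --- into a concrete combinatorial tree $\mathcal T_N$ inside $\Omega$. Following the planar argument, I expect Definition~\ref{definition of lacunary direction sets} to be phrased so that its negation supplies, for each $N$, a finite subset $\Omega_N \subseteq \Omega$ organized as a rooted tree of depth comparable to $N$ in which the branching factor at each level is large enough to defeat any attempted lacunary cover of order $N$. This extraction will be carried out by an inductive pigeonhole/compactness scheme: at each depth one locates a cluster of directions dense enough to be sublacunary in its own right, recurses within it, and identifies leaves with distinguished elements of $\Omega$.

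Given $\mathcal T_N$, I would then assign to each leaf $\omega$ a thin cylindrical tube $P_\omega$ of length one, cross-sectional radius $\delta_N$, and orientation $\omega$, anchored so that all such tubes pass through a common reference point. The tree is then pruned by independent random retention of edges with survival probability $p_N$ calibrated against the branching factors. The decisive measure estimates are of two kinds: (a) an upper bound on $\mathbb E|E_N|$ obtained level by level, using that two leaves whose paths first diverge at depth $j$ produce tubes of angular separation $\sim \lambda^j$ and hence of intersection volume $\sim \lambda^{-j}\delta_N^{d+1}$, which is then processed via a branching-process / second-moment calculation; and (b) a lower bound on $|E_N^{\ast}(A_0)|$ that captures the angular spread of the surviving directions, since the $A_0$-dilations escape a neighborhood of the anchor and become nearly disjoint at their far ends. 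Optimizing $p_N$ and $\delta_N$ against the branching profile will force the ratio in \eqref{Kakeya-type condition} to infinity.

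The main obstacle will be the tree extraction in Step~1. In $\mathbb R^2$ direction sets are linearly ordered and finite-order lacunarity reduces cleanly to nested lacunary sequences on an interval, yielding a one-dimensional tree almost for free. In $\mathbb R^{d+1}$ with $d\geq 2$, sets lacunary of finite order are built from sequences living in affine subvarieties of the direction sphere, and their intersections and projections interact in geometrically nontrivial ways; sublacunarity therefore need not manifest in any single scalar parameter. Producing one coherent tree whose leaves both certify the failure of lacunarity of every finite order and simultaneously enjoy the angular separations required for the overlap bounds in Step~3 is the essential technical novelty over the planar argument and over the curve-based construction in \cite{KrocPramanik}, and it is where I expect the bulk of the work to lie.
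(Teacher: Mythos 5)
There is a genuine gap, and it is in Step~2 rather than in the tree extraction you flag as the main difficulty. A family of tubes all passing through a common anchor point (a ``bush'') can never satisfy \eqref{Kakeya-type condition}: if the surviving directions are $\delta_N$-separated and $K$ in number, then at distance $\tfrac12$ from the anchor the tubes are already essentially disjoint, so $|E_N|\gtrsim K\delta_N^d$, while trivially $|E_N^{\ast}(A_0)|\leq A_0^2 K\delta_N^d$; the ratio stays bounded by a constant depending only on $A_0$, no matter how you thin the tree. Random deletion of edges cannot repair this, because the overlap a bush provides is confined to a neighborhood of the anchor whose contribution to $|E_N|$ is negligible. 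The compression mechanism actually used (here and in Bateman--Katz) is translation plus stickiness: one takes $M^{Jd}\approx\delta_N^{-d}$ tubes, one rooted at \emph{each} $M^{-J}$-cube of the hyperplane $\{0\}\times[0,1)^d$, and the randomness lies in a height- and lineage-preserving (sticky) assignment of the $2^N\ll M^{Jd}$ selected slopes to the $M^{Jd}$ roots. Stickiness forces massive overlap in the far slab $[A_0,A_0+1]\times\mathbb R^d$ (the covering probability of a point is controlled by survival of a Bernoulli$(\tfrac12)$ percolation on a reference tree, giving $\mathbb E|E_N|\lesssim 1/N$), while the distinctness of the roots keeps the near slab $[0,1]\times\mathbb R^d$ large; the latter is proved by bounding $\sum_{t\neq t'}|P_t\cap P_{t'}|$ from above \emph{with high probability} via first- and second-moment estimates and then invoking $|\bigcup P_t|\geq (\sum|P_t|)^2/\sum|P_t\cap P_{t'}|$. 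So your moment calculation is aimed at the wrong quantity as well: the second moment is needed for the lower bound on the union, not for the upper bound on $|E_N|$.

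On Step~1, your instinct that this is where the dimension $d\geq 2$ bites is right, but the sketch leaves it unresolved. The resolution in the paper is to define lacunarity through one-dimensional projections in \emph{every} rotated coordinate system on every tangent hyperplane of the sphere, and to show (Proposition~\ref{SublacunaryInfiniteSplit}, via Lemma~\ref{TreeLacunaryTraditional}) that sublacunarity forces the \emph{splitting number} of the $M$-adic tree encoding some compact piece of $\mathcal C_\Omega\cap\mathbb V$, in some coordinate frame, to be arbitrarily large; the coordinate dependence is essential (the set $U\times V$ of Section~\ref{counterexample section} has splitting number $2$ in the standard frame but infinite splitting number after a rotation by $\pi/4$). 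A pruning algorithm (Proposition~\ref{pruning stage 1}) then produces a binary subtree of $2^N$ directions with exactly $N$ splits per ray and a quantitative Euclidean separation between sibling subtrees, and it is this separation --- not a single lacunarity ratio $\lambda^j$ --- that drives the intersection estimates. Without the separation condition built into the extraction, the overlap bounds in your Step~3 would not go through.
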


Combining this result with others from the literature (most notably \cite{{Bateman}, {Alfonseca}, {ParcetRogers}}), we obtain the following necessary and sufficient condition for Kakeya-type sets to exist. 
\begin{theorem}\label{MainThm2}
	For any dimension $d \geq 1$, the following are equivalent:
	\begin{enumerate}
	\item[(1)] The direction set $\Omega \subseteq \mathbb R^{d+1}$ is sublacunary in the sense of Definition \ref{definition of lacunary direction sets}.
	\item[(2)] The set of directions $\Omega$ admits Kakeya-type sets in the sense of Definition \ref{Kakeya-type set}.
	\item[(3)] The maximal operators $D_{\Omega}$ and $M_{\Omega}$ defined in \eqref{max op} and \eqref{Kakeya max op} are unbounded on $L^p(\mathbb{R}^{d+1})$ for every $p\in(1,\infty)$.
	\end{enumerate}
\end{theorem}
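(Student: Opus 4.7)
The plan is to prove Theorem~\ref{MainThm2} by establishing the cyclic chain of implications $(1) \Rightarrow (2) \Rightarrow (3) \Rightarrow (1)$. Two of these are either Theorem~\ref{MainThm1} itself or follow from the standard testing argument already laid out in Section~\ref{background}; the remaining implication reduces, via the contrapositive, to invoking $L^p$-boundedness results from the literature for finitely lacunary direction sets.

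For $(1) \Rightarrow (2)$, in dimensions $d \geq 2$ this is exactly Theorem~\ref{MainThm1}, while in the planar case $d = 1$ it is due to Bateman~\cite{Bateman}. For $(2) \Rightarrow (3)$, I simply point to the inequality in~\eqref{Kakeya sets and D}, namely
\[
\|M_{\Omega}\|_{p \to p} \geq \|D_{\Omega}\|_{p \to p} \geq c_0 \left( \frac{|E_N^*(A_0)|}{|E_N|} \right)^{1/p},
\]
whose right-hand side diverges along the sequence $\{E_N\}$ by the defining condition~\eqref{Kakeya-type condition} of Kakeya-type sets. This rules out $L^p$-boundedness for every $p \in [1, \infty)$, and in particular for every $p \in (1,\infty)$.

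The implication $(3) \Rightarrow (1)$ I prove by contraposition: assume $\Omega$ is \emph{not} sublacunary in the sense of Definition~\ref{definition of lacunary direction sets}, which by definition means $\Omega$ admits a finite cover by sets that are lacunary of some finite order. Since both $D_\Omega$ and $M_\Omega$ are subadditive with respect to finite unions of direction sets, it suffices to verify $L^p$-boundedness on $(1, \infty)$ for a single finitely lacunary direction set. In the plane this is covered by Bateman~\cite{Bateman}; in higher dimensions I invoke the almost-orthogonality framework of Alfonseca~\cite{Alfonseca}, extended to arbitrary dimension by Parcet and Rogers~\cite{ParcetRogers}, which yields uniform $L^p$ bounds for the directional and Kakeya--Nikodym maximal operators associated with such $\Omega$. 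Contraposition then gives $(3) \Rightarrow (1)$, closing the cycle.

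The main obstacle is not any individual implication in isolation---each has an analog already in the literature---but rather ensuring that the ambient Definition~\ref{definition of lacunary direction sets} of higher-dimensional finite-order lacunarity is the same notion (or at least a notion strong enough) to trigger the hypotheses of Parcet--Rogers. Concretely, one must check that every set which Definition~\ref{definition of lacunary direction sets} classifies as lacunary of finite order falls under the scope of the $L^p$-boundedness result of~\cite{ParcetRogers}; any discrepancy in the respective formulations would have to be bridged by a separate compatibility argument, likely by unwinding the recursive structure of Definition~\ref{definition of lacunary direction sets} and matching it against the hierarchical direction sets treated in~\cite{ParcetRogers}. Once this compatibility is confirmed, Theorem~\ref{MainThm2} follows by assembling the three implications above.
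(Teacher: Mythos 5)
Your proposal follows exactly the paper's route: (1)\,$\Rightarrow$\,(2) is Theorem~\ref{MainThm1} for $d\geq 2$ (and Bateman~\cite{Bateman} for $d=1$), (2)\,$\Rightarrow$\,(3) is the testing inequality \eqref{Kakeya sets and D}, and (3)\,$\Rightarrow$\,(1) is proved by contraposition via the $L^p$-boundedness of $D_\Omega$ and $M_\Omega$ for admissible lacunary direction sets. The one step you defer as a ``compatibility argument'' is precisely the content of the paper's Theorem~\ref{boundedness theorem for directional max op}, and it is the only genuinely new mathematics in the implication: there one first reduces $M_\Omega$ to an iterated composition $D_{\Omega_d}\circ\cdots\circ D_{\Omega_1}\circ D_\Omega$ of directional operators along the perpendicular direction sets $\Omega_j=\{v_j(\omega):\omega\in\Omega\}$ (checking these inherit finite-order lacunarity), and then runs an induction on the composite order $N_0(\Omega)=N(\Omega)+\sum_{(k,j)}N_{kj}(\Omega)$, using the special sequences of Definition~\ref{defn: Lacunary sets} to manufacture the decomposition $\Omega=\bigcup_i\Omega_{\sigma,i}$ demanded by \cite{ParcetRogers} and verifying that each piece satisfies $N_0(\Omega_{\sigma,i})\leq N_0(\Omega)-1$. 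So your outline is structurally correct and matches the paper, but that bridging argument should be carried out rather than assumed.
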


To clarify, the implication (3) $\implies$ (1) for $d = 1$ is in \cite{Alfonseca}, expanding on the work started in ~\cite{{NagelSteinWainger},  {SjogrenSjolin}, {Carbery}, {AlfonsecaSoriaVargas}}. For $d \geq 2$, this is due to \cite{ParcetRogers}, as we will see in Theorem \ref{boundedness theorem for directional max op}. The proof of (1) $\implies$ (2) is the content of \cite{Bateman} for $d = 1$ and of Theorem \ref{MainThm1} for $d \geq 2$. The implication (2) $\implies$ (3) is established in the argument presented in the paragraph of \eqref{Kakeya sets and D} in all dimensions. 

Some of the implications above are known to admit stronger variants. For instance, (2) implies (3) even when $p=1$, as the argument leading to \eqref{Kakeya sets and D} shows. Further, it is not necessary to know that the operator $D_{\Omega}$ is unbounded on all $L^p(\mathbb{R}^{d+1})$, $p\in(1,\infty)$, in order to conclude that $\Omega$ is sublacunary.  We will prove in Section \ref{Appendix section} that the weaker requirement
\[ {\textit{(3') The maximal operator $D_{\Omega}$ is unbounded on $L^p(\mathbb R^{d+1})$ for some $p\in(1,\infty)$}}, \] suffices to establish (1). Thus $D_{\Omega}$ enjoys an interesting dichotomy in that it is either bounded on all or none of the Lebesgue spaces $L^p$ with $p \in (1, \infty)$.  

\subsection{Structure of the proofs and layout of the paper} \label{layout section} 

The paper is divided into ten main sections, not counting the introduction.  In Section 2 we define (admissible) finite order lacunarity and sublacunarity, consider several instructive and famous examples of such sets, and prove the implication (3) $\Rightarrow$ (1) of Theorem~\ref{MainThm2}.  

Section 3 begins the program of proving the implication (1) $\Rightarrow$ (2) of Theorem~\ref{MainThm2}; i.e., of constructing Kakeya-type sets over sublacunary direction sets.  We begin by reviewing the necessary literature on trees and how they can be used to encode subsets of Euclidean space.  The so-called \textit{splitting number} of a tree, as defined in \cite{Bateman}, is then shown to be the critical concept that allows us to recast the notion of (admissible) finite order lacunarity of a set into an equivalent and more tractable form for the purposes of our proof.  We use this language of trees in Section 4 to extract a convenient subset of an arbitrary sublacunary direction set, denoted by $\Omega_N$.  Section 5 explores the geometry of the intersection of two tubes and the implications of this geometry for the structure of trees encoding the sets of orientations and  positions of a given collection of thin $\delta$-tubes.  

\begin{figure}[h!]
\setlength{\unitlength}{0.8mm}
\begin{picture}(-50,0)(-7,13)

	\put(0,0){\special{sh 0.01}\ellipse{10}{10}}
	\put(-2.5,-1.5){\shortstack{$\S 2$}}
	\path(20,-7)(60,-7)(60,7)(20,7)(20,-7)
	\put(27,1){\shortstack{Theorem 1.3:}}
	\put(30,-4){\shortstack{(3) $\Rightarrow$ (1)}}

	\put(0,-20){\special{sh 0.01}\ellipse{10}{10}}
	\put(-2.5,-21.5){\shortstack{$\S 3$}}
	\put(30,-20){\special{sh 0.01}\ellipse{10}{10}}
	\put(27.5,-21.5){\shortstack{$\S 4$}}
	\put(30,-40){\special{sh 0.01}\ellipse{10}{10}}
	\put(27.5,-41.5){\shortstack{$\S 5$}}
	\put(60,-30){\special{sh 0.01}\ellipse{10}{10}}
	\put(57.5,-31.5){\shortstack{$\S 6$}}
	\put(110,-50){\special{sh 0.01}\ellipse{10}{10}}
	\put(106.5,-51.5){\shortstack{$\S 10$}}
	\put(90,-30){\special{sh 0.01}\ellipse{10}{10}}
	\put(87.5,-31.5){\shortstack{$\S 9$}}
	\put(90,-10){\special{sh 0.01}\ellipse{10}{10}}
	\put(87.5,-11.5){\shortstack{$\S 8$}}
	\put(90,10){\special{sh 0.01}\ellipse{10}{10}}
	\put(87.5,8.5){\shortstack{$\S 7$}}
	\put(140,-30){\special{sh 0.01}\ellipse{10}{10}}
	\put(136.5,-31.5){\shortstack{$\S 11$}}
	\path(130,-7)(170,-7)(170,7)(130,7)(130,-7)
	\put(137,1){\shortstack{Theorem 1.3:}}
	\put(140,-4){\shortstack{(1) $\Rightarrow$ (2)}}

	\thicklines
	\path(6,0)(19,0)
	\path(17,1)(19,0)(17,-1)
	\path(0,-6)(0,-14)
	\path(-1,-12)(0,-14)(1,-12)
	\path(6,-20)(24,-20)
	\path(22,-19)(24,-20)(22,-21)
	\path(36,-20)(55,-27)
	\path(52.5,-27.5)(55,-27)(53.5,-25)
	\path(30,-26)(30,-34)
	\path(29,-32)(30,-34)(31,-32)
	\path(36,-40)(55,-33)
	\path(52.5,-32.5)(55,-33)(53.5,-35)
	\path(64,-25.5)(85,7)
	\path(82.25,5.75)(85,7)(85,4)
	\path(65.5,-28)(84.5,-12)
	\path(81.5,-12.5)(84.5,-12)(83.5,-15)
	\path(66,-30)(84,-30)
	\path(82,-31)(84,-30)(82,-29)
	\path(65.5,-32)(104,-50)
	\path(102.5,-47.5)(104,-50)(101,-50.5)	
	\path(96,9)(129,0)
	\path(127.5,2)(129,0)(126.5,-1)
	\path(96,-11)(134.5,-27)
	\path(133,-25)(134.5,-27)(132,-27.5)
	\dottedline{0.75}(90,-16)(90,-24)
	\path(89,-22)(90,-24)(91,-22)
	\dottedline{1}(93,-15)(107,-44.5)
	\path(104.5,-43)(107,-44.5)(107.5,-41.5)
	\path(96,-30)(134,-30)
	\path(132,-29)(134,-30)(132,-31)
	\path(116,-48)(134.5,-33.5)
	\path(133.5,-36)(134.5,-33.5)(131.5,-34)
	\path(142,-24.5)(150,-8)
	\path(151,-11)(150,-8)(147,-9.5)

\end{picture}
\vspace{5.5cm}
\caption{\label{Fig: outline of paper} Diagram illustrating the approximate dependence structure between sections in this paper with respect to the proof of Theorem~\ref{MainThm2}.  Dotted arrows indicate a dependence in terms of definitions and notation only.}
\end{figure}
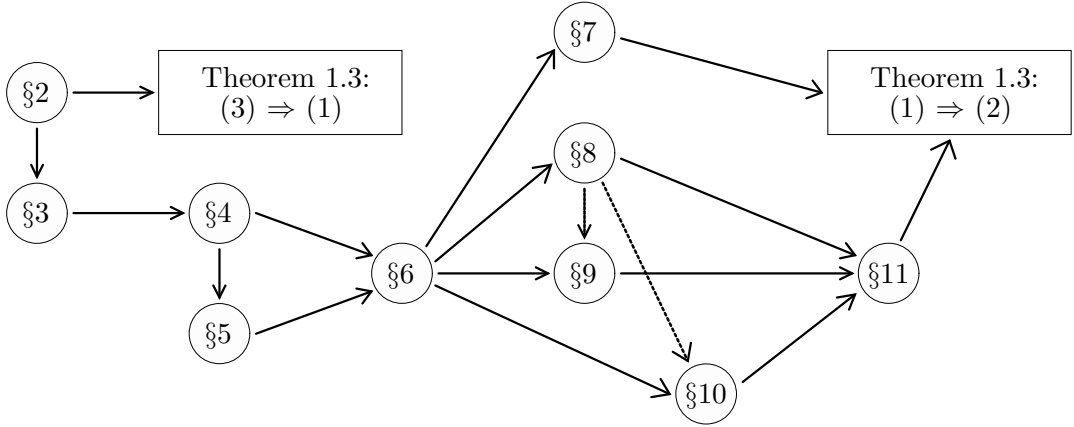

Section 6 combines results from the previous two sections to describe the actual mechanism we use to assign slopes in $\Omega_N$ to $\delta$-tubes affixed to a prescribed set of points in Euclidean space.  Here, we also reformulate the implication (1) $\Rightarrow$ (2) of Theorem~\ref{MainThm2} in terms of quantitative upper and lower bounds on the sizes of a typical Kakeya-type set $E_N$ and its principal dilate $E^*_N(A_0)$ as described in Definition~\ref{Kakeya-type set} (see Proposition \ref{main theorem reformulated}).  From here, the paper splits into more or less two disjoint expositions, each one charged with establishing one of these two probabilistic and quantitative bounds.

In Section 7 we prove the quantitative upper bound previously prescribed using an argument similar to~\cite{Bateman}.  Sections 8, 9, 10, and 11 combine to establish the corresponding lower bound.  Section 11 details the actual estimation, utilizing all the smaller pieces developed in Sections 8, 9, and 10.  These three sections revolve around a central theme of ideas, notably the structure imposed on the position and slope trees of a collection of two, three, or four $\delta$-tubes, certain pairs of which are required to intersect at a given location in space.

The framework of this paper is the same as in \cite{{Bateman}, {BatemanKatz}}, and bears the closest resemblance to \cite{KrocPramanik}. In particular, given a sublacunary direction set, our goal is to construct a family of tubes, all of which originate from (or said to be rooted in) the hyperplane $\{0\} \times [0,1)^d$, after an appropriate coordinate transformation. For a given root position, a slope from $\Omega_N$ is assigned to it using a random mechanism that preserves heights and lineages of both source (root) and image (slope) within their respective trees. The quantitative lower and upper bounds \eqref{random Kakeya lower bound} and \eqref{random Kakeya upper bound} formulated in Proposition \ref{main theorem reformulated} ensure that the random set thus constructed is of Kakeya-type with positive probability. Of the two bounds  \eqref{random Kakeya lower bound} and \eqref{random Kakeya upper bound}, the first is the most significant contribution of this paper. More precisely, the issue is the following. A large lower bound on a union of tubes follows if they do not have significant pairwise overlap among themselves, i.e. if the total size of pairwise intersections is small. In dimension two, a good upper bound on this intersection size was available uniformly in every sticky slope assignment. The counting argument that provided this bound is not transferable to general dimensions, but it is still possible to obtain the desired bound with large probability. A probabilistic statement similar to but not as strong as \eqref{random Kakeya lower bound} can be derived relatively easily via an estimate on the first moment of the total size of random pairwise intersections. Unfortunately, this is still not sharp enough to yield the disparity in the sizes of the tubes and their translated counterparts necessary to claim the existence of a Kakeya-type set. To strengthen the bound, we need a second moment estimate on the pairwise intersections. Both moment estimates share some common features; for instance, 
\begin{enumerate}[-]
\item Euclidean distance relations between roots and slopes of two intersecting tubes,
\item interplay of the above with the relative positions of the roots and slopes within the respective trees that they live in, which affects the slope assignments.  
\end{enumerate}  
However, the technicalities are far greater for the second moment compared to the first, requiring a study of pairwise intersections among three or four tubes, not just two. The above-mentioned points appear in a somewhat simplied form in \cite{KrocPramanik}, where the authors considered a special case of a direction set $\Omega$ obtained as a Cantor-type subset of a curve. There the direction tree had a richer structure, and as a consequence geometric and probabilistic estimates were simpler. The generality of this paper involved in handling arbitrary sublacunary direction sets gives rise to substantial technical challenges, necessitating the analysis carried out in Sections 8-11.     

\section{Finite order lacunarity} \label{lacunarity section}

The concept of finite order lacunarity is ubiquitous, and recognized as fundamental in the study of planar Kakeya-type sets and associated directional maximal operators. It is no surprise that it continues to play a similar central role in this article. The existing literature on the subject embodies several different notions of Euclidean lacunarity both in single and general dimensions, see in particular \cite{Bateman, Carbery, ParcetRogers, SjogrenSjolin}. The present section is devoted to a discussion of the definitions to be used in the remainder of the paper. The concepts introduced here will be revisited in Section \ref{subsection : Lacunarity on Trees}, using the language of trees. The interplay of these two perspectives is essential to the proofs of Theorems \ref{MainThm1} and \ref{MainThm2}.  

\subsection{Lacunarity on the real line}
\begin{definition}[Lacunary sequence] \label{defn: Lacunary sequence in R}
Let $A = \{a_1,a_2,\ldots\}$ be an infinite sequence of points contained in a compact subset of $\mathbb{R}$. Given a constant $0 < \lambda < 1$, we say that $A$ is a {\em{lacunary sequence}} converging to $\alpha$ with {\em{constant of lacunarity}} at most $\lambda$, if
\begin{equation}\label{lacunarity constant}
	|a_{j+1}-\alpha| \leq \lambda |a_j-\alpha| \quad \text{ for all } j \geq 1.\nonumber
\end{equation}
\end{definition} 
\begin{definition}[Lacunary sets] \label{defn: Lacunary sets}
In $\mathbb{R}$, a {\em{lacunary set of order 0}} is a set of cardinality at most 1, i.e., either empty or a singleton.  Recursively, given a constant $0<\lambda<1$ and an integer $N \geq 1$, we say that a relatively compact subset $U$ of $\mathbb{R}$ is a {\em{lacunary set of order at most $N$}} with lacunarity constant at most $\lambda$, and write $U \in \Lambda(N;\lambda)$, if there exists a lacunary sequence $A$ with lacunarity constant $\leq \lambda$ with the following properties:
\begin{enumerate}[-]
\item $U \cap [\sup(A), \infty) = \emptyset$, $U \cap (-\infty, \inf(A)] = \emptyset$, 
\item For any two elements $a, b \in A$, $a < b$ such that $(a, b) \cap A = \emptyset$, the set $U \cap [a, b) \in \Lambda(N-1, \lambda)$.   
\end{enumerate} 
The {\em{order of lacunarity}} of $U$ is exactly $N$ if $U \in \Lambda(N;\lambda) \setminus \Lambda(N-1;\lambda)$.  A lacunary sequence $A$ obeying the conditions above will be called a {\em{special sequence}} and its limit will be termed a {\em{special point}} for $U$. 
\end{definition} 
For any fixed $N$ and $\lambda$, the class $\Lambda(N;\lambda)$ is closed under containment, scalar addition and multiplication; these properties, summarized in the following lemma, are easy to verify and left to the reader. 
\begin{lemma}\label{lacunarity under linear operations}
Let $U \in \Lambda(N;\lambda)$. Then 
\begin{enumerate}[(i)]
\item $V \in \Lambda(N;\lambda)$ for any $V \subseteq U$. 
\item $c_1U + c_2 \in \Lambda(N;\lambda)$ for any $c_1 \ne 0$, $c_2 \in \mathbb R$. 
\end{enumerate} 
\end{lemma}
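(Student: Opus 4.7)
Both parts will be proved by induction on the lacunarity order $N$. The base case $N=0$ is immediate: $\Lambda(0;\lambda)$ consists of the empty set and singletons, which are manifestly closed under taking subsets and under affine transformations $x \mapsto c_1 x + c_2$ with $c_1 \ne 0$.

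For the inductive step of (i), let $U \in \Lambda(N;\lambda)$ with witnessing special sequence $A = \{a_j\}$ converging to some $\alpha$, and let $V \subseteq U$. Because Definition \ref{defn: Lacunary sets} does not require the special sequence to lie in $U$, the plan is to simply reuse $A$ as a special sequence for $V$: the tail conditions $V \cap [\sup A,\infty) \subseteq U \cap [\sup A,\infty) = \emptyset$ (and analogously for the lower tail) hold for free, while for any two consecutive elements $a < b$ of $A$ we have $V \cap [a,b) \subseteq U \cap [a,b) \in \Lambda(N-1;\lambda)$, so the inductive hypothesis for (i) places $V \cap [a,b)$ in $\Lambda(N-1;\lambda)$. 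Hence $V \in \Lambda(N;\lambda)$.

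For (ii), set $\varphi(x) := c_1 x + c_2$. A one-line computation shows that $\varphi$ sends a lacunary sequence converging to $\alpha$ with constant $\lambda$ to a lacunary sequence converging to $\varphi(\alpha)$ with the same constant, since
\[ |\varphi(a_{j+1}) - \varphi(\alpha)| = |c_1|\,|a_{j+1} - \alpha| \le \lambda |c_1|\,|a_j - \alpha| = \lambda |\varphi(a_j) - \varphi(\alpha)|. \]
When $c_1 > 0$, $\varphi$ is order-preserving, so $\varphi(A)$ serves as a special sequence for $c_1 U + c_2$: consecutive elements of $\varphi(A)$ correspond to consecutive elements of $A$, and $\varphi(U) \cap [\varphi(a), \varphi(b)) = \varphi(U \cap [a,b))$ lies in $\Lambda(N-1;\lambda)$ by the inductive hypothesis for (ii).

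The only mild nuisance is the case $c_1 < 0$, where $\varphi$ reverses order and turns the half-open interval $[a,b)$ into $(\varphi(b), \varphi(a)]$, yielding an endpoint mismatch with Definition \ref{defn: Lacunary sets}. I would handle this by first treating the reflection $x \mapsto -x$ separately, observing that the reversed enumeration of $A$ is still lacunary with the same constant and that moving a single endpoint between two adjacent intervals alters each piece by at most an isolated point — an operation which does not raise the order of lacunarity — and then composing this reflection with the orientation-preserving case already established. I anticipate no genuine obstacle anywhere in the argument; the whole lemma is a bookkeeping exercise in which the affine rule for lacunary sequences does essentially all the work, which is presumably why the authors leave the verification to the reader.
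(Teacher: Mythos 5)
The paper gives no argument for this lemma (it is explicitly ``left to the reader''), so your proposal has to stand on its own. Part (i) and the orientation-preserving half of part (ii) are correct and routine. The genuine gap is exactly at the point you wave off as a ``mild nuisance'': the case $c_1<0$. Your claim that altering each block by an isolated endpoint ``does not raise the order of lacunarity'' is false at the bottom of the recursion: a singleton belongs to $\Lambda(0;\lambda)$ but a two-point set does not, so adjoining one point can raise the order from $0$ to $1$. This is not a hypothetical worry. Take $\lambda=\tfrac12$, $A=\{2^{-j}: j\ge 0\}$ and $U=\{2^{-2k}: k\ge 1\}\cup\{3\cdot 2^{-2k-2}: k\ge 1\}$; each block $U\cap[2^{-j-1},2^{-j})$ is a single point, so $U\in\Lambda(1;\tfrac12)$, yet $(-U)\cap[-2^{-2k},-2^{-2k-1})=\{-2^{-2k},\,-3\cdot 2^{-2k-2}\}$ contains two points. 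So the reflected sequence $-A$ genuinely fails to witness $-U\in\Lambda(1;\tfrac12)$: reflection turns $[a,b)$ into $(a,b]$, each block may capture the left endpoint of the next one, and at order $0$ there is no slack to absorb the extra point.

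The conclusion is still true ($-U$ above lies in $\Lambda(1;\tfrac12)$ with the special sequence $\{-3\cdot 2^{-j-2}: j\ge 0\}$), but proving it requires constructing a genuinely new special sequence for $c_1U+c_2$ rather than reusing $\varphi(A)$, and verifying that such a sequence can always be found \emph{with the same constant $\lambda$} is precisely the nontrivial content of the $c_1<0$ case. (Naive fixes do not work: refining $A$ by midpoints degrades the lacunarity constant to $\tfrac{1+\lambda}{2}$, and perturbing the $a_j$ off of $U$ can fail when $U$ accumulates at an element of $A$.) As written, your argument does not establish part (ii) for $c_1<0$.
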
 
The sets of interest to us are those that are generated by finite unions of sets of the form described in Definition \ref{defn: Lacunary sets}. 
\begin{definition}[Admissible lacunarity of finite order and sublacunarity] \label{defn: Admissible finite order lacunarity} 
We say that a relatively compact set $U \subseteq \mathbb{R}$ is an {\em{admissible lacunary set of finite order}} if there exist a constant $0<\lambda<1$ and integers $1\leq N_1,N_2<\infty$ such that $U$ can be covered by $N_1$ lacunary sets of order $\leq N_2$, each with lacunarity constant $\leq \lambda$.  If $U$ does not satisfy this criterion, we call it {\em{sublacunary}}.
\end{definition} 
\subsubsection{Examples} \label{1d lacunary examples section}
\begin{enumerate}[(a)]
\item A standard example of a lacunary set of order 1 and lacunarity constant $\lambda \in (0,1)$ is $U = \{\lambda^{j} : j \geq 1\}$, or any nontrivial subsequence thereof. Indeed $U$ is itself a lacunary sequence, and hence its own special sequence. 

A general lacunary set of order 1 need not always be a lacunary sequence. For example $\{2^{-2j} \pm 4^{-2j} : j \geq 1\}$ is lacunary of order 1 relative to the special sequence $\{ 2^{-j} : j \geq 1\}$. Despite this, lacunary sequences are in a sense representative of the class $\Lambda(1;\lambda)$, since any set in $\Lambda(1;\lambda)$ can be written as the union of at most four lacunary sequences with lacunarity constant $\leq \lambda$. By Lemma \ref{lacunarity under linear operations}, the set $\{a \lambda^j + b : j \geq 1 \}$ is lacunary of order at most 1 for any unit vector $(a,b)$. 
\item In general, given an integer $k \geq 1$ and constants $M_1 \leq M_2 \leq \cdots \leq M_k$ with $M_1 \geq \max(2, k-1)$, the set \[ U = \left\{ M_1^{-j_1} +  M_2^{-j_2} +  \cdots + M_k^{-j_k} : 0 \leq j_1 \leq j_2 \leq \cdots \leq j_k \right\} \] is lacunary of order $k$ and has lacunarity constant $\leq M_1^{-1}$. The special sequence can be chosen to be $A = \{ M_1^{-j} : j \geq 1\}$.
\item \label{dyadic rationals example} A set that is dense in some nontrivial interval, however small, is sublacunary. For example, dyadic rationals of the form $\{\frac{k}{2^m} : 0 \leq k < 2^m \}$ for a fixed $m$ can be written as a finite union of lacunary sequences with a given  lacunarity $\lambda$, but the number of sequences in the union grows without bound as $m \rightarrow \infty$. By Lemma \ref{lacunarity under linear operations},  a set that contains an affine copy of $\{\frac{k}{2^m} : 0 \leq k < 2^m \}$ for every $m$ is sublacunary.
\item The set $U = \{ 2^{-j} + 3^{-k} : j, k \geq 0\}$ can be covered by a finite union of sets in $\Lambda(2;\frac{1}{2})$. For instance the two subsets of $U$ where $k \ln 3 \leq (j-1) \ln 2$ and $k \ln 3 \geq j \ln 2$ respectively are each lacunary of order 2, with $\{ 3^{-k} \}$ and $\{ 2^{-j} \}$ being their respective special sequences. The complement, where $(j-1) \ln 2 \leq k \ln 3 \leq j \ln 2$, contains at most one $k$ per $j$, and is a finite union of lacunary sets of order 1.   
\item A slight variation of the above example: $\{2^{-j} + (q_j - 2^{-j})3^{-k} : j, k \geq 0 \}$, where $\{q_j\}$ is an enumeration of the rationals in $[\frac{9}{10}, 1]$, leads to a very different conclusion. This set contains $\{ q_j\}$, and is hence sublacunary, even though the set may be viewed as a special sequence $\{ 2^{-j} \}$ with collections of lacunary sequences converging to every point of it. This example illustrates the relevance of the requirement that the lower order components of $\Lambda(N;\lambda)$ lie in disjoint intervals of $\mathbb R$.   
\item Given any $0 < \lambda < 1$ and $m > 0$, there is a constant $C = C(\lambda, m)$ such that for any unit vector $(a,b)$, the set $U_{a,b} = \{ a \lambda^{j} + b \lambda^{mj} : j \geq 1\}$ can be covered by $C$ sets in $\Lambda(1;\lambda)$. We leave the verification of this to the reader, but will provide a general statement along these lines in Section \ref{Euclidean examples section}, see example (\ref{NSW example}).  
\item \label{Carberyd2} Given any $0 < \lambda < 1$, $m \in \mathbb Q \cap (0, \infty)$, there is a constant $C = C(\lambda, m)$ such that for any unit vector $(a,b)$, the set \[U_{a,b} = \{u_{jk}= a \lambda^j + b \lambda^{mk} : j, k \geq 1 \}\] can be covered by at most $C$ lacunary sets of order at most 2. 
This is clear for $(a,b) = (1,0)$ or $(0,1)$, with the order of lacunarity being 1. For $ab \ne 0$, there are four possibilities concerning the signs of $a$ and $b$. We deal with $a > 0$ and $b < 0$, the treatment of which is representative of the general case. The set $U_{a,b}$ is decomposed into three parts:
\begin{align*}
V_{a,b} &= \bigl\{ u_{jk} \in U : a\lambda^{j} + b \lambda^{mk} \geq a \lambda^{j+1} \bigr\},\\  W_{a,b} &= \bigl\{ u_{jk} \in U : a\lambda^{j} + b \lambda^{mk} < b \lambda^{m(k+1)} \bigr\}, \\  Z_{a,b} &= U_{a,b} \setminus \bigl[V_{a,b} \cup W_{a,b} \bigr].    
\end{align*} 
Then for every fixed $j$, the set $V_{a,b} \cap [a \lambda^{j+1}, a \lambda^{j})$ is an increasing lacunary sequence with constant $\leq \lambda^m$, converging to $a \lambda^j$. An analogous conclusion holds for $W_{a,b} \cap [b \lambda^{mk}, b \lambda^{m(k+1)})$. Thus $V_{a,b}$ and $W_{a,b}$ are both lacunary of order 2, with their special sequences being $A = \{a \lambda^j \}$ and $A = \{ b \lambda^{mk}\}$ respectively. For $u_{jk} \in Z_{a,b}$, the indices $j$ and $k$ obey the inequality \[ -\frac{a}{b}(1 - \lambda) < \lambda^{mk-j} \leq -\frac{a}{b} (1 - \lambda^m)^{-1}. \] Since $m$ is rational, the values of $mk-j$ range over rationals of a fixed demonimator (same as that of $m$). The inequality above therefore permits at most $C$ solutions of $mk-j$, the constant $C$ depending on $\lambda$ and $m$, but independent of $(a,b)$. Thus $Z_{a,b}$ is covered by a $C$-fold union of subsets, each consisting of elements $u_{jk} = \lambda^{j}(a + b \lambda^{mk-j})$ for which $mk-j$ is held fixed at one of these solutions. Each such set is lacunary of order 1 with lacunarity $\leq \lambda$. 
\end{enumerate}

\subsubsection{Non-closure of finite order lacunarity under algebraic sums} \label{counterexample section}
An important aspect of the class of admissible lacunary sets of finite order is that it is not closed under set-algebraic operations, as we establish in the example furnished below. This feature, perhaps initially counterintuitive, is the main inspiration for the definition of higher dimensional lacunarity provided in the next subsection. 

{\em{Example: }} Let $N_j \nearrow \infty$ be a fast growing sequence, and $M_j = 2^{m_j}$ a slower growing one, so that \begin{equation} \label{M_j and N_j}M_j < N_j - N_{j-1}. \end{equation} For instance, $N_j = 2^{j^2}$ and $M_j = 2^j$ will do. For every $j \geq 1$ and $1 \leq k \leq M_j = 2^{m_j}$, set $q_{jk} = 2^{-N_j} (1 + k2^{-m_j})$, and define 
\[ U_j = \{2^{-N_j + k} + q_{jk} : 1 \leq k \leq M_j \}, \quad U = \bigcup_{j=1}^{\infty} U_j, \quad V = \{-2^{-j} : j \geq 1 \}.  \]    
An element of $U_j$ of the form $2^{-N_j + k} + q_{jk}$ lies in the dyadic interval $[2^{-N_j + k}, 2^{-N_j  + k + 1})$, and for a given $k$, is the only element of $U_j$ in this interval. Further, $U_j \subseteq [2^{-N_j + 1}, 2^{-N_j + M_j + 1})$, hence by the relation \eqref{M_j and N_j}, $U_j \cap U_{j'} = \emptyset$ if $j \ne j'$. Thus $U \in \Lambda(1;\frac{1}{2})$, since for any $i \geq 1$, the set $U \cap [2^{-i}, 2^{-i+1})$ is either empty or a single point. Clearly $V$ is a lacunary sequence, hence  $V \in \Lambda(1;\frac{1}{2})$ as well, being its own special sequence. On the other hand, 
\[ U + V \supseteq \bigcup_{j=1}^{\infty} \bigl\{q_{jk} : 1 \leq k \leq M_j \bigr\}.\]
In other words, $U+V$ contains an affine copy of the dyadic rationals of the form $\{k2^{-m_j} : 1 \leq k \leq 2^{m_j} \}$ in $[0,1]$, for every $j$. As discussed in example (\ref{dyadic rationals example}) in Section \ref{1d lacunary examples section}, $U+V$ is sublacunary.  

The counterexample above illustrates the sensitivity of lacunarity on ambient coordinates, and precludes a higher dimensional generalization of this notion that relies on componentwise extension. For instance, the two-dimensional set $U \times V$ (with $U$, $V$ as above) has lacunary coordinate projections in the current system of coordinates, but there are other directions of projection, for instance the line of unit slope, along which the projection of this set is much more dense. 

\subsection{Finite order lacunarity in general dimensions} \label{lacunarity general dimensions section}
Let $\mathbb V$ be a $d$-dimensional affine subspace of an Euclidean space $\mathbb R^n$, $n \geq d$. Given a base point $\mathbf a$ of $\mathbb V$ and an orthonormal basis $\mathcal{B} = \{\mathbf{v}_1,\ldots,\mathbf{v}_{d}\}$ of the linear subspace $\mathbb V - a$, 
we define the projection maps
\begin{equation}\label{projection map}
	\pi_{j} = \pi_j[\mathbf a, \mathcal B]: \mathbb V \rightarrow\mathbb{R},\quad\text{ via }\quad x = \mathbf a + \sum_{j=1}^{d} x_j \mathbf v_j \rightarrow x_j = \pi_{j}(x), \quad 1\leq j\leq d.
\end{equation}
\begin{definition}[Admissible lacunarity and sublacunarity of Euclidean sets] \label{defn : Finite order lacunarity general dimensions} 
Let $U$ be a relatively compact subset of $\mathbb V$.
	\begin{enumerate}
		\item[(i)] We say that the set $U$ is {\em{admissible lacunary of order at most $N$ (as an Euclidean subset of $\mathbb V$)}} with lacunarity constant at most $\lambda < 1$ if there exists an integer $R \geq 1$ satisfying the following property: for any choice of basis $\mathcal{B}$ and base point $\mathbf a$, and each $1\leq j\leq d$, the projected set \[\pi_j(U) = \{\pi_j(x) : x \in U \} \subseteq \mathbb R \] can be covered by $R$ members of $\Lambda(N;\lambda)$, with the class $\Lambda(N;\lambda)$ as described in Definition \ref{defn: Admissible finite order lacunarity}. The projection $\pi_j$ depends on $\mathbf a$ and $\mathcal B$ via \eqref{projection map}. The collection of sets $U$ that obey these conditions for a given choice of $N, \lambda$ and $R$ will be denoted by $\Lambda_d(N, \lambda, R; \mathbb V)$. 
		\item[(ii)] The set $U$ is called {\em{sublacunary in $\mathbb V$}} if it is not admissible lacunary of finite order; i.e., if for any $\lambda < 1$ and integers $N, R \geq 1$ there exists a choice of basis $\mathcal B$ and an index $1 \leq j \leq d$ such that $\pi_{j}(U)$ cannot be covered by any $R$-fold union of one-dimensional lacunary sets of order at most $N$ and lacunarity constant at most $\lambda$. 
	\end{enumerate}
\end{definition} 
{\em{Remarks:}} 
\begin{enumerate}[-]
\item An equivalent formulation of the definition of $U \in \Lambda_d(N, R, \lambda; \mathbb V)$ is that for any line $L$ in $\mathbb V$ (and indeed in $\mathbb R^{d+1}$ as we will soon see in Lemma \ref{n and d}), the projection of $U$ onto $L$ is coverable by at most $R$ sets in $\Lambda(N;\lambda)$. 
\item We ask the reader to verify that the choice of base point in $\mathbb V$ is not important in this definition, since $\pi_j[\mathbf a, \mathcal B](U)$ is a translate of $\pi_j[\mathbf a', \mathcal B](U)$ for any $\mathbf a, \mathbf a' \in \mathbb V$. Thus $\pi_j[\mathbf a, \mathcal B](U) \in \Lambda(N;\lambda)$ if and only if $\pi_j[\mathbf a', \mathcal B](U) \in \Lambda(N;\lambda)$. 
\item The definition is also invariant under rotation in $\mathbb R^n$; if $O$ is an orthogonal transformation of $\mathbb R^n$, then $U \in \Lambda_d(N, \lambda, R; \mathbb V)$ if and only if $O(U) \in \Lambda_d(N, \lambda, R; O(\mathbb V))$. 
\item The choice of rotation $\mathcal B$ within $\mathbb V$ is however critical. It is not possible to have necessary and sufficient implications like the ones above for two arbitrary choices of bases $\mathcal B$ and $\mathcal B'$. We provide examples below. Henceforth, we will refer to the choice of a pair $\varphi = (\mathbf a, \mathcal B)$ as a system of coordinates, with the main focus on $\mathcal B$. 
\end{enumerate}
Before proceeding to examples, we check the definition for consistency if $U$ is a subset of several affine subspaces. 
\begin{lemma} \label{n and d}
Let $U \subseteq \mathbb V$ be as above. Then for any choice of $N, R, \lambda$, the set $U \in \Lambda_d(N, R, \lambda; \mathbb V)$ if and only if $U \in \Lambda_n(N, R, \lambda; \mathbb R^n)$. 
\end{lemma}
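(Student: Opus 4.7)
The plan is to establish both directions of the equivalence by relating one-dimensional projections of $U$ in the two ambient spaces. The central observation is that for any $x \in U \subseteq \mathbb V$ and any vector $\mathbf w \in \mathbb R^n$, the inner product $\langle x - \mathbf a, \mathbf w \rangle$ equals $\langle x - \mathbf a, \mathbf u \rangle$, where $\mathbf u$ denotes the orthogonal projection of $\mathbf w$ onto the linear subspace $\mathbb V - \mathbf a$. Thus every one-dimensional projection of $U$ inside $\mathbb R^n$ reduces, up to an affine reparametrization of the line of projection, to a projection along a vector lying inside $\mathbb V - \mathbf a$.

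For the direction $U \in \Lambda_n(N, R, \lambda; \mathbb R^n) \Rightarrow U \in \Lambda_d(N, R, \lambda; \mathbb V)$, I would take an arbitrary orthonormal basis $\mathcal B = \{\mathbf v_1, \ldots, \mathbf v_d\}$ of $\mathbb V - \mathbf a$ and extend it to an orthonormal basis $\mathcal B^{(n)} = \{\mathbf v_1, \ldots, \mathbf v_n\}$ of $\mathbb R^n$ by adjoining any orthonormal basis of $(\mathbb V - \mathbf a)^\perp$. Since $\mathbf a$ already lies in $\mathbb V$, the first $d$ coordinates of any $x \in U$ with respect to $(\mathbf a, \mathcal B^{(n)})$ coincide with its coordinates with respect to $(\mathbf a, \mathcal B)$. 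Hence $\pi_j[\mathbf a, \mathcal B](U) = \pi_j[\mathbf a, \mathcal B^{(n)}](U)$ for $1 \leq j \leq d$, and the hypothesis applied to $\mathcal B^{(n)}$ furnishes the required cover with the same constant $R$.

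The converse $U \in \Lambda_d(N, R, \lambda; \mathbb V) \Rightarrow U \in \Lambda_n(N, R, \lambda; \mathbb R^n)$ is where the main work lies. Fix an arbitrary orthonormal basis $\mathcal B^{(n)} = \{\mathbf w_1, \ldots, \mathbf w_n\}$ of $\mathbb R^n$, a base point $\mathbf b \in \mathbb R^n$, and an index $1 \leq j \leq n$. For each $x \in U$ I would write
\[
\pi_j[\mathbf b, \mathcal B^{(n)}](x) = \langle x - \mathbf a, \mathbf u_j \rangle + \langle \mathbf a - \mathbf b, \mathbf w_j \rangle,
\]
where $\mathbf u_j$ is the orthogonal projection of $\mathbf w_j$ onto $\mathbb V - \mathbf a$. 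If $\mathbf u_j = \mathbf 0$, then $\pi_j[\mathbf b, \mathcal B^{(n)}](U)$ is a single point and lies in $\Lambda(N;\lambda)$ trivially. Otherwise set $\tilde{\mathbf u}_j := \mathbf u_j / \|\mathbf u_j\|$ and complete it to an orthonormal basis $\tilde{\mathcal B}$ of $\mathbb V - \mathbf a$ with $\tilde{\mathbf u}_j$ as its first element; then $\langle x - \mathbf a, \mathbf u_j \rangle = \|\mathbf u_j\| \, \pi_1[\mathbf a, \tilde{\mathcal B}](x)$. The hypothesis guarantees that $\pi_1[\mathbf a, \tilde{\mathcal B}](U)$ admits a cover by $R$ sets in $\Lambda(N;\lambda)$, and by Lemma \ref{lacunarity under linear operations}(ii) the affine map $t \mapsto \|\mathbf u_j\| t + \langle \mathbf a - \mathbf b, \mathbf w_j \rangle$ pushes this cover forward to a cover of $\pi_j[\mathbf b, \mathcal B^{(n)}](U)$ with identical parameters $N, R, \lambda$. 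The only mild obstacle is handling basis vectors $\mathbf w_j$ that straddle both $\mathbb V - \mathbf a$ and its orthogonal complement, and this is exactly what the orthogonal decomposition together with the affine invariance supplied by Lemma \ref{lacunarity under linear operations} resolves.
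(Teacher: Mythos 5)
Your proposal is correct and follows essentially the same route as the paper: both arguments rest on the observation that for $x, y \in \mathbb V$ one has $\langle x - y, \mathbf w\rangle = \langle x - y, \mathbf u\rangle$ where $\mathbf u$ is the orthogonal projection of $\mathbf w$ onto $\mathbb V - \mathbf a$, so that the scalar projection of $U$ onto an arbitrary line in $\mathbb R^n$ is an affine copy of a projection onto a line inside $\mathbb V$, after which Lemma \ref{lacunarity under linear operations} transfers the lacunarity. Your treatment is marginally more complete in that it handles the degenerate case $\mathbf u_j = \mathbf 0$ and spells out the ``clear'' forward implication, but the substance is identical.
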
  
\begin{proof}
The ``if'' implication is clear, so we consider the converse. Without loss of generality, we may choose $\mathbb V = \{1 \} \times \mathbb R^{n-1}$. Given any unit vector $\omega = (\omega_1, \cdots, \omega_n) \in \mathbb R^n$ with $0 < |\omega_1| < 1$, let $\mathbb L$ denote the line through the origin in $\mathbb R^n$ pointing in the direction of $\omega$. Let $\mathbb L'$ denote the projection of $\mathbb L$ on $\mathbb V$, so that $\mathbb L' = \{e_1 + s \omega' : s \in \mathbb R \}$, where $e_1$ is the first canonical basis vector in $\mathbb R^n$, and $\omega' = (0, \omega_2, \cdots, \omega_n)$. The desired conclusion follows from the claim that
\begin{equation} \label{pi and pi'}
\text{ the sets } \pi(U) \text{ and }  \pi'(U) \text{ are affine copies of each other,}   
\end{equation}  
where $\pi(U)$ and $\pi'(U)$ denote the scalar projections onto $\mathbb L$ and $\mathbb L'$, measured from the origin and $(1, 0, \cdots, 0)$ respectively. Indeed, Lemma \ref{lacunarity under linear operations} then permits us to extend known lacunarity features of the former directly to the latter.  

To establish \eqref{pi and pi'}, it suffices to note that for any $x \in \mathbb R^n$, 
\[ \pi(x) = (x \cdot \omega) \omega, \quad \text{ and } \quad \pi'(x) = \frac{x \cdot \omega'}{|\omega'|^2} \omega'. \]
The choice of $\mathbb V$, $\omega$ and $\omega'$ yield the relations $(x - y) \cdot \omega = (x - y) \cdot \omega'$ for any $x, y \in U$, hence the above expressions imply that 
\[ |\pi(x) - \pi(y)| = |\omega'| |\pi'(x) - \pi'(y)|,\]
which is the desired conclusion.  
\end{proof} 
\subsubsection{Examples of admissible lacunary and sublacunary sets in $\mathbb R^d$} \label{Euclidean examples section}
\begin{enumerate}[(a)]
\item \label{NSW example} A set of the form considered by Nagel, Stein and Wainger \cite{NagelSteinWainger}, such as 
\begin{equation} \label{NSW sets}  U = \{\gamma(\theta_j) : j \geq 1 \}, \quad \text{ where } \quad \gamma(t) = (t^{m_1},\cdots, t^{m_d}) \end{equation}  is admissible lacunary of order 1. Here $0<m_1<\cdots <m_d$ are fixed constants, and $0<\theta_{j+1}\leq\lambda \theta_j$, for some $0<\lambda<1$ and all $j$. Critical to this verification are the following two properties of $U$ appearing in \cite[Lemma 4]{NagelSteinWainger}: 
\begin{enumerate}[-]
\item There is a constant $C_1 = C_1(m_1, \cdots, m_d)$ obeying the following requirement. For any unit vector $\xi = (\xi_1, \cdots, \xi_d)$ in $\mathbb R^d$, the set $\mathbb N$ of positive integers can be decomposed into $C_1$ disjoint consecutive intervals $\{ \mathbb N_s \}$; for every $s$, there exists  $r(s) \in \{1, \cdots, d\}$ such that 
\begin{equation}  \label{C_1 pieces} \max_{1 \leq r \leq d} |\theta_j^{m_r} \xi_r| = \bigl|\theta_j^{m_{r(s)}} \xi_{r(s)} \bigr| \quad \text{ for all } j \in \mathbb N_s.\end{equation}  
The composition of $\mathbb N_s$ depends on $\xi$. 
\item Further for any $c > 0$, there is a constant $C_2 = C_2(c, m_1, \cdots, m_d)$ independent of $\xi$ and $\mathbb N_s$  so that 
\begin{equation} \label{generic ineq in N_s} \max_{\begin{subarray}{c}r \in \{1, \cdots, d\} \\ r \ne r(s) \end{subarray}} \bigl| \theta_j^{m_r} \xi_r \bigr| < c \bigl| \theta_j^{m_{r(s)}} \xi_{r(s)}\bigr|  \end{equation} 
for all but $C_2$ integers $j \in \mathbb N_s$. 
\end{enumerate}
Assuming these two facts, the claim of lacunarity is established as follows. Using the definition of $\mathbb N_s$ in (\ref{C_1 pieces}), the set $U$ can be decomposed into $C_1$ pieces $U_s$, where $U_s = \{ \gamma(\theta_j) : j \in \mathbb N_s \}$. Fix a constant $R$ such that $2d \lambda^{m_1R-1} < 1$. If $j'>j$ are two integers in $\mathbb N_s$ that are at least $R$-separated and for both of which \eqref{generic ineq in N_s} holds with $c = \frac{1}{2d}$, then 
\begin{equation} \label{NSW example steps}
\begin{aligned}  \bigl| \sum_{r=1}^{d} \xi_r \theta_{j'}^{m_r} \bigr| \leq d \bigl| \xi_{r(s)} \theta_{j'}^{m_{r(s)}} \bigr| &\leq d \bigl(\lambda^{j'-j})^{m_{r(s)}} \bigl| \xi_{r(s)} \theta_{j}^{m_{r(s)}} \bigr| \\ &\leq 2 d \bigl(\lambda^R)^{m_{1}} \bigl| \sum_{r=1}^{d} \xi_r \theta_{j}^{m_r} \bigr| < \lambda \bigl| \sum_{r=1}^{d} \xi_r \theta_{j}^{m_r} \bigr|.  \end{aligned} \end{equation} 
Thus each $U_s$ is the union of at most $R$ lacunary sequences of lacunarity $< \lambda$, together with the $C_2$ points where \eqref{generic ineq in N_s} fails.    
\item \label{Carbery example} A set of the form considered by Carbery \cite{Carbery}, i.e., 
\begin{equation} \label{Carbery sets}  U = \{ \Gamma_{\mathbf k} = (\lambda^{k_1}, \cdots, \lambda^{k_d}) : \mathbf k = (k_1, \cdots, k_d) \in \mathbb N^d\} \end{equation} 
is admissible lacunary of order $d$. We prove this by induction on $d$. The initializing step for $d=2$ has been covered in example (\ref{Carberyd2}) of Section \ref{1d lacunary examples section}. For a general $d$ and after splitting $U$ into $d!$ pieces, we may assume that $k_1 \leq k_2 \leq \cdots \leq k_d$. Given any unit vector $\xi = (\xi_1, \cdots, \xi_d) \in \mathbb R^d$, we write 
\begin{align*} U &= \bigcup_{s=1}^{d} U_s \quad \text{ with } \quad U_s = \{ \Gamma_{\mathbf k} : \mathbf k \in \mathbb N_s^d\}, \text{ where } \\ 
 \mathbb N_s^d &= \bigl\{\mathbf k \in \mathbb N^d:  \bigl| \lambda^{k_s} \xi_s\bigr| = \max_{1 \leq r \leq d} \bigl|\lambda^{k_r} \xi_r \bigr| \bigr\}.  
\end{align*}  
Depending on the signs of $\lambda^{k_s} \xi_s$ and $\Gamma_k \cdot \xi - \lambda^{k_s} \xi_s$, each $\mathbb N_s^d$ can be decomposed into four parts. Their treatments are similar with trivial adjustments, so we focus on the subset of $\mathbb N_s^d$ where 
\[ \lambda^{k_s} \xi_s > 0 \quad \text{ and } \quad \sum_{r \ne s} \lambda^{k_r} \xi_r \geq 0, \]
continuing to call this subset $\mathbb N_s^d$ to ease notational burden. One last splitting is needed; for a constant $A$ to be specified shortly, we write  
\[ \mathbb N_s^d = \mathbb N_{s,1}^d \cup \mathbb N_{s,2}^d, \text{ where } \mathbb N_{s,1}^d = \{\mathbf k \in \mathbb N_s^d : \lambda^{k_s} \xi_s > A |\lambda^{k_r} \xi_r| \text{ for all } r \ne s \}. \]

For $\mathbf k \in \mathbb N_{s,1}^d$, 
\begin{equation} \label{on N_{s1}}\lambda^{k_s} \xi_s \leq \Gamma_{\mathbf k} \cdot \xi < \lambda^{k_s} \xi_s \bigl(1 + dA^{-1} \bigr) < \lambda^{k_s-1} \xi_s,\end{equation}
where the last inequality follows for a suitable choice of $A$. We argue that $\{ \xi_s \lambda^{k_s} : k_s \geq 1 \}$ may be viewed as a special sequence for $\{ \Gamma_{\mathbf k} \cdot \xi : \mathbf k \in \mathbb N_{s,1}^d \}$. Indeed, if $k_s$ is fixed, then \eqref{on N_{s1}} shows that 
\begin{align*} \{ \Gamma_{\mathbf r} \cdot \xi : \mathbf r \in \mathbb N_{s,1}^d \} \cap [\xi_s \lambda^{k_s}, \xi_s \lambda^{k_s-1}) &= \{ \Gamma_{\mathbf r} \cdot \xi : \mathbf r \in \mathbb N_{s,1}^d, \; r_s = k_s \} \\ &\subseteq \xi_s \lambda^{k_s} + \Bigl\{\sum_{r \ne s} \lambda^{k_r} \xi_r : k_r \in \mathbb N, \; r \ne s  \Bigr\}. \end{align*}
By the induction hypothesis, there is a constant $R$ independent of $\xi$ such that the set on the right hand side above is coverable by at most $R$ sets in $\Lambda(d-1;\lambda)$.  Hence $\{ \Gamma_{\mathbf k} : \mathbf k \in \mathbb N_{s,1}^d \}$ is admissible lacunary of order $d$. 

We turn to the complementary set $\mathbb N_{s,2}^d$. After decomposing $\mathbb N_{s,2}^d$ into $(d-1)$ subsets, we may fix an index $\ell$ such that 
\begin{equation}  \label{on N_{s2}}
|\lambda^{k_{\ell}} \xi_{\ell}| \leq \lambda^{k_s} \xi_{s} \leq  A |\lambda^{k_{\ell}} \xi_{\ell}| 
\end{equation}  
on $\mathbb N_{s,2}^d$. Without loss of generality let $\ell \geq s$. The number of possible values of $k_{\ell}- k_s$ obeying \eqref{on N_{s2}} is at most a fixed constant $C$ depending on $A$ (hence $\lambda$ and $d$), but independent of $\xi$. Thus $\mathbb N_{s,2}^d$ may be written as the $C$-fold union of subsets indexed by $c$, where the subset identified by $c$ contains all $\mathbf k \in \mathbb N_{s,2}^d$ with the property that $k_{\ell} - k_s = c \geq 0$. For $\mathbf k$ in such a subset, 
\[  \Gamma_{\mathbf k} \cdot \xi = (\xi_s + \lambda^{c} \xi_{\ell}) \lambda^{k_s} + \sum_{r \ne \ell, s} \lambda^{k_r} \xi_r. \]
Since the number of summands in the linear combination above is $(d-1)$, the induction hypothesis dictates that $\{ \Gamma_{\mathbf k}  : \mathbf k \in \mathbb N_{s,2}^d \}$ is admissible lacunary of order $(d-1)$, completing the proof.         
\item A curve in $\mathbb R^{d}$ is sublacunary. So is a Cantor-like subset of it as considered in \cite{KrocPramanik}.
\item \label{UV example} If $U$ and $V$ are the lacunary sets of order 1 constructed in Section \ref{counterexample section}, the set $U \times V$ is sublacunary. Indeed, after a rotation of angle $\frac{\pi}{4}$  one of the coordinate projections turns out to be a constant multiple of $U+V$.  We have seen in Section \ref{counterexample section} that this last set is sublacunary on $\mathbb R$.  
\end{enumerate}
\subsection{Finite order lacunarity for direction sets} \label{lacunary direction set section}
Given two sets $\Omega_1, \Omega_2 \subseteq \mathbb R^{d+1} \setminus \{0\}$, we say that $\Omega_1 \sim \Omega_2$ if
\[ \left\{ \frac{\omega}{|\omega|} : \omega \in \Omega_1 \right\} = \left\{\frac{\omega}{|\omega|} : \omega \in \Omega_2 \right\}. \]  
The binary relation $\sim$ is clearly an equivalence relation among sets in $\mathbb R^{d+1} \setminus \{0\}$. An equivalence class of $\sim$ is, by definition, a {\em{direction set}}. By a slight abuse of nomenclature, we will refer to a set $\Omega \subseteq \mathbb R^{d+1} \setminus \{0\}$ as a direction set to mean the equivalence class of $\sim$ that contains $\Omega$. Clearly the maximal operators $D_{\Omega}$ and $M_{\Omega}$, as well as the admittance of Kakeya-type sets (as in Definition \ref{Kakeya-type set}), remain unchanged for all members of this equivalence class. 

Certain modifications are necessary to extend the notion of lacunarity from Euclidean sets to direction sets, in view of the latter's scale invariance. Given a direction set $\Omega \subseteq \mathbb R^{d+1} \setminus \{0\}$,  we denote by $\mathcal C_{\Omega}$ the cone generated by this set of directions, namely 
\begin{equation}  \mathcal C_{\Omega} := \{r \omega : r > 0, \; \omega \in \Omega \}. \label{direction cone} \end{equation} 

\begin{definition} \label{definition of lacunary direction sets}
Let $\Omega \subseteq \mathbb R^{d+1} \setminus \{0\}$ be a direction set, with $\mathcal C_{\Omega}$ as in \eqref{direction cone}.  
\begin{enumerate}[(i)]
\item Given an integer $N$ and a positive constant $\lambda < 1$, we say that $\Omega$ is admissible lacunary as a direction set with order at most $N$ and lacunarity at most $\lambda$ if there exists an integer $R$ such that $U \in \Lambda_d(N, \lambda, R;\mathbb V)$ in the sense of Definition \ref{defn : Finite order lacunarity general dimensions}, for every hyperplane $\mathbb V$ at unit distance from the origin and every relatively compact subset $U$ of $\mathcal C_{\Omega} \cap  \mathbb V$. 
\item A direction set $\Omega \subseteq \Omega_0$ failing this property is termed a sublacunary direction set. Thus $\Omega$ is sublacunary as a direction set if for any choice of integers $N, R$ and positive constant $\lambda < 1$ there is a tangential hyperplane $\mathbb V$ of the unit sphere, a relatively compact subset $U$ of $\mathcal C_{\Omega} \cap \mathbb V$ and a line $L$ in $\mathbb V$ such that the projection of $U$ along $L$ cannot be covered by any $R$-fold union of sets in $\Lambda(N;\lambda)$. 
\end{enumerate} 
\end{definition}
\subsubsection{Examples of admissible lacunary and sublacunary direction sets}
\begin{enumerate}[(a)]
\item A direction set $\Omega$ of the form considered by Nagel, Stein and Wainger \cite{NagelSteinWainger},
\[ \Omega = \{ u_j = (\gamma(\theta_j), 1) : j \geq J \} \]  is admissible lacunary of order 1. Here the function $\gamma$ and the sequence $\theta_j$ are as described in example (\ref{NSW example}) of Section \ref{Euclidean examples section}. Thus $\Omega$ is paramterized by the positive constants $m_1 < m_2 < \cdots < m_d$. We set $m_{d+1} = 0$. To verify the claim, we choose $\mathbb V = \{ x \in \mathbb R^{d+1}: x \cdot \eta = 1\}$ for some unit vector $\eta$, so that   
\[ \mathcal C_{\Omega} \cap \mathbb V = \left\{ v_j = \frac{u_j}{ u_j \cdot \eta} : u_j \in \Omega \right\}.  \]
Fix a unit vector $\omega = (\omega', \omega_{d+1}) \in \mathbb R^{d+1}$, and let $\pi_{\omega}$ denote the scalar projection onto $\omega$; i.e., $\pi_{\omega}(v) = v \cdot \omega$. As required by Definitions \ref{definition of lacunary direction sets} and \ref{defn : Finite order lacunarity general dimensions} and in view of Lemma \ref{n and d}, we aim to show that that there is a large constant $R$ (independent of $\mathbb V$) for which any relatively compact subset of $\pi_{\omega}(\mathcal C_{\Omega} \cap \mathbb V)$ can be covered by $R$ members of $\Lambda(1;\lambda)$. By the property \eqref{C_1 pieces} of $\Omega$, we first decompose the integers into a bounded number $C_1$ of disjoint intervals ($C_1$ independent of $\omega$ and $\eta$), on each of which there exists an index $1 \leq r \leq d+1$ such that \begin{equation} \max_{1 \leq i \leq d+1} |\theta_j^{m_i} \eta_i | = |\theta_j^{m_r} \eta_r|. \label{def N_r}\end{equation} 
Let us denote by $\mathbb N_r[\eta]$ one of the subintervals for which  \eqref{def N_r} holds. For $j \in \mathbb N_r[\eta]$,     
\begin{equation}  \label{second round} \pi_{\omega}(v_{j}) - \frac{\omega_{r}}{\eta_{r}} = \frac{\xi \cdot u_j}{\eta_{r} \; (\eta \cdot u_j)}, \quad \text{ where } \quad \xi = (\xi_1, \cdots, \xi_{d+1}) \in \mathbb R^{d+1}\end{equation} 
with $\xi_k = \omega_k \eta_r - \omega_r \eta_k,$ so that $\xi_r = 0$. Our goal is to show that for $j \in \mathbb N_r[\eta]$, the sequence on the right hand side above can be covered by an $R$-fold union of lacunary sequences converging to 0. 

Using \eqref{C_1 pieces} again, we decompose $\mathbb N_r[\eta]$ into at most $C_1$ pieces, of the form $\mathbb N_{rs}[\eta, \xi] = \mathbb N_r[\eta] \cap \mathbb N_s[\xi]$. Since $\xi_r = 0$, we conclude that $\mathbb N_r[\xi] = \emptyset$; hence $s \ne r$. By property \eqref{generic ineq in N_s}, for every $c > 0$, there are at most a bounded number $C_2 = C_2(c)$ indices $j \in \mathbb N_{rs}[\eta, \xi]$ for which at least one of the inequalities 
\begin{equation} \label{generic 2} \max_{i \ne r} |\theta_j^{m_i} \eta_i| < c |\theta_j^{m_r} \eta_r|, \qquad  \max_{i \ne s} |\theta_j^{m_i} \xi_i| < c |\theta_j^{m_s} \xi_s|\end{equation}  fails. 

First suppose $s > r$. Choosing two integers $j,j' \in \mathbb N_{rs}[\eta, \xi]$ with $j' - j \geq R$ for both of which the constraints in \eqref{generic 2} hold, we follow the steps laid out in \eqref{NSW example steps}, obtaining from \eqref{second round}
\begin{align*}
\left[\left| \pi_{\omega}(v_{j'}) - \frac{\omega_{r}}{\eta_{r}} \right|\right] \left[ \left| \pi_{\omega}(v_{j}) - \frac{\omega_{r}}{\eta_{r}} \right| \right]^{-1} &= \frac{\xi \cdot u_{j'}}{\xi \cdot u_{j}} \cdot \frac{\eta \cdot u_j}{\eta \cdot u_{j'}} \\ &\leq \left[\frac{d |\xi_s| \theta_{j'}^{m_s}}{\frac{1}{2}|\xi_{s}| \theta_j^{m_s}}\right] \cdot \left[ \frac{d |\eta_r| \theta_j^{m_r}}{\frac{1}{2} |\eta_r|\theta_{j'}^{m_r}} \right] \\ &\leq 4d^2 \left(\frac{\theta_{j'}}{\theta_j}\right)^{m_s - m_r} \leq 4d^2 \lambda^{R(m_s-m_r)}. 
\end{align*}  
If $R$ is selected large enough to satisfy $4d^2 \lambda^{R(m_s - m_r)} <  \lambda$, then for $j \in \mathbb N_{rs}[\eta, \xi]$ the sequence on the right hand side of \eqref{second round} can be covered by the union of $R$ lacunary sequences converging to zero, excluding the $C_2$ points where \eqref{generic 2} fails. For $s < r$, the same calculation above can be replicated for $j' < j$ with $j' - j < -R$. Thus in this case the sequence in \eqref{second round} grows as $j$ increases, and hence has to be finite by the assumption of relative compactness. Nonetheless, this finite sequence is still coverable by a lacunary sequence going to zero, this time in reverse order of $j$. In either event, we have decomposed the set $\{ \pi_{\omega}(v_j) : j \in \mathbb N_{rs}[\eta, \xi]\}$ into $R$ lacunary sequences of lacunarity $\lambda$, proving the claim. 
\item A direction set of the type studied in \cite{Carbery}, namely 
\[ \Omega = \{ (\Gamma_{\mathbf k}, 1) : 0 \leq k_1 \leq k_2 \leq \cdots \leq k_d \}, \] 
(with $\Gamma_{\mathbf k}$ as in \eqref{Carbery sets}) is admissible lacunary of order $d$. This is proved along lines similar to the example above, using methods already explained in examples (\ref{Carberyd2}) and (\ref{Carbery example}) of Section \ref{1d lacunary examples section} and \ref{Euclidean examples section} respectively; we omit the details here.  
\item A curve in $\mathbb R^{d+1}$ is sublacunary as a direction set.    
\item For sets $U$, $V$ as constructed in Section \ref{counterexample section}, the direction set $\Omega = \{1\} \times U \times V$ is sublacunary, since $U \times V$ is sublacunary as an Euclidean set (see example (\ref{UV example}) in Section \ref{Euclidean examples section}). 
\item Let $\{ q_{\ell} : \ell \geq 1\}$ be an enumeration of the rationals on any nontrivial interval, say on $[\frac{1}{2}, \frac{2}{3}]$. A direction set of the type considered by Parcet and Rogers \cite[Example 1 on page 4]{ParcetRogers}, such as 
\[ \Omega = \{(q_{\ell}2^{-\ell}, 2^{-\ell}, 1) : \ell \geq 1\} \] is sublacunary, even though the one-dimensional coordinate projections in the current coordinate system are lacunary of order at most 1. Choosing $\mathbb V = \{ x_2 = 1 \}$, we find that \[ \mathcal C_{\Omega} \cap \mathbb V = \{(q_{\ell}, 1, 2^{\ell}) : \ell \geq 1 \}. \]
The order of lacunarity of the $x_1$-projection grows without bound as we choose increasingly large compact subsets of $\mathcal C_{\Omega} \cap \mathbb V$.      
\item We also mention another example considered by Parcet and Rogers \cite[Example 2 on page 4]{ParcetRogers}. Given the canonical orthonormal basis $\{e_1, e_2, e_3 \}$ of $\mathbb R^3$, let us fix another orthonormal basis $\{ e_1, e_2', e_3' \}$ with span$\{ e_2, e_3\} = \text{span}\{ e_2', e_3'\}$ and $e_3'$ lying in the first quadrant determined by $e_2$ and $e_3$.  The direction set under consideration is $\Omega = \{u_{\ell} : \ell \geq 1 \}$, where $u_{\ell}$ is a sequence of vectors satisfying $u_{\ell} \cdot e_2' = q_{\ell} u_{\ell} \cdot e_1$ for some enumeration of rationals $\{ q_{\ell} \}$ in an interval. The last condition does not completely specify $u_{\ell}$, hence the direction set so defined is not unique (further restrictions are imposed in \cite{ParcetRogers}), but regardless of any subsequent choice $\Omega$ is sublacunary. Choosing $\mathbb V = \{x_1 =1\}$, we observe that   
\[ \mathcal C_{\Omega} \cap \mathbb V = \left\{ \frac{u_{\ell}}{u_{\ell} \cdot e_1} : \ell \geq 1\right\}. \]
Projecting $\mathcal C_{\Omega} \cap \mathbb V$ in the direction $e_2'$, we find that the projected set is $\{q_{\ell} : \ell \geq 1 \}$, which is not lacunary of finite order.   
\end{enumerate}
\subsection{Boundedness of directional maximal operators} \label{Appendix section}
To confirm that our definition of directional lacunarity of finite order agrees with similar notions existing in the literature, we are able to use the result of Parcet and Rogers \cite{ParcetRogers} to establish the $L^p$-boundedness of directional and Kakeya-Nikodym maximal operators associated to such direction sets $\Omega \subseteq \mathbb R^{d+1}$. Incidentally, this also proves the implication ``(3) $\implies$ (1) " in Theorem \ref{MainThm2}. Let us recall from \eqref{max op} and \eqref{Kakeya max op} the relevant definitions.  
\begin{theorem} \label{boundedness theorem for directional max op}
Given positive integers $N, R$, a positive constant $\lambda < 1$ and any exponent $p \in (1, \infty]$, there exists a positive finite constant $C_p = C_p (N, \lambda, R)$ with the following property. Any admissible lacunary direction set $\Omega \subseteq \mathbb R^{d+1}$ of finite order that obeys Definition \ref{definition of lacunary direction sets}(i) with the prescribed values of $N$, $\lambda$ and $R$ also satisfies 
\begin{equation} \label{M and D bounded} ||M_{\Omega}||_{p \rightarrow p} \leq C_p \quad \text{ and } \quad ||D_{\Omega}||_{p \rightarrow p} \leq C_p. \end{equation}  
\end{theorem}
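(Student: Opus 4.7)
The plan is to derive Theorem \ref{boundedness theorem for directional max op} as a direct consequence of the main $L^p$-boundedness result of Parcet and Rogers \cite{ParcetRogers}. Their theorem establishes \eqref{M and D bounded} for direction sets $\Omega \subseteq \mathbb R^{d+1}$ satisfying a recursively defined hierarchical lacunarity condition, with a constant depending only on the relevant structural parameters and $p$. The case $p = \infty$ is trivial, since $\|M_\Omega f\|_\infty$ and $\|D_\Omega f\|_\infty$ are both bounded by $\|f\|_\infty$; we therefore restrict attention to $p \in (1, \infty)$. The substance of the argument is the verification that any $\Omega$ admissible lacunary in the sense of Definition \ref{definition of lacunary direction sets}(i) falls within the Parcet-Rogers hypothesis, with matching quantitative bookkeeping.

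To carry out this verification I would proceed by induction on the order $N$. The base case $N = 1$ is the classical lacunary-direction setting treated in \cite{NagelSteinWainger, SjogrenSjolin, Carbery, AlfonsecaSoriaVargas}, for which $L^p$-boundedness with constant depending on $\lambda$, $R$, and $p$ is already known. For the inductive step, suppose $\Omega$ satisfies Definition \ref{definition of lacunary direction sets}(i) with parameters $N, \lambda, R$. I would fix an arbitrary tangential hyperplane $\mathbb V$ of the unit sphere and project $\mathcal C_\Omega \cap \mathbb V$ onto an arbitrary line. Definition \ref{defn : Finite order lacunarity general dimensions} furnishes an $R$-fold covering of this projection by sets in $\Lambda(N, \lambda)$, each admitting a special sequence whose consecutive gaps partition $\mathbb R$ into intervals on which the restriction of the projection lies in $\Lambda(N-1, \lambda)$. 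Via Lemma \ref{n and d}, this pulls back to a hierarchical decomposition of $\Omega$ itself: a principal first-level lacunary family together with a bounded number of sub-families, each admissible lacunary of order $N-1$, arranged inside the slabs cut out by the special sequence. This decomposition is precisely the nested tree structure demanded by Parcet and Rogers, with depth and branching controlled by $N$ and $R$ uniformly in $\mathbb V$ and in the line of projection.

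The principal obstacle is to ensure that this pullback from projection data to an explicit hierarchical decomposition of $\Omega$ respects all of the Parcet-Rogers axioms simultaneously and uniformly in the choice of $\mathbb V$, base point $\mathbf a$, basis $\mathcal B$, and projecting line; in particular, the lacunarity constants at every level must remain bounded by $\lambda$ and the branching at every level by a function of $R$ and $N$ alone. Once this bookkeeping is complete, iterating the Parcet-Rogers almost-orthogonality principle, which generalizes the Alfonseca machinery of \cite{Alfonseca}, yields \eqref{M and D bounded} with a constant $C_p = C_p(N, \lambda, R)$ of the required form, the $p$-dependence arising entirely from the base case. As a byproduct, contraposing this theorem against the unboundedness in statement (3) of Theorem \ref{MainThm2} delivers the implication (3) $\Rightarrow$ (1) for all dimensions $d \geq 1$.
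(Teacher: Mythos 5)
Your overall strategy --- reduce to the Parcet--Rogers almost-orthogonality theorem by induction, decomposing $\Omega$ along a special sequence of a one-dimensional projection --- is the same route the paper takes, but there are two genuine gaps in your execution.

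First, you assert that Parcet--Rogers delivers the bound for $M_{\Omega}$ as well as for $D_{\Omega}$, but the paper has to supply this reduction itself: it dominates $M_{\Omega}f$ pointwise by a composition $C_d\, D_{\Omega_d}\circ\cdots\circ D_{\Omega_1}\circ D_{\Omega}f$, where $\Omega_j=\{x_{d+1}e_j-x_je_{d+1}:\ \omega\in\Omega\}$ are derived direction sets spanning $\omega^{\perp}$, and then must check (via the identity $\frac{v_j(\omega)\cdot\xi}{v_j(\omega)\cdot\eta}=\frac{\omega\cdot v_j(\xi)}{\omega\cdot v_j(\eta)}$) that each $\Omega_j$ is again admissible lacunary of order at most $N$. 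Without some such argument the first inequality in \eqref{M and D bounded} is unproved.

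Second, and more seriously, your induction on the order $N$ does not close. When you cut $\Omega$ into pieces $\Omega_{\sigma,i}$ using the special sequence of \emph{one} projection $\pi_k[\Theta_j(\Omega)]$, only the lacunarity order of \emph{that particular} projection drops by one; the orders of the other coordinate projections, and hence the order of $\Omega_{\sigma,i}$ \emph{as a direction set} in the sense of Definition \ref{definition of lacunary direction sets}, need not decrease at all. So the pieces are not, in general, admissible lacunary of order $N-1$, and your inductive hypothesis does not apply to them. The paper resolves exactly this bookkeeping problem by inducting instead on the aggregate quantity $N_0(\Omega)=N(\Omega)+\sum_{(k,j)\in\Sigma^{\ast}}N_{kj}(\Omega)$ of \eqref{def N_0}, where $N_{kj}$ is the minimal order needed to cover $\pi_k[\Theta_j(\Omega)]$; one application of the Parcet--Rogers step reduces some $N_{kj}$ by one and hence reduces $N_0$ by one, with base case $N_0=0$ handled by the one-dimensional Hardy--Littlewood maximal theorem. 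You correctly flagged the uniformity bookkeeping as the principal obstacle, but the fix is not uniformity --- it is choosing the right induction parameter.
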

\begin{proof}
We first argue that the boundedness of $D_{\Omega}$ on any $L^p(\mathbb R^{d+1})$ implies the same for $M_{\Omega}$. Without loss of generality, we may assume that $\Omega \subseteq (-\epsilon, \epsilon)^d \times \{1\}$ for some small constant $\epsilon > 0$. Let us define for any $x \in \mathbb R^{d+1}$ the vectors \[ v_j(x) = x_{d+1}e_j - x_j e_{d+1}, \quad 1 \leq j \leq d, \] 
where $\{e_1, \cdots e_{d+1} \}$ denotes the canonical orthonormal basis in $\mathbb R^{d+1}$.  For $\omega = (\omega_1, \cdots, \omega_d, 1) \in \Omega$, the collection $\{v_1(\omega), \cdots, v_d(\omega) \}$ spans $\omega^{\perp}$. Then 
\begin{align*}
M_{\Omega}f(x) &\leq C_d \sup_{\omega \in \Omega} \sup_{0 < r \leq h} \frac{1}{r^d h} \int_{\begin{subarray}{c}  |t| \leq h \\ |s| \leq r \end{subarray}} \bigl| f \bigl( x - t \omega - \sum_{j=1}^{d} s_j v_j(\omega) \bigr)\bigr| \, dt \, ds \\ 
&\leq C_d \sup_{\omega \in \Omega} \sup_{r > 0} \frac{1}{r^d} \int D_{\Omega} f\bigl(x - \sum_{j=1}^{d} s_j v_j(\omega) \bigr) \, ds \\ 
&\leq C_d \sup_{\omega \in \Omega} \sup_{r > 0} \frac{1}{r^{d-1}} \int D_{\Omega_1} \circ D_{\Omega} f\bigl(x - \sum_{j=2}^{d} s_j v_j(\omega) \bigr) \, ds_2 \cdots, ds_d \\ 
&\leq \cdots \leq C_d D_{\Omega_d} \circ D_{\Omega_{d-1}} \circ \cdots \circ D_{\Omega_1} \circ D_{\Omega} f(x),  
\end{align*}  
where $\Omega_j = \{v_j(\omega) : \omega \in \Omega \}$, $1 \leq j \leq d$. The relation \[ \frac{v_j(\omega) \cdot \xi}{v_j(\omega) \cdot \eta} = \frac{\omega \cdot v_j(\xi)}{\omega \cdot v_j(\eta)} \qquad \text{ for all } \xi, \eta \in \mathbb R \] implies that if $\Omega$ is admissible lacunary of order at most $N$ as a direction set, then so is $\Omega_j$ for every $j$. Thus a bound on the $L^p$ operator norm of $M_{\Omega}$ would follow if the second conclusion (for directional maximal operators) in \eqref{M and D bounded} is known to hold for all such direction sets. We will henceforth concentrate only on $D_{\Omega}$, with $\Omega$ being admissible lacunary of finite order. 

As mentioned before, the $L^p$-boundedness of $D_{\Omega}$ is a restatement of the main result in \cite{ParcetRogers}; we merely supply the connecting details. The proof is by induction. The quantity that forms the basis for induction is related but not identical to the order of lacunarity of the direction set as prescribed in Definition \ref{definition of lacunary direction sets}. To set up the induction parameter and hypothesis, we need a few preparatory steps. Without loss of generality and by a generic rotation if necessary, we may assume that $\Omega$, which is admissible lacunary of order $N = N(\Omega)$ as a direction set, is contained in a fixed small spherical cap in the first orthant that stays away from the coordinate hyperplanes. For each index $1 \leq j \leq d+1$, we set $\mathbb V_j := \{x \in \mathbb R^{d+1} : x_j = 1\}$ and define 
\[ \Theta_j(\Omega) = \mathcal C_{\Omega} \cap \mathbb V_j = \left\{\left(\frac{\omega_1}{\omega_{j}}, \cdots, \frac{\omega_{d+1}}{\omega_j} \right) : \omega = (\omega_1, \cdots, \omega_{d+1}) \in \Omega \right\}. \]
Then $\Theta_j(\Omega) \in \Lambda_d(N,\lambda, R;\mathbb V_j)$, according to Definition \ref{definition of lacunary direction sets}. Appealing to Definition \ref{defn : Finite order lacunarity general dimensions}, let $N_{kj} = N_{kj}(\Omega) \leq N$ and $R_{kj} = R_{kj}(\Omega) \leq R$ be the smallest non-negative integers such that \[\pi_k[\Theta_j(\Omega)] = \left\{ \frac{\omega_k}{\omega_j} : \omega \in \Omega \right\}, \qquad k \ne  j \]  is coverable by at most $R_{kj}$ members of $\Lambda(N_{kj}; \lambda)$. Here $\pi_k$ denotes the projection onto the $k$th coordinate axis in the ambient coordinate system. By decomposing $\Omega$ into at most $Rd^2$ pieces if necessary, we may assume that $R_{kj}=1$ for all $k \neq j$. Set 
\[ \Sigma = \left\{(k, j) : 1 \leq k < j \leq d+1 \right\} \qquad \text{ and } \qquad \Sigma^{\ast} = \Sigma^{\ast}(\Omega) = \left\{(k, j) \in \Sigma : N_{kj} \geq 1 \right\}. \] For a generic rotation mentioned at the beginning of this proof, $\Sigma = \Sigma^{\ast}$.

The induction is based on 
\begin{equation}  \label{def N_0} N_0(\Omega) = N(\Omega) + \sum_{\sigma = (k, j)\in \Sigma^{\ast}} N_{kj}(\Omega).   \end{equation}      
The induction hypothesis is that the second inequality in \eqref{M and D bounded} holds for all $\Omega$ with $N_0(\Omega) \leq N_0$. The initializing step $N_0 = 0$ follows from the one-dimensional Hardy-Littlewood maximal theorem. For a direction set $\Omega$ with $N_0(\Omega) = N_0$ and any $\sigma = (k, j) \in \Sigma^{\ast}$, let $\{\theta_{\sigma, i} : i \geq 1 \}$ be a lacunary (without loss of generality decreasing) sequence with lacunarity constant $\leq \lambda$ that serves as a special sequence for $\pi_k \left[\Theta_j(\Omega) \right]$ (see Definition \ref{defn: Lacunary sequence in R}). As in \cite{ParcetRogers}, we set  
\[ \Omega_{\sigma, i} = \left\{\omega \in \Omega : \theta_{\sigma, i+1} < \frac{\omega_k}{\omega_j} \leq \theta_{\sigma, i} \right\},  \]
and observe that $\Omega_{\sigma, i} \subseteq \Omega$ is admissible lacunary of order at most $N$ as a direction set with the same parameters $R$ and $\lambda$ as before. In particular, $N(\Omega_{\sigma, i}) \leq N(\Omega)$, and $N_{k'j'}(\Omega_{\sigma, i}) \leq N_{k'j'}(\Omega)$ for all $(k',j') \in \Sigma$.  The result of \cite{ParcetRogers} states that 
\[ ||D_{\Omega}||_{p \rightarrow p} \leq C \sup_{\sigma \in \Sigma^{\ast}} \sup_{i \geq 1} ||D_{\Omega_{\sigma, i}}||_{p \rightarrow p}.\] 
(In fact, \cite{ParcetRogers} addresses the generic and nontrivial case of $\Sigma^{\ast} = \Sigma$, but the proof goes through with trivial modifications after a reduction to lower dimensions even when $\Sigma^{\ast} \subsetneq \Sigma$, i.e., if $N_{kj}=0$ for certain pairs $(k,j) \in \Sigma$). From the definition of $\Omega_{\sigma, i}$, we conclude that \[\pi_k \left[\Theta_j(\Omega_{\sigma, i}) \right] = \pi_k \left[ \Theta_j(\Omega)\right] \cap (\theta_{\sigma, i+1}, \theta_{\sigma, i}] \in \Lambda(N_{kj}-1, \lambda)\] for any $\sigma = (k,j) \in \Sigma^{\ast}$; hence $N_{kj}(\Omega_{\sigma, i}) \leq N_{kj}-1$.  It now follows from \eqref{def N_0} that
   \[ N_0(\Omega_{\sigma, i}) \leq N_0(\Omega) - 1, \] 
allowing us to carry the induction forward. 
\end{proof} 


\section{Rooted, labelled trees}

As in \cite{BatemanKatz}, \cite{Bateman} and \cite{KrocPramanik}, the language of rooted, labelled trees continues to be the vehicle of choice for construction of Kakeya-type sets.  We recall the basic terminology of trees and state the relevant facts in Sections~\ref{trees} and \ref{tree encoding} below, referring the reader to our previous work~\cite{KrocPramanik} for a more detailed discussion of some of the stated results, and to \cite{LyonsPeres} for a comprehensive treatise on the subject.

\subsection{The terminology of trees}\label{trees}

A \textit{tree} is defined to be a connected undirected graph with no cycles.  A \textit{rooted, labelled tree} $\mathcal{T}$ is one whose vertex set is a nonempty collection of finite sequences of nonnegative integers such that if $\langle i_1,\ldots,i_n\rangle\in \mathcal{T}$, then
\begin{enumerate}
	\item[(i.)] for any $k$, $0\leq k\leq n$, $\langle i_1,\ldots,i_k\rangle\in \mathcal{T}$, where $k=0$ corresponds to the empty sequence, and
	\item[(ii.)] for every $j\in \{0,1,\ldots,i_n\}$, we have $\langle i_1,\ldots,i_{n-1},j\rangle\in \mathcal{T}$.  
\end{enumerate}
We say that $\langle i_1,\ldots, i_{n-1}\rangle$ is the \textit{parent} of $\langle i_1,\ldots,i_{n-1},j\rangle$ and that $\langle i_1,\ldots,i_{n-1},j\rangle$ is the $(j+1)th$ \textit{child} of $\langle i_1,\ldots,i_{n-1}\rangle$.  A parent-child pair is an edge, and a sequence of connected edges $(e_1, e_2, \ldots)$ is a {\em{ray}}, where by convention we require that the child vertex of $e_i$ agree with the parent vertex of $e_{i+1}$ for all $i \geq 1$.  The empty sequence $\emptyset$ is the designated \textit{root} of the tree $\mathcal{T}$ and all vertices of the form $\langle i_1 \rangle\in\mathcal{T}$ are children of this root.  We let $\partial \mathcal{T}$ denote the collection of all rays in $\mathcal{T}$ of maximal (possibly infinite) length.  For a fixed vertex $v\in\mathcal{T}$, we also define the \textit{subtree of $\mathcal{T}$ generated by the vertex} $v$ to be the maximal subtree of $\mathcal{T}$ with $v$ as the root.

The \textit{height} of the tree is taken to be the supremum of the lengths of all the sequences in the tree.  Further, we define the height $h(\cdot)$ of a vertex to be the length of its identifying sequence.  If the height of a vertex $v$ is equal to $k$, we say that $v$ is a $k$\textit{th generation vertex} of the tree.  The height of the root is always taken to be zero.  If $u$ and $v$ are two vertices in $\mathcal{T}$ that lie along the same ray, with $h(u) > h(v)$, then we say $u$ is a \textit{descendant} of $v$ (or that $v$ is an \textit{ancestor} of $u$), and we write $u\subset v$.  The \textit{youngest common ancestor} of $u$ and $v$, denoted by $D(u,v)$, is the vertex of maximal height that any ray passing through $u$ has in common with any ray passing through $v$.

If $\mathcal{T}$ is a tree and $n\in\mathbb{Z}^+$, the \textit{truncation} of $\mathcal{T}$ to height $n$, denoted $\mathcal{T}_n$, is the subtree of $\mathcal{T}$ consisting of all vertices with height no more than $n$.  A tree is called \textit{locally finite} if its truncation to every level is finite; i.e. consists of finitely many vertices.  All of our trees will have this property.  In the remainder of this article, when we speak of a \textit{tree} we will always mean a \textit{locally finite, rooted, labelled tree}.

The following definition will be very important for us later.
\begin{definition}\label{D:stickiness}
	Let $\mathcal{T}$ and $\mathcal{T}'$ be two trees with equal (possibly infinite) heights.  A map $\sigma: \mathcal{T}\rightarrow \mathcal{T}'$ is called {\bf sticky} if
\begin{enumerate}
	\item[$\bullet$] for all $v\in \mathcal{T}$, $h(v) = h(\sigma(v))$, and
	\item[$\bullet$] $u\subset v$ implies $\sigma(u)\subset\sigma(v)$ for all $u,v\in \mathcal{T}$.
\end{enumerate}
We often say that $\sigma$ is sticky if it preserves heights and lineages.
\end{definition}
A one-to-one and onto sticky map between two trees, when it exists, is said to be an \textit{isomorphism} and the two trees are said to be \textit{isomorphic}.  Two isomorphic trees can and will be treated as essentially identical objects.


\subsection{Encoding bounded subsets of Euclidean space by trees}\label{tree encoding section}

The language of rooted, labelled trees is especially convenient for representing bounded sets in Euclidean spaces.  This connection is well-studied in the literature.  We refer the interested reader to \cite{LyonsPeres} for more information.

Fix any integer $M\geq 2$.  For any nonnegative integer $i$ and positive integer $k$ such that $i<M^k$, there exists a unique representation
\begin{equation}\label{M-adic representation}
	i = i_1M^{k-1} + i_2M^{k-2} + \cdots + i_{k-1}M + i_k,
\end{equation}
where the integers $i_1,\ldots,i_k$ take values in $\mathbb{Z}_M := \{0,1,\ldots,M-1\}$. These are the digits of the $M$-adic expansion of $i$. An easy consequence of \eqref{M-adic representation} is that there is a one-to-one correspondence between $M$-adic rationals in $[0,1)$ of the form $i/M^k$ and finite integer sequences $\langle i_1,\ldots, i_k\rangle$ of length $k$ with $i_j\in\mathbb{Z}_M$ for each $j$.  More generally, for any $\mathbf{i} = (j_1,\cdots,j_d) \in\mathbb{Z}^d$ such that $\mathbf{i}\cdot M^{-k}\in [0,1)^d$, we can apply \eqref{M-adic representation} to each component of $\mathbf{i}$ to obtain 
\begin{equation} \label{i expansion} \frac {\mathbf{i}}{M^k} = \frac{1}{M^k}(j_1, \cdots, j_d) = \frac {\mathbf{i}_1}{M} + \frac {\mathbf{i}_2}{M^2} + \cdots + \frac {\mathbf{i}_k}{M^k}, \end{equation}  with $\mathbf{i}_j\in\mathbb{Z}_M^d$ for all $j$.  In this way, we identify $\mathbf{i}$ with $\langle \mathbf{i}_1,\ldots,\mathbf{i}_k\rangle$. Let $\phi : \mathbb{Z}_M^d \rightarrow \{0,1,\ldots, M^d-1\}$ be an enumeration of $\mathbb{Z}_M^d$.  We refer to 
\begin{equation}\label{tree encoding}
	\mathcal{T}([0,1)^d;M,\phi) = \left\{\langle \phi(\mathbf{i}_1),\ldots,\phi(\mathbf{i}_k)\rangle : k\geq 0,\ \mathbf{i}_j \in\mathbb{Z}_M^d\right\}
\end{equation}
as the {\em{full $M$-adic tree in dimension $d$}}. Every vertex of the full $M$-adic tree has exactly $M^d$ children; therefore there are exactly $M^{kd}$ vertices of the $k$th generation. For our purposes, it will suffice to fix $\phi$ to be the lexicographic ordering, and so we will omit the notation for $\phi$ in \eqref{tree encoding}, writing simply, and with a slight abuse of notation, 
\begin{equation}\label{better tree encoding}
	\mathcal{T}([0,1)^d;M) = \left\{\langle \mathbf{i}_1,\ldots,\mathbf{i}_k\rangle : k\geq 0,\ \mathbf{i}_j \in\mathbb{Z}_M^d\right\}.
\end{equation}
We will refer to the tree in \eqref{better tree encoding} by the notation $\mathcal{T}([0,1)^d)$ once the base $M$ has been fixed.

Each vertex $v = \langle \mathbf{i}_1,\ldots,\mathbf{i}_k\rangle$ of $\mathcal{T}([0,1)^d;M)$ at height $k$ represents the unique $M$-adic cube in $[0,1)^d$ of sidelength $M^{-k}$, containing $\mathbf{i}\cdot M^{-k}$, of the form 
\begin{equation} \label{cube} 
Q = \left[\frac {j_1}{M^k},\frac {j_1+1}{M^k}\right)\times\cdots\times \left[\frac {j_d}{M^k},\frac {j_d+1}{M^k}\right).
\end{equation}   
Here $\langle \mathbf i_1, \cdots, \mathbf i_k \rangle$ is related to $(j_1, \cdots, j_d)$ by \eqref{i expansion}. Consequently, any $x\in[0,1)^d$ can be realized as the intersection of a nested sequence of $M$-adic cubes.  Thus, we view the tree in \eqref{better tree encoding} as an encoding of the set $[0,1)^d$ with respect to base $M$.  Any subset $E\subseteq[0,1)^d$ then corresponds to a subtree $\mathcal T(E;M)$ of $\mathcal{T}([0,1)^d;M)$.  The vertices on the tree $\mathcal T(E;M)$ represent $M$-adic cubes that have nontrivial intersection with $E$. As a result, an infinite ray in $\mathcal T(E;M)$ identifies a point in $E$ or its closure. Needless to say, the tree representation of the set $E$ is coordinate-sensitive. Indeed trees representing the same set in two systems of coordinates may possess widely different features - a property that we will need to take into account shortly. 

In light of the discussion above and for simplicity, we will henceforth identify the vertex $v = \langle \mathbf {i}_1, \mathbf {i}_2, \cdots, \mathbf i_k \rangle  \in \mathcal T([0,1)^d)$ with the corresponding cube $Q$ as in \eqref{cube} lying on $[0,1)^d$. With this understanding, the notation $u \subset v$ stands both for set containment as well as tree ancestry.


\subsection{The splitting number of a tree}

There are many ways to quantify the ``size" or ``spread'' of a tree (see~\cite{LyonsPeres}). Of these, the concept of a \textit{splitting number} proved to be the most relevant in the planar characterization of directions that admit Kakeya-type sets~\cite{Bateman}. Not surprisingly, it will turn out to be equally important for us. One of its applications is the explicit restatement of finite order lacunarity of a set $\Omega$ in terms of the structure of the tree encoding $\Omega$.  We define the notion of splitting number below, then collect some fundamental results about this quantity that will allow us to prove Theorem~\ref{MainThm1}, which is also the first forward implication in Theorem~\ref{MainThm2}.

We say that a vertex $v\in\mathcal{T}$ \textit{splits in} $\mathcal{T}$ if it has at least two children in $\mathcal{T}$.  When it is clear to which tree we are referring, we will just say that $v$ \textit{splits}, and we will call $v$ a \textit{splitting vertex}.  Define split$_{\mathcal{T}}(R)$, the \textit{splitting number of a ray} $R$ \textit{in} $\mathcal{T}$ to be the number of splitting vertices in $\mathcal{T}$ along that ray.  The \textit{splitting number of a vertex} $v$ \textit{with respect to a tree} $\mathcal{T}$ is defined to be 
\begin{equation}\label{splitting vertex}
\text{split}_{\mathcal{T}}(v) := \max_{\mathcal{S}_v\subseteq\mathcal{T}}\ \min_{R_v\in \partial \mathcal{S}_v}\text{split}_{\mathcal{S}_v}(R_v),
\end{equation}
where the maximum is taken over all subtrees $\mathcal{S}_v\subseteq\mathcal{T}$ rooted at $v$, and the minimum is taken over all rays $R_v$ in $\mathcal{S}_v$ that originate at the vertex $v$.  Finally, the \textit{splitting number of the tree} $\mathcal{T}$ is defined as 
\begin{equation}\label{tree split}
\text{split}(\mathcal{T}) := \max_{v\in\mathcal{T}}\ \text{split}_{\mathcal{T}}(v).
\end{equation}
\subsubsection{Examples} \label{splitting number examples}
\begin{enumerate}[(a)]
\item If $\Omega = \{ 2^{-j} : j \geq 1\}$, then split$(\mathcal T(\Omega;2)) = 1$. 
\item If $\Omega_m = \{ \frac{k}{2^m} : 0 \leq k < 2^m\}$, then split$(\mathcal T(\Omega_m;2)) = m$. As a result, the tree depicting all dyadic rationals has infinite splitting number. 
\item \label{U V splitting number} Let $U$ and $V$ be the sets constructed in Section \ref{counterexample section}. Then split$(\mathcal T(U \times V);2) = 2$, while split$(\mathcal T(\varphi(U \times V);2)) = \infty$ for the coordinate transformation $\varphi(u,v) = (u+v, u-v)$.   
\end{enumerate} 
\subsubsection{Preliminary facts about splitting numbers}
Our first result about splitting numbers (of vertices) says that they are monotone nonincreasing in lineages.
\begin{lemma}\label{monotonicity}
	Let $u,v\in\mathcal{T}$ with $u\subseteq v$.  Then split$_{\mathcal{T}}(u) \leq \text{split}_{\mathcal{T}}(v).$
\end{lemma}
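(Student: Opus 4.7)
The plan is to deduce the lemma directly from the max-min definition \eqref{splitting vertex} by exhibiting, for every admissible subtree $\mathcal S_u \subseteq \mathcal T$ rooted at $u$, a lifted subtree $\mathcal S_v \subseteq \mathcal T$ rooted at $v$ that has exactly the same set of splitting-counts along its maximal rays. Once such a lift is constructed, the collection of subtrees $\mathcal S_v$ obtained this way is a subfamily of all subtrees of $\mathcal T$ rooted at $v$, so taking the outer maximum in \eqref{splitting vertex} over the latter, larger, family can only increase the value; this immediately yields $\text{split}_{\mathcal T}(u) \leq \text{split}_{\mathcal T}(v)$.

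To build the lift, I would use the hypothesis $u \subseteq v$ (i.e.\ $u$ is a descendant of $v$) to pick out the unique path $v = w_0, w_1, \ldots, w_k = u$ joining $v$ to $u$ in $\mathcal T$, and then set
\[
\mathcal S_v := \{w_0, w_1, \ldots, w_{k-1}\} \cup \mathcal S_u,
\]
edges being inherited from $\mathcal T$. This is clearly a subtree of $\mathcal T$ rooted at $v$. By construction, each $w_i$ with $0 \le i < k$ has exactly one child in $\mathcal S_v$, namely $w_{i+1}$, and so none of the vertices along the stem $w_0, \ldots, w_{k-1}$ is a splitting vertex of $\mathcal S_v$. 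Every maximal ray $R_v \in \partial \mathcal S_v$ must therefore begin at $v$, traverse the stem to $u$ accruing no splits, and then continue as a maximal ray $R_u \in \partial \mathcal S_u$. The correspondence $R_v \leftrightarrow R_u$ is a bijection between $\partial \mathcal S_v$ and $\partial \mathcal S_u$, and $\text{split}_{\mathcal S_v}(R_v) = \text{split}_{\mathcal S_u}(R_u)$.

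Taking the minimum over $R_v$ (equivalently $R_u$), this gives $\min_{R_v \in \partial \mathcal S_v}\text{split}_{\mathcal S_v}(R_v) = \min_{R_u \in \partial \mathcal S_u}\text{split}_{\mathcal S_u}(R_u)$. Since every admissible $\mathcal S_u$ produces an admissible $\mathcal S_v$ with the same inner value, and the outer maximum in the definition of $\text{split}_{\mathcal T}(v)$ ranges over a possibly larger family of subtrees rooted at $v$, we obtain $\text{split}_{\mathcal T}(u) \leq \text{split}_{\mathcal T}(v)$, completing the argument.

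There is no real obstacle here; the only point that requires care is the observation that appending a one-child ``stem'' to the root adds no new splitting vertices, so splitting counts are preserved under the lift. This is purely combinatorial and follows straight from the definition of a splitting vertex as one with at least two children.
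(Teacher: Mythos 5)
Your proof is correct and is essentially identical to the paper's: the paper also forms the subtree $\mathcal S_{v\rightarrow u}$ by adjoining to $\mathcal S_u$ the path in $\mathcal T$ from $v$ to $u$, observes that no splitting vertices are introduced along that stem so the ray-by-ray splitting counts are preserved, and concludes by noting that these lifted subtrees form a subcollection of the subtrees rooted at $v$ over which the outer maximum in \eqref{splitting vertex} is taken. No gaps.
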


\begin{proof}
Let $\mathcal{S}_u$ be a subtree of $\mathcal{T}$ rooted at $u$.  Define $\mathcal{S}_{v \rightarrow u}$ to be the union of the tree $\mathcal{S}_u$ with the path in $\mathcal{T}$ connecting $v$ to $u$.  This is a subtree of $\mathcal{T}$ rooted at $v$.  Since $v$ does not split in $\mathcal{S}_{v \rightarrow u}$ and there are no splitting vertices in $\mathcal{S}_{v \rightarrow u}$ between $v$ and $u$, we find that for any ray $R$ in $\mathcal S_u$,  \begin{equation} \label{split equality} \text{split}_{\mathcal S_u}(R) = \text{split}_{\mathcal S_{v \rightarrow u}}(R_v), \end{equation} 
where $R_v$ is the ray in $\mathcal S_{v \rightarrow u}$ rooted at $v$ obtained by extending $R$ to $v$.  Conversely, if $R_v$ is a ray in $\mathcal S_{v \rightarrow u}$, then \eqref{split equality} holds for $R = R_v \cap \mathcal S_u$.  Maximizing over all subtrees $ \mathcal{S}\subseteq\mathcal{T}$ rooted at $u$, we have that 
\begin{align*}
	\text{split}_{\mathcal{T}}(u) &= \max_{\mathcal{S}_u\subseteq\mathcal{T}}\min_{R\in \partial \mathcal{S}_u}\text{split}_{\mathcal S_{u}}(R)\\
	&= \max_{\mathcal{S}_{v \rightarrow u}\subseteq\mathcal{T}}\min_{R_v\in \partial \mathcal{S}_{ v \rightarrow u}}\text{split}_{\mathcal S_{v \rightarrow u}}(R_v)\\
	&\leq \text{split}_{\mathcal{T}}(v).
\end{align*}
The last inequality is a consequence of \eqref{splitting vertex}, since the class of subtrees of the form $\mathcal S_{v \rightarrow u}$ is a subcollection of trees rooted at $v$. 
\end{proof}

An immediate consequence of Lemma~\ref{monotonicity} is that split$(\mathcal{T}) = \text{split}_{\mathcal{T}}(v_0)$, where $v_0$ is the root of $\mathcal{T}$.  Our next result says that splitting numbers of trees are also monotonic in an appropriate sense.

\begin{lemma}\label{monotone trees}
	Let $\mathcal{S}\subseteq\mathcal{T}$.  Then split$(\mathcal{S})\leq$ split$(\mathcal{T})$.
\end{lemma}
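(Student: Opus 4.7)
The plan is to reduce the lemma to a direct comparison of the max-min formulas defining the splitting numbers, exploiting the fact that every subtree of $\mathcal{S}$ is automatically a subtree of $\mathcal{T}$.

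First I would note, using the remark immediately following Lemma \ref{monotonicity}, that $\text{split}(\mathcal{T}) = \text{split}_{\mathcal{T}}(v_0)$ where $v_0$ is the root of $\mathcal{T}$, and similarly $\text{split}(\mathcal{S}) = \text{split}_{\mathcal{S}}(w_0)$ for the root $w_0$ of $\mathcal{S}$. Since property (i) in the definition of a rooted, labelled tree forces every subtree to contain all ancestors of each of its vertices, the hypothesis $\mathcal{S} \subseteq \mathcal{T}$ implies that $\mathcal{S}$ and $\mathcal{T}$ share the same root; thus $w_0 = v_0$ and the lemma reduces to showing $\text{split}_{\mathcal{S}}(v_0) \leq \text{split}_{\mathcal{T}}(v_0)$.

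Next I would unpack definition \eqref{splitting vertex} for both sides. The quantity $\text{split}_{\mathcal{S}}(v_0)$ is a maximum of $\min_{R \in \partial \mathcal{S}'} \text{split}_{\mathcal{S}'}(R)$ taken over all subtrees $\mathcal{S}' \subseteq \mathcal{S}$ rooted at $v_0$, while $\text{split}_{\mathcal{T}}(v_0)$ is the same maximum but taken over subtrees of $\mathcal{T}$ rooted at $v_0$. The crucial observation is that any $\mathcal{S}'$ eligible for the first maximum is, via the inclusion $\mathcal{S} \subseteq \mathcal{T}$, eligible for the second one, and that the inner quantity $\min_{R \in \partial \mathcal{S}'} \text{split}_{\mathcal{S}'}(R)$ depends only on the intrinsic structure of $\mathcal{S}'$ (not on the ambient tree $\mathcal{S}$ versus $\mathcal{T}$). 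Hence the first supremum is taken over a subcollection of those in the second, giving the required inequality.

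I do not anticipate any real obstacle here; the only subtlety is recognizing that $\text{split}_{\mathcal{S}'}(R)$ is an intrinsic quantity of $\mathcal{S}'$, so no ambiguity arises from shifting viewpoints between $\mathcal{S}$ and $\mathcal{T}$. This makes the proof essentially a one-line consequence of the max-min definition, in the same spirit as—but easier than—Lemma \ref{monotonicity}.
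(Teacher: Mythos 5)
Your proof is essentially the paper's: both arguments reduce to comparing the max--min in \eqref{splitting vertex} over subtrees of $\mathcal S$ versus subtrees of $\mathcal T$ rooted at the same vertex, using that every subtree of $\mathcal S$ is a subtree of $\mathcal T$ and that the inner quantity $\min_{R}\text{split}_{\mathcal S'}(R)$ is intrinsic to $\mathcal S'$. One caveat: your claim that $\mathcal S\subseteq\mathcal T$ forces the two trees to share a root is not valid under the paper's conventions, since the paper explicitly works with subtrees rooted at arbitrary vertices $v$ (e.g.\ the maximal subtree generated by $v$, and the subtrees $\mathcal S_v$ in \eqref{splitting vertex} itself). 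This is harmless, however: if the root $v_0$ of $\mathcal S$ is an interior vertex of $\mathcal T$, your comparison still gives $\text{split}_{\mathcal S}(v_0)\leq\text{split}_{\mathcal T}(v_0)$, and \eqref{tree split} supplies $\text{split}_{\mathcal T}(v_0)\leq\text{split}(\mathcal T)$ --- which is precisely the extra step the paper's proof records.
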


\begin{proof}
	By Lemma~\ref{monotonicity}, split$(\mathcal{S}) = \text{split}_{\mathcal{S}}(v_0)$, where $v_0$ is the root of $\mathcal{S}$.  Since $v_0\in\mathcal{S}\subseteq\mathcal{T}$ and any subtree of $\mathcal S$ is also a subtree of $\mathcal T$, we find that 
\begin{align*}
\text{split}_{\mathcal S}(v_0) &= \max_{\mathcal S_{v_0} \subseteq \mathcal S} \min_{R_{v_0}\in \partial \mathcal{S}_{v_0}}\text{split}_{S_{v_0}}(R_{v_0})\\ 
	&\leq \max_{\mathcal S_{v_0} \subseteq \mathcal T} \min_{R_{v_0}\in \partial \mathcal{S}_{v_0}}\text{split}_{S_{v_0}}(R_{v_0}) \\
	&\leq  \text{split}_{\mathcal T}(v_0)\\
	& \leq \text{split}(\mathcal T),
\end{align*}
where the last two inequalities are implied by  \eqref{splitting vertex} and \eqref{tree split} respectively. Lemma~\ref{monotone trees} follows.
\end{proof}

A feature of trees with finite splitting number, originally observed in \cite [Lemma 5]{Bateman}, is that all vertices with largest split occur along a ray. This specialized ray will turn out to be critical in the detection of lacunary limits. 
\begin{lemma}\label{SplitsOnARay}
Let $\mathcal T$ be a tree with split$(\mathcal T) = N$. Then there exists a ray $R$ in $\mathcal T$ (of finite or infinite length) such that a vertex $v$ lies on $R$ if and only if split$_{\mathcal T}(v) = N$, provided the latter collection contains more than one element. 
\end{lemma}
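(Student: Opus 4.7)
The strategy is to show that the set $S := \{v \in \mathcal{T} : \text{split}_{\mathcal{T}}(v) = N\}$ is linearly ordered by the ancestor relation $\subset$ and closed under taking intermediate vertices, from which it will follow that $S$ traces out a ray descending from the root.

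First, I would observe that the root $v_0$ of $\mathcal{T}$ lies in $S$. Indeed, by Lemma \ref{monotonicity} and the definition \eqref{tree split}, $\text{split}(\mathcal{T}) = \text{split}_{\mathcal{T}}(v_0)$, so $\text{split}_{\mathcal{T}}(v_0) = N$. Thus $S$ is nonempty, and if $|S| \geq 2$ there exists at least one proper descendant of $v_0$ in $S$.

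The main step is to show that any two vertices $u, v \in S$ are comparable; i.e., one lies on a ray through the other. Suppose for contradiction that $u, v \in S$ are incomparable, and set $w := D(u,v)$, the youngest common ancestor. Since $u, v \subset w$, the paths from $w$ to $u$ and from $w$ to $v$ emanate from $w$ via two distinct children $w_1, w_2$ of $w$ in $\mathcal{T}$. Because $\text{split}_{\mathcal{T}}(u) = N$, there is a subtree $\mathcal{S}_u \subseteq \mathcal{T}$ rooted at $u$ with $\text{split}_{\mathcal{S}_u}(R_u) \geq N$ for every ray $R_u \in \partial \mathcal{S}_u$; pick an analogous $\mathcal{S}_v$ rooted at $v$. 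Now build a subtree $\mathcal{S}_w \subseteq \mathcal{T}$ rooted at $w$ by taking the union of $\mathcal{S}_u$, $\mathcal{S}_v$, and the unique paths in $\mathcal{T}$ from $w$ down to $u$ and from $w$ down to $v$, and nothing else. In $\mathcal{S}_w$, every intermediate vertex on the $w$-to-$u$ path (resp. $w$-to-$v$ path) has a single child, so these vertices do not split; the only potentially new splitting vertex in $\mathcal{S}_w$ relative to $\mathcal{S}_u \cup \mathcal{S}_v$ is $w$ itself, which splits into $w_1$ and $w_2$. Any maximal ray $R_w$ in $\mathcal{S}_w$ starting at $w$ passes through either $u$ or $v$ and thus restricts to a maximal ray in $\mathcal{S}_u$ or $\mathcal{S}_v$; counting splits gives $\text{split}_{\mathcal{S}_w}(R_w) \geq 1 + N$. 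Consequently $\text{split}_{\mathcal{T}}(w) \geq N+1$, contradicting Lemma \ref{monotone trees} (or directly the hypothesis $\text{split}(\mathcal{T}) = N$). This is the main obstacle in the proof, and it hinges on the freedom in \eqref{splitting vertex} to choose an economical subtree $\mathcal{S}_w$ that adds no spurious splits along the connecting paths.

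Having established that $S$ is totally ordered by $\subset$, I would next use Lemma \ref{monotonicity} to see that $S$ is convex along chains: if $u, v \in S$ with $u \subset v$ and $w$ is any vertex on the path from $v$ down to $u$, then $N = \text{split}_{\mathcal{T}}(u) \leq \text{split}_{\mathcal{T}}(w) \leq \text{split}_{\mathcal{T}}(v) = N$, so $w \in S$. Combining total ordering, convexity, and the fact that $v_0 \in S$, the set $S$ consists of $v_0$ together with a sequence of consecutive descendants, each the parent of the next. This is precisely a ray $R$ in $\mathcal{T}$ originating at $v_0$ (finite or infinite), and by construction $v \in R$ if and only if $\text{split}_{\mathcal{T}}(v) = N$, as required.
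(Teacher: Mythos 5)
Your proposal is correct and follows essentially the same route as the paper: the heart of both arguments is the contradiction that if two vertices with splitting number $N$ were incomparable, their youngest common ancestor $D(u,v)$ would be a splitting vertex with $\text{split}_{\mathcal T}(D(u,v)) \geq N+1$, violating $\text{split}(\mathcal T)=N$. You simply supply more detail than the paper does — the explicit construction of the subtree $\mathcal S_w$ witnessing the $N+1$ splits, and the observations that the root lies in $S$ and that $S$ is convex along chains by Lemma \ref{monotonicity} — all of which is sound.
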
 
\begin{proof} We prove by contradiction. Suppose there are two vertices $u,v\in\mathcal{T}$ with split$_{\mathcal T}(u) = \text{split}_{\mathcal T}(v) = N$, $u \not\subseteq v$, $v \not\subseteq u$. Then their youngest common ancestor $D(u,v)$ is neither $u$ nor $v$.  By Lemma~\ref{monotonicity}, we know that $\text{split}_{\mathcal{T}}(D(u,v)) \geq N$.  Since $u\neq v$, the vertex $D(u,v)$ is actually a splitting vertex.  Therefore, $\text{split}_{\mathcal{T}}(D(u,v)) \geq N+1$.  But this contradicts the requirement that $\text{split}(\mathcal{T}) = N$, establishing our claim.
\end{proof}

\subsubsection{A reformulation of Theorem \ref{MainThm1}}
The dichotomy between trees with finite versus infinite splitting number will prove to be our main distinction of interest.  Roughly speaking, a tree that has infinite splitting number in some coordinate system must encode a ``large" subset of Euclidean space, the threshold of size being determined by sublacunarity. However, as we have seen in example (\ref{U V splitting number}) of Section \ref{splitting number examples},  the splitting number of a tree encoding a set is sensitive to the coordinates used to represent the set. More strongly, even the finiteness of the splitting number could be affected by the choice.  This consideration features prominently in the restatement of Theorem \ref{MainThm1} that we are about to set down.

Our proof of Theorem \ref{MainThm1} will follow a two-step route. 
\begin{proposition}\label{SublacunaryInfiniteSplit}
	Fix a dimension $d \geq 2$ and an integer $M \geq 2$. If a direction set $\Omega\subseteq \mathbb R^{d+1} \setminus \{0\}$ is sublacunary (in the sense of Definition \ref{definition of lacunary direction sets}), then \begin{equation}  \label{infty conclusion} \sup_{\mathbb V} \sup_{W_{\Omega}} \sup_{\varphi} \text{split}(\mathcal{T}(\varphi(W_{\Omega});M)) = \infty. \end{equation} 
Here $\mathbb V$ ranges over the collection of all hyperplanes at unit distance from the origin. For a fixed $\mathbb V$, the set $W_{\Omega}$ ranges over all relatively compact subsets of $\mathcal C_{\Omega} \cap \mathbb V$, and the innermost supremum is taken over all coordinate choices $\varphi = (\mathbf a, \mathcal B)$ on $\mathbb V$, where $\mathbf a \in \mathbb V$ is the point closest to the origin and $\mathcal B = \{\mathbf v_1, \cdots, \mathbf v_d \}$ is any orthonormal basis of $\mathbb V - \mathbf a$. In other words, $\varphi$ represents a rotation in $\mathbb V$ centred at $\mathbf a$, with \[\varphi(\mathcal C_\Omega \cap \mathbb V) = \bigl\{(x_1, \cdots, x_d) : x = \mathbf a + \sum_{j=1}^d x_j \mathbf v_j \in \mathcal C_\Omega \cap \mathbb V \bigr\}.\] 

Thus for every $N \geq 1$, there exists a hyperplane $\mathbb V_N$, a relatively compact subset $W_N$ of $\mathcal C_{\Omega} \cap \mathbb V_N$, and a coordinate  system $\varphi_N$ on $\mathbb V_N$ such that \begin{equation} \label{split > N} \text{split}(\mathcal{T}(\varphi_N(W_N);M)) > N. \end{equation}    
\end{proposition}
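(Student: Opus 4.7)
I would prove the contrapositive: assuming that $\sup_{\mathbb V, W_\Omega, \varphi} \text{split}(\mathcal T(\varphi(W_\Omega);M)) \leq K < \infty$, I would conclude that $\Omega$ is admissible lacunary of finite order, contradicting Definition \ref{definition of lacunary direction sets}(ii). Since admissible lacunarity of a direction set unpacks, via Definitions \ref{definition of lacunary direction sets} and \ref{defn : Finite order lacunarity general dimensions}, into one-dimensional lacunarity of arbitrary line projections of compact subsets of $\mathcal C_\Omega \cap \mathbb V$, the main task is to translate a uniform bound on the splitting number of $d$-dimensional trees into bounded-cover estimates in $\Lambda(N;\lambda)$ for these projections.

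The cornerstone is a one-dimensional correspondence: if $U \subseteq [0,1)$ satisfies $\text{split}(\mathcal T(U;M)) \leq N$, then $U$ can be covered by $R = R(M,N)$ members of $\Lambda(N;\lambda)$ for some $\lambda = \lambda(M) < 1$. I would prove this by induction on $N$. The base case $N = 0$ is immediate since a tree with no splits encodes a set of cardinality at most one. For the inductive step, apply Lemma \ref{SplitsOnARay} to isolate a distinguished ``backbone'' ray of $\mathcal T(U;M)$ along which all vertices of maximal split lie. This ray corresponds to a nested sequence of $M$-adic intervals $I_0 \supset I_1 \supset \cdots$ converging to a special point $\alpha \in \mathbb R$, and the $M$-adic boundary points of these intervals form a lacunary sequence converging to $\alpha$ with lacunarity constant $\leq 1/M$; this serves as a special sequence for $U$ as in Definition \ref{defn: Lacunary sets}. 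Each subtree hanging off the backbone has splitting number $\leq N-1$ by Lemma \ref{SplitsOnARay}, so the inductive hypothesis provides $\Lambda(N-1;\lambda)$-covers of the restrictions $U \cap (I_k \setminus I_{k+1})$, which assemble into a cover of $U$ by a uniformly bounded number of sets in $\Lambda(N;\lambda)$.

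The main obstacle is the passage from this 1D correspondence to one-dimensional projections of $\varphi(W) \subseteq [0,1)^d$. A naive projection inequality of the form $\text{split}(\mathcal T(\pi_j(V);M)) \leq \text{split}(\mathcal T(V;M))$ fails in general --- small four-point examples in $[0,1)^2$ exhibit a 1D-projection tree split strictly larger than the 2D tree split. To overcome this, one must exploit both remaining pieces of freedom beyond a single $\varphi$: varying $W$ over all relatively compact subsets of $\mathcal C_\Omega \cap \mathbb V$ and varying $\varphi$ over all rotations of $\mathbb V$. Given any line $L$ in $\mathbb V$, I would align $\mathbf v_1$ with $L$, and then either (a) iterate the Lemma \ref{SplitsOnARay} backbone construction directly inside the $d$-dimensional tree to produce a cascade of nested cubes whose coordinate projections furnish lacunary special sequences for $\pi_j(\varphi(W))$, or (b) pass to a suitable sub-selection $W' \subseteq W$ concentrated along $L$ on which the $d$-dim tree of $\varphi(W')$ faithfully mirrors the 1D tree of $\pi_L(W')$. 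In either approach the hypothesis $\text{split}(\mathcal T(\varphi(W);M)) \leq K$ propagates to a bound on the 1D tree split of $\pi_L(W)$; the 1D lemma above then yields admissible lacunarity of $\pi_L(W)$ with parameters depending only on $K$, $M$, and $d$. Running this construction over every line $L$, every compact $W \subseteq \mathcal C_\Omega \cap \mathbb V$, and every hyperplane $\mathbb V$ exhibits $\Omega$ as admissible lacunary of finite order, contradicting sublacunarity.
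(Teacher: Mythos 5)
Your overall architecture --- arguing the contrapositive, establishing a one-dimensional lemma that converts a bound on $\text{split}(\mathcal T(U;M))$ into a bounded cover by sets in $\Lambda(N;\lambda)$, and then reducing line projections of $\varphi(W)$ to that lemma --- is exactly the paper's. Your one-dimensional lemma is the paper's Lemma \ref{TreeLacunaryTraditional}, proved there by the same induction using the backbone ray from Lemma \ref{SplitsOnARay}. One small inaccuracy in your sketch: the $M$-adic endpoints of the nested backbone intervals do \emph{not} directly form a lacunary sequence with constant $\leq M^{-1}$ (consecutive intervals can share an endpoint with $\alpha$, and a point of the annulus $I_k\setminus I_{k+1}$ can be only marginally farther from $\alpha$ than a point of $I_{k+1}\setminus I_{k+2}$); the paper's Lemma \ref{base case} repairs this by a threefold decimation of the annuli via the separation estimate \eqref{distance calculation}. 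That is a fixable detail.

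The genuine gap is precisely at the step you flag yourself: passing from $\text{split}(\mathcal T(\varphi(W);M))\leq K$ to a splitting bound for $\mathcal T(\pi_j(\varphi(W));M)$. The paper does not leave this open. It chooses a subset $W_j\subseteq W$ on which $\pi_j$ is injective, sets up a bijection between the rays of $\mathcal T(\pi_j(W);M)$ and those of $\mathcal T(W_j;M)$, and argues in \eqref{pi_j and Omega_j} that each splitting vertex of a subtree of the projection tree along a ray forces a splitting vertex at the same height along the corresponding ray of $\mathcal T(W_j;M)$, whence $\text{split}(\mathcal T(\pi_j(W);M))\leq \text{split}(\mathcal T(W_j;M))\leq \text{split}(\mathcal T(W;M))$ by Lemma \ref{monotone trees}. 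You instead assert that such monotonicity fails and offer two unexecuted alternatives. Neither amounts to a proof. Option (b) is essentially the paper's $W_j$ device, and your own four-point objection, if it is correct, applies verbatim to an injective sub-selection (in such examples $\pi_j$ is already injective on $W$), so you cannot simultaneously reject the inequality and rely on a sub-selection to restore it. Option (a) founders on a concrete obstruction: hanging subtrees attached at different heights of the $d$-dimensional backbone can have identical $j$-th coordinate projections, so their projected $\Lambda(N-1;\lambda)$-covers accumulate without bound inside a single interval and cannot be assembled into boundedly many sets of one higher order --- this is exactly the mechanism by which the sublacunary set $U+V$ of Section \ref{counterexample section} is built. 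To complete the argument you must either reconcile your claimed counterexample with the paper's \eqref{pi_j and Omega_j} and then adopt that ray-correspondence argument, or, if the counterexample survives scrutiny, prove a quantitative substitute of the form $\text{split}(\mathcal T(\pi_j V;M))\leq f(d,M,\text{split}(\mathcal T(V;M)))$ for some finite-valued $f$, which is all the contrapositive requires. As written, the central reduction of the proposition is missing.
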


\begin{proposition}\label{InfiniteSplitKakeya}
	If a direction set $\Omega$ obeys \eqref{infty conclusion} for some $M\geq 2$, then $\Omega$ admits Kakeya-type sets.
\end{proposition}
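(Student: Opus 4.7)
The plan is a random construction, in the spirit of Bateman's planar proof but requiring genuinely new probabilistic input in dimensions $d \geq 2$. Given $N \geq 1$, use the hypothesis to fix a hyperplane $\mathbb V_N$, a relatively compact $W_N \subseteq \mathcal C_\Omega \cap \mathbb V_N$ and a coordinate system $\varphi_N$ on $\mathbb V_N$ so that the tree $\mathcal T_N := \mathcal T(\varphi_N(W_N); M)$ has $\text{split}(\mathcal T_N) \geq N$. Invoking the defining formulas \eqref{splitting vertex} and \eqref{tree split}, there is a vertex $v_N \in \mathcal T_N$ and a subtree $\mathcal S_N \subseteq \mathcal T_N$ rooted at $v_N$ in which every ray from $v_N$ meets at least $N$ splitting vertices. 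Pruning $\mathcal S_N$ to retain exactly $M$ children at each selected splitting level and relabelling heights yields a full $M$-adic subtree $\mathcal S_N'$ of height $N$; its $M^N$ leaves identify, via the encoding \eqref{tree encoding}, a finite slope set $\Omega_N \subseteq \Omega$ of cardinality $M^N$ whose minimum angular separation $\delta_N$ tends to $0$ as $N \to \infty$, by relative compactness of $W_N$.

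Next, fix a configuration of $M^N$ cubes of sidelength $\delta_N$ inside the root hyperplane $\{0\} \times [0,1)^d$ that naturally carries a full $M$-adic tree structure $\mathcal R_N$ of height $N$, for instance by aligning them along a single coordinate axis. Draw a uniformly random sticky bijection $\sigma_N : \mathcal R_N \to \mathcal S_N'$ in the sense of Definition \ref{D:stickiness}. For each leaf cube $Q$ of $\mathcal R_N$ (centred at some $p_Q$), form the tube $P_Q^{(N)}$ of length $1$ and cross-sectional radius $\delta_N$, rooted at $p_Q$ and oriented along a representative direction from $\Omega_N$ identified by $\sigma_N(Q)$. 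The set $E_N$ of Definition \ref{Kakeya-type set} is the union of these tubes and $E_N^*(A_0)$ is the corresponding union of their $A_0$-dilates. The whole matter is now reduced to two quantitative bounds: an upper estimate forcing $|E_N| \to 0$ and a lower estimate $|E_N^*(A_0)| \geq c_0 > 0$, each holding with positive probability.

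The upper estimate follows the planar argument of Bateman: stickiness of $\sigma_N$ forces tubes rooted in nearby cubes to receive nearly parallel slopes, so they largely coincide and $E_N$ carries high multiplicity, hence small measure. The lower estimate rests on controlling the total pairwise overlap $\sum_{t \neq t'} |A_0 P_t^{(N)} \cap A_0 P_{t'}^{(N)}|$. A first-moment estimate for this sum, while elementary, produces a bound that is too weak to drive the ratio in \eqref{Kakeya-type condition} to infinity; it is the second-moment version, combined with Chebyshev's inequality, that suffices. This second-moment estimate is the principal technical obstacle --- unlike the planar case, where a deterministic uniform bound on pairwise overlap was available, one must here analyse triple and quadruple intersections of tubes, weighing the Euclidean geometry of intersecting cylinders against the rigidity imposed on the random slope assignment by stickiness. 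Once both bounds are secured, picking a realization on which they occur simultaneously produces a deterministic family of tubes for which $|E_N^*(A_0)|/|E_N| \to \infty$ as $N \to \infty$, verifying the Kakeya-type property.
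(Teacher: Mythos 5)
Your overall strategy --- extract a structured finite slope set $\Omega_N$ from the infinitely-splitting tree, assign slopes to roots by a random sticky map, prove the upper bound by a Bateman-style argument and the lower bound by a second-moment estimate on pairwise tube overlaps requiring an analysis of triple and quadruple intersections --- is exactly the paper's. But the construction as you describe it has concrete gaps. First, your pruning extracts only $M$-adic combinatorial structure and ignores Euclidean geometry: two vertices of $\mathcal T_N$ that are distinct at height $h$ represent $M$-adic cubes that may be \emph{adjacent}, hence at Euclidean distance far smaller than $M^{-h}$. The paper's pruning (Proposition \ref{pruning stage 1}) spends a factor $(2C_0+1)^d$ of splitting per generation precisely to force the two branches below each splitting vertex to be Euclidean-separated by $C_0M^{-h}$ (Lemma \ref{nonadjacency}, Lemma \ref{Pruning building block}). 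That separation is what makes Euclidean intersection of tubes compatible with tree ancestry (Corollary \ref{Two distances comparable}, Lemma \ref{v(t) weak sticky}); without it both the percolation upper bound and every counting lemma in Sections \ref{probability estimation section}--\ref{tube count section} fail. (Also, a splitting vertex need only have two children, so you cannot in general ``retain exactly $M$ children''; the paper's $\Omega_N$ has cardinality $2^N$, not $M^N$.)

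Second, your tube family and randomization are not the right ones. The paper roots a tube at \emph{every} $M$-adic cube of sidelength $M^{-J}$ in the $d$-dimensional root hyperplane ($M^{Jd}$ tubes, with $J$ chosen so that $C_0M^{-J}\leq \delta_N$, i.e.\ cross-section strictly finer than the minimal slope separation, which is needed for Lemma \ref{leading to Poss(x)}), and the slope map is a massively non-injective sticky map from this full root tree onto the pruned slope tree, built from independent Bernoulli$(\frac{1}{2})$ variables attached to basic spatial cubes. That independence is what makes the slope-assignment probabilities computable (Lemma \ref{general probability lemma}) and what sets up the percolation bound $\text{Pr}(x\in K_N(\mathbb X))\leq C/N$, which relies on $\text{Poss}(x)$ being an $M^{-J}$-neighbourhood of an affine copy of all of $\Omega_N$ inside the full root grid. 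With only $M^N$ roots lined up along one coordinate axis and a uniformly random sticky \emph{bijection}, neither mechanism is available and you offer no substitute. Finally, both quantitative bounds are asserted rather than proved; they occupy Sections \ref{UpperBoundSection}--\ref{LowerBoundSection} and are where essentially all of the work lies.
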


Proposition~\ref{InfiniteSplitKakeya} will be the subject of the main body of our paper (Sections~\ref{UpperBoundSection} --~\ref{LowerBoundSection}).  We prove Proposition~\ref{SublacunaryInfiniteSplit} in Section~\ref{subsection : Lacunarity on Trees} below.

We end this section with a natural question: how does the splitting number of a tree $\mathcal T(\Omega;M)$ change if $\Omega$ is re-encoded as a tree with respect to a different base? It is not difficult to see that the number itself is not invariant under change of base. For example, if $\Omega = \{\frac{k}{4^N} : 0 \leq k < 4^N \}$ for some integer $N \geq 1$, then split$(\mathcal T(\Omega; 2)) = 2N$, whereas split$(\mathcal T(\Omega;4)) = N$.  On the other hand,  no consistent notion of ``size" of a set should be dependent on the choice of base we use to encode that set. The appropriate base-invariant concept here turns out not to be the {\em{value}} of the quantity in \eqref{infty conclusion}, but whether it is {\em{finite}} or not. Indeed for any two choices of base integers $M, M' \geq 2$, the corresponding expressions in \eqref{infty conclusion} are either both finite or both infinite. We do not need this stronger conclusion, but observe that Proposition \ref{SublacunaryInfiniteSplit} combined with Theorem \ref{MainThm2} gives an aposteriori proof of this fact. This all serves to remind the reader that in the Kakeya-type construction, the choice of base used to encode the direction set as a tree is purely utilitarian and non-central to the proof.


\subsection{Lacunarity on trees} \label{subsection : Lacunarity on Trees}

A distinctive feature in the planar characterization of Kakeya-type sets \cite{Bateman} is the observation that the lacunarity of a set is reflected in the structure of its tree.  Following the ideas developed there, we recast the concept of finite order lacunarity of a one-dimensional set using the structure of the splitting vertices of its tree. This provides a tool of convenience in the proof of  Proposition \ref{SublacunaryInfiniteSplit}, the main objective of this section. 
\begin{lemma}\label{TreeLacunaryTraditional}
	For any $M\geq 2$, $N \geq 1$, there is a constant $C = C(N,M)$ with the following property.  If a relatively compact set $U \subseteq \mathbb R$ is such that split$(\mathcal T(U;M)) = N$, then $U$ can be covered by the $C$-fold union of sets in $\Lambda(N;M^{-1})$ as described in Definition \ref{defn: Lacunary sets}.
\end{lemma}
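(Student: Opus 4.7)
My plan is to induct on $N$. The base case $N=0$ is trivial: $\text{split}(\mathcal{T}(U;M))=0$ forces the tree to be a linear chain with no branching, so $U$ is empty or a singleton, hence in $\Lambda(0;M^{-1})$.

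For the inductive step, I would assume the result at level $N-1$. Given $U$ with $\text{split}(\mathcal{T}(U;M))=N$, I would invoke Lemma \ref{SplitsOnARay} to obtain the distinguished ray $R=(v_0,v_1,\ldots)$ of all vertices with splitting number $N$, and let $x^*$ be the limit of the nested cubes $\{Q_{v_k}\}$. Every off-ray vertex has splitting number at most $N-1$, so the induction hypothesis applied to each off-ray child $w$ of a splitting vertex on $R$ yields a cover of $U \cap Q_w$ by at most $C(N-1,M)$ members of $\Lambda(N-1;M^{-1})$.

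The next step in the plan is to classify the off-ray siblings by (i) side (left or right of $x^*$), (ii) offset $\ell \in \{1,\ldots,M-1\}$ within the parent cube, and (iii) residue of the splitting height modulo $m=2$. For each such class, indexed by splitting heights $h_1<h_2<\cdots$ with $h_{j+1}\geq h_j+2$, I would set $A_j$ equal to the far edge of the $j$-th sibling cube (its endpoint farther from $x^*$). A direct geometric computation gives $A_j-x^* \in (\ell M^{-h_j-1}, (\ell+1)M^{-h_j-1}]$, whence the lacunary ratio satisfies
\[
\frac{A_{j+1}-x^*}{A_j-x^*} \;\leq\; \frac{\ell+1}{\ell}\,M^{-2} \;\leq\; 2M^{-2} \;\leq\; M^{-1},
\]
so the sequence $\{A_j\}$ is lacunary of the required ratio converging to $x^*$. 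The separation $A_{j+1} \leq $ near edge of $Q_j$ follows from $M^2 \geq (\ell+1)/(\ell-1)$ when $\ell\geq 2$, and for the more delicate $\ell=1$ case it follows directly from the nesting $Q_{v_{h_{j+1}+1}} \subset Q_{v_{h_j+1}}$, which places the $(j+1)$-th sibling entirely to the left of the $j$-th. In either case, $Q_j$ will lie in the gap $[A_{j+1},A_j)$.

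To finish, for each class and each of the $C(N-1,M)$ inductive sub-covers of $U \cap Q_j$, I would take the union of the corresponding sub-cover pieces over $j$. Each such union has its trace on every gap of $A$ equal to a single member of $\Lambda(N-1;M^{-1})$ and therefore lies in $\Lambda(N;M^{-1})$. Summing over the $2(M-1)$ choices of $(\text{side},\ell)$, the $2$ residue classes, the $C(N-1,M)$ inductive sub-covers, and the singleton $\{x^*\}$ if $x^* \in U$, I obtain a cover of $U$ by at most $C(N,M) = 4(M-1)C(N-1,M)+1$ sets in $\Lambda(N;M^{-1})$. The main obstacle is the $\ell = 1$ case, which I plan to handle uniformly with $\ell\geq 2$ by anchoring the special sequence to the \emph{far} (rather than near) edges of the sibling cubes, together with the nesting observation above.
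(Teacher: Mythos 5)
Your proof is correct and shares the paper's overall architecture --- induction on $N$, the distinguished ray supplied by Lemma \ref{SplitsOnARay}, the induction hypothesis applied to the off-ray subtrees, and the assembly of $C(N-1,M)$ ``parallel'' covers $U^{[i]}$ --- but it handles the key quantitative step by a genuinely different route. The paper produces the special sequence by applying its $N=1$ result (Lemma \ref{base case}) to the set of left endpoints $\{a_v\}$ of the off-ray cubes; the separation available there is only the one-sided bound $a_{n_k}-a^{\ast}\geq M^{-n_{k+2}}$, which forces a mod-$3$ thinning and the constant $6M$. You instead classify the off-ray siblings by side and by offset $\ell$ within the parent and anchor the sequence at the \emph{far} edges, which localizes $A_j-x^{\ast}$ two-sidedly in $(\ell M^{-h_j-1},(\ell+1)M^{-h_j-1}]$ and makes a mod-$2$ thinning suffice; your observations that the near edge fails for $\ell=1$ (it can be adjacent to $x^{\ast}$) and that the nesting $Q_{j+1}\subseteq v_{h_j+1}$ forces $A_{j+1}\leq\inf Q_j$ for every $\ell$ are exactly the right fixes. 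The trade-off is that your induction starts at the trivial $N=0$ case, so your inductive step must carry the content of Lemma \ref{base case} inline, whereas the paper modularizes it (and needs that lemma as its base case anyway). Two small points you should make explicit: (a) the covering sets produced by the induction hypothesis for $U\cap Q_j$ need not lie inside $Q_j$, so they must first be intersected with $Q_j$ (harmless by Lemma \ref{lacunarity under linear operations}(i)) before taking the union over $j$, or the trace on a single gap could pick up pieces coming from several values of $j$; and (b) a class with only finitely many siblings yields a finite sequence $\{A_j\}$, which must be extended to an infinite lacunary sequence converging to $x^{\ast}$ so that $\inf(A)=x^{\ast}$ and the boundary conditions of Definition \ref{defn: Lacunary sets} hold --- the same extension the paper leaves implicit.
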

The proof of this lemma will be presented later in this section. Assuming this, the proof of the proposition is completed as follows. 

\begin{proof}[\textit{Proof of Proposition~\ref{SublacunaryInfiniteSplit}}] We prove the contrapositive, starting with the assumption that 
\begin{equation} \label{contrapositive assumption} \sup_{\mathbb V} \sup_{W_{\Omega}} \sup_{\varphi} \text{split}(\mathcal{T}(\varphi(W_{\Omega} ;M))=N <\infty. \end{equation}  
Fix an arbitrary coordinate system $\varphi = (\mathbf a, \mathcal B)$ of $\mathbb V$ and let $\pi_j$ denote the projection maps defined in \eqref{projection map} with respect to this choice. For the remainder of this proof, we will assume that $\mathbb V$ is represented in these coordinates, so that $\pi_j$ may be thought of as the coordinate projections. Let $W = W_{\Omega}$ be an arbitrary relatively compact subset of $\mathcal C_{\Omega} \cap \mathbb V$. Since the tree encoding a set matches that of its closure, we may suppose without loss of generality that $W = W_{\Omega}$ is compact in $\mathbb V$. 

For any $1\leq j\leq d+1$, we create a subset $W_j \subseteq W$ that contains for every $x_j \in \pi_{j}(W)$ a unique point $x \in W$ with $\pi_j(x) = x_j$. For concreteness, $x$ could be chosen to be minimal in $\pi_j^{-1}(x_j) \cap W$ with respect to the lexicographic ordering. In other words, $\pi_j$ restricted to $W_j$ is a bijection onto $\pi_j(W)$. We claim that
\begin{equation} \label{pi_j and Omega_j}
\text{split}(\mathcal T(\pi_j(W);M)) \leq \text{split}(\mathcal{T}(W_j;M)).
\end{equation} 
Assuming this for the moment, we obtain from the hypothesis \eqref{contrapositive assumption} and Lemma \ref{monotone trees} that $\text{split}(\mathcal{T}(\pi_j (W);M)\leq \text{split}(\mathcal{T}(W;M)) \leq N$.  Applying Lemma~\ref{TreeLacunaryTraditional} to $U = \pi_j(W)$, we see that there is a constant $C$ (uniform in $\mathbb V$, $\varphi$, $j$ and $W$) such that the projections $\pi_j (W)$ can be covered by the $C$-fold union of one-dimensional lacunary sets of order $\leq N$ and lacunarity $\leq M^{-1}$.  Thus, $W = W_{\Omega}$ is admissible lacunary of order at most $N$ according to Definition \ref{defn : Finite order lacunarity general dimensions}. Hence $\Omega$ is admissible lacunary of finite order as a direction set by Definition \ref{definition of lacunary direction sets}. 

It remains to establish \eqref{pi_j and Omega_j}. Any infinite ray $R = R(x_j)$ in $\partial \mathcal T(\pi_j(W);M)$ corresponds to a point $x_j \in \pi_j(W)$. Let $R^{\ast} = R^{\ast}(x) \in \partial \mathcal T(W_j;M)$ denote the ray that represents $\pi_j^{-1}(x_j) = x$. This establishes a bijection between the collection of rays in the two trees. Let $v_0$ and $v_0^{\ast}$ denote the roots of the trees $\mathcal T(\pi_j(W);M)$ and $\mathcal T(W;M)$ respectively, so that $\pi_j(v_0^{\ast}) = v_0$. If $\mathcal S$ is a subtree of $\mathcal T(\pi_j(W);M)$ rooted at $v_0$, let us denote by $\mathcal S^{\ast}$ the subtree of  $\mathcal T(W_j;M)$ rooted at $v_0^{\ast}$ generated by all rays $R^{\ast}$ such that $R$ is a ray of $\mathcal S$.  It is clear that if a vertex $v$ on $R(x_j)$ splits in $\mathcal S$, then there are two points $x_j \ne x_j'$ in $\pi_j(W)$ lying in distinct children of $v$. This implies that $x = \pi_j^{-1}(x_j)$ and $x' = \pi_j^{-1}(x_j')$ lie in distinct children of $v^{\ast}$, which denotes the vertex of height $h(v)$ on $R^{\ast}(x)$.  This makes $v^{\ast}$ a splitting vertex of  $\mathcal S^{\ast}$. Thus every splitting vertex of $\mathcal S$ lying on $R$ generates a splitting vertex of $\mathcal S^{\ast}$ lying on $R^{\ast}$ at the same height.  As a result,  split$_{\mathcal S}(R) \leq \text{split}_{\mathcal S^{\ast}}(R^{\ast})$. Combining these facts with the definition of the splitting number of a tree, we obtain  
\begin{align*}
\text{split}(\mathcal T(\pi_j(W);M)) &= \max_{\mathcal S} \min_{R \in \partial \mathcal S} \text{split}_{\mathcal S}(R) \\ &\leq \max_{\mathcal S^{\ast}} \min_{R^{\ast} \in \partial \mathcal S^{\ast}} \text{split}_{\mathcal S^{\ast}}(R^{\ast}) \\ &\leq \text{split}(\mathcal T(W_j;M)). 
\end{align*}  
In view of Lemma \ref{monotonicity}, the maxima in the first and second lines above are taken over all subtrees $\mathcal S$ and $\mathcal S^{\ast}$ rooted at $v_0$ and $v_0^{\ast}$ respectively.  This completes the proof of  \eqref{pi_j and Omega_j} and hence of Proposition~\ref{SublacunaryInfiniteSplit}. 
\end{proof}

We now turn to the proof of the lemma on which the argument above was predicated. 

\begin{proof}[Proof of Lemma \ref{TreeLacunaryTraditional}]
We apply induction on $N$. The base case $N = 1$ will be treated momentarily in Lemma \ref{base case}. Proceeding to the induction step, 
let $R^{\ast}$ denote an infinite ray of the tree $\mathcal T = \mathcal T(U;M)$ that contains all the vertices $\{v^{\ast} : \text{split}_{\mathcal T}(v^{\ast}) = N  \}$. The existence of such a ray has been established in Lemma \ref{SplitsOnARay}.  For every vertex $v$ in $\mathcal T(U;M)$ which does not lie on $R^{\ast}$ but whose parent does, we define a set $U_{v}$ as follows: $\mathcal T_v = \mathcal T(U_v;M)$, where $\mathcal T_v$ denotes the maximal subtree of $\mathcal T$ rooted at $v$. The definition of the ray $R^{\ast}$ dictates that each $U_{v}$ has the property that split$(\mathcal T(U_v;M)) \leq N-1$. By the induction hypothesis, there exists a constant $C = C(N-1, M)$ such that each $U_v$ is covered by the $C$-fold union of sets in $\Lambda(N-1;M^{-1})$. The set $U$ can therefore be covered by the $C$-fold union of sets $U^{[i]}$, where each $U^{[i]}$ shares a tree structure similar to $U$: it contains the point identified by $R^{\ast}$, with the additional feature that now $U_v^{[i]} \in \Lambda(N-1;M^{-1})$ for every $v \in \mathcal V^{[i]}$, where \[\mathcal V^{[i]} := \bigl\{ v \in \mathcal T(U^{[i]}; M) : v  \notin R^{\ast}\text{ but parent of $v$ is in } R^{\ast} \bigr\}. \]

For every vertex $v \in \mathcal V^{[i]}$, let $a_v$ denote the left hand endpoint of the $M$-adic interval represented by $v$. The tree encoding the collection of points $A = \{ a_v : v \in \mathcal V^{[i]} \}$ contains the ray $R^{\ast}$; indeed the only splitting vertices of $\mathcal T(A;M)$ lie on $R^{\ast}$. Therefore split$(\mathcal T(A;M)) =1$. Hence, by Lemma \ref{base case}, $A$ is at most a $C$-fold union of monotone lacunary sequences with lacunarity $M^{-1}$, each converging  to the point identifying $R^{\ast}$. Let us continue to denote by $A$ one such monotone (say decreasing) sequence. If $a=a_v$ and $b$ are two successive elements of this sequence with $a < b$, then $U^{[i]} \cap [a,b) = U_v^{[i]}$, which is in $\Lambda(N-1;M^{-1})$. Thus $U^{[i]}$ is in $\Lambda(N;M^{-1})$ according to Definition \ref{defn: Lacunary sets}, completing the proof.
\end{proof} 

\begin{lemma} \label{base case}
Fix $M \geq 2$, and let $A \subseteq \mathbb R$ be a relatively compact set with the property that split$(\mathcal T(A;M)) = 1$. Then $A$ can be written as the union of at most $6M$ lacunary sequences (defined in Definition \ref{defn: Lacunary sequence in R}) each with lacunarity constant $\leq M^{-1}$. 
\end{lemma}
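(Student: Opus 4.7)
Proof plan: By Lemma \ref{SplitsOnARay}, after setting aside trivial cases (e.g.\ $|A|\le 1$), all splitting vertices of $\mathcal T(A;M)$ lie on a single ray $R^* = \langle v_0, v_1, v_2, \ldots\rangle$; we may assume $R^*$ is infinite, as the finite case is analogous. Let $d_k$ denote the digit for which $v_{k+1}$ is the $d_k$-th child of $v_k$, and set $\alpha = \sum_{k\geq 0} d_k M^{-k-1}$, the limit of $R^*$. Every vertex outside $R^*$ has splitting number zero, so each off-ray child of $v_k$ at some position $d \in \{0, \ldots, M-1\}\setminus\{d_k\}$ generates a non-splitting subtree, necessarily an infinite path, representing a single point $a_{k,d} \in A$. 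Hence $A \setminus \{\alpha\}$ is the disjoint union of these $a_{k,d}$. Writing $b = d - d_k$ for the signed offset, a direct $M$-adic expansion yields
\[
a_{k,d} - \alpha \;=\; b\, M^{-k-1} \;+\; \big(\text{correction } \in (-M^{-k-1}, M^{-k-1})\big),
\]
so that $|a_{k,d}-\alpha| \in \big((|b|-1)M^{-k-1},\,(|b|+1)M^{-k-1}\big)$.

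For $|b| \geq 2$ this immediately places $a_{k,d}$ in the band $B_k^\pm := \{x \in A : \pm(x-\alpha) \in [M^{-k-1}, M^{-k})\}$, contributing at most $M-2$ points per sign per level. The delicate case is $|b|=1$, where $|a_{k,d}-\alpha|$ may be arbitrarily small. The key observation is as follows. Enumerate the heights at which $b=+1$ is available (i.e.\ $d_k \leq M-2$) as $k_1 < k_2 < \cdots$, and set $\rho_k := \sum_{j>k} d_j M^{-j-1}$. The restriction $d_{k_{i+1}} \leq M-2$ forces
\[
\rho_{k_i} \;\leq\; M^{-k_i-1} - M^{-k_{i+1}-1}, \qquad \text{hence} \qquad a_{k_i, d_{k_i}+1} - \alpha \;\geq\; M^{-k_{i+1}-1}.
\]
Combined with the upper bound $a_{k_i, d_{k_i}+1} - \alpha < 2M^{-k_i-1}$, this confines the $b=+1$ point at height $k_i$ to bands $B_j^+$ with $k_i \leq j \leq k_{i+1}$, and an elementary index count shows that each fixed band $B_j^+$ receives at most two such points. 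A symmetric analysis governs $b=-1$, so $|B_k^+|, |B_k^-| \leq (M-2)+2 = M$ for every $k$.

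To assemble the lacunary sequences, partition $A \setminus \{\alpha\}$ into four classes according to the sign of $a-\alpha$ and the parity of its band index, noting that within each class the bands occurring are separated by at least two in index. Labelling the $\leq M$ points of each band by slots $1, \ldots, M$ and collecting slot-$s$ representatives across all bands in the class yields, for each class and each $s$, a sequence with at most one element per band. If $a^{(i)} \in B_{j_i}^\pm$ (with $j_0 < j_1 < \cdots$, successive $j$'s differing by at least $2$), then
\[
\frac{|a^{(i+1)}-\alpha|}{|a^{(i)}-\alpha|} \;<\; \frac{M^{-j_{i+1}}}{M^{-j_i-1}} \;=\; M^{\,j_i+1-j_{i+1}} \;\leq\; M^{-1},
\]
so each such sequence is lacunary with constant $\leq M^{-1}$. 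This produces $4M$ sequences, and a few further singleton sequences absorb $\alpha$ itself (when $\alpha \in A$) and the finite-$R^*$ edge case, keeping the count below $6M$. The principal obstacle is the band-counting estimate $|B_k^\pm| \leq M$, which hinges on the self-limiting mechanism described above preventing $\rho_{k_i}$ from approaching $M^{-k_i-1}$ at consecutive $b=+1$ heights; once that is in hand, the remainder is a routine ratio computation.
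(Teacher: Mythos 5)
Your proof is correct, and it organizes the argument differently from the paper's. Both arguments start from Lemma \ref{SplitsOnARay} and the observation that each off-ray child of a ray vertex carries a non-splitting subtree, hence a single point. From there the paper classifies points by \emph{which} child of $v_k$ they hang off (an index $i\in\mathbb Z_M$), so that each class $A_{i\pm}$ has at most one point per level; it proves the separation estimate $a_{n_k}-a^{\ast}\ge M^{-n_{k+2}}$ by a geometric argument about $M$-adic intervals not sharing endpoints, and then extracts lacunary sequences by taking the occupied levels mod $3$, giving $2\times M\times 3=6M$. You instead classify points by the \emph{magnitude} of $|a-\alpha|$ into dyadic bands, isolate the only delicate case $|b|=1$, and control it through the digit expansion of $\alpha$: your inequality $a_{k_i,d_{k_i}+1}-\alpha\ge M^{-k_{i+1}-1}$ is the exact analogue of the paper's \eqref{distance calculation}, with the next \emph{available} branching height playing the role of $n_{k+2}$. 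The assembly is then (sign) $\times$ (band parity) $\times$ (slot within band), giving $4M$ sequences plus singletons; the band-occupancy bound $|B_k^{\pm}|\le M$ is the real content and your two-indices count $\{i: k_i\le j\le k_{i+1}\}$ is right. What your version buys is that the separation issue is confined to the adjacent-child points, while the far points ($|b|\ge 2$) are trivially banded; the paper's version avoids any digit computation by working per child-index class. One point you should make explicit: the bound $\rho_{k_i}\le M^{-k_i-1}-M^{-k_{i+1}-1}$ requires $k_{i+1}$ to exist, so when the digits $d_j$ of $\alpha$ are eventually all $M-1$ (resp.\ all $0$) there is one terminal $b=+1$ (resp.\ $b=-1$) point whose distance to $\alpha$ is not controlled by this mechanism; it must be absorbed as a singleton alongside $\alpha$ and the finite-$R^{\ast}$ case, which your slack $6M-4M=2M\ge 4$ accommodates but which deserves a sentence.
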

\begin{proof}
The argument here closely follows the line of reasoning in \cite[Remark 2, page 60]{Bateman}. By Lemma \ref{SplitsOnARay}, there is a ray $R^{\ast}$ in $\mathcal T(A;M)$ of infinite length such that all the splitting vertices of $\mathcal T(A;M)$ lie on it. The ray $R^{\ast}$ uniquely identifies a point in $\mathbb R$, say $a^{\ast} = \alpha(R^{\ast})$. Any ray that is not $R^{\ast}$ but is rooted at a vertex of $R^{\ast}$ is therefore non-splitting. Thus for every $j = 0, 1, 2, \cdots$ there exists at most $M-1$ rays $R_j$ in $\mathcal T(A;M)$ whose $M$-adic distance from $R^{\ast}$ is $j$. In other words, if $a_j = \alpha(R_j)$ is the point in $A$ identified by $R_j$, then there are at most $M-1$ distinct points $a_j \ne a^{\ast}$ such that 
\begin{equation} \label{Madic distance}h(D(a^{\ast}, a_j)) =  h(D(\alpha(R^{\ast}), \alpha(R_j) )) = j.  \end{equation}  

We define two subsets $A_{\pm}$ of $A$, containing respectively points $a \geq a^{\ast}$ and $a \leq a^{\ast}$. This decomposes $\mathcal T(A;M)$ into two subtrees $\mathcal T(A_{\pm};M)$. Let us focus on $\mathcal T(A_{+};M)$, the treatment for the other tree being identical. We decompose $\mathcal T(A_{+};M)$ as the union of at most $M$ trees $\mathcal T(A_{i+};M)$, $i \in \mathbb Z_M$, constructed as follows. The tree  $\mathcal T(A_{i+};M)$ contains the ray $R^{\ast}$, and for every vertex $v$ in $R^{\ast}$ the ray in $\mathcal T(A_{+};M)$, if any, descended from the $i$th child of $v$. In view of the discussion in the preceding paragraph, if there exists an integer $j$ for which a ray $R_j$ in $\mathcal T(A_{i+};M)$ obeys \eqref{Madic distance}, then such a ray must be unique. 

We now fix $i \in \mathbb Z_M$ and proceed to cover $A_{i+}$ by a threefold union of lacunary sequences converging to $a^{\ast}$. Let $\{ n_1 < n_2 < \cdots \}$ be the subsequence of integers with the property that $R_j \in \mathcal T(A_{i+};M)$ if and only if $j = n_k$ for some $k$. The important observation is that if $n_{k+2}$ is a member of this subsequence, then 
\begin{equation} \label{distance calculation} 
a_{n_k} - a^{\ast} \geq \frac{1}{M^{n_{k+2}}}. 
\end{equation}  
We will return to the proof of this statement in a moment, but a consequence of it and \eqref{Madic distance} is that for any $k \geq 0$ and fixed $\ell = 0,1,2$, 
\[ a_{n_{3(k+1)+\ell}} - a^{\ast} \leq M^{-n_{3k+3 + \ell}} = M^{-n_{3k+3 + \ell} + n_{3k+2 + \ell}} M^{-n_{3k+2+\ell}} \leq M^{-1} (a_{n_{3k+\ell}} - a^{\ast}). \]
Thus for every fixed $\ell = 0,1,2$, the sequence $\mathfrak A_{\ell} = \{ a_{n_{3k + \ell}} : k \geq 0 \}$ is covered by a lacunary sequence with constant $\leq M^{-1}$ converging to $a^{\ast}$. Since $A_{i+}$ is the union of $\{\mathfrak A_\ell : \ell = 0, 1,2\}$, the result follows.  

It remains to settle \eqref{distance calculation}, which is best explained by Figure \ref{Fig:lacunary sequence separation}.
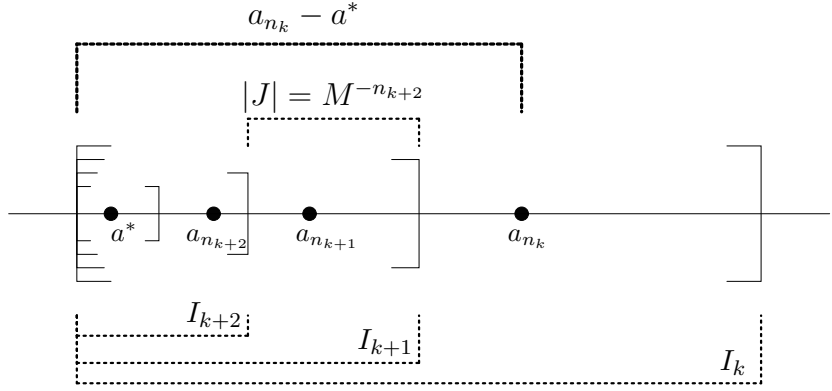
\begin{figure}[h!]
\setlength{\unitlength}{0.9mm}
\begin{picture}(0,0)(-30,20)

        \put(5,0){\special{sh 0.99}\ellipse{2}{2}}
        \put(20,0){\special{sh 0.99}\ellipse{2}{2}}
        \put(34,0){\special{sh 0.99}\ellipse{2}{2}}
        \put(65,0){\special{sh 0.99}\ellipse{2}{2}}
        \path(-10,0)(110,0)
        \path(5,10)(0,10)(0,-10)(5,-10)        \path(4,8)(0,8)(0,-8)(4,-8)        \path(3,6)(0,6)(0,-6)(3,-6)
        \path(2,4)(0,4)(0,-4)(2,-4)

        \path(95,10)(100,10)(100,-10)(95,-10)
        \path(46,8)(50,8)(50,-8)(46,-8)
        \path(22,6)(25,6)(25,-6)(22,-6)        \path(10,4)(12,4)(12,-4)(10,-4)        \put(5,-4){\small\shortstack{$a^*$}}
        \put(16,-4){\small\shortstack{$a_{n_{k+2}}$}}
        \put(32,-4){\small\shortstack{$a_{n_{k+1}}$}}
        \put(63,-4){\small\shortstack{$a_{n_{k}}$}}
        \put(24,16){\large\shortstack{$|J| = M^{-n_{k+2}}$}}
        \put(25,28){\large\shortstack{$a_{n_k}-a^*$}}
        \put(16,-16){\shortstack{$I_{k+2}$}}
        \put(41,-20){\shortstack{$I_{k+1}$}}
        \put(94,-23){\shortstack{$I_k$}}
\allinethickness{.3mm}\dottedline{1}(25,10)(25,14)(50,14)(50,10)
        \allinethickness{.4mm}\dottedline{1}(0,15)(0,25)(65,25)(65,15)  
        \allinethickness{.3mm}\dottedline{1}(0,-15)(0,-25)(100,-25)(100,-15)
        \allinethickness{.3mm}\dottedline{1}(0,-15)(0,-22)(50,-22)(50,-15)
        \allinethickness{.3mm}\dottedline{1}(0,-15)(0,-18)(25,-18)(25,-15)


\end{picture}
\vspace{4.5cm}
\caption{\label{Fig:lacunary sequence separation} A figure explaining inequality \eqref{distance calculation} when $M=2$ and $n_k = k$.}
\end{figure}
If $I_j$ is the $M$-adic interval of length $M^{-n_j}$ containing $a^{\ast}$, then $I_{k+2}$ cannot share a right endpoint with $I_{k+1}$, since this would prevent the existence of a point $a_{n_{k+1}} \geq a^{\ast}$ obeying \eqref{Madic distance} with $j = n_{k+1}$. Thus $a^{\ast}$ (in $I_{k+2}$) and $a_{n_k}$ (which is to the right of $I_{k+1}$) must lie on opposite sides of $J$, the rightmost $M$-adic subinterval of length $M^{-n_{k+2}}$ in $I_{k+1}$. This implies $a_{n_k} - a^{\ast} \geq |J|$, which is the conclusion of \eqref{distance calculation}.  
\end{proof} 

\section{Pruning of the slope tree} \label{Slope tree pruning section}

We now fix a base integer $M \geq 2$ and a sublacunary direction set $\Omega \subseteq \mathbb R^{d+1}$ (obeying the conclusion of Proposition \ref{SublacunaryInfiniteSplit}), and turn our attention to the proof of Proposition~\ref{InfiniteSplitKakeya}. We will also fix an absolute constant $C_0 \geq 1$, which will remain unchanged for the rest of the proof, and whose value will be specified later ($C_0=10$ will do). Given any integer $N$ however large, Proposition~\ref{SublacunaryInfiniteSplit} (see \eqref{split > N}) supplies a hyperplane $\mathbb V_N$ at unit distance from the origin, a coordinate system $\varphi_N$ on $\mathbb V_N$, and a relatively compact subset $W_N \subseteq \mathcal C_{\Omega} \cap \mathbb V_N$ for which split$(\mathcal T(\varphi_N(W_N);M)) > (N+1)(2C_0+1)^d$. The choice of $N$, and hence $\mathbb V_N$, $W_N$ and $\varphi_N$ will stay fixed during the analysis in Sections \ref{general facts about tube families}-\ref{LowerBoundSection}. The existence of Kakeya-type sets, which is the goal of Proposition \ref{InfiniteSplitKakeya}, relies on the ability to conduct this analysis for arbitrarily large $N$. The constant $C_0$, on the other hand, does not change with $N$. 

Without loss of generality we will assume that $\mathbb V_N = \{1\} \times \mathbb R^d$ and that $\varphi_N$ is the ambient coordinate system in $\mathbb V_N$ (and hence in all hyperplanes parallel to $\mathbb V_N$). The use of $\varphi_N$ will be dropped in the sequel, and we will simply write split$(\mathcal T(W_N;M)) > (N+1)(2C_0+1)^d$. We will also assume that $W_N \subseteq \{1\} \times [0,1)^d$; indeed if $W_N \subseteq \{1\} \times [0, M^L)^d$ for some large $L$, then we scale by a factor of $M^{-L}$ in directions perpendicular to $e_1 = (1, 0, \cdots, 0)$, leaving the direction $e_1$ unchanged. The tree corresponding to the scaled version of $W_N$ has the same splitting number as the original tree. Further, a union $E_N$ of tubes pointing in the scaled directions can be rescaled back to tubes with orientations in $W_N$, with the ratio $|E_N^{\ast}|/|E_N|$ (as explained in \eqref{Kakeya-type condition}) unchanged. From this point onwards, our direction set will be an appropriately chosen subset of $W_N \subseteq \{1\} \times [0,1)^d$ for a fixed $N$.  We rename $W_N$ as $\Omega$, since this will not cause any confusion in the sequel. 

An important preparatory step in the construction of Kakeya-type sets is the extraction of a subset of the direction set $\Omega$, whose representative tree with respect to base $M$ enjoys special structural properties, in terms of $M$-adic and Euclidean distance between certain vertices. The essential features of this trimming process and the modified direction set are summarized  below in the main result of this section. 

\begin{proposition}\label{pruning stage 1}
	Let $M \geq 2$ be a base integer, $C_0 \geq 1$ a fixed constant, and $N \gg 1$ a large parameter as described above. Let $\Omega \subseteq \{1\} \times [0,1)^d$ be a direction set obeying the hypothesis split$(\mathcal T(\Omega;M)) > (N+1)(2C_0+1)^d$. Then there exist 
\begin{itemize} 
\item a finite subset $\Omega_N \subseteq \Omega$ of cardinality $2^N$, and 
\item an integer $J=J(\Omega, N) \geq N$
\end{itemize} 
 such that the following properties hold for the tree $\mathcal T_J(\Omega_N;M)$ of height $J$ encoding $\Omega_N$:
	\begin{enumerate}[(i)]
	\item \label{N split per ray} Every ray in $\mathcal T_J(\Omega_N;M)$ splits exactly $N$ times. 
	\item \label{two children per split} Every splitting vertex in $\mathcal T_J(\Omega_N;M)$ has exactly two children.
        \item \label{Euclidean separation} Let $v$ be any splitting vertex of $\mathcal T_J(\Omega_N;M)$ and let $w_1, w_2$ be its two children as specified by part (\ref{two children per split}). If $v_i \subseteq w_i$ denotes the first splitting descendant of $w_i$ for $i=1,2$, then the Euclidean distance between the cubes $v_1$ and $v_2$ is at least $C_0 M^{-h}$, where $h = \min \{h(v_i): i=1,2\}$.  
	\end{enumerate}
The integer $J$ can be chosen to ensure that the following additional condition is met: 
\begin{enumerate}[(iv)]
\item \label{Defn of J} $C_0 M^{-J} \leq \min\{ |\omega - \omega'| : \omega \ne \omega', \; \omega, \omega' \in \Omega_N \}$. 
\end{enumerate}
\end{proposition}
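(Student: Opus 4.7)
My plan is to proceed in three stages. First, using the hypothesis $\text{split}(\mathcal{T}(\Omega;M)) > (N+1)(2C_0+1)^d$, the definition \eqref{splitting vertex} applied at the root of $\mathcal{T}(\Omega;M)$, and Lemma~\ref{monotonicity}, I would extract a subtree $\mathcal{R}\subseteq\mathcal{T}(\Omega;M)$ rooted at the same root in which every maximal ray carries at least $(N+1)(2C_0+1)^d$ splitting vertices. This step is essentially tautological and fixes the ``splitting budget'' to be spent in the subsequent stages.

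In the second stage I would construct a binary subtree $\mathcal{S}\subseteq\mathcal{R}$ that simultaneously secures (i), (ii) and (iii), by induction on $N$. The inductive statement is: if $v$ is a vertex of a subtree $\mathcal{R}'\subseteq\mathcal{R}$ in which every ray beyond $v$ splits at least $K(2C_0+1)^d$ times, then the portion of $\mathcal{R}'$ below $v$ admits a binary pruning with exactly $K$ splits per ray and with (iii) holding at every splitting vertex. The inductive step hinges on a pigeonhole--geometric selection: letting $v^{\star}\subseteq v$ be the first splitting descendant of $v$ in $\mathcal{R}'$, I fix the minimal integer $L$ with $M^L\geq 2C_0+1$ and partition the cube $v^{\star}$ into $M^{dL}$ sub-cubes of side $M^{-h(v^{\star})-L}$, further grouped into clusters forming $(2C_0+1)^d$-blocks, so that sub-cubes in different clusters are at Euclidean distance at least $C_0 M^{-h(v^{\star})-L}$. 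Either two splitting-rich descendants of $v^{\star}$ in $\mathcal{R}'$ already live in distinct clusters, in which case they become our candidates for $v_1, v_2$; or they all cluster into a single block, in which case one recurses within that smaller cube, paying one factor of $(2C_0+1)^d$ from the available splitting budget. Since the budget is finite and each recursion strictly refines the region under consideration, the procedure terminates with a valid pair $(v_1, v_2)$, each carrying a residual splitting budget of $(K-1)(2C_0+1)^d$. Applying the inductive hypothesis below $v_1$ and $v_2$ completes the step. Iterating $N$ levels delivers the desired $\mathcal{S}$, whose $2^N$ maximal rays specify the $2^N$-element set $\Omega_N\subseteq\Omega$.

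For the third stage, I would verify (iv) by enlarging $J$. Since the $2^N$ points of $\Omega_N$ are distinct, $\delta := \min\{|\omega-\omega'| : \omega\neq\omega',\;\omega,\omega'\in\Omega_N\}>0$, so any integer $J$ exceeding the maximum height of a splitting vertex in $\mathcal{S}$ and satisfying $M^{-J}\leq \delta/C_0$ will do; truncation at such $J$ preserves (i)--(iii) since those properties only concern the splitting vertices of $\mathcal{S}$, all of which sit at strictly smaller heights. The principal obstacle in this plan is the pigeonhole step in the recursive construction: one must track the splitting budget carefully both during the selection of $(v_1, v_2)$ and during the possible recursive descent when all splitting is initially concentrated in a single cluster, and argue that the multiplicative buffer $(2C_0+1)^d$ in the hypothesis is precisely what is needed to fund both operations at each of the $N$ levels, independently of the arithmetic relationship between $M$ and $2C_0+1$.
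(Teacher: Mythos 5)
Your overall architecture (extract a subtree in which every ray splits at least $(N+1)(2C_0+1)^d$ times, iterate a two-child selection step $N$ times, then enlarge $J$ to secure (iv)) is the same as the paper's, and your first and third stages are fine. The gap is inside the building block. First, the geometric setup is broken: you cannot partition the $M^{dL}$ subcubes of $v^{\star}$ into clusters of $(2C_0+1)^d$ subcubes so that subcubes in \emph{different} clusters are at distance $\geq C_0 M^{-h(v^{\star})-L}$ --- two subcubes straddling a cluster boundary are adjacent, hence at distance $0$. The correct tool (the paper's nonadjacency lemma) is a pigeonhole on \emph{cardinality}, not on cluster membership: a fixed $M$-adic cube of sidelength $M^{-r}$ admits at most $(2C_0+1)^d$ cubes of the same sidelength within distance $C_0M^{-r}$ of itself, so any family of more than $(2C_0+1)^d$ such cubes contains two that are $C_0M^{-r}$-separated. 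The dichotomy driving the descent should therefore be ``at the current scale the number of occupied cubes is $\leq (2C_0+1)^d$ (descend further) versus $> (2C_0+1)^d$ (pigeonhole yields the separated pair)'', not ``all in one cluster versus not''.

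Second, and more seriously, your budget accounting does not close. You charge $(2C_0+1)^d$ of the splitting budget \emph{per recursion step}; if the descent recurses $r\geq 2$ times before a separated pair appears, the cost is $r(2C_0+1)^d$, which overdraws the single factor of $(2C_0+1)^d$ allotted to this level of the induction, and nothing in your argument bounds $r$. The point you are missing --- and the actual content of the paper's building-block lemma --- is that the cost is not additive over recursion steps. If $k$ is the \emph{first} height at which the number of occupied vertices exceeds $(2C_0+1)^d$, then any ray down to height $k-1$ contains at most $(2C_0+1)^d-1$ splitting vertices of height $\leq k-2$, because each such split forces a new, distinct vertex at height $k-1$ and by minimality of $k$ there are at most $(2C_0+1)^d$ vertices at that height. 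Hence the total number of splits consumed over the entire descent, however long, is at most $(2C_0+1)^d$, and each of $v_1, v_2$ retains a budget of at least $(K-1)(2C_0+1)^d$. Without this cardinality-at-the-penultimate-level argument your induction does not go through.
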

The pruning process leading to the outcome claimed in the proposition is based on an iterative algorithm. The building block of the iteration is contained in Lemma \ref{Pruning building block} below, with Lemma \ref{nonadjacency} supplying an easy but necessary intermediate step. 
\begin{lemma} \label{nonadjacency}
Fix integers $r \geq 0$ and $C_0 \geq 1$. A collection of cubes of cardinality $\geq (2C_0 + 1)^d + 1$ consisting of $M$-adic cubes of sidelength $M^{-r}$ and must contain at least two cubes whose Euclidean separation is $\geq C_0M^{-r}$.  
\end{lemma}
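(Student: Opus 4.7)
The plan is to argue by contradiction, or equivalently by a direct pigeonhole-style counting argument. Suppose the collection $\mathcal{Q}$ has the property that every pair of cubes is at Euclidean distance strictly less than $C_0 M^{-r}$. I will pick any one cube $Q_0 \in \mathcal{Q}$ as a reference and show that the number of $M$-adic cubes of sidelength $M^{-r}$ lying within distance $C_0 M^{-r}$ of $Q_0$ is at most $(2C_0+1)^d$, which bounds $|\mathcal{Q}|$ and contradicts the hypothesis $|\mathcal{Q}| \geq (2C_0+1)^d + 1$.

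To execute the count, I would parametrize $M$-adic cubes of sidelength $M^{-r}$ by their integer index. After translating so that $Q_0$ corresponds to the origin, any such cube $Q_{\mathbf{k}}$ with $\mathbf{k} = (k_1, \ldots, k_d) \in \mathbb{Z}^d$ has lower-left corner $M^{-r}\mathbf{k}$, and a direct computation gives
\[ \mathrm{dist}(Q_0, Q_{\mathbf{k}}) = M^{-r}\sqrt{\sum_{i=1}^d \max(0, |k_i|-1)^2}. \]
The strict inequality $\mathrm{dist}(Q_0, Q_{\mathbf{k}}) < C_0 M^{-r}$ then forces $\max(0, |k_i|-1) \leq C_0 - 1$ for each $i$ (since otherwise even the single term $(|k_i|-1)^2$ would be at least $C_0^2$), hence $|k_i| \leq C_0$. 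So $\mathbf{k}$ lies in the grid $\{-C_0, -C_0+1, \ldots, C_0\}^d$, which has exactly $(2C_0+1)^d$ elements. Since $Q_0$ corresponds to $\mathbf{k}=0$, the total number of admissible cubes, including $Q_0$ itself, is at most $(2C_0+1)^d$.

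This contradicts $|\mathcal{Q}| \geq (2C_0+1)^d + 1$ and completes the argument. There is no real obstacle here; the only minor subtlety worth being careful about is the exact form of the coordinate-wise distance formula (and the strict versus non-strict inequality), which is what pins down the grid size at $2C_0+1$ per coordinate rather than $2C_0$ or $2C_0+2$. The bound is clearly sharp: for instance, the $(2C_0+1)^d$ cubes indexed by $\mathbf{k} \in \{-C_0, \ldots, C_0\}^d$ themselves form a configuration in which every pair is at Euclidean distance at most $(C_0-1)\sqrt{d}\, M^{-r}$ from some fixed one of them, so one cannot hope to replace $(2C_0+1)^d + 1$ by a smaller number.
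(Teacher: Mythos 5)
Your proof is correct and is essentially the same pigeonhole argument as the paper's: fix one cube, translate it to the centre, and observe that every $M$-adic cube of the same sidelength within distance $<C_0M^{-r}$ of it must lie in the $(2C_0+1)^d$-cube grid around it (the paper phrases this as the central unit subcube of $[0,2C_0+1)^d$ being at distance $\geq C_0$ from the boundary, then rescales). One caveat on your closing aside: the configuration $\mathbf{k}\in\{-C_0,\dots,C_0\}^d$ does \emph{not} show sharpness, since it only controls distances to the central cube, while two extreme cubes in that grid (e.g.\ $\mathbf k=(C_0,\dots,C_0)$ and $-\mathbf k$) are at distance $(2C_0-1)\sqrt{d}\,M^{-r}\geq C_0M^{-r}$ from each other; the constant $(2C_0+1)^d+1$ is convenient for the paper's purposes but is not claimed, and is not, optimal.
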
 
\begin{proof}
We first treat the case $r=0$. The cube $Q_0 = [0, 2C_0+1)^d$ contains exactly $(2C_0+1)^d$ subcubes of unit sidelength with vertices in $\mathbb Z^d$.  The central subcube $Q$ maintains a minimum distance of $C_0$ from the boundary of $Q_0$. Rephrasing this after a translation, any cube $Q$ with vertices in $\mathbb Z^d$ and of sidelength 1 admits at most $(2C_0+1)^d$ similar cubes whose distance from itself is $\leq C_0$. The case of a general $r \geq 0$ follows by scaling $Q_0$ by a factor of $M^{-r}$.    
\end{proof}
\begin{lemma} \label{Pruning building block}
Fix a constant integer $C_0 \geq 1$, an integer $N_0 \geq (2C_0+1)^d$ and a vertex $v_0$ of the full $M^d$-adic tree $\mathcal T(\{1\} \times [0,1)^d; M)$. Let $\mathcal T_{[0]}$ rooted at $v_0$ be a subtree with the property that every ray in $\mathcal T_{[0]}$ splits at least $N_0$ times. Then there exist an integer $k \geq 1$ and a subtree $\mathcal T_{[1]}$ of $\mathcal T_{[0]}$ rooted at $v_0$ and of height $k$ such that: 
\begin{enumerate}[(i)]
\item \label{nonadjacency - two descendants} The root $v_0$ has exactly two descendants $v_1$ and $v_2$ of height $k$ in $\mathcal T_1$. 
\item \label{nonadjacency - Euclidean separation} The Euclidean separation between the cubes $v_1$ and $v_2$ is given by dist$(v_1, v_2) \geq C_0 M^{-k}$. 
\item \label{nonadjacency - remaining splits} If $\mathcal T_{[0]}(v_i)$ is the maximal subtree of $\mathcal T_{[0]}$ rooted at $v_i$ then each ray in $\mathcal T_{[0]}(v_i)$ splits at least $N_0 - (2C_0+1)^d$ times. 
\end{enumerate}    
\end{lemma}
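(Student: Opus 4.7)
The plan is to descend level by level from $v_0$ within $\mathcal{T}_{[0]}$ and stop at the first level whose cross-section first exceeds $(2C_0+1)^d$. For each $k \geq h(v_0)$, let $S_k \subseteq \mathcal{T}_{[0]}$ denote the set of descendants of $v_0$ at absolute height $k$, and set $m_k := |S_k|$. Since each maximal ray of $\mathcal{T}_{[0]}$ rooted at $v_0$ encounters $\geq N_0 \geq (2C_0+1)^d$ splitting vertices, an easy binary-tree count shows there are $\geq 2^{N_0} \geq (2C_0+1)^d+1$ distinct such rays, so $m_k$ must eventually exceed $(2C_0+1)^d$. Let $k^{\ast}$ be the smallest $k$ with $m_{k^{\ast}} \geq (2C_0+1)^d+1$. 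Because $S_{k^{\ast}}$ is then a collection of more than $(2C_0+1)^d$ distinct $M$-adic cubes of sidelength $M^{-k^{\ast}}$, Lemma~\ref{nonadjacency} produces two cubes $v_1, v_2 \in S_{k^{\ast}}$ with $\mathrm{dist}(v_1, v_2) \geq C_0 M^{-k^{\ast}}$. I take $\mathcal{T}_{[1]}$ to be the subtree of $\mathcal{T}_{[0]}$ rooted at $v_0$ obtained by retaining only the two paths from $v_0$ to $v_1$ and from $v_0$ to $v_2$; conditions (i) and (ii) are then immediate by construction.

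The heart of the argument is verifying (iii), which I reduce to bounding by $(2C_0+1)^d$ the number of splitting vertices of $\mathcal{T}_{[0]}$ on the path from $v_0$ to each $v_i$. For this I prove by joint induction on $k$, in the range $h(v_0) \leq k \leq k^{\ast} - 1$, the two claims that (a) no vertex of $S_k$ is a leaf of $\mathcal{T}_{[0]}$, and (b) for every $v \in S_k$ the number of splitting vertices of $\mathcal{T}_{[0]}$ on the path from $v_0$ to $v$, excluding $v$, is at most $m_k - 1$. The base case $k = h(v_0)$ is trivial. In the inductive step, claim (a) at level $k$ guarantees that each vertex of $S_k$ has at least one child in $\mathcal{T}_{[0]}$, which forces $m_{k+1} \geq m_k$ with strict increase whenever some vertex of $S_k$ splits; this absorbs the possible unit increment to the split count when passing from a parent $u \in S_k$ to a child $v \in S_{k+1}$ and secures (b) at level $k+1$. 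To re-establish (a) at level $k+1 \leq k^{\ast}-1$, any leaf $\ell \in S_{k+1}$ would carry $\geq N_0$ splits on its path from $v_0$, while (b) just-proven bounds this by $m_{k+1} - 1 \leq (2C_0+1)^d - 1 \leq N_0 - 1$ (using minimality of $k^{\ast}$ to control $m_{k+1}$), a contradiction that closes the induction.

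Applying (b) at level $k^{\ast}-1$ and accounting for one possible extra split at the parent gives that the path from $v_0$ to each $v_i \in S_{k^{\ast}}$ carries at most $(2C_0+1)^d$ splits of $\mathcal{T}_{[0]}$. Any ray of $\mathcal{T}_{[0]}(v_i)$ extends a ray of $\mathcal{T}_{[0]}$ through $v_i$ with $\geq N_0$ splits in total, at most $(2C_0+1)^d$ of which lie before $v_i$, so at least $N_0 - (2C_0+1)^d$ splits lie inside $\mathcal{T}_{[0]}(v_i)$; this is (iii). The possibility that $v_i$ itself is a leaf of $\mathcal{T}_{[0]}$ can only occur in the borderline case $N_0 = (2C_0+1)^d$, in which case (iii) is vacuously true.

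The principal obstacle is setting up the joint induction: recognizing that the bookkeeping inequality (b) and the leaf-avoidance (a) must be carried in tandem via the minimality of $k^{\ast}$, and that each split along an individual descending path is necessarily paid for by a unit increment in the total cross-section $m_k$. Once this trade-off is correctly identified, the geometric conclusion in (ii) drops out directly from Lemma~\ref{nonadjacency} applied at level $k^{\ast}$, with no additional surgery required.
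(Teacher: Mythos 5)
Your proposal is correct and follows essentially the same route as the paper: choose $k$ to be the first height at which the cross-section of $\mathcal{T}_{[0]}$ exceeds $(2C_0+1)^d$, invoke Lemma~\ref{nonadjacency} at that level, and bound the number of splits above height $k$ by the size of the cross-section at height $k-1$. Your joint induction on $(m_k$, leaf-avoidance$)$ is simply a more formal rendering of the paper's one-line injection argument that each splitting vertex of height $\leq k-2$ contributes a new vertex at height $k-1$.
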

\begin{proof}
Each ray in $\mathcal T_{[0]}$ splits at least $N_0$ times, so there exists a generation in this tree consisting of at least $2^{N_0}$ vertices. Since $2^{N_0}\gg (2C_0 + 1)^d$, let us define $k$ to be the smallest height in $\mathcal T_{[0]}$ such that the number of vertices at that height exceeds $(2C_0+1)^d$. By Lemma \ref{nonadjacency}, there exist vertices $v_1$ and $v_2$ at height $k$ such that dist$(v_1, v_2) \geq C_0 M^{-k}$. The subtree $\mathcal T_1$ of height $k$ rooted at $v_0$ and generated by $v_1$, $v_2$ clearly obeys conditions (\ref{nonadjacency - two descendants}) and (\ref{nonadjacency - Euclidean separation}) stated in Lemma \ref{Pruning building block}. To complete the proof, let us recall that the number of elements of $\mathcal T_{[0]}$ at height $k-1$ is $\leq (2C_0+1)^d$. Thus any ray of $\mathcal T_{[0]}$ rooted at $v_0$ contains at most $(2C_0 + 1)^d-1$ splitting vertices of height $\leq k-2$, since each splitting vertex of height $\leq k-2$ gives rise to at least one new element (different among themselves and distinct from the terminating vertex of the ray) at height $k-1$. Since every ray of $\mathcal T_{[0]}$ contained at least $N_0$ splitting vertices to begin with, at most $(2C_0+1)^d$ of which may be lost by height $k-1$, we are left with at least $N_0 - (2C_0 + 1)^d$ splitting vertices per ray rooted at $v_i$, which is the conclusion claimed in (\ref{nonadjacency - remaining splits}). 
\end{proof} 

\begin{figure}[h!]
\setlength{\unitlength}{0.7mm}
\begin{picture}(-50,0)(-145,10)

	\path(-70,10)(-70,-30)(-110,-30)(-110,10)(-70,10)
	\path(-90,10)(-90,-30)
	\path(-70,-10)(-110,-10)
	\put(-91,13){\Large\shortstack{$v$}}
	\setlength\fboxsep{0pt}
	\put(-110,-10){\colorbox{gray!40}{\framebox(20,20){\Large$a$}}}
	\put(-90,-10){\colorbox{gray!40}{\framebox(20,20){\Large$b$}}}
	\put(-90,-32){\vector(0,-1){10}}

	\path(-60,-45)(-60,-75)(-120,-75)(-120,-45)(-60,-45)
	\path(-90,-45)(-90,-75)
	\path(-75,-45)(-75,-75)
	\path(-105,-45)(-105,-75)
	\path(-60,-60)(-120,-60)
	\put(-128,-62){\Large\shortstack{$a$}}
	\put(-55,-62){\Large\shortstack{$b$}}
	\put(-105,-60){\colorbox{gray!40}{\framebox(15,15){\Large$a_1$}}}
	\put(-90,-60){\colorbox{gray!40}{\framebox(15,15){\Large$b_1$}}}
	\put(-105,-75){\colorbox{gray!40}{\framebox(15,15){\Large$a_2$}}}
	\put(-90,-75){\colorbox{gray!40}{\framebox(15,15){\Large$b_2$}}}
	\put(-90,-77){\vector(0,-1){10}}

	\path(-70,-90)(-70,-130)(-110,-130)(-110,-90)(-70,-90)
	\path(-90,-90)(-90,-130)
	\path(-80,-90)(-80,-130)
	\path(-100,-90)(-100,-130)
	\path(-70,-110)(-110,-110)
	\path(-70,-100)(-110,-100)
	\path(-70,-120)(-110,-120)
	\put(-120,-102){\Large\shortstack{$a_1$}}
	\put(-65,-102){\Large\shortstack{$b_1$}}
	\put(-120,-122){\Large\shortstack{$a_2$}}
	\put(-65,-122){\Large\shortstack{$b_2$}}
	\put(-90,-100){\colorbox{gray!40}{\framebox(10,10){}}}
	\put(-100,-100){\colorbox{gray!70}{\framebox(10,10){\Large$v_1$}}}
	\put(-90,-110){\colorbox{gray!40}{\framebox(10,10){}}}
	\put(-100,-110){\colorbox{gray!40}{\framebox(10,10){}}}
	\put(-90,-120){\colorbox{gray!40}{\framebox(10,10){}}}
	\put(-100,-120){\colorbox{gray!40}{\framebox(10,10){}}}
	\put(-90,-130){\colorbox{gray!70}{\framebox(10,10){\Large$v_2$}}}
	\put(-100,-130){\colorbox{gray!40}{\framebox(10,10){}}}


	\put(0,0){\special{sh 0.99}\ellipse{2}{2}}
	\put(-15,-45){\special{sh 0.99}\ellipse{2}{2}}
	\put(15,-45){\special{sh 0.99}\ellipse{2}{2}}
	\put(-25,-85){\special{sh 0.99}\ellipse{2}{2}}
	\put(-8,-85){\special{sh 0.99}\ellipse{2}{2}}
	\put(8,-85){\special{sh 0.99}\ellipse{2}{2}}
	\put(25,-85){\special{sh 0.99}\ellipse{2}{2}}
	\put(30,-125){\special{sh 0.99}\ellipse{2}{2}}
	\put(-30,-125){\special{sh 0.99}\ellipse{2}{2}}

	\path(-15,-45)(0,0)(15,-45)(25,-85)
	\path(-25,-85)(-15,-45)(-8,-85)
	\path(30,-125)(25,-85)
	\path(-30,-125)(-25,-85)
	\path(15,-45)(8,-85)

	\put(-2,3){\Large\shortstack{$v$}}
	\put(-20,-42){\Large\shortstack{$a$}}
	\put(16,-42){\Large\shortstack{$b$}}
	\put(28,-85){\Large\shortstack{$b_2$}}
	\put(10,-90){\Large\shortstack{$b_1$}}
	\put(-12,-90){\Large\shortstack{$a_2$}}
	\put(-33,-85){\Large\shortstack{$a_1$}}
	\put(-38,-127){\Large\shortstack{$v_1$}}
	\put(33,-127){\Large\shortstack{$v_2$}}


	\allinethickness{.55mm}\path(-70,10)(-70,-30)(-110,-30)(-110,10)(-70,10)
	\path(-120,-45)(-120,-75)(-90,-75)(-90,-45)(-120,-45)
	\path(-60,-45)(-60,-75)(-90,-75)(-90,-45)(-60,-45)
	\path(-70,-90)(-70,-110)(-90,-110)(-90,-90)(-70,-90)
	\path(-110,-90)(-110,-110)(-90,-110)(-90,-90)(-110,-90)
	\path(-70,-130)(-70,-110)(-90,-110)(-90,-130)(-70,-130)
	\path(-110,-130)(-110,-110)(-90,-110)(-90,-130)(-110,-130)

\end{picture}
\vspace{10cm}
\caption{\label{Fig:pruning separation} An illustration of the procedure generating the forced Euclidean separation between the descendants $v_1$ and $v_2$ of $v\in\mathcal{T}$, in $\mathbb{R}^2$ when $M=2$.}  
\end{figure}
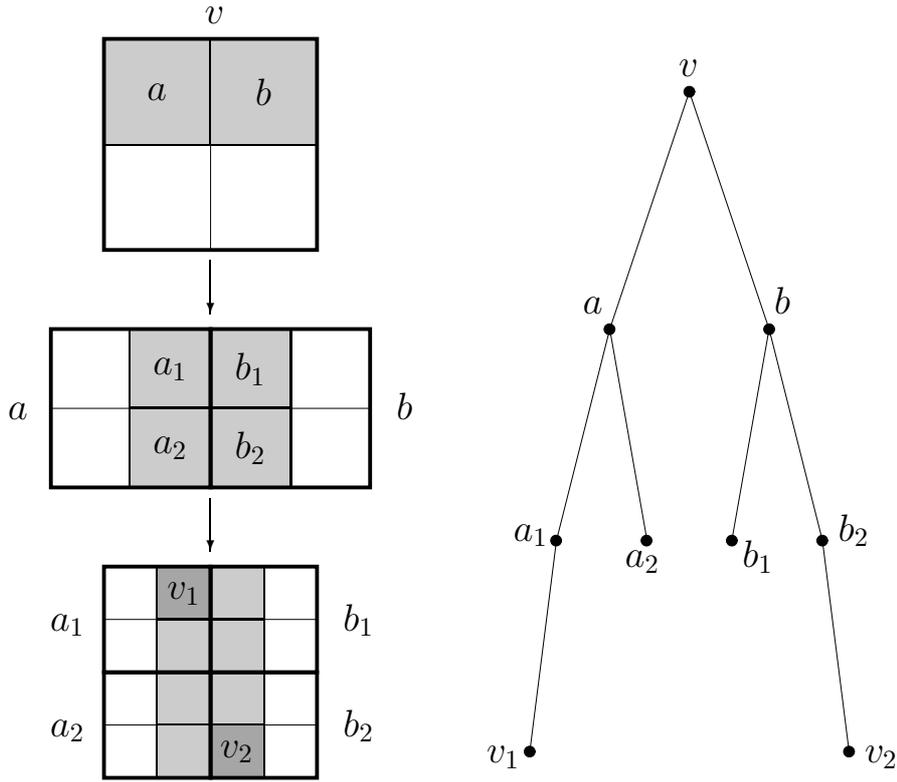

With the preliminary steps out of the way, we are ready to prove the main proposition. 
\begin{proof}[Proof of Proposition \ref{pruning stage 1}]
We know that split$(\mathcal T(\Omega;M)) > (N+1)(2C_0+1)^d$. Given any $N \geq 1$, we can therefore fix a subtree $\overline{\mathcal T}$ of $\mathcal T(\Omega;M)$ of infinite height in which every ray splits at least $(N+1)(2C_0 + 1)^d$ times. The pruning is executed on the subtree $\overline{\mathcal T}$ as follows. 

In the first step we apply Lemma \ref{Pruning building block} with \[ \mathcal T_{[0]} = \overline{\mathcal T}, \quad v_0 = \{1\} \times [0,1)^d \quad \text{ and } \quad N_0 = (N+1)(2C_0 + 1)^d. \] This yields a subtree $\mathcal T_{[1]}$ rooted at $\{1\} \times [0,1)^d$ of height $i_0$ consisting of two vertices $w_1$ and $w_2$ at the bottom-most level with dist$(w_1, w_2) \geq C_0 M^{-i_0}$. Every ray in $\mathcal T_{[1]}$ splits exactly once. Let us denote by $\overline{\mathcal T}(w_i)$ the maximal subtree of $\overline{\mathcal T}$ rooted at $w_i$. By Lemma \ref{Pruning building block}  any ray of $\overline{\mathcal T}(w_i)$ splits at least $N(2C_0+1)^d$ times. Set $\mathbb W_1 := \{ w_1, w_2 \}$.

At the second step we invoke Lemma \ref{Pruning building block} twice, resetting the parameters in that lemma to be \[ \mathcal T_{[0]} = \overline{\mathcal T}(w_i) , \quad v_0 = w_i, \quad  N_0 = N(2C_0+1)^d \] for $i=1,2$ respectively, and obtaining two subtrees as a consequence. Appending these two newly pruned subtrees of $\overline{\mathcal T}(w_i)$ to $\mathcal T_{[1]}$ from the previous step, we arrive at a tree $\mathcal T_{[2]}$ rooted at $\{ 1\} \times [0,1)^d$ of finite height but with rays of possibly variable length, in which every ray splits exactly twice.  If $v_1$ and $v_2$ are the first two splitting descendants of $\{1\} \times [0,1)^d$ in this tree, then $v_i \subseteq w_i$. Hence \[ \text{dist}(v_1, v_2) \geq \text{dist}(w_1, w_2) \geq C_0M^{-i_0} \geq C_0 M^{-h} \quad \text{ where } \quad h = \min_{i=1,2}h(v_i) \geq i_0, \] verifying the requirements (\ref{N split per ray})-(\ref{Euclidean separation}) of Proposition \ref{pruning stage 1} for $N=2$. Let us denote by $\mathbb W_2$ the collection of four vertices of maximal lineage in $\mathcal T_{[2]}$ obtained at the conclusion of this step. For any $w \in \mathbb W_2$, every ray of the tree $\overline{\mathcal T}(w)$ (defined as before as the maximal subtree of $\overline{\mathcal T}$ rooted at $w$) contains at least $(N-1)(2C_0+1)^d$ splitting vertices. Further $\mathbb W_2$ can be written as \[ \mathbb W_2 =  \bigcup \{ \mathbb W_2(w') : w' \in \mathbb W_1\}, \]  where $\mathbb W_2(w')$ consists of the two vertices in $\mathbb W_2$ descended from $w'$. For fixed $w' \in \mathbb W_1$, Lemma \ref{Pruning building block} asserts that the vertices $v, v'$ in $\mathbb W_2(w')$ have the same height $i_{w'}$, with dist$(v,v') \geq C_0 M^{-i_{w'}}$. 

In general at the end of the $k$th step we have a tree $\mathcal T_{[k]}$ of finite height, but with rays of potentially variable length, obeying the requirements (\ref{N split per ray})-(\ref{Euclidean separation}) for $N=k$. The collection of vertices of highest lineage in $\mathcal T_{[k]}$ is termed $\mathbb W_k$. We have that $\#(\mathbb W_k) = 2^k$. The collection $\mathbb W_k$ can be decomposed as \[ \mathbb W_k = \bigcup \{\mathbb W_k(w') : w' \in \mathbb W_{k-1} \}, \quad \text{ where } \quad \mathbb W_k(w') = \{ w_1(w'), w_2(w') \} \] consists of the two descendants of $w'$ that lie in $\mathbb W_k$. Lemma \ref{Pruning building block} ensures that \begin{align}  h(w_1(w')) = h(w_2(w')) &=: i_{w'} > h(w'),\text{ and that } \nonumber \\ \text{dist}(w_1(w'), w_2(w')) &\geq C_0 M^{-i_{w'}}.\label{pruning induction} \end{align}  Any ray in $\mathcal T_{[k]}$ splits exactly $k$ times, and for any $w \in \mathbb W_k$ each ray of $\overline{\mathcal T}(w)$ splits at least $(N-k+1) (2C_0+1)^d$ times. 

In the $(k+1)$th step, Lemma \ref{Pruning building block} is applied $2^k$ times in succession. In each application, the values of $\mathcal T_{[0]}$, $v_0$, $N_0$ are reset to \[ \mathcal T_{[0]} = \overline{\mathcal T}(w), \quad v_0 = w, \quad N_0 = (N-k+1)(2C_0+1)^d \] respectively for some $w \in \mathbb W_k$. The resulting tree $\mathcal T_{[k+1]}$, obtained by appending the $2^k$ newly constructed trees to $\mathcal T_{[k]}$ at the appropriate roots, clearly obeys (\ref{two children per split}) and also (\ref{N split per ray}) with $N = k+1$. Part (\ref{Euclidean separation}) only needs to be verified for the splitting vertices  $v_1(w')$ and $v_2(w')$ descended from $w' \in \mathbb W_{k-1}$, since the splitting vertices of older generations have been dealt with in previous steps.  But \[v_i(w') \subseteq w_i(w') \text{ for } i=1,2, \; \text{ so } \; \min_{i=1,2} h(v_i(w')) =h \geq i_{w'}. \] Combining this with \eqref{pruning induction}, we obtain \[ \text{dist}(v_1(w'), v_2(w')) \geq \text{dist}(w_1(w'), w_2(w')) \geq C_0 M^{-i_{w'}} \geq C_0 M^{-h}.\] 

In view of the number of splitting vertices per ray in the original subtree $\overline{\mathcal T}$, the process described above can be continued at least $N$ steps. The tree $\mathcal T_{[N]}$ of finite height but variable ray lengths obtained at the conclusion of the $N$th step satisfies the conditions (\ref{N split per ray})-(\ref{Euclidean separation}).  We pick from every vertex of maximal lineage in $\mathcal T_{[N]}$ exactly one point of $\Omega$, calling the resulting collection of $2^N$ chosen points $\Omega_N$.  Set $\delta := \min\{|\omega - \omega'| : \omega, \omega' \in \Omega_N, \; \omega \ne \omega' \} > 0$. The rays in $\mathcal T_{[N]}$ are now extended as rays representing the points in $\Omega_N$ (and hence without introducing any further splits) to a uniform height $J$ that satisfies $M^{-J} \leq C_0^{-1} \delta$, thereby meeting the criterion in part (iv).   
\end{proof} 

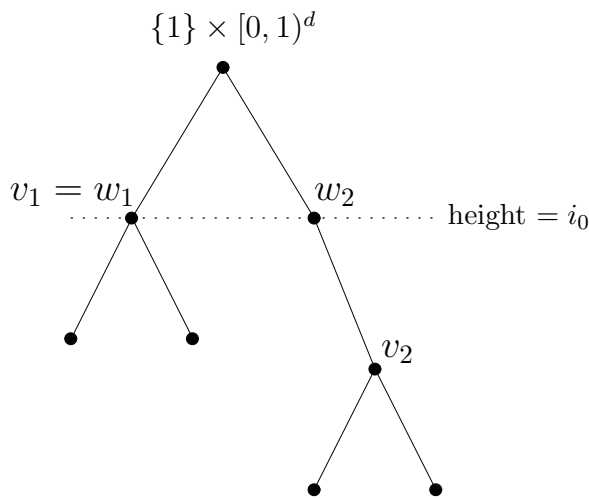
\begin{figure}[h]
\setlength{\unitlength}{0.8mm}
\begin{picture}(-50,0)(-90,10)

        \put(0,0){\special{sh 0.99}\ellipse{2}{2}}
        \put(-15,-25){\special{sh 0.99}\ellipse{2}{2}}
        \put(15,-25){\special{sh 0.99}\ellipse{2}{2}}
        \put(-25,-45){\special{sh 0.99}\ellipse{2}{2}}
        \put(-5,-45){\special{sh 0.99}\ellipse{2}{2}}
        \put(25,-50){\special{sh 0.99}\ellipse{2}{2}}
        \put(35,-70){\special{sh 0.99}\ellipse{2}{2}}
        \put(15,-70){\special{sh 0.99}\ellipse{2}{2}}

        \path(-15,-25)(0,0)(15,-25)(25,-50)
        \dottedline{2}(-25,-25)(35,-25)
        \path(-25,-45)(-15,-25)(-5,-45)        \path(35,-70)(25,-50)(15,-70)        \put(-12,5){\large\shortstack{$\{1\}\times[0,1)^d$}}
        \put(-35,-22){\Large\shortstack{$v_1 = w_1$}}
        \put(15,-22){\Large\shortstack{$w_2$}}
        \put(26,-48){\Large\shortstack{$v_2$}}
        \put(37,-26){\shortstack{height $= i_0$}}


\end{picture}
\vspace{6.5cm}
\caption{\label{Fig:second step} An illustration of a pruned tree at the second step of pruning.}
\end{figure}

\subsection{Splitting and basic slope cubes} 
The pruned slope tree $\mathcal T_J(\Omega_N;M)$ produced by Proposition \ref{pruning stage 1} looks like an elongated version of the full binary tree of height $N$. Rays in this tree may have long segments with no splits. However only the splitting vertices of $\mathcal T_J(\Omega_N;M)$ and certain other vertices related to these are of central importance to the subsequent analysis. With this in mind and to aid in quantification later on, we introduce the class of splitting vertices  
\begin{align} 
\mathcal G = \mathcal G(\Omega_N) &:= \bigcup_{j=1}^N \mathcal G_j(\Omega_N), \text{ where for every $1 \leq j \leq N$} \label{splitting vertex collection} \\ 
\label{generation j splitting vertices}
\mathcal G_j(\Omega_N) &:= \left\{\gamma \, : \begin{aligned} &\text{ there exists $v \in \Omega_N$ such that $\gamma$ is the $j$th splitting} \\  &\text{ vertex on the ray identifying $v$ in $\mathcal T_J(\Omega_N;M)$} \end{aligned} \right\}. 
\end{align} 
The vertices in $\mathcal G_j(\Omega_N)$ will be termed {\em{the $j$th splitting vertices}}. As dictated by the pruning mechanism, such vertices $\gamma$ may occur at different heights of the tree $\mathcal T_J(\Omega_N;M)$, and hence could represent $M$-adic cubes of varying sizes. Thus the index $j$, which encodes the number of splitting vertices on the ray leading up to and including $\gamma$, should not be confused with the height of $\gamma$ in $\mathcal T_J(\Omega_N;M)$. Given $\gamma \in \mathcal G(\Omega_N)$, we write 
\begin{equation} \label{defn nu} 
\nu(\gamma) = j \quad \text{ if } \gamma \in \mathcal G_j(\Omega_N),
\end{equation} 
and refer to $\nu(\gamma)$ as the {\em{splitting index}} of $\gamma$. Indeed $N-\nu(\gamma)$ is the splitting number of $\gamma$ with respect to $\mathcal T_J(\Omega_N;M)$, defined as in \eqref{splitting vertex}. Note that $\mathcal G_1(\Omega_N)$ consists of a single element, namely the unique splitting vertex of $\mathcal T(\Omega_N;M)$ of minimal height.  In general $\#(\mathcal G_j(\Omega_N)) = 2^{j-1}$, i.e., there are exactly $2^{j-1}$ splitting vertices of index $j$. We declare $\mathcal G_{N+1}(\Omega_N) \equiv \Omega_N$. 

Another related quantity of importance is the one mentioned in part (\ref{Euclidean separation}) of Proposition \ref{pruning stage 1}. In view of its ubiquitous occurrence in the sequel, we set up the following notation. For $\gamma \in \mathcal G_j(\Omega_N)$, $1 \leq j \leq N-1$, 
\begin{equation} \label{height of youngest splitting child}
\lambda(\gamma) = \lambda_j(\gamma) := \min \{h(\gamma') \; : \; \gamma' \subsetneq \gamma, \; \gamma' \in \mathcal G_{j+1}(\Omega_N) \}.
\end{equation} 
Thus $\lambda_j(\gamma) > h(\gamma)$ is the height of the first splitting vertex of index $(j+1)$ descended from $\gamma$. We refer to  an element of $\{\lambda_j(\gamma) : \gamma \in \mathcal G_j(\Omega_N) \}$ as a {\em{$j$th fundamental height}} of $\Omega_N$. There could be at most $2^{j-1}$ such heights. The collection of all fundamental heights will be denoted by $\mathcal R$; it will play a vital role in the remainder of the article, specifically in the random construction outlined in Section \ref{random construction section}. The two descendants of $\gamma \in \mathcal G_j(\Omega_N)$ at height $\lambda_j(\gamma)$, at least one (but not necessarily both) of which corresponds to a $(j+1)$th splitting vertex, are called the {\em{$j$th basic slope cubes}}. The entirety of $j$th basic slope cubes as $\gamma$ ranges over $\mathcal G_j(\Omega_N)$ is termed $\mathcal H_j(\Omega_N)$. More precisely,  
\begin{equation} \label{jth basic slope cubes} 
\mathcal H_j(\Omega_N) := \left\{ \theta \, : \begin{aligned} &\text{ there exists } \omega \in \Omega_N \text{ and } \gamma_j \in \mathcal G_j(\Omega_N)  \\ &\text{ such that } \omega \in \theta \subsetneq \gamma_j \text{ and } h(\theta) = \lambda(\gamma_j)  
\end{aligned} \right\}. 
\end{equation} 
Note that every $j$th basic slope cube $\theta$ is either itself a $(j+1)$th splitting vertex $\gamma_{j+1} \in \mathcal G_{j+1}(\Omega_N)$, or uniquely identifies such a vertex in the sense that there exists a non-splitting ray in the slope tree rooted at $\theta$ that terminates at $\gamma_{j+1}$. In either event, we say that {\em{$\gamma_{j+1} \in \mathcal G_{j+1}(\Omega_N)$ is identified by $\theta \in \mathcal H_j(\Omega_N)$}}. Since every $\gamma \in \mathcal G_j(\Omega_N)$ contributes exactly two cubes to $\mathcal H_j(\Omega_N)$, it follows that $\#(\mathcal H_j(\Omega_N)) = 2^{j}$. We declare $\mathcal H_0(\Omega_N) = \mathcal G_1(\Omega_N)$ and  $\mathcal H_N(\Omega_N) = \Omega_N$. \\

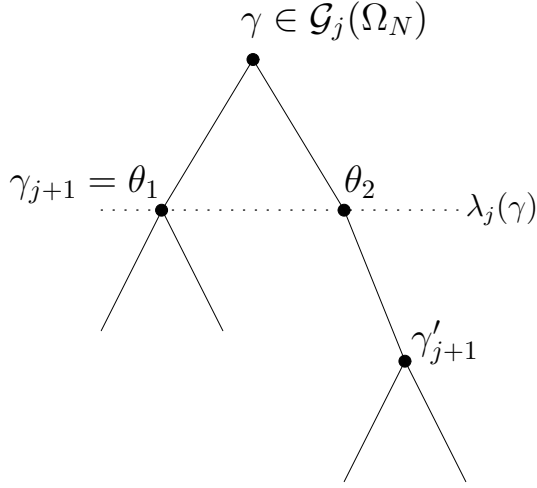
\begin{figure}[h!]
\setlength{\unitlength}{0.8mm}
\begin{picture}(-50,0)(-90,10)

        \put(0,0){\special{sh 0.99}\ellipse{2}{2}}
        \put(-15,-25){\special{sh 0.99}\ellipse{2}{2}}
        \put(15,-25){\special{sh 0.99}\ellipse{2}{2}}
        \put(25,-50){\special{sh 0.99}\ellipse{2}{2}}

        \path(-15,-25)(0,0)(15,-25)(25,-50)
        \dottedline{2}(-25,-25)(35,-25)
        \path(-25,-45)(-15,-25)(-5,-45)
        \path(35,-70)(25,-50)(15,-70)

        \put(-2,5){\Large\shortstack{$\gamma\in\mathcal G_j(\Omega_N)$}}
        \put(-40,-22){\Large\shortstack{$\gamma_{j+1} = \theta_1$}}
        \put(15,-22){\Large\shortstack{$\theta_2$}}
        \put(26,-48){\Large\shortstack{$\gamma_{j+1}'$}}
        \put(35,-26){\large\shortstack{$\lambda_j(\gamma)$}}

\end{picture}
\vspace{6.5cm}
\caption{\label{Fig:basic slope cube tree} Two basic slope cubes $\theta_1,\theta_2\in\mathcal H_j(\Omega_N)$ and their parent vertex $\gamma\in\mathcal G_j(\Omega_N)$.  Notice that $\gamma_{j+1} = \theta_1$ and $\gamma_{j+1}'$ are both members of $\mathcal G_{j+1}(\Omega_N)$.}\end{figure}

One of the important features of the pruning mechanism outlined in Proposition \ref{pruning stage 1} is an Euclidean separation condition between the two $j$th basic slope cubes descended from a common splitting vertex $\gamma_j \in \mathcal G_j(\Omega_N)$. The following implication of this condition will be convenient for later use.   
\begin{corollary} \label{Two distances comparable} 
Given a splitting vertex $\gamma$ of $\mathcal T_J(\Omega_N)$, define 
\begin{align} 
\rho_\gamma &:= \sup\{|a-b| : a \in \gamma_1 \cap \Omega_N, \; b \in \gamma_2\cap \Omega_N \}, \label{sup distance}  \\
\delta_\gamma &:= \inf\{|a-b| : a \in \gamma_1 \cap \Omega_N, \; b \in \gamma_2\cap \Omega_N \}, \label{inf distance}
\end{align} where $\gamma_1$ and $\gamma_2$ are the two children of $\gamma$ in $\mathcal T_J(\Omega_N;M)$.  Then, the two quantities $\rho_\gamma$ and $\delta_\gamma$, both of which are trivially bounded by diam$(\gamma) = \sqrt{d} M^{-h(\gamma)}$, are comparable, i.e., $\delta_\gamma \leq \rho_\gamma \leq (1 + 2 \sqrt{d}C_0^{-1}) \delta_\gamma$.  
\end{corollary}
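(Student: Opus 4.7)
The plan is to reduce the claim to Proposition~\ref{pruning stage 1}(iii) via the triangle inequality. The lower bound $\delta_\gamma \leq \rho_\gamma$ is trivial from the definitions of infimum and supremum, so I will focus on the upper bound. I first dispense with the degenerate case $\nu(\gamma) = N$: since every ray in $\mathcal T_J(\Omega_N;M)$ splits exactly $N$ times (Proposition~\ref{pruning stage 1}(i)) and every split is binary (Proposition~\ref{pruning stage 1}(ii)), when $\gamma$ is already the $N$th splitting vertex on the rays through it, each child $\gamma_i$ sits above a single non-splitting ray terminating at one point of $\Omega_N$, so $\gamma_i \cap \Omega_N$ is a singleton and $\rho_\gamma = \delta_\gamma$ trivially.

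For the main case $\nu(\gamma) = j \leq N-1$, I invoke Proposition~\ref{pruning stage 1}(iii) to produce, for $i = 1, 2$, the first splitting descendant $v_i \subseteq \gamma_i$, which necessarily lies in $\mathcal G_{j+1}(\Omega_N)$, and satisfies $\mathrm{dist}(v_1, v_2) \geq C_0 M^{-h}$ with $h := \min\{h(v_1), h(v_2)\}$. The key observation (and the only place where care is needed) is that every point of $\gamma_i \cap \Omega_N$ actually lies in $v_i$. This is because the segment of $\mathcal T_J(\Omega_N;M)$ from $\gamma_i$ down to $v_i$ contains no splitting vertices, so each vertex on it has a unique child in the tree; any ray through $\gamma_i$ terminating in $\Omega_N$ must therefore pass through $v_i$.

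Consequently, $\gamma_i \cap \Omega_N \subseteq v_i \cap \Omega_N$, which yields the lower bound $\delta_\gamma \geq \mathrm{dist}(v_1, v_2) \geq C_0 M^{-h}$. Then, for any $a, a' \in \gamma_1 \cap \Omega_N$ and $b, b' \in \gamma_2 \cap \Omega_N$, both pairs $\{a, a'\} \subseteq v_1$ and $\{b, b'\} \subseteq v_2$, so the triangle inequality gives
\begin{equation*}
|a - b| \leq |a - a'| + |a' - b'| + |b' - b| \leq \mathrm{diam}(v_1) + |a' - b'| + \mathrm{diam}(v_2) \leq |a' - b'| + 2\sqrt{d}\, M^{-h}.
\end{equation*}
Taking the supremum over $(a, b)$ on the left and the infimum over $(a', b')$ on the right, and then substituting the lower bound $M^{-h} \leq \delta_\gamma / C_0$, yields $\rho_\gamma \leq \delta_\gamma + 2\sqrt{d}\, M^{-h} \leq (1 + 2\sqrt{d}\, C_0^{-1})\,\delta_\gamma$, as required. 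The trivial bound by $\mathrm{diam}(\gamma) = \sqrt{d}\, M^{-h(\gamma)}$ follows since both endpoints live in the cube $\gamma$. The argument is essentially a bookkeeping exercise once property (iii) of the pruning is in hand; the only conceptual point is locating $\gamma_i \cap \Omega_N$ inside $v_i$, which I expect to be the sole (minor) obstacle.
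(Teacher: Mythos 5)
Your proof is correct and follows essentially the same route as the paper's: both hinge on the observation that $\gamma_i \cap \Omega_N \subseteq v_i$ (the first splitting descendant), deduce $\delta_\gamma \geq C_0 M^{-h}$ from Proposition~\ref{pruning stage 1}(\ref{Euclidean separation}), and close with the triangle inequality using $\mathrm{diam}(v_1) + \mathrm{diam}(v_2) \leq 2\sqrt{d}\,M^{-h}$. Your explicit treatment of the degenerate case $\nu(\gamma) = N$ (where the children meet $\Omega_N$ in singletons) is a small completeness bonus the paper glosses over, but it does not change the argument.
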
 
\begin{proof}
Using part (\ref{Euclidean separation}) of Proposition \ref{pruning stage 1} and the notation set up in \eqref{height of youngest splitting child}, we observe that $\gamma_i \cap \Omega_N \subseteq v_i$ where $v_i$ is the first splitting descendant of $\gamma_i$, so that $\delta_\gamma \geq C_0 M^{-\lambda(\gamma)}$. Let $a_i$, $b_i$ be points in the closures of $\gamma_i \cap \Omega_N$, $i=1,2$ such that $\delta_\gamma = |a_1 - b_1|$, $\rho_\gamma = |a_2-b_2|$.  Then 
\begin{align*}  \rho_\gamma = |a_2-b_2| &\leq |a_1 - b_1| + |a_2 - a_1| + |b_2-b_1| \\ &\leq |a_1-b_1| + \text{diam}(v_1) + \text{diam}(v_2) \\ &\leq |a_1 - b_1| + 2 \sqrt{d} M^{-\lambda(\gamma)} \\ &\leq \delta_\gamma + 2 \sqrt{d} C_0^{-1} \delta_\gamma \leq C_2 \delta_\gamma, \end{align*}  
where the third inequality above follows from the fact that $v_i$ is either itself a cube of sidelength $M^{-\lambda(\gamma)}$ or is contained in one.    
\end{proof} 
\subsection{Binary representation of $\Omega_N$} \label{binary slope tree}
The classes of basic slope cubes $\mathcal H_j(\Omega_N)$ allow us to represent each element in $\Omega_N$ in terms of a unique $N$-long binary sequence as follows. Since every splitting vertex of $\mathcal T_J(\Omega_N)$ has exactly two children, one of them must be larger (or older) than the other in the lexicographic ordering. Let us agree to call the older (respectively younger) child of a vertex $v$ its $0$th (respectively 1st) offspring. For $1 \leq j \leq N$, we define a bijective map $\Psi_j: \{0,1\}^j \rightarrow \mathcal H_{j}(\Omega_N)$ inductively as follows. For $j=1$, 
\begin{equation} \label{defn Psi_1} \Psi_1(i) := \Biggl\{\begin{aligned}
&\text{ the unique element of $\mathcal H_1(\Omega_N)$} \\ &\text{ descended from the $i$th child of $\gamma_1$ }, 
\end{aligned}\end{equation} 
where $i=0,1,$ and $\gamma_1$ is the single element in $\mathcal H_0(\Omega_N) = \mathcal G_1(\Omega_N)$. In general if $\Psi_{j}$ has been defined, then for $\bar{\epsilon} \in \{0,1\}^{j}$ and $i=0,1$, we set 
\begin{equation} 
\Psi_{j+1}(\bar{\epsilon}, i) := \Biggl\{\begin{aligned}
&\text{ the unique element of $\mathcal H_{j+1}(\Omega_N)$ } \\ &\text{ descended from the $i$th child of $\gamma_{j+1}$,} \end{aligned} \label{defn Psi_j} \end{equation} 
where $\gamma_{j+1}$ is the unique element of $\mathcal G_{j+1}(\Omega_N)$ identified by $\Psi_{j}(\bar{\epsilon})$.

The map $\Psi_N$ provides the claimed bijection of $\{0,1\}^N$ onto $\Omega_N$. In fact, the discussion above yields the following stronger conclusion, the verification of which is straightforward and left to the reader.
\begin{proposition} \label{Splitting tree lemma}
Let $\mathcal H_j(\Omega_N)$ be as in \eqref{jth basic slope cubes}.
\begin{enumerate}[(i)]
\item The collection of vertices \begin{equation} \label{Splitting tree} \mathcal H(\Omega_N) := \bigcup_{j=1}^{N}
\left\{ (\theta_1, \cdots, \theta_j) \Bigl| \begin{aligned} &\exists \; \omega \in \Omega_N, \text{ such that } \omega \in \theta_k, \\ &\theta_k \in \mathcal H_k(\Omega_N), \; 1 \leq k \leq j  \end{aligned} \right\} \bigcup \{\gamma_1\}  
\end{equation} 
is a tree rooted at $\gamma_1 \in \mathcal H_0(\Omega_N)$ of height $N$, in which $(\theta_1, \cdots, \theta_j, \theta_{j+1})$ is a vertex of height $(j+1)$ and a child of $(\theta_1, \cdots, \theta_j)$. Every element $\theta_j \in \mathcal H_j(\Omega_N)$ identifies a vertex $(\theta_1, \cdots, \theta_j)$ of the $j$th generation in this tree. 
\item Let $\mathcal B_N$ denote the full binary tree of height $N$, namely the tree $\mathcal T_N([0,1);2)$. The map $\Psi : \mathcal B_N \rightarrow \mathcal H(\Omega_N)$ defined by
\begin{equation} \label{splitting vertex binary tree isomorphism}
\begin{aligned} 
\Psi(\emptyset) &= \text{ the unique element $\gamma_1 \in \mathcal H_0(\Omega_N)$}, \\ \Psi(\bar{\epsilon}) &= \; \Psi_{j}(\bar{\epsilon}) \quad \text{ if } \quad \bar{\epsilon} \in \{0,1\}^j, \; 1 \leq j \leq N,
\end{aligned}
\end{equation}  
with $\Psi_j$ as in \eqref{defn Psi_j} is a tree isomorphism in the sense of Definition \ref{D:stickiness}.       
\end{enumerate} 
\end{proposition}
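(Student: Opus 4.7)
The plan is to exhibit a bijection between $\mathcal H_j(\Omega_N)$ and the vertices of height $j$ in the proposed tree, then verify that $\Psi$ intertwines this bijection with the binary tree structure on $\mathcal B_N$.

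For part (i), I would first establish that each $\omega \in \Omega_N$ determines a unique chain $\theta_1(\omega) \supsetneq \cdots \supsetneq \theta_N(\omega)$ with $\theta_k(\omega) \in \mathcal H_k(\Omega_N)$. By Proposition \ref{pruning stage 1}(\ref{N split per ray}), the ray in $\mathcal T_J(\Omega_N;M)$ from the root to $\omega$ passes through exactly $N$ splitting vertices, which we enumerate in decreasing order of containment as $\gamma_1 \supsetneq \gamma_2 \supsetneq \cdots \supsetneq \gamma_N$ with $\gamma_k \in \mathcal G_k(\Omega_N)$. Define $\theta_k(\omega)$ to be the unique descendant of $\gamma_k$ at height $\lambda(\gamma_k)$ containing $\omega$ (with $\theta_N(\omega) := \omega$, consistent with the convention $\mathcal H_N(\Omega_N) = \Omega_N$). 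Next, I would verify that whenever $\omega, \omega' \in \theta_j \cap \Omega_N$ for a common $\theta_j \in \mathcal H_j(\Omega_N)$, one has $\theta_k(\omega) = \theta_k(\omega')$ for every $k \leq j$: both rays share the unique splitting ancestor $\gamma_j$ of $\theta_j$ of index $j$, and hence share all its splitting ancestors $\gamma_1, \ldots, \gamma_{j-1}$; moreover, for $k < j$ both points lie in $\theta_j \subseteq \gamma_{k+1} \subseteq \theta_k$, pinning $\theta_k$ uniquely. This yields the claimed bijection $\theta_j \leftrightarrow (\theta_1, \ldots, \theta_j)$, and the tree structure, the height equal to $N$, and the generation count follow immediately from the parent-by-truncation convention.

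For part (ii), I would prove by induction on $j$ that $\Psi_j : \{0,1\}^j \to \mathcal H_j(\Omega_N)$ is a bijection. The base case $j=1$ uses that $\gamma_1$ has exactly two children by Proposition \ref{pruning stage 1}(\ref{two children per split}), each descending non-splittingly to a unique element of $\mathcal H_1(\Omega_N)$. For the inductive step, each $\Psi_j(\bar\epsilon)$ identifies a unique $\gamma_{j+1} \in \mathcal G_{j+1}(\Omega_N)$, whose two children again descend to distinct basic slope cubes in $\mathcal H_{j+1}(\Omega_N)$; these cubes are distinct from those produced by any $\bar\epsilon' \ne \bar\epsilon$ because the associated splitting vertices are disjoint cubes in the slope tree. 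Height preservation of $\Psi$ is automatic from the definition. Lineage preservation reduces to the containment $\Psi_{j+1}(\bar\epsilon, i) \subsetneq \Psi_j(\bar\epsilon)$ as $M$-adic cubes---indeed $\Psi_{j+1}(\bar\epsilon, i)$ lies strictly inside a child of $\gamma_{j+1} \subseteq \Psi_j(\bar\epsilon)$---which by part (i) translates exactly to $(\theta_1, \ldots, \theta_j, \theta_{j+1})$ being a child of $(\theta_1, \ldots, \theta_j)$ in $\mathcal H(\Omega_N)$.

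The entire argument is essentially bookkeeping. The only genuine subtlety is the well-definedness step in part (i), namely that distinct choices of $\omega \in \theta_j \cap \Omega_N$ give rise to identical ancestor sequences; this rests on the fact that each $\theta_j \in \mathcal H_j(\Omega_N)$ is contained in a unique splitting vertex of index $j$, a direct consequence of $\mathcal T_J(\Omega_N;M)$ being a tree and of the definition of $\mathcal H_j$ via $\lambda(\gamma_j)$.
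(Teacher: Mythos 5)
Your proof is correct. The paper offers no proof of this proposition --- it states only that ``the verification \ldots is straightforward and left to the reader'' --- and your argument is precisely the intended bookkeeping: the key well-definedness point you isolate (that any $\omega \in \theta_j \cap \Omega_N$ forces the same ancestor chain $\theta_1, \ldots, \theta_{j-1}$, because $\theta_j$ lies below a unique splitting vertex of each index $k \leq j$) is exactly what makes \eqref{Splitting tree} a tree and $\Psi$ well-defined and sticky.
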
 
Although we will not need to use it, an analogous argument shows that the class of splitting vertices $\mathcal G(\Omega_N)$ is isomorphic to $\mathcal B_{N-1}$.


\section{Families of intersecting tubes} \label{general facts about tube families}
The finite set of directions $\Omega_N$ created in Proposition \ref{pruning stage 1} forms the basis of the construction of Kakeya-type sets. Indeed the sets of interest that verify the conclusion of Theorem \ref{MainThm1} will be the union of a family of tubes, with each tube assigned a slope from $\Omega_N$. Each tube is based on a suitably fine subcube of the $d$-dimensional unit cube $\{0\} \times [0,1)^d$, hereafter referred to as the {\em{root hyperplane}}. The tree depicting the root hyperplane, more precisely the full $M$-adic tree of dimension $d$ and height $J$ will be termed the {\em{root tree}}.  For $0 \leq k \leq J$, let $\mathcal Q(k)$ be the collection of all vertices of height $k$ in the root tree, i.e.,
\begin{equation} \label{defn Q_k}
\mathcal Q(k) := \left\{Q \; : \; Q \in \mathcal T(\{0\} \times [0,1)^d;M), \; h(Q) = k \right\}.
\end{equation} 
Geometrically, and in view of the discussion in Section \ref{tree encoding}, a member $Q$ of $\mathcal Q(k)$ is an $M$-adic cube of sidelength $M^{-k}$ of the form 
\begin{equation} \label{root cubes}
Q = \{0\} \times \prod_{\ell=1}^{d} \left[ \frac{j_{\ell}}{M^{k}}, \frac{j_{\ell+1}}{M^{k}}\right), \; \text{ where } \; (j_1, j_2, \cdots j_{d}) \in \{ 0,1, \cdots, M^{k}-1\}^d, 
\end{equation} 
so that $\#(\mathcal Q(k)) = M^{kd}$. In view of the above, and for the purpose of distinguishing vertices of the root and the slope trees, a vertex in the root tree is termed a {\em{spatial cube}}. For reasons to be made clear in a moment, an element of $\mathcal Q(J)$ (i.e., a youngest vertex of the root tree) is of added significance and will be called a {\em{root cube}}.  Given a fixed constant $A_0 \geq 1$, and for $t \in \mathcal Q(J)$, $\omega \in \Omega_N$, we define a {\em{tube rooted at $t$ with orientation $\omega$}} to be the set   
\begin{equation} \label{tube defn}
P_{t, \omega} := \widetilde{Q}_t + [0, 10 A_0]\omega = \bigl\{s + r\omega : s \in \widetilde{Q}_t, \; 0 \leq r \leq 10A_0  \bigr\}. 
\end{equation} 
Here $\widetilde{Q}_t$ denotes the $c_d$-dilate of the cube $t$; i.e., the cube with the same centre as $t$ but with $c_d$ times its sidelength, for a small positive constant $c_d$ soon to be specified in Corollary \ref{which is bigger corollary}. For instance, the choice $c_d = d^{-2d}$ will suffice. Thus $P_{t, \omega}$ is essentially a $(d+1)$-dimensional cylinder of constant length and with cubical cross-section of sidelength $c_d M^{-J}$ perpendicular to the $x_1$-axis. An algorithm $\sigma$ that assigns to every root $t \in \mathcal Q(J)$ a slope $\sigma(t) \in \Omega_N$ produces, according to the prescription \eqref{tube defn}, a family of tubes of cardinality $M^{Jd}$, and a corresponding set
\begin{equation} \label{generic sticky Kakeya}
\mathbb K (\sigma) = \mathbb K(\sigma;N, J) := \bigcup \{ P_{t, \sigma(t)} : t \in \mathcal Q(J) \}. 
\end{equation}  
While this definition is quite general, in our applications the slope assignment map $\sigma$ will always be chosen to be sticky in the sense of Definition \ref{D:stickiness} and as a mapping between the trees representing roots and slopes respectively; specifically, 
\[ \sigma : \mathcal T_J(\{0\} \times [0,1)^d;M) \rightarrow \mathcal T_J(\Omega_N;M). \]  
Random sticky slope assignment algorithms will be prescribed in the next section, but for now we record some properties of arbitrary tubes and features of general sets of the form $\mathbb K(\sigma)$.  

\subsection{Intersection of two tubes} 

\begin{lemma} \label{intersection criterion lemma}
For $v, v' \in \Omega_N$ and $t, t' \in \mathcal Q(J)$, $t \ne t'$, let $P_{t, v}$ and $P_{t', v'}$ be the tubes defined as in (\ref{tube defn}). If there exists $x = (x_1, \cdots, x_{d+1}) \in P_{t, v} \cap P_{t', v'}$, then the inequality
 \begin{equation} \label{intersection criterion inequality} \bigl| \text{cen}(t') - \text{cen}(t) + x_1(v'-v) \bigr| \leq 2 c_d \sqrt{d} M^{-J} \end{equation} 
holds, where $\text{cen}(t)$ denotes the centre of the cube $t$.  
\end{lemma}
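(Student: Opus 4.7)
The plan is to extract from the hypothesis $x \in P_{t,v} \cap P_{t',v'}$ two representations of the point $x$, use the normalization that all directions in $\Omega_N \subseteq \{1\}\times [0,1)^d$ have first coordinate equal to $1$, and then estimate how much the root-cube representatives can differ from their respective centres.

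First I would write, by the definition \eqref{tube defn}, $x = s + r v = s' + r' v'$ for some $s \in \widetilde{Q}_t$, $s' \in \widetilde{Q}_{t'}$ and $r, r' \in [0, 10 A_0]$. Since $\widetilde{Q}_t, \widetilde{Q}_{t'} \subseteq \{0\}\times\mathbb R^d$ and the first coordinate of both $v$ and $v'$ is $1$, comparing first coordinates of $x$ in these two representations forces $r = r' = x_1$. This is the only nontrivial observation in the argument; once it is in place, everything else follows from linear algebra.

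Next I would write $s = \mathrm{cen}(t) + \epsilon$ and $s' = \mathrm{cen}(t') + \epsilon'$, where $\epsilon, \epsilon'$ are displacements within the $c_d$-dilates of the root cubes. Since $\widetilde{Q}_t$ is a cube of sidelength $c_d M^{-J}$ lying in a hyperplane parallel to $\{x_1 = 0\}$, the Euclidean distance from its centre to any of its points is at most $\tfrac{1}{2}c_d \sqrt{d}\, M^{-J}$, giving $|\epsilon|, |\epsilon'| \leq \tfrac{1}{2} c_d \sqrt{d}\, M^{-J}$.

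Finally, substituting into $s + x_1 v = s' + x_1 v'$ and rearranging yields
\[
\mathrm{cen}(t') - \mathrm{cen}(t) + x_1 (v' - v) = \epsilon - \epsilon',
\]
and the triangle inequality $|\epsilon - \epsilon'| \leq |\epsilon| + |\epsilon'| \leq c_d \sqrt{d}\, M^{-J}$ delivers the claimed bound (in fact with a slightly better constant than stated). There is no real obstacle here; the lemma is essentially a bookkeeping statement that captures the geometric fact that, since all tubes have the same ``time'' direction along $e_1$ and share any intersection at the same height $x_1$, two intersecting tubes must originate from nearly the same base point once the accumulated drift $x_1(v'-v)$ is accounted for.
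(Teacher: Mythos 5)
Your proof is correct and follows essentially the same route as the paper: both arguments write $x = y + x_1 v = y' + x_1 v'$ with $y \in \widetilde{Q}_t$, $y' \in \widetilde{Q}_{t'}$ (the identification $r = r' = x_1$, which you make explicit, is implicit in the paper's choice of parameter) and then bound the displacements from the centres. Your constant $\tfrac12 c_d\sqrt{d}\,M^{-J}$ per cube is in fact the sharp half-diameter, marginally better than the paper's stated bound, so nothing is lost.
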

\begin{proof}
If $x \in P_{t, v} \cap P_{t', v'}$, then there exist points $y \in \widetilde{Q}_t$, $y' \in \widetilde{Q}_{t'}$ such that $x = y + x_1v = y' + x_1 v'$, i.e., $x_1(v'-v) = y - y'$. The inequality (\ref{intersection criterion inequality}) follows since both $|y - \text{cen}(t)|$ and $|y'- \text{cen}(t')|$ are bounded above by $c_d \sqrt{d} M^{-J}$.   
\end{proof} 
\begin{corollary} \label{which is bigger corollary}
If the constant $c_d$ is chosen sufficiently small, then under the hypotheses of Lemma \ref{intersection criterion lemma}, 
\begin{equation} \label{wich is bigger} 
|x_1||v-v'| \geq \frac{1}{2} M^{-J}. 
\end{equation} 
\end{corollary}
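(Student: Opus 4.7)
The plan is to combine the inequality from Lemma \ref{intersection criterion lemma} with the obvious lower bound on the Euclidean separation between the centers of two distinct root cubes in $\mathcal{Q}(J)$. Specifically, since $t, t' \in \mathcal{Q}(J)$ are distinct $M$-adic cubes of sidelength $M^{-J}$ of the form described in \eqref{root cubes}, their centers differ in at least one coordinate by exactly $k M^{-J}$ for some nonzero integer $k$, and hence
\[ |\text{cen}(t') - \text{cen}(t)| \geq M^{-J}. \]

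I would then apply the reverse triangle inequality to the estimate \eqref{intersection criterion inequality} supplied by Lemma \ref{intersection criterion lemma}. This yields
\[ |x_1| |v'-v| = |x_1 (v'-v)| \geq |\text{cen}(t') - \text{cen}(t)| - 2c_d \sqrt{d} M^{-J} \geq \bigl(1 - 2c_d \sqrt{d}\bigr) M^{-J}. \]
To conclude \eqref{wich is bigger}, it suffices to require that $1 - 2c_d\sqrt{d} \geq 1/2$, i.e., $c_d \leq 1/(4\sqrt{d})$. The suggested choice $c_d = d^{-2d}$ (for $d \geq 2$) comfortably satisfies this condition, so the corollary follows.

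This is essentially a one-line computation once Lemma \ref{intersection criterion lemma} is in hand, and there is no real obstacle — the only subtlety is to remember that the full tubes $P_{t,v}$ use the dilated cubes $\widetilde{Q}_t$ of sidelength $c_d M^{-J}$, whereas the separation between the centers of the original root cubes $t, t'$ is controlled by the undilated sidelength $M^{-J}$. The dilation factor $c_d$ is precisely what must be chosen small enough to absorb the error term $2c_d \sqrt{d} M^{-J}$ arising from the cross-sectional thickness of the two tubes.
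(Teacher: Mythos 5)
Your proposal is correct and follows exactly the same route as the paper: the lower bound $|\text{cen}(t')-\text{cen}(t)| \geq M^{-J}$ for distinct root cubes, the reverse triangle inequality applied to \eqref{intersection criterion inequality}, and the condition $2c_d\sqrt{d} \leq \tfrac{1}{2}$ on the dilation constant. Nothing to add.
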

\begin{proof}
Since $t \ne t'$, we know that $|\text{cen}(t') - \text{cen}(t)| \geq M^{-J}$. The inequality in \eqref{intersection criterion inequality} therefore implies that 
\[ |x_1| |v-v'| \geq |\text{cen}(t) - \text{cen}(t')| - 2 c_d \sqrt{d} M^{-J} \geq (1 - 2 c_d \sqrt{d}) M^{-J} \geq \frac{1}{2} M^{-J},\]
provided $c_d$ is chosen to satisfy $2 c_d \sqrt{d} \leq \frac{1}{2}$.  
\end{proof}  
Lemma \ref{intersection criterion lemma} provides an intersection criterion for two tubes in the form of an algebro-geometric inequality. We will also need to know the size of this intersection. This estimate is by now standard in the literature, dating back to the work of C\'ordoba \cite{Cordoba}. The result below is easily verifiable, but the reader may consult \cite[Lemma 10.3.6, p.~374]{Grafakos} as a reference.   
\begin{lemma} \label{intersection size lemma} 
If $P_{t,v}$ and $P_{t',v'}$ are any two intersecting tubes of the form \eqref{tube defn}, then 
\[ |P_{t,v} \cap P_{t',v'}| \leq \frac{C_d M^{-J(d+1)}}{M^{-J} + |v-v'|},\]
where $C_d$ is a dimension-dependent constant.   
\end{lemma}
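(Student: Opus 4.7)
The plan is to slice both tubes by hyperplanes perpendicular to the $x_1$-axis, estimate the $d$-dimensional volume of the cross-sectional intersection, and integrate. The key observation is that because every direction in $\Omega_N$ lies in $\{1\}\times [0,1)^d$, the first coordinate of the direction vectors $v,v'$ equals $1$, so the $x_1$-coordinate serves as a natural parameter along each tube. Writing $v = (1, v_\perp)$ and $v' = (1, v'_\perp)$ with $v_\perp, v'_\perp \in [0,1)^d$, the slice of $P_{t,v}$ at height $x_1 \in [0, 10A_0]$ is the $d$-dimensional cube $\widetilde Q_t + x_1 v_\perp$ (translated in the perpendicular $\mathbb R^d$), which has sidelength $c_d M^{-J}$ and volume $(c_dM^{-J})^d$. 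In particular $|v - v'| = |v_\perp - v'_\perp|$, so the angle between the two tube axes is directly controlled by the quantity appearing in the statement.

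The first step is to determine the set $I$ of heights $x_1$ at which the two cross-sections can meet. By Lemma \ref{intersection criterion lemma} (applied pointwise in $x_1$), such an intersection is nonempty only if
\[
\bigl| \operatorname{cen}_\perp(t) - \operatorname{cen}_\perp(t') + x_1(v_\perp - v'_\perp) \bigr| \leq C c_d M^{-J}
\]
for a dimensional constant $C$, so $I$ is an interval of length at most $\min\bigl(10 A_0,\; 2C c_d M^{-J}/|v-v'|\bigr)$. At every $x_1\in I$ the cross-sectional intersection has volume at most $(c_d M^{-J})^d$, being contained in a translate of a cube of that sidelength. Integrating gives
\[
|P_{t,v}\cap P_{t',v'}| \leq (c_d M^{-J})^d \cdot \min\!\left(10 A_0,\; \frac{2Cc_d M^{-J}}{|v-v'|}\right).
\]

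To turn this into the symmetric bound $C_d M^{-J(d+1)}/(M^{-J}+|v-v'|)$, I would split into two cases. If $|v-v'|\leq M^{-J}$, then $M^{-J}+|v-v'|\leq 2M^{-J}$, the trivial slice bound gives $|P_{t,v}\cap P_{t',v'}|\leq 10 A_0\, (c_dM^{-J})^d$, and the claimed inequality follows with $C_d$ chosen at least $20 A_0 c_d^{\,d}$. If instead $|v-v'| > M^{-J}$, then $M^{-J}+|v-v'| \leq 2|v-v'|$, and the second term in the minimum gives
\[
|P_{t,v}\cap P_{t',v'}| \leq \frac{2C c_d^{\,d+1}\, M^{-J(d+1)}}{|v-v'|} \leq \frac{4Cc_d^{\,d+1}\, M^{-J(d+1)}}{M^{-J}+|v-v'|}.
\]
Taking $C_d$ as the maximum of the two constants yields the stated estimate.

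I do not expect any genuine obstacle here; the lemma is a standard cross-sectional computation of the C\'ordoba type. The only small points to monitor are (i) using the correct norm when translating the $\ell^\infty$ sidelength $c_d M^{-J}$ into an Euclidean radius for the intersection-criterion inequality, which only affects the dimensional constant, and (ii) the case division ensuring that the single denominator $M^{-J}+|v-v'|$ correctly interpolates between the ``nearly parallel'' regime (where the intersection fills almost the whole tube length) and the ``transverse'' regime (where the tubes cross briefly).
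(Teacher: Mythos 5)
Your proof is correct and is exactly the standard C\'ordoba cross-sectional argument: the paper itself omits the proof, remarking only that the estimate is ``easily verifiable'' and citing Grafakos, and the cited proof is the same slicing computation you carry out (slices meet on an $x_1$-interval of length $\lesssim M^{-J}/|v-v'|$, each slice contributing at most $(c_dM^{-J})^d$, followed by the two-case comparison with the symmetric denominator). The only cosmetic remark is that your constant in the nearly-parallel case also absorbs the fixed tube length $10A_0$, which is harmless since $A_0$ is an absolute constant fixed once and for all.
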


\subsection{Tubes and a point} \label{tubes and point section}
A crucial component of the proof of Proposition \ref{InfiniteSplitKakeya}, amplified in Section \ref{UpperBoundSection}, is to identify when a given point $x$ belongs to a union of tubes of the form \eqref{generic sticky Kakeya}. In our applications, the set $\mathbb K(\sigma)$ in \eqref{generic sticky Kakeya} will be probabilistically generated by random sticky maps, and we will need to estimate the likelihood of such an inclusion. But many major ingredients of the argument pertain to general sets $\mathbb K(\sigma)$ generated by an arbitrary sticky $\sigma$. We discuss these features here. 

\begin{lemma} \label{leading to Poss(x)}
Let $x \in \mathbb R^{d+1}$, $A_0 \leq x_1 \leq 10 A_0$. If the parameter $C_0$ used in the pruning of the slope tree $\mathcal T(\Omega;M)$ (see Proposition \ref{pruning stage 1}) is chosen sufficiently large relative to the constant $A_0$ in \eqref{tube defn}, then the following property holds:  for any $t \in \mathcal Q(J)$, there exists at most one $v(t) \in \Omega_N$ such that $x \in P_{t, v(t)}$. 
\end{lemma}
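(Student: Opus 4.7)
The plan is to argue by contradiction, deriving a quantitative upper bound on the possible separation $|v - v'|$ of any two ``competing'' slopes and then clashing it against the lower bound furnished by part~(\ref{Defn of J}) of Proposition~\ref{pruning stage 1}. So, fix $t \in \mathcal Q(J)$ and suppose that there exist two distinct $v, v' \in \Omega_N$ such that $x \in P_{t,v} \cap P_{t,v'}$.

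First I would exploit the fact that $\Omega_N \subseteq \{1\} \times [0,1)^d$ to pin down the parameters along the two tubes. Writing $v = (1, v_2, \ldots, v_{d+1})$ and similarly for $v'$, the inclusion $x \in P_{t,v}$ produces $y \in \widetilde Q_t$ and $r \in [0, 10 A_0]$ with $x = y + r v$. Since $\widetilde Q_t \subseteq \{0\} \times [0,1)^d$, the first coordinate of $y$ is $0$ and hence $x_1 = r$. The corresponding decomposition $x = y' + r' v'$ with $y' \in \widetilde Q_t$ forces $r' = x_1$ as well. Subtracting the two expressions for $x$ yields $x_1 (v - v') = y' - y$, so in view of $\mathrm{diam}(\widetilde Q_t) = c_d \sqrt{d}\, M^{-J}$ and the assumption $x_1 \geq A_0$, we obtain
\begin{equation*}
|v - v'| \;=\; \frac{|y - y'|}{|x_1|} \;\leq\; \frac{c_d \sqrt{d}}{A_0}\, M^{-J}.
\end{equation*}

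On the other hand, part (\ref{Defn of J}) of Proposition~\ref{pruning stage 1} guarantees that any two distinct elements of $\Omega_N$ are separated by at least $C_0 M^{-J}$. Combining the two estimates gives $C_0 A_0 \leq c_d \sqrt{d}$, which fails as soon as $C_0 > c_d \sqrt{d}/A_0$. Choosing $C_0$ large enough relative to $A_0$ (for the proposed $c_d = d^{-2d}$ and $A_0 \geq 1$, any $C_0 \geq 1$ already suffices) produces the required contradiction, proving that at most one $v(t) \in \Omega_N$ can satisfy $x \in P_{t, v(t)}$.

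There is no substantive obstacle here; the whole argument rests on the observation that the normalization $v_1 = v_1' = 1$ collapses the length-direction freedom in the parametrization of each tube, forcing both representations of $x$ to share the same parameter value $r = x_1$. Consequently the discrepancy $|v - v'|$ is controlled not by the tube length $10 A_0$ but by the much smaller cross-sectional diameter $c_d \sqrt d\, M^{-J}$, which is what makes it irreconcilable with the Euclidean separation built into $\Omega_N$ during the pruning. The role of the hypothesis $x_1 \geq A_0$ is precisely to keep the denominator $|x_1|$ uniformly bounded below, so that the bound on $|v - v'|$ degenerates linearly in $A_0^{-1}$ rather than blowing up.
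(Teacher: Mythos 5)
Your argument is correct and is essentially the paper's own proof: both reduce to the observation that $x - x_1 v$ and $x - x_1 v'$ must lie in the (dilated) root cube, so $|x_1||v-v'|$ is bounded by the cross-sectional diameter $O(M^{-J})$, which contradicts the separation $C_0 M^{-J}$ from Proposition \ref{pruning stage 1}(iv) once $C_0$ is large relative to $A_0^{-1}$. The only cosmetic difference is that you track the diameter of $\widetilde Q_t$ (gaining the factor $c_d$) while the paper uses the coarser bound $\sqrt{d}M^{-J}$ from the full cube $t$; both suffice.
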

\begin{proof} 
If there exist slopes $v, v' \in \Omega_N$ such that $x \in P_{t,v} \cap P_{t,v'}$, then the points $x - x_1v$ and $x - x_1v'$ must both lie in $t$. In other words, 
\[ |x_1(v-v')| = |(x-x_1v)-(x-x_1v')|\leq \sqrt{d}M^{-J}.  \]
Since $x_1 \geq A_0$, this implies that $|v-v'| \leq A_0^{-1} \sqrt{d} M^{-J}$, which is $\leq \frac{C_0}{2} M^{-J}$ for a choice of $C_0$ sufficiently large. Comparing with part (iv) of Proposition \ref{pruning stage 1}, we find this is possible in $\Omega_N$ only if $v = v'$. 
\end{proof} 
The lemma above motivates the following definition: for $x \in \mathbb R^{d+1}$ with $A_0 \leq x_1 \leq 10 A_0$,  
\begin{equation} \label{defn Poss(x)}
\text{Poss}(x) := \left\{t \in \mathcal Q(J) : \begin{aligned}  &\text{ there exists } v(t) = v(t;x) \in \Omega_N \\ &\text{ such that } x \in P_{t, v(t)} \end{aligned} \right\}.  
\end{equation} 
\begin{lemma} \label{Poss(x) inclusion}
The set Poss$(x)$ introduced in \eqref{defn Poss(x)} can also be characterized as follows:
\begin{equation} \text{Poss}(x) = \{ t \in \mathcal Q(J) : t \cap (x-x_1 \Omega_N) \ne \emptyset \}. \label{Poss(x) another description} 
\end{equation}  
Thus Poss$(x)$ is contained in an $O(M^{-J})$-neighborhood of an affine copy of $\Omega_N$ in the root hyperplane $\{0\} \times [0,1)^d$.   
\end{lemma}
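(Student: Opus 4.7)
My plan rests on one rigid observation rooted in the normalization carried out earlier: since $\Omega_N \subseteq \{1\} \times [0,1)^d$, every direction $\omega \in \Omega_N$ has first coordinate exactly one, while $\widetilde{Q}_t \subseteq \{0\} \times \mathbb{R}^d$ has first coordinate exactly zero. If $x = s + r\omega$ with $s \in \widetilde{Q}_t$ and $r \in [0, 10A_0]$, then matching first coordinates forces $r = x_1$ unambiguously. This will pin the parameter along the tube and reduce the membership condition $x \in P_{t,\omega}$ to the cleaner equivalent $x - x_1\omega \in \widetilde{Q}_t$.

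The forward inclusion $\mathrm{Poss}(x) \subseteq \{t : t \cap (x - x_1 \Omega_N) \neq \emptyset\}$ will then follow immediately. I take $t \in \mathrm{Poss}(x)$ with witness slope $v(t) \in \Omega_N$ supplied by the definition \eqref{defn Poss(x)}, and the observation above produces the point $s = x - x_1 v(t) \in \widetilde{Q}_t \subseteq t$, which by construction also belongs to $x - x_1\Omega_N$ and witnesses the non-empty intersection.

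The reverse inclusion is where the only real content will sit. Starting from $y = x - x_1 v \in t$ with $v \in \Omega_N$, I would seek $v' \in \Omega_N$ with $x \in P_{t,v'}$. The natural candidate is $v' = v$, and by Lemma \ref{leading to Poss(x)} it is the unique candidate once $C_0$ is chosen sufficiently large relative to $A_0$ (the slopes in $\Omega_N$ are $C_0 M^{-J}$-separated by Proposition \ref{pruning stage 1}(iv), so after multiplication by $x_1 \geq A_0$ the images $x - x_1\Omega_N$ are pairwise separated by at least $A_0 C_0 M^{-J} \gg \sqrt{d} M^{-J} = \mathrm{diam}(t)$, leaving at most one $v$ such that $x - x_1 v \in t$). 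The main obstacle is then upgrading $y \in t$ to $y \in \widetilde{Q}_t$: the shrinkage factor $c_d \ll 1$ places $\widetilde{Q}_t$ strictly inside $t$, and $y$ could a priori lie in the $O(M^{-J})$-thick shell $t \setminus \widetilde{Q}_t$. I would therefore interpret \eqref{Poss(x) another description} modulo this boundary layer, whose thickness matches the scale of the cube itself and is harmlessly absorbed into the $O(M^{-J})$-neighborhood claim that follows. A genuine set-theoretic equality can always be enforced by defining $P_{t,v'}$ using $t$ rather than $\widetilde{Q}_t$, a change that does not affect the downstream arguments.

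The neighborhood assertion will fall out easily once the characterization is in hand. The map $\omega \mapsto x - x_1 \omega$ is an affine injection on $\Omega_N$, and since every $\omega \in \Omega_N$ has $\omega_1 = 1$, each image point has first coordinate $x_1 - x_1 = 0$. Hence $x - x_1 \Omega_N$ lies in the hyperplane $\{0\} \times \mathbb{R}^d$ containing the root hyperplane, and is an affine copy of $\Omega_N$ there. Any $t$ in the characterized set, being a root cube of diameter $\sqrt{d}\, M^{-J}$ that meets $x - x_1 \Omega_N$, sits within its $\sqrt{d}\, M^{-J}$-neighborhood, which completes the proof.
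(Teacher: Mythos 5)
Your proof is correct and follows essentially the same route as the paper's: match first coordinates to force $r = x_1$, so that $x \in P_{t,v}$ becomes $x - x_1 v \in \widetilde{Q}_t$, and then unwind both inclusions. The discrepancy you flag between $\widetilde{Q}_t$ and $t$ in the reverse inclusion is real but is silently elided in the paper's own one-line converse ("$x \in t + x_1 v \subseteq P_{t,v}$"), and your resolution — reading the equality up to the $O(M^{-J})$ boundary layer, which is all that is used downstream — is the right one.
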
 
\begin{proof}
If $t \in \text{Poss}(x)$,  it follows from the definition \eqref{tube defn} of a tube and the description \eqref{defn Poss(x)} of Poss$(x)$ that $x - x_1 v(t) \in t$ for some $v(t) \in \Omega_N$. Thus the left hand side of \eqref{Poss(x) another description} is contained in the right hand side. Conversely, if $x-x_1 v \in t$ for some $v \in \Omega_N$, then $x \in t + x_1 v \subseteq P_{t,v}$. This means that $t \in \text{Poss}(x)$, and the result follows.   
\end{proof}
The mapping \begin{equation} \label{defn v(t)} v : \text{Poss}(x) \rightarrow \Omega_N \quad \text{ which sends } \quad t \mapsto v(t) \quad \text{ with } \quad x \in P_{t,v(t)} \end{equation}   
is uniquely defined by Lemma \ref{leading to Poss(x)}. It captures for every $t \in \text{Poss}(x)$ the ``correct slope'' that ensures that a tube rooted at $t$ with that slope contains $x$. A purely deterministic object driven by $\Omega_N$, this map has a certain structure that is critical to the subsequent analysis.  To formalize this property, let us recall the definitions of $\mathcal G_j(\Omega_N)$ and $\mathcal H_j(\Omega_N)$ from \eqref{generation j splitting vertices} and \eqref{jth basic slope cubes}. We denote for every $\omega \in \Omega_N$ and $1 \leq j \leq N$, 
\begin{equation} \label{defn of eta_j}
\eta_j(\omega) :=  h(\theta) \quad \text{ where } \quad \omega \subseteq \theta \in \mathcal H_j(\Omega_N).
\end{equation}
In other words, $\eta_j(\omega)$ is the height of the $j$th basic slope cube on the ray identifying $\omega$ in $\mathcal T_J(\Omega_N;M)$. We note that $\eta_N(\omega) \equiv J$ for all $\omega \in \mathcal H_N(\Omega_N) = \Omega_N$.  

The quantity $\eta_j$ is used to define the following objects:   
\begin{align}  
\mathcal N_x &:= \left\{ \Phi_j(t) \; : \; t \in \text{Poss}(x), \; 0 \leq j \leq N \right\}, \label{defn N} \\  \mathcal M_x &:= \left\{ \Theta_j(t) \, : \, t \in \text{Poss}(x), \; 0 \leq j \leq N \right\},  \quad \text{ where } \label{defn M} \\ \Phi_j(t) &:= \begin{cases} \{0\} \times [0,1)^d &\text{ for } j = 0 \\  \bigl(Q_1^{\ast}(t), \cdots, Q_j^{\ast}(t)\bigr) &\text{ for } j \geq 1, \quad \text{ and } \end{cases} \label{defn Phi_j} \\
\Theta_j(t) &:= \begin{cases} \{1\} \times [0,1)^d &\text{ for } j = 0 \\  \bigl(\theta_1(t), \cdots, \theta_j(t)\bigr) &\text{ for } j \geq 1. \end{cases} \label{defn Theta_j}
\end{align} 
Here for $j \geq 1$, the cube $Q_j^{\ast}(t)$ is a cube in the root hyperplane containing $t$. In contrast, $\theta_j(t)$ is a vertex in $\mathcal H_j(\Omega_N)$, hence a cube in $\{ 1\} \times [0,1)^d$, containing the point $v(t) \in \Omega_N$. Furthermore, both cubes are located at the same height in their respective trees and obey the defining properties \begin{equation} \label{s_j and theta_j} t \subseteq Q_j^{\ast}(t), \quad v(t) \in \theta_j(t), \quad \text{ and } \quad h(Q_j^{\ast}(t)) = h(\theta_j(t)) = \eta_j(v(t)).\end{equation}  

We pause briefly to clarify the definitions \eqref{defn Phi_j} and \eqref{defn Theta_j} (see Figure~\ref{Fig:pull-back process}). Given any $t \in \text{Poss}(x)$, we pick on the ray identifying $t$ the vertices that lie at the same height as the basic slope cubes of $v(t)$. The entries of the vector $\Phi_j(t)$ are the first $j$ chosen vertices on this ray. On the other hand, $\Theta_j(t)$ consists of the first $j$ basic slope cubes containing $v(t)$.  The vectors $\Phi_{N}(t)$ and $\Theta_{N}(t)$ identify $t$ and $v(t)$ respectively. For reasons to emerge shortly in Lemma \ref{N to M sticky map}, we view the collection $\mathcal N_x$ as a tree, in which $\Phi_j(t)$ is a vertex of height $j$, and $\Phi_{j+1}(t)$ is a child of $\Phi_j(t)$. As we have already noted, the set Poss$(x)$, and hence the youngest generation of $\mathcal N_x$, contains all possible roots that could support tubes with directions in $\Omega_N$ containing $x$. For an arbitrary $\sigma$, it is therefore natural to phrase a necessary criterion for the inclusion $x \in \mathbb K(\sigma)$ in terms of $\mathcal N_x$. For this reason we choose to call $\mathcal N_x$ the {\em{reference tree}}, and its defining cubes $Q_j^{\ast}(t)$ as {\em{reference cubes}}. The collection $\mathcal M_x$ should be thought of as the ``image'' of $\mathcal N_x$ on the slope side, and hence a tree as well, with $\Theta_j(t)$ being a vertex of the $j$th generation and the parent of $\Theta_{j+1}(t)$.  In fact, $\mathcal M_x$ is a subtree of $\mathcal H(\Omega_N)$ defined as in \eqref{Splitting tree}. In view of Proposition \ref{Splitting tree lemma}, any vertex $\Theta_j(t)$ of height $j \geq 1$ in $\mathcal M_x$ is identified with the $j$-long binary sequence $\Psi^{-1}(\Theta_j(t))$.  

\begin{figure}[h!]
\setlength{\unitlength}{0.8mm}
\begin{picture}(-50,0)(-106,20)

	\put(-50,0){{\special{sh 0.99}\ellipse{2}{2}}}
	\dottedline{1}(-50,0)(-60,-7)
	\dottedline{1}(-50,0)(-40,-7)
	\dottedline{1}(-50,0)(-46,-8)
	\put(-55,-15){{\special{sh 0.99}\ellipse{2}{2}}}
	\dottedline{1}(-55,-15)(-65,-22)
	\dottedline{1}(-55,-15)(-45,-22)
	\dottedline{1}(-55,-15)(-52,-23)
	\put(-57,-25){{\special{sh 0.99}\ellipse{2}{2}}}
	\dottedline{1}(-57,-25)(-67,-32)
	\dottedline{1}(-57,-25)(-47,-32)
	\dottedline{1}(-57,-25)(-60,-33)
	\put(-55,-35){{\special{sh 0.99}\ellipse{2}{2}}}
	\dottedline{1}(-55,-35)(-65,-42)
	\dottedline{1}(-55,-35)(-45,-42)
	\dottedline{1}(-55,-35)(-58,-43)
	\put(-52,-45){{\special{sh 0.99}\ellipse{2}{2}}}
	\dottedline{1}(-52,-45)(-65,-52)
	\dottedline{1}(-52,-45)(-54,-53)
	\dottedline{1}(-52,-45)(-58,-53)
	\put(-50,-55){{\special{sh 0.99}\ellipse{2}{2}}}
	\dottedline{1}(-50,-55)(-60,-62)
	\dottedline{1}(-50,-55)(-42,-62)
	\dottedline{1}(-50,-55)(-55,-63)
	\put(-49,-65){{\special{sh 0.99}\ellipse{2}{2}}}
	\dottedline{1}(-49,-65)(-59,-72)
	\dottedline{1}(-49,-65)(-40,-72)
	\dottedline{1}(-49,-65)(-45,-73)
	\put(-50,-75){{\special{sh 0.99}\ellipse{3}{3}}}
	\path(-55,-15)(-57,-25)(-55,-35)(-52,-45)(-50,-55)(-49,-65)(-50,-75)

	\path(-52,2)(-48,2)(-48,-2)(-52,-2)(-52,2)
	\put(-60,0){\Large\shortstack{$\Phi_j$}}
	\path(-59,-23)(-55,-23)(-55,-27)(-59,-27)(-59,-23)
	\put(-74,-25){\Large\shortstack{$\Phi_{n-1}$}}	
	\path(-52,-53)(-48,-53)(-48,-57)(-52,-57)(-52,-53)
	\put(-63,-57){\Large\shortstack{$\Phi_n$}}
	\put(-52,-84){\Large\shortstack{$t$}}


	\put(20,0){{\special{sh 0.99}\ellipse{2}{2}}}
	\put(15,-25){{\special{sh 0.99}\ellipse{2}{2}}}
	\put(13,-55){{\special{sh 0.99}\ellipse{2}{2}}}
	\put(17,-75){{\special{sh 0.99}\ellipse{3}{3}}}
	\path(15,-25)(13,-55)(17,-75)

	\put(17,0){\vector(-1,0){62}}
	\put(12,-25){\vector(-1,0){64}}
	\put(10,-55){\vector(-1,0){54}}

	\put(25,-2){\Large\shortstack{$\Theta_j$}}
	\put(19,-27){\Large\shortstack{$\Theta_{n-1}$}}
	\put(18,-56){\Large\shortstack{$\Theta_{n}$}}
	\put(13,-84){\Large\shortstack{$v(t)$}}

	\allinethickness{.4mm}\dottedline{2}(-55,-15)(-50,0)(-47,15)
	\dottedline{2}(15,-25)(20,0)(20,15)
	\allinethickness{.25mm}\dottedline{1}(20,0)(25,-10)
	\dottedline{1}(15,-25)(8,-35)
	\dottedline{1}(13,-55)(25,-65)


\end{picture}
\vspace{8.5cm}
\caption{\label{Fig:pull-back process} The pull-back mechanism used to define $\mathcal{N}_x$, for $M= d=2$.}  
\end{figure}
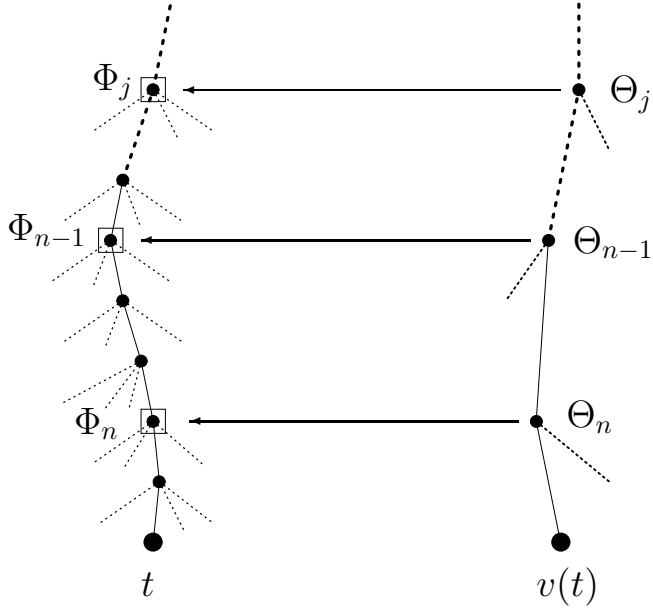

Given the constraints of our pruning mechanism in Proposition \ref{pruning stage 1}, the ``correct slope''  map $t \mapsto v(t)$ need not be sticky as a mapping from $\mathcal T_J(\text{Poss}(x);M)$ to $\mathcal T_J(\Omega_N;M)$. It does however possess a weak variant of the stickiness property that we specify in the next lemma. As we will see in Lemma \ref{N to M sticky map}, this milder substitute is able to achieve two goals that are of fundamental relevance to this study. First, it assigns a tree structure to $\mathcal N_x$ and $\mathcal M_x$. Second, it is strong enough to lift $v$ as a sticky map from $\mathcal N_x \rightarrow \mathcal M_x$.
\begin{lemma} \label{v(t) weak sticky}
There is a sufficiently large choice of the parameter $C_0$ in Proposition \ref{pruning stage 1} for which the following conclusion holds. Let $x \in \mathbb R^{d+1}$ with $A_0 \leq x_1 \leq 10A_0$, $t, t' \in \text{Poss}(x)$ and $u = D(t,t')$. Set $w=D(v(t), v(t'))$, so that $w \in \mathcal G(\Omega_N)$, the class of splitting vertices defined in \eqref{splitting vertex collection}. Then
\begin{equation}  h(u) < \lambda(w), \label{weak sticky} \end{equation}  
with $\lambda$ defined as in \eqref{height of youngest splitting child}. 
\end{lemma}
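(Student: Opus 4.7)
The plan is a short proof by contradiction that exploits the intersection criterion of Lemma \ref{intersection criterion lemma} on the Euclidean side against the forced separation built into the pruned slope tree by Proposition \ref{pruning stage 1}(iii), as repackaged by Corollary \ref{Two distances comparable}. The mechanism is a two-sided squeeze on $|v(t)-v(t')|$, bounded above by something of order $M^{-h(u)}$ and below by $C_0 M^{-\lambda(w)}$; the assumption $h(u) \geq \lambda(w)$ will then collapse these into an inequality forcing $C_0$ to be bounded by a purely dimensional constant, contradicting the freedom to take $C_0$ arbitrarily large.

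First, applying Lemma \ref{intersection criterion lemma} to the common point $x \in P_{t,v(t)} \cap P_{t',v(t')}$ gives
\[
\bigl|\text{cen}(t') - \text{cen}(t) + x_1(v(t')-v(t))\bigr| \leq 2 c_d \sqrt{d}\, M^{-J}.
\]
Combining the triangle inequality with $x_1 \geq A_0 \geq 1$ and the observation that $u = D(t,t')$ is a spatial cube of sidelength $M^{-h(u)}$ containing both $t$ and $t'$, so that $|\text{cen}(t) - \text{cen}(t')| \leq \sqrt{d}\, M^{-h(u)}$, yields the upper bound
\[
|v(t) - v(t')| \leq \sqrt{d}\, M^{-h(u)} + 2 c_d \sqrt{d}\, M^{-J}.
\]

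For the matching lower bound, the hypothesis $w \in \mathcal G(\Omega_N)$ forces $v(t) \neq v(t')$ (else $D(v(t),v(t'))$ would be a leaf of $\mathcal T_J(\Omega_N;M)$ rather than a splitting vertex), and by the definition of youngest common ancestor $v(t)$ and $v(t')$ lie in two distinct children of $w$. The lower bound $\delta_w \geq C_0 M^{-\lambda(w)}$ extracted in the proof of Corollary \ref{Two distances comparable} directly from Proposition \ref{pruning stage 1}(iii) then delivers
\[
|v(t) - v(t')| \geq \delta_w \geq C_0\, M^{-\lambda(w)}.
\]

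The conclusion now follows by contradiction. Suppose $h(u) \geq \lambda(w)$, so $M^{-h(u)} \leq M^{-\lambda(w)}$; and note $\lambda(w) \leq J$ by definition, whence $M^{-J} \leq M^{-\lambda(w)}$. The two displayed bounds on $|v(t)-v(t')|$ combine into
\[
C_0\, M^{-\lambda(w)} \leq \sqrt{d}\,(1 + 2 c_d)\, M^{-\lambda(w)},
\]
forcing $C_0 \leq \sqrt{d}(1 + 2 c_d)$, a purely dimensional quantity. Taking $C_0$ larger than this threshold in the pruning step of Proposition \ref{pruning stage 1} (this is exactly the slack that the hypothesis provides) yields the contradiction, and therefore $h(u) < \lambda(w)$. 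There is no substantial obstacle in this argument; the only point requiring any attention is the reduction of the lower bound to the quantity $\lambda(w)$, which is precisely what was engineered by the iterative pruning mechanism in Section \ref{Slope tree pruning section}.
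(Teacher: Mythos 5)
Your proof is correct and follows essentially the same route as the paper: the intersection criterion of Lemma \ref{intersection criterion lemma} gives the upper bound $|v(t)-v(t')| \lesssim \sqrt{d}\,M^{-h(u)}$, the pruning separation of Proposition \ref{pruning stage 1}(iii) gives the lower bound $C_0 M^{-\lambda(w)}$, and comparing the two forces $\lambda(w) > h(u)$ once $C_0$ exceeds a dimensional constant. The only cosmetic differences are that you phrase the conclusion as a contradiction rather than a direct implication, and you extract the lower bound via the intermediate estimate $\delta_w \geq C_0 M^{-\lambda(w)}$ from the proof of Corollary \ref{Two distances comparable} instead of quoting Proposition \ref{pruning stage 1}(iii) directly; both are equivalent.
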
 
{\em{Remark:}} If $v$ defined in \eqref{defn v(t)} was indeed a sticky map, one would have access to the inequality $h(u) \leq h(w)$. We know however that $\lambda(w) > h(w)$, and hence \eqref{weak sticky} should be viewed as a weak version of stickiness.  
\begin{proof}
 If $x \in P_{t, v(t)} \cap P_{t', v(t')}$, then by the inequality \eqref{intersection criterion inequality} in Lemma \ref{intersection criterion lemma},
\begin{align} A_0 |v(t)-v(t')| &\leq |x_1||v(t)-v(t')| \nonumber \\  &\leq |\text{cen}(t') - \text{cen}(t)| + 2\sqrt{d} M^{-J} \nonumber \\ &\leq 2 \sqrt{d} M^{-h(u)} + 2\sqrt{d} M^{-J} \leq 4 \sqrt{d} M^{-h(u)}, \nonumber \\
\text{ and thus } |v(t) -v(t')| &\leq 4 \sqrt{d} A_0^{-1} M^{-h(u)}. \label{upper bound on difference} \end{align}  
On the other hand, $v(t)$ and $v(t')$ each lie in distinct children of $w$, which must be a splitting vertex of $\mathcal T_J(\Omega_N;M)$. If $w \in \mathcal G_j(\Omega_N)$ and if $\gamma$, $\gamma'$ denote the $(j+1)$th splitting vertices descended from $w$, then each of $\gamma$ and $\gamma'$ contains exactly one of $v(t)$ and $v(t')$.  By Proposition \ref{pruning stage 1}(\ref{Euclidean separation}), 
\begin{equation} \label{lower bound on difference}
|v(t) - v(t')| \geq \text{dist}(\gamma, \gamma') \geq C_0 M^{-\lambda_j(w)}.   
\end{equation} 
Combining \eqref{upper bound on difference} and \eqref{lower bound on difference} we obtain 
\[ C_0 M^{-\lambda_j(w)} \leq 4 \sqrt{d} A_0^{-1} M^{-h(u)}.\]
If the constant $C_0$ is chosen larger than $4\sqrt{d} A_0^{-1}$, then the inequality above implies \eqref{weak sticky}, as claimed.  
\end{proof}
\begin{lemma}\label{N to M sticky map}
The collection of vertex tuples $\mathcal N_x$, $\mathcal M_x$ defined in \eqref{defn N}, \eqref{defn M} are well-defined as trees rooted at $\{0\} \times [0,1)^d$ and $\{1\} \times [0,1)^d$ respectively, with the ancestry relation as described in the discussion leading up to Lemma \ref{v(t) weak sticky}. More precisely, the map $v$ defined in \eqref{defn v(t)} meets the following consistency requirements:
\begin{enumerate}[(i)]
\item Let $t, t' \in \text{Poss}(x)$, $u = D(t,t')$. If the index $j$ satisfies $\eta_j(v(t)) \leq h(u)$ then we also have $\eta_j(v(t')) \leq h(u)$, in which case $\Phi_j(t) = \Phi_j(t')$ and $\Theta_j(t) = \Theta_j(t')$. \label{consistency}
\item The map from $\mathcal N_x \rightarrow \mathcal M_x$ that sends $\Phi_j(t) \mapsto \Theta_j(t)$ is well-defined and sticky. \label{sticky map lift}
\end{enumerate}
\end{lemma}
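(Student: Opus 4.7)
The plan is to derive both parts from Lemma \ref{v(t) weak sticky} (weak stickiness), which is the only substantive input, combined with careful unwinding of the definitions of $\mathcal G_j, \mathcal H_j, \lambda, \eta, \Phi_j, \Theta_j$. The pivotal geometric fact is that along the rays identifying $v(t)$ and $v(t')$ in $\mathcal T_J(\Omega_N;M)$, the splitting vertices of splitting index $\leq k_0$ coincide, where $k_0 = \nu(w)$ with $w = D(v(t), v(t'))$. Consequently $\theta_k(t) = \theta_k(t')$ (and hence $\eta_k(v(t)) = \eta_k(v(t'))$) for every $k < k_0$, while at level $k_0$ one has $\eta_{k_0}(v(t)) = \lambda_{k_0}(w) = \eta_{k_0}(v(t'))$ even though $\theta_{k_0}(t) \neq \theta_{k_0}(t')$. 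This is essentially a direct reading of the definition of a basic slope cube as the unique descendant of its governing splitting vertex at height $\lambda_k(\cdot)$ containing the terminal slope.

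For part (i), I would first dispose of the trivial case $t = t'$ and note that when $t \ne t'$, the disjointness of the dilated cubes $\widetilde Q_t, \widetilde Q_{t'}$ (for $c_d$ small) forces $v(t) \ne v(t')$, making $w$ a proper splitting ancestor of both $v(t)$ and $v(t')$. Inequality \eqref{weak sticky} then reads $h(u) < \lambda_{k_0}(w) = \eta_{k_0}(v(t))$. Since $k \mapsto \eta_k(v(t))$ is strictly increasing (a consequence of the chain $\lambda_{k-1}(g_{k-1}) \leq h(g_k) < \lambda_k(g_k)$ for the successive splitting vertices $g_k$ along the ray of $v(t)$), the assumption $\eta_j(v(t)) \leq h(u)$ forces $j < k_0$, so by the pivotal fact $\eta_j(v(t')) = \eta_j(v(t)) \leq h(u)$ and $\Theta_j(t) = \Theta_j(t')$. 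The equality $\Phi_j(t) = \Phi_j(t')$ then follows because each reference cube $Q_k^{\ast}(t)$ is the ancestor of $t$ at height $\eta_k(v(t)) \leq h(u)$, and any ancestor of $t$ at height at most $h(u)$ coincides with the corresponding ancestor of $t'$ (both being ancestors of $u$).

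Part (ii) reduces to part (i). For well-definedness of the assignment $\Phi_j(t) \mapsto \Theta_j(t)$: if $\Phi_j(t) = \Phi_j(t')$, then in particular $Q_j^{\ast}(t) = Q_j^{\ast}(t')$ is a common ancestor of $t$ and $t'$, so $\eta_j(v(t)) = h(Q_j^{\ast}(t)) \leq h(D(t,t'))$, and part (i) delivers $\Theta_j(t) = \Theta_j(t')$. The tree structure on $\mathcal N_x$ and $\mathcal M_x$ is transparent, since each tuple has a unique parent obtained by dropping its last entry and thus lies at height exactly $j$; stickiness is then automatic, as $\Phi_j(t)$ and $\Theta_j(t)$ sit at the same height $j$ in their respective trees and lineage in each tree is simply the prefix relation, which is manifestly preserved by sending $\Phi_j(t) \mapsto \Theta_j(t)$. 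The main obstacle to articulate carefully is the pivotal identification $\eta_{k_0}(v(t)) = \lambda_{k_0}(w)$ and the coincidence of earlier basic slope cubes; beyond this, the argument is pure bookkeeping in the language of $\mathcal G_j(\Omega_N)$ and $\mathcal H_j(\Omega_N)$.
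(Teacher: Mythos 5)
Your proposal is correct and follows essentially the same route as the paper: both arguments rest on Lemma \ref{v(t) weak sticky} to conclude that $\eta_j(v(t)) \leq h(u) < \lambda(w)$ forces $j < \nu(w)$ (equivalently, $w \subseteq \gamma_{j+1}(t)$ in the paper's phrasing), so the first $j$ splitting vertices and basic slope cubes of $v(t)$ and $v(t')$ coincide, after which $\Phi_j(t)=\Phi_j(t')$ follows from both reference cubes being ancestors of $u$, and part (ii) reduces to part (i) exactly as you describe. Your explicit verification that $k \mapsto \eta_k(v(t))$ is strictly increasing and that $t \ne t'$ forces $v(t) \ne v(t')$ fills in details the paper leaves implicit, but introduces no new ideas.
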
 
\begin{proof} 
Let $\gamma_j(t) \in \mathcal G_j(\Omega_N)$ denote the $j$th splitting vertex on the ray identifying $v(t)$. Then $\eta_j(v(t)) = \lambda_j(\gamma_j(t))$.  If $\eta_j(v(t)) = \lambda_j(\gamma_j(t)) \leq h(u)$, then Lemma \ref{v(t) weak sticky} implies that $\eta_j(v(t)) = \lambda_j(\gamma_j(t)) < \lambda(w)$, where $w = D(v(t), v(t'))$. Unravelling the implication of this inequality, we see that the height of the first splitting descendant of $\gamma_j(t)$ is strictly smaller than the corresponding quantity for $w$. Since both $\gamma_j(t)$ and $w$ are splitting vertices lying on the ray of $v(t)$, this means that $\gamma_j(t)$ is an ancestor of $w$ of strictly lesser height. In other words, $w \subseteq \gamma_{j+1}(t)$. Since the rays for $v(t)$ and $v(t')$ agree up to and including height $h(w)$, we conclude that their first $(j+1)$ splitting vertices are identical; i.e., 
\begin{equation}
\gamma_{k}(t) = \gamma_{k}(t') \text{ for } k \leq j+1. 
\end{equation} 
Hence $\eta_k(v(t)) = \lambda_k(\gamma_k(t)) = \lambda_k(\gamma_k(t')) = \eta_k(v(t'))$ for all such $k$, implying one of the desired conclusions in part (\ref{consistency}). Since \[ h(w) \geq h(\gamma_{j+1}(t)) = h(\gamma_{j+1}(t')) \geq \eta_j(v(t)) = \eta_j(v(t')), \] the vectors $v(t)$ and $v(t')$ must agree at height $\eta_j$. Thus $\Theta_j(t) = \Theta_j(t')$. Of course if $\eta_j(v(t)) = \eta_j(v(t')) \leq h(u)$, then $\Phi_j(t) = \Phi_j(t')$. This completes the proof of the first part of the lemma.

Part (\ref{sticky map lift}) is essentially a restatement of the result in part (\ref{consistency}). To ascertain that the map is well-defined we choose $t, t' \in \text{Poss}(x)$ with $u= D(t,t')$ and $\Phi_j(t) = \Phi_j(t')$ and aim to show that $\Theta_j(t) = \Theta_j(t')$. The hypothesis $\Phi_j(t) = \Phi_j(t')$ implies that $\eta_j(v(t)) = \eta_j(v(t')) \leq h(u)$, and part (\ref{consistency}) implies that the images match. Stickiness is a by-product of the definitions. 
\end{proof}
Lemma \ref{N to M sticky map} permits the unambiguous assignment of an ``ideal image'' (namely an edge in $\mathcal M_x$) to every edge of the tree $\mathcal N_x$, in the following sense: if every edge in the ray leading up to $\Phi_N(t)$ receives its ideal image, then $x \in P_{t, v(t)}$. To make this quantitatively precise, let us define the {\em{reference slope function}} $\kappa$ as follows: for every edge $e$ in $\mathcal N_x$ joining the vertices $\Phi_j(t)$ to $\Phi_{j+1}(t)$, we define a binary counter $\kappa(e)$ through the defining equation
\begin{equation} \label{defn overlay function} 
\Psi^{-1} \circ \Theta_{j+1}(t) = (\Psi^{-1} \circ \Theta_j(t), \kappa(e))
\end{equation} 
where $\Psi$ is the tree isomorphism defined in Proposition \ref{Splitting tree lemma}. In other words, $\kappa(e)$ is zero (respectively one) if and only if the ray identifying $\Theta_{j+1}(t)$ in $\mathcal T_{J}(\Omega_N)$ passes through the $0$th (respectively $1$st) child of the $(j+1)$th splitting vertex identified by $\Theta_j(t)$.
\begin{corollary}\label{kappa makes sense}
The reference slope function $\kappa$ described in \eqref{defn overlay function} is well-defined, and assigns to each edge of $\mathcal N_x$ a unique value of 0 or 1.  
\end{corollary}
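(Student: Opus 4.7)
The plan is to deduce the corollary essentially from the machinery already assembled: the well-definedness of the map $\Phi_j(t) \mapsto \Theta_j(t)$ furnished by Lemma \ref{N to M sticky map}(\ref{sticky map lift}), together with the tree isomorphism $\Psi : \mathcal B_N \rightarrow \mathcal H(\Omega_N)$ of Proposition \ref{Splitting tree lemma}. The argument splits into two routine verifications, neither of which should present a real obstacle.

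First I would verify that $\kappa(e)$ does not depend on the choice of $t \in \text{Poss}(x)$ used to represent the edge $e$. Any edge $e$ of $\mathcal N_x$ is determined by a pair of adjacent vertices $\Phi_j(t)$ and $\Phi_{j+1}(t)$ for some $t \in \text{Poss}(x)$; if another $t' \in \text{Poss}(x)$ yields the same pair, i.e., $\Phi_j(t) = \Phi_j(t')$ and $\Phi_{j+1}(t) = \Phi_{j+1}(t')$, then Lemma \ref{N to M sticky map}(\ref{sticky map lift}) gives $\Theta_j(t) = \Theta_j(t')$ and $\Theta_{j+1}(t) = \Theta_{j+1}(t')$. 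Consequently the defining equation \eqref{defn overlay function} produces the same value of $\kappa(e)$ from either $t$ or $t'$.

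Next I would verify that the value assigned is indeed $0$ or $1$. By construction, $\Theta_{j+1}(t)$ is a child of $\Theta_j(t)$ in the tree $\mathcal H(\Omega_N)$ of Proposition \ref{Splitting tree lemma}(i). Since $\Psi$ is a tree isomorphism onto $\mathcal B_N$, the full binary tree of height $N$, it preserves the parent-child relation under $\Psi^{-1}$; hence $\Psi^{-1}\circ\Theta_{j+1}(t)$ is a child of $\Psi^{-1}\circ\Theta_j(t)$ in $\mathcal B_N$. In $\mathcal B_N$ a child of a length-$j$ binary sequence is necessarily obtained by appending a single digit from $\{0,1\}$, so the representation
\[ \Psi^{-1}\circ\Theta_{j+1}(t) = \bigl(\Psi^{-1}\circ\Theta_j(t), \kappa(e)\bigr) \]
exists, is unique, and forces $\kappa(e) \in \{0,1\}$.

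Combining these two observations yields the claim. The only conceptual point worth emphasizing — and the nearest thing to an obstacle — is that the well-definedness of $\kappa$ rests crucially on the non-trivial sticky lift established in Lemma \ref{N to M sticky map}, which itself was the beneficiary of the weak stickiness estimate \eqref{weak sticky} and the Euclidean separation built into the pruning of Proposition \ref{pruning stage 1}. Once that lift is in hand, however, the present corollary reduces to unpacking the definitions against the isomorphism $\Psi$.
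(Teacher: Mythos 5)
Your proposal is correct and follows essentially the same route as the paper: both reduce well-definedness to Lemma \ref{N to M sticky map}, which guarantees that two representations $\Phi_{j+1}(t) = \Phi_{j+1}(t')$ of the terminating vertex of $e$ force $\Theta_k(t) = \Theta_k(t')$ for all $k \leq j+1$, so that \eqref{defn overlay function} returns the same value. Your additional paragraph checking that the value lies in $\{0,1\}$ via the isomorphism $\Psi$ onto the full binary tree is a detail the paper leaves implicit, but it is the intended reading.
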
 
\begin{proof}
If there exist $t \ne t'$ in Poss$(x)$ such that the terminating vertex of $e$ could be represented either as $\Phi_{j+1}(t)$ or as $\Phi_{j+1}(t')$, then Lemma \ref{N to M sticky map} guarantees that $\Theta_{k}(t) = \Theta_{k}(t')$ for all $k \leq j+1$, proving that $\kappa(e)$ given by \eqref{defn overlay function} is a well-defined function on the edge set of $\mathcal N_x$. 
\end{proof} 
The reader may find it helpful to visualize the edges of the reference tree $\mathcal N_x$ with an overlay of model binary values assigned by $\kappa$, against which any other slope assignment will be tested. This intuition is made precise in the following lemma. Given a fixed point $x$ and a union of tubes $\mathbb K(\sigma)$ of the form \eqref{generic sticky Kakeya} generated by a sticky slope map $\sigma$, the result offers a criterion governed by the reference slope function $\kappa$ for verifying whether $x \in \mathbb K(\sigma)$. Indeed for such $\sigma$, we can define $\mathcal N_x(\sigma)$ and $\mathcal M_x(\sigma)$ akin to \eqref{defn N} and \eqref{defn M}, but using the given slope map $t \mapsto \sigma(t)$ instead of the naturally generated $v$ given by \eqref{defn v(t)}. More precisely, we set  
\begin{align}
\mathcal N_x(\sigma) & :=  \{\Phi_j(t ; \sigma) \, : \, t \in \text{Poss}(x), 0 \leq j \leq N\}, \label{defn N omega} \\   
\mathcal M_x(\sigma) & :=  \{\Theta_j(t; \sigma) \, : \, t \in \text{Poss}(x), \; 0 \leq j \leq N\}, \label{defn M omega} 
\end{align}
where for $j \geq 1$, both $\Phi_j(t; \sigma)$ and $\Theta_j(t; \sigma)$ are $j$-long vectors whose $i$th components are $M$-adic cubes of identical size, containing $t$ in the root hyperplane and $\sigma(t)$ in the slope tree respectively.  For $\Theta_j(t;\sigma)$, the $i$th entry is required to lie in $\mathcal H_{i}(\Omega_N)$, which uniquely specifies both vectors. In light of the preceding results in this section, it is not surprising that the collections \eqref{defn N omega} and \eqref{defn M omega} are trees and that $\sigma$ extends to a map between these trees. 
\begin{lemma} \label{percolation preparation lemma} 
The following conclusions hold:
\begin{enumerate}[(i)]
\item \label{auxiliary slope map} The collections $\mathcal N_x(\sigma)$ and $\mathcal M_x(\sigma)$ as in \eqref{defn N omega} and \eqref{defn M omega} are well-defined as trees rooted respectively at $\{ 0\} \times [0,1)^d$ and $\{ 1\} \times [0,1)^d$. The tuples $\Phi_j(t;\sigma)$ and $\Theta_j(t;\sigma)$ are deemed vertices of generation $j$, and parents of $\Phi_{j+1}(t;\sigma)$ and $\Theta_{j+1}(t;\sigma)$ respectively. The map $\Phi_j(t ; \sigma) \mapsto \Theta_j(t; \sigma)$ from $\mathcal N_x(\sigma)  \rightarrow \mathcal M_x(\sigma)$ is well-defined and sticky.
\item \label{analogue of kappa for omega} If $e$ denotes the edge connecting $\Phi_j(t; \sigma)$ and $\Phi_{j+1}(t ; \sigma)$ in $\mathcal N_x(\sigma)$, then the quantity $\iota_{\sigma}(e)$ defined by 
\begin{equation} \label{defn nonoverlay function} \Psi^{-1} \circ \Theta_{j+1}(t;\sigma) = (\Psi^{-1} \circ \Theta_j(t;\sigma), \iota_{\sigma}(e)) \end{equation}  
gives rise to a well-defined binary function on the edge set of $\mathcal N_x(\sigma)$. 
\item \label{inclusion criterion} If $x \in \mathbb K(\sigma)$, then there exists $t \in \text{Poss}(x)$ such that
$\Theta_N(t; \sigma) = \Theta_N(t)$. In particular, this implies that 
\begin{equation} \label{comparing Phi}
\Phi_j(t;\sigma) = \Phi_j(t) \quad \text{ for all } \quad 1 \leq j \leq N,  
\end{equation}  
and hence that $\mathcal N_x$ and $\mathcal N_x(\sigma)$ share a common ray $R$ identifying $t$ with the property 
\begin{equation} \label{equality on each edge}
\iota_{\sigma}(e) = \kappa(e) \quad \text{ for every edge $e$ in $R$.}
\end{equation} 
\end{enumerate}  
\end{lemma}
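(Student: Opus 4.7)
My plan is to follow the template already set by Lemmas \ref{v(t) weak sticky}--\ref{N to M sticky map} and Corollary \ref{kappa makes sense} for the ``correct slope'' map $v$, and adapt it to $\sigma$. I expect the arguments to be cleaner rather than harder, because the weak stickiness painstakingly extracted for $v$ in Lemma \ref{v(t) weak sticky} is replaced by the outright stickiness of $\sigma$; in particular, I expect no appeal to the Euclidean separation Proposition \ref{pruning stage 1}(\ref{Euclidean separation}) nor to the constant $C_0$.

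For part (\ref{auxiliary slope map}), the key step will be the direct analogue of Lemma \ref{N to M sticky map}(\ref{consistency}): given $t, t' \in \text{Poss}(x)$ with $u = D(t,t')$, if $\eta_j(\sigma(t)) \leq h(u)$ for some $1 \leq j \leq N$, then $\eta_j(\sigma(t')) = \eta_j(\sigma(t))$, $\Theta_j(t;\sigma) = \Theta_j(t';\sigma)$ and $\Phi_j(t;\sigma) = \Phi_j(t';\sigma)$. The stickiness of $\sigma$ supplies $\sigma(u) \supseteq D(\sigma(t), \sigma(t'))$ in the slope tree, and so $h(D(\sigma(t), \sigma(t'))) \geq h(\sigma(u)) = h(u) \geq \eta_j(\sigma(t))$. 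The ancestor of $\sigma(t)$ at height $\eta_j(\sigma(t))$ is, by definition, the basic slope cube $\theta_j(t;\sigma) \in \mathcal H_j(\Omega_N)$; the height inequality makes it an ancestor of $\sigma(t')$ as well, and since each ray in $\mathcal T_J(\Omega_N;M)$ contains exactly one vertex of $\mathcal H_j(\Omega_N)$ (Proposition \ref{Splitting tree lemma}), this cube must equal $\theta_j(t';\sigma)$. Hence the $\Theta_j$-components agree. The agreement of the $\Phi_j$-components is then automatic, because the reference cube of height $\eta_j(\sigma(t)) \leq h(u)$ containing $t$ is also an ancestor of $u$, hence contains $t'$. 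From this single consistency statement, both collections $\mathcal N_x(\sigma)$ and $\mathcal M_x(\sigma)$ inherit an unambiguous tree structure (any two representations of the same vertex agree, and parents are unique), and the map $\Phi_j(t;\sigma) \mapsto \Theta_j(t;\sigma)$ is well-defined. Stickiness is then immediate: heights are $\eta_j(\sigma(t))$ on both sides by construction, and if $\Phi_{j}(t;\sigma)$ descends from $\Phi_{j'}(t'';\sigma)$ in $\mathcal N_x(\sigma)$, one may take $t'' = t$, whence the image ancestry $\Theta_j(t;\sigma) \subset \Theta_{j'}(t;\sigma)$ is built in.

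Part (\ref{analogue of kappa for omega}) will then drop out of part (\ref{auxiliary slope map}) as in Corollary \ref{kappa makes sense}: if the edge $e$ of $\mathcal N_x(\sigma)$ admits two representations via different $t, t' \in \text{Poss}(x)$, then well-definedness of the map $\Phi_k \mapsto \Theta_k$ for $k = j, j+1$ forces $\Theta_j(t;\sigma) = \Theta_j(t';\sigma)$ and $\Theta_{j+1}(t;\sigma) = \Theta_{j+1}(t';\sigma)$, so the binary label extracted by $\Psi^{-1}$ through \eqref{defn nonoverlay function} is the same. For part (\ref{inclusion criterion}), I would argue as follows. If $x \in \mathbb K(\sigma)$, then $x \in P_{t, \sigma(t)}$ for some $t \in \mathcal Q(J)$; this places $t \in \text{Poss}(x)$ and exhibits $\sigma(t)$ as a slope through $t$ containing $x$, which by the uniqueness clause of Lemma \ref{leading to Poss(x)} forces $\sigma(t) = v(t)$. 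Consequently the basic slope cubes along the ray of $\sigma(t)$ coincide with those along the ray of $v(t)$, yielding $\Theta_j(t;\sigma) = \Theta_j(t)$ for every $j$, and in particular $\Theta_N(t;\sigma) = \Theta_N(t)$. The matching heights $\eta_j(\sigma(t)) = \eta_j(v(t))$ then give \eqref{comparing Phi}, and \eqref{equality on each edge} follows at once because, on the common ray $R$ identifying $t$, both $\iota_\sigma(e)$ and $\kappa(e)$ are read off the same sequence of basic slope cubes through $\Psi^{-1}$.

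The most delicate point, I anticipate, is the consistency induction in part (\ref{auxiliary slope map}); everything else is essentially bookkeeping built on top of it. That induction, however, is substantially gentler than the $v$-analogue because the stickiness of $\sigma$ gives the ancestry inequality in the slope tree as a direct implication of the ancestry in the root tree, sparing us the metric detour through Proposition \ref{pruning stage 1}(\ref{Euclidean separation}) that was needed for the non-sticky ``correct slope'' map.
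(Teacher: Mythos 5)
Your proposal is correct and follows essentially the same route as the paper's own proof: well-definedness in part (i) rests on the stickiness inequality $h(u)\leq h(D(\sigma(t),\sigma(t')))$ exactly as in the paper, part (ii) is deduced from part (i) as in Corollary \ref{kappa makes sense}, and part (iii) uses $\sigma(t)=v(t)$ together with the uniqueness of the chain of basic slope cubes and the definitions of $\kappa$ and $\iota_\sigma$ via $\Psi^{-1}$. Your anticipated simplification relative to Lemma \ref{v(t) weak sticky} (no appeal to the Euclidean separation of Proposition \ref{pruning stage 1}) is precisely the remark the paper itself makes at the start of its proof.
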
 
\begin{proof}
Despite the obvious similarity of the statement with that of Lemma \ref{N to M sticky map}, the distinction in the proofs should be noted. The assumed stickiness of $\sigma$ simplifies the proof of part (\ref{auxiliary slope map}), compared to that of Lemma \ref{N to M sticky map}, where $v$ was not known to be sticky. Indeed if $t \ne t'$ are such that $\Phi_j(t;\sigma) = \Phi_j(t';\sigma)$, then as before $\eta_j(\sigma(t)) = \eta_j(\sigma(t')) \leq h(u)$.  By stickiness of $\sigma$, we have $h(u) \leq h(z)$, where $z = D(\sigma(t), \sigma(t'))$. This implies that the rays identifying $\sigma(t)$ and $\sigma(t')$ agree up to and including height $\eta_j(\sigma(t))$, i.e., $\Theta_j(t;\sigma) = \Theta_j(t';\sigma)$. Part (\ref{analogue of kappa for omega}) is an easy consequence of part (\ref{auxiliary slope map}) and follows exactly the same way as Corollary \ref{kappa makes sense} was deduced from Lemma \ref{N to M sticky map}. Finally, if $x \in \mathbb K(\sigma)$, then there is some $t \in \text{Poss}(x)$ such that $\sigma(t) = v(t)$. Since the chain of basic slope cubes containing any $v \in \Omega_N$ is unique, this implies that $\Theta_N(t;\sigma) = \Theta_N(t)$, and hence $\Phi_j(t;\sigma) = \Phi_j(t)$ for all $1 \leq j \leq N$. The last equality says that $t$ is identified by the same sequence of vertices and hence the same ray in both $\mathcal N_x$ and $\mathcal N_x(\sigma)$.  
If $e_1, e_2, \cdots e_N$ are the successive edges in this ray, with $e_{j+1}$ connecting $\Phi_j(t)$ with $\Phi_{j+1}(t)$, then a consequence of the definitions \eqref{defn overlay function}, \eqref{defn nonoverlay function} of $\kappa$ and $\iota_{\sigma}$ is that 
\[ (\iota_{\sigma}(e_1), \cdots, \iota_{\sigma}(e_N)) = \Psi^{-1} \circ \sigma(t)  = \Psi^{-1} \circ v(t) = (\kappa(e_1), \cdots, \kappa(e_N)),\] 
where $\Psi$ is the tree isomorphism defined in Proposition \ref{Splitting tree lemma}, and part (\ref{inclusion criterion}) follows.   
\end{proof} 
We end this section with a bound on the number of vertices of the reference tree at a given height, a result that will be useful for probability computations later. In view of the characterization \eqref{Poss(x) another description}  of Poss$(x)$ given in Lemma \ref{Poss(x) inclusion}, and our construction of $\Omega_N$, this is intuitively clear.  
\begin{lemma} \label{vertex counting in N_x}
There exists a positive constant $C$ depending on $d$ and $A_0$ but uniform in $x \in [A_0, A_0 + 1] \times \mathbb R^d$ such that the number of vertices of height $j$ in $\mathcal N_x$  is bounded above by $C 2^j$. 
\end{lemma}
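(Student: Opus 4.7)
The plan is to bound the number of vertices at height $j$ in $\mathcal N_x$ by pushing the count through the sticky map $\Phi_j(t)\mapsto\Theta_j(t)$ from $\mathcal N_x$ to $\mathcal M_x$ established in Lemma \ref{N to M sticky map}. Since $\mathcal M_x$ is a subtree of $\mathcal H(\Omega_N)$, which by Proposition \ref{Splitting tree lemma} is isomorphic via $\Psi$ to the full binary tree $\mathcal B_N$, the number of vertices of $\mathcal M_x$ at height $j$ is at most $2^j$. Hence it suffices to show that each vertex $\theta=(\theta_1,\dots,\theta_j)$ of $\mathcal M_x$ at height $j$ has at most $C=C(d,A_0)$ preimages under the map $\Phi_j(t)\mapsto\Theta_j(t)$.

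Fix such a $\theta$ and set $\eta:=h(\theta_j)$. A preimage of $\theta$ is a vector $\Phi_j(t')=(Q_1^\ast(t'),\dots,Q_j^\ast(t'))$ for some $t'\in\text{Poss}(x)$ with $\Theta_j(t')=\theta$. For every such $t'$, the defining relations \eqref{s_j and theta_j} force $\eta_i(v(t'))=h(\theta_i)$ for all $1\leq i\leq j$, so each $Q_i^\ast(t')$ is simply the $M$-adic ancestor of $Q_j^\ast(t')$ at the height $h(\theta_i)$ prescribed by $\theta$. Consequently $\Phi_j(t')$ is uniquely determined by $Q_j^\ast(t')$, and I am reduced to counting the distinct $M$-adic cubes of sidelength $M^{-\eta}$ that arise as $Q_j^\ast(t')$ for some $t'$ as above.

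Here I would invoke the geometric description of $\text{Poss}(x)$ from Lemma \ref{Poss(x) inclusion}: each eligible $t'$ satisfies $t'\cap (x-x_1 v(t'))\neq\emptyset$ with $v(t')\in\theta_j\cap\Omega_N$. Thus $t'$ lies in the $O(M^{-J})$-neighbourhood of the affine set $x-x_1\theta_j$. Since $A_0\leq x_1\leq 10A_0$ and $\theta_j$ is an $M$-adic cube of sidelength $M^{-\eta}$, the set $x-x_1\theta_j$ is contained in a cube of sidelength $10A_0 M^{-\eta}$; inflating by the $M^{-J}\leq M^{-\eta}$ neighbourhood (valid since $\eta\leq J$) keeps it inside a cube of sidelength at most $(10A_0+2)M^{-\eta}$. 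The number of $M$-adic cubes of sidelength $M^{-\eta}$ that can intersect a region of this size is trivially bounded by $(10A_0+3)^d$, which depends only on $d$ and $A_0$.

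Combining the two counts yields at most $(10A_0+3)^d\cdot 2^j$ vertices of height $j$ in $\mathcal N_x$, which is the claimed inequality with $C=(10A_0+3)^d$. The argument is essentially a straightforward bookkeeping exercise once the sticky map to a binary tree and the characterization of $\text{Poss}(x)$ are available, so I do not anticipate a genuine obstacle; the only mild care needed is to verify that the $M^{-J}$-neighbourhood does not blow the region up beyond the sidelength $M^{-\eta}$ and to dispatch the boundary cases $j=0$ (trivial, since there is a single root) and $\eta=J$ (where $Q_j^\ast(t')=t'$ itself), both of which are covered by the same neighbourhood estimate.
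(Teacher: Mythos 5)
Your argument is correct and is essentially the paper's own proof: both reduce the count to (number of possible $\theta_j(t) \in \mathcal H_j(\Omega_N)$, which is $\leq 2^j$) times (number of $M$-adic cubes $Q_j^{\ast}(t)$ of sidelength $M^{-\eta_j(v(t))}$ compatible with a fixed $\theta_j(t)$, which is $O_{d,A_0}(1)$ because $x - x_1 v(t) \in t \subseteq Q_j^{\ast}(t)$ forces $Q_j^{\ast}(t)$ into an $O(A_0 M^{-\eta_j})$-neighbourhood of $x - x_1\theta_j(t)$). Your phrasing via fibres of the sticky map $\mathcal N_x \to \mathcal M_x$ is only a cosmetic repackaging of the paper's direct count of the pairs $(\theta_j(t), Q_j^{\ast}(t))$.
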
 
\begin{proof}
Let $n_j(x)$ denote the number of vertices of height $j$ in $\mathcal N_x$. In view of the relations \eqref{defn N} and \eqref{s_j and theta_j} defining $\mathcal N_x$, the cardinality $n_j(x)$ equals the number of spatial cubes in the collection
\begin{equation}
\{Q_j^{\ast}(t) : \; t \in \text{Poss}(x), \; t \subseteq Q_j^{\ast}(t), \; h(Q_j^{\ast}(t)) = \eta_j(v(t))  \}, \label{alt N_x} 
\end{equation} 
so we proceed to count the number of such cubes $Q_j^{\ast}(t)$. Let us recall from the definition \eqref{defn Poss(x)} of Poss$(x)$ that $x \in P_{t, v(t)}$. This implies that $x - x_1 v(t) \in t$, and hence for $\theta_j(t)$ as in \eqref{s_j and theta_j},
\begin{align*}
|\text{cen}(Q_j^{\ast}(t)) -& x + x_1 \text{cen}(\theta_j(t))| \\ &\leq |\text{cen}(Q_j^{\ast}(t)) - \text{cen}(t)| + |x_1||\text{cen}(\theta_j(t))  - v(t)| + |\text{cen}(t) -x + x_1 v(t)| \\
&\leq \sqrt{d} M^{-\eta_j(v(t))} + (A_0+1) \sqrt{d} M^{-\eta_j(v(t))} + \sqrt{d} M^{-J} \\ &\leq 4A_0 \sqrt{d} M^{-\eta_j(v(t))}.  
\end{align*} 
Let us unravel the geometric implications of the inequality above. For a given $\theta_j(t)$ containing $v(t)$, there are at most a constant number $C(d,A_0)$ of $M$-adic cubes of sidelength same as $\theta_j(t)$ (hence candidates for $Q_j^{\ast}(t)$) whose centres are within distance $4A_0 \sqrt{d} M^{-\eta_j(v(t))}$ of $x - x_1 \text{cen}(\theta_j(t))$. On the other hand, each $\theta_j(t) \in \mathcal H_j(\Omega_N)$, and hence the total number of possible $\theta_j(t)$ as $t$ ranges over Poss$(x)$ is at most $\#(\mathcal H_j(\Omega_N)) = 2^j$, by Proposition \ref{Splitting tree lemma}. Since $n_j(x)$ is the cardinality of the collection in \eqref{alt N_x}, the observations above lead to the bound $n_j(x) = O(2^j)$ as claimed.   
\end{proof} 


\section{Random construction of Kakeya-type sets} \label{random construction section} 
Motivated by the generalities laid out in the previous section, specifically Lemmas \ref{N to M sticky map} and \ref{percolation preparation lemma}, we now proceed to describe a randomized algorithm for generating a class of sticky slope assignments $\sigma$. Let us recall the class $\mathcal R$ of fundamental heights of $\Omega_N$ defined in \eqref{height of youngest splitting child} and the discussion thereafter.    

We start with a collection of independent and identically distributed Bernoulli$(\frac{1}{2})$ random variables 
\begin{equation} \label{Bernoulli warehouse} 
\mathbb X := \left\{X_Q : Q \in \mathcal Q(k), \; k \in \mathcal R \right\}, 
\end{equation} 
with $\mathcal Q(k)$ defined as in \eqref{defn Q_k}. The collection $\mathbb X$ therefore assigns, for every fundamental height $k$ an independent binary random variable to every $M$-adic cube of sidelength $M^{-k}$ in the root hyperplane. We use $\mathbb X$ as the randomization source for our construction.

Let $h_0$ denote the height of the single element $\theta_0 \in \mathcal G_1(\Omega_N) = \mathcal H_0(\Omega_N)$, in other words, the first splitting vertex of $\mathcal T_J(\Omega_N;M)$. We define $\sigma(Q_0) \equiv \theta_0$ for all $Q_0 \in \mathcal Q(h_0)$. At the first step of the randomization process, each $Q_0 \in \mathcal Q(h_0)$ is decomposed into subcubes $Q_1$ of sidelength $M^{-h_1}$ where $h_1 = \lambda_1(\theta_0) > h_0$.  We call these subcubes the {\em{first basic spatial cubes}}. Each first basic spatial cube $Q_1$ receives from the Bernoulli collection $\mathbb X$ defined in \eqref{Bernoulli warehouse} a value of $X_{Q_1}$, which is either zero or one. Recalling from \eqref{defn Psi_1} that \[ \Psi_1(X_{Q_1}) \in \mathcal H_1(\Omega_N), \quad \text{ and that } \quad h(\Psi_1(X_{Q_1})) = h_1, \]  we define 
\[ \sigma(Q_1) = \sigma_{\mathbb X}(Q_1) = \Psi_1(X_{Q_1}) \]
for any first basic spatial cube $Q_1$. Each element of $\mathcal H_1(\Omega_N)$, and hence each $\sigma(Q_1)$, is either a second splitting vertex of $\Omega_N$ or the identifier of one. If the root cube $Q_1$ already maps into a second splitting vertex under $\sigma$, no further action is needed for it in step one.  Now, suppose there exists $\gamma \in \mathcal G_2(\Omega_N)$ such that $h(\gamma) > h_1$ (there could be at most one such $\gamma$). Then for any cube $Q_1$ for which $\Psi_1(X_{Q_1}) $ is the unique ancestor of $\gamma$ at height $h_1$, we decompose $Q_1$ into subcubes $Q_1'$ of sidelength $M^{-h(\gamma)}$ and set $\sigma(Q_1') = \gamma$ for all such $Q_1' \subsetneq Q_1$. Thus, at the end of the first step, 
\begin{enumerate}[(a)]
\item we have obtained  a partition of the root hyperplane into first basic spatial cubes, and randomly assigned each such cube a first basic slope cube in $\mathcal H_1(\Omega_N)$ of the same height, namely $\lambda_1(\theta_0) = h_1$.
\item If the vertices in $\mathcal G_2(\Omega_N)$ occur at different heights, then predicated on the random assignment in part (a) certain first basic spatial cubes could subdivide further to generate a different partition of the root hyperplane, say $\{\mathcal Q_1(\gamma) : \gamma \in \mathcal G_2(\Omega_N)\}$. Each cube $Q_1' \in \mathcal Q_1(\gamma)$ is of height $h(\gamma)$ and is mapped to $\gamma$. We will refer to $Q_1'$ as {\em{a spatial cube of second splitting height}}. Thus a first basic spatial cube is either itself a spatial cube of second splitting height, or is uniformly partitioned into a disjoint union of such cubes.  
\end{enumerate} 

\begin{figure}[h!]
\setlength{\unitlength}{0.8mm}
\begin{picture}(-50,0)(-40,10)

	\put(0,0){\special{sh 0.99}\ellipse{2}{2}}
	\put(-15,-25){\special{sh 0.99}\ellipse{2}{2}}
	\put(15,-25){\special{sh 0.99}\ellipse{2}{2}}
	\put(25,-50){\special{sh 0.99}\ellipse{2}{2}}

	\path(-15,-25)(0,0)(15,-25)(25,-50)
	\path(-25,-45)(-15,-25)(-5,-45)
	\path(35,-70)(25,-50)(15,-70)

	\put(-2,5){\Large\shortstack{$\gamma\in\mathcal G_j(\Omega_N)$}}
	\put(-42,-22){\Large\shortstack{$\gamma_{j+1} = \theta_1$}}
	\put(-15,-25){\circle{4}}
	\put(17,-22){\Large\shortstack{$\theta_2$}}
	\path(13,-27)(13,-23)(17,-23)(17,-27)(13,-27)
	\put(26,-48){\Large\shortstack{$\widetilde{\gamma}_{j+1}$}}


	\put(92,2){\Large\shortstack{$Q_{j-1}'\in\mathcal{Q}_{j-1}(\gamma)$}}
	\put(130,-27){\Large\shortstack{$Q_j$}}
	\put(113,-52){\Large\shortstack{$Q_j'$}}

	\put(90,0){\special{sh 0.99}\ellipse{2}{2}}
	\path(90,0)(93,-7)
	\put(93,-7){\special{sh 0.99}\ellipse{2}{2}}
	\dottedline{2}(93,-7)(98,-15)
	\path(90,0)(87,-7)
	\put(87,-7){\special{sh 0.99}\ellipse{2}{2}}
	\dottedline{2}(87,-7)(91,-15)
	\path(90,0)(98,-7)
	\put(98,-7){\special{sh 0.99}\ellipse{2}{2}}
	\dottedline{1}(98,-7)(103,-12)
	\dottedline{1}(103,-12)(115,-25)
	\dottedline{1}(103,-12)(120,-25)
	\dottedline{1}(103,-12)(125,-25)
	\path(90,0)(82,-7)
	\put(82,-7){\special{sh 0.99}\ellipse{2}{2}}
	\dottedline{1}(82,-7)(78,-12)
	\dottedline{1}(78,-12)(70,-25)
	\dottedline{1}(78,-12)(75,-25)
	\dottedline{1}(78,-12)(80,-25)
	\dottedline{1}(78,-12)(85,-25)
	\dottedline{1}(78,-12)(90,-25)
	\dottedline{1}(78,-12)(95,-25)
	\dottedline{1}(78,-12)(100,-25)

	\put(70,-25){\special{sh 0.99}\ellipse{2}{2}}
	\put(70,-25){\circle{4}}
	\put(75,-25){\special{sh 0.99}\ellipse{2}{2}}
	\path(73,-23)(73,-27)(77,-27)(77,-23)(73,-23)
	\put(80,-25){\special{sh 0.99}\ellipse{2}{2}}
	\put(80,-25){\circle{4}}
	\put(85,-25){\special{sh 0.99}\ellipse{2}{2}}
	\path(83,-23)(83,-27)(87,-27)(87,-23)(83,-23)

	\put(90,-25){\special{sh 0.99}\ellipse{2}{2}}
	\path(92,-23)(92,-27)(88,-27)(88,-23)(92,-23)
	\put(95,-25){\special{sh 0.99}\ellipse{2}{2}}
	\path(93,-23)(93,-27)(97,-27)(97,-23)(93,-23)
	\put(100,-25){\special{sh 0.99}\ellipse{2}{2}}
	\put(100,-25){\circle{4}}

	\dottedline{2}(104,-25)(111,-25)
	\put(115,-25){\special{sh 0.99}\ellipse{2}{2}}
	\path(113,-23)(113,-27)(117,-27)(117,-23)(113,-23)
	\put(120,-25){\special{sh 0.99}\ellipse{2}{2}}
	\put(120,-25){\circle{4}}
	\put(125,-25){\special{sh 0.99}\ellipse{2}{2}}
	\path(123,-23)(123,-27)(127,-27)(127,-23)(123,-23)

	\dottedline{2}(75,-25)(75,-40)
	\dottedline{2}(85,-25)(85,-40)
	\dottedline{2}(95,-25)(95,-40)
	\dottedline{2}(115,-25)(115,-40)
	\dottedline{2}(125,-25)(125,-40)

	\dottedline{1}(90,-25)(90,-45)
	\dottedline{1}(90,-45)(75,-50)
	\dottedline{1}(90,-45)(80,-50)
	\dottedline{1}(90,-45)(85,-50)
	\dottedline{1}(90,-45)(90,-50)
	\dottedline{1}(90,-45)(95,-50)
	\dottedline{1}(90,-45)(100,-50)
	\dottedline{1}(90,-45)(105,-50)
	\dottedline{1}(90,-45)(110,-50)
	\put(75,-50){\special{sh 0.99}\ellipse{2}{2}}
	\put(80,-50){\special{sh 0.99}\ellipse{2}{2}}
	\put(85,-50){\special{sh 0.99}\ellipse{2}{2}}
	\put(90,-50){\special{sh 0.99}\ellipse{2}{2}}
	\put(95,-50){\special{sh 0.99}\ellipse{2}{2}}
	\put(100,-50){\special{sh 0.99}\ellipse{2}{2}}
	\put(105,-50){\special{sh 0.99}\ellipse{2}{2}}
	\put(110,-50){\special{sh 0.99}\ellipse{2}{2}}

\end{picture}
\vspace{6.5cm}
\caption{\label{Fig:basic root cube assignment} A pictorial representation of the basic slope and root cubes and a typical slope assignment.  
Vertices $Q_j$ for which $X_{Q_j} = 0$ are indicated by a circle and assigned $\theta_1$; others are indicated by squares and assigned $\theta_2$. For the squared vertices, a further slope assignment is made at a finer level. } 
\end{figure}
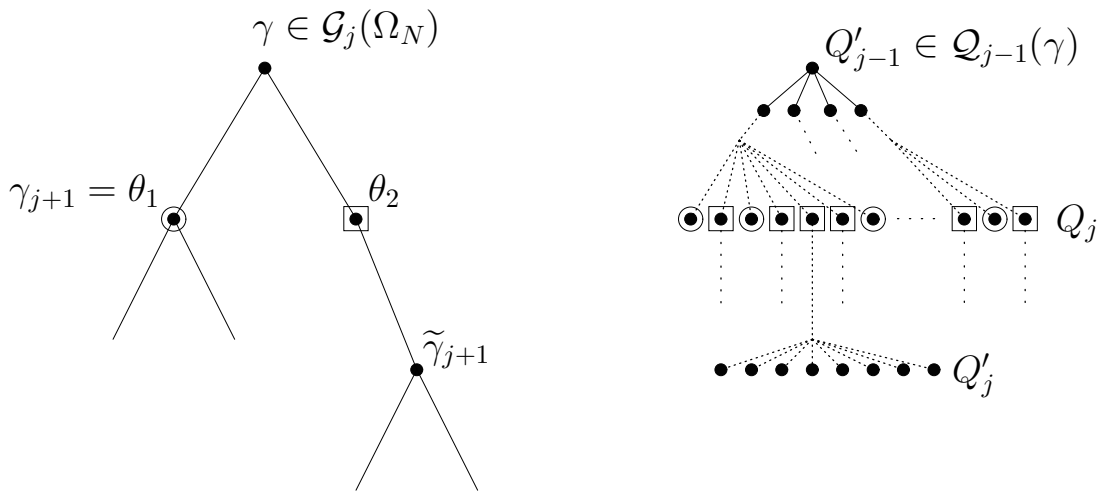

In general, the $j$th step of the construction generates a random and possibly non-uniform partition of the root hyperplane into spatial cubes $Q_j'$ of $(j+1)$th splitting height. Each $Q_j'$ is the terminal member of a descending chain
\begin{equation} \label{chain of basic and splitting cubes}
Q_{j}' \subseteq Q_j \subsetneq Q_{j-1}' \subseteq Q_{j-1} \subsetneq \cdots \subsetneq Q_1' \subseteq Q_1,  
\end{equation} 
 where for every $k \leq j$,  $Q_k$ is a $k$th basic spatial cube, and $Q_k'$ is a spatial cube of $(k+1)$th splitting height. 
Each $Q_k$ is mapped by $\sigma$ to a $k$th basic slope cube in $\mathcal H_k(\Omega_N)$, whereas $Q_k'$ is mapped to a splitting vertex in $\mathcal G_{k+1}(\Omega_N)$. All such assignments preserve heights and lineages; in other words, for a sequence of cubes as in \eqref{chain of basic and splitting cubes}, 
\begin{equation} \label{sigma preserves heights and lineages}
\begin{aligned}
\sigma(Q_j') &\subseteq \sigma(Q_j) \subsetneq \sigma(Q_{j-1}') \subseteq \cdots \subsetneq \sigma(Q_1') \subseteq \sigma(Q_1), \\ &h(\sigma(Q_j)) = h(Q_j), \quad h(\sigma(Q_j')) = h(Q_j'). 
\end{aligned}
\end{equation}  
The spatial cubes at $(j+1)$th splitting height can therefore be classified as follows: 
\begin{equation} \label{jth step outcome}
\mathcal Q_j(\gamma) := \{ Q_j' : \sigma(Q_j') = \gamma \},  \qquad \gamma \in \mathcal G_{j+1}(\Omega_N). 
\end{equation}  

At the $(j+1)$th step each $Q_j'$ from the collection $\mathcal Q_j(\gamma)$ is decomposed into subcubes $Q_{j+1}$ of height $\lambda_{j+1}(\gamma) > h(\gamma)$. These are the $(j+1)$th {\em{basic spatial cubes}}. Each spatial cube $Q_{j+1}$ is assigned the binary value $X_{Q_{j+1}}$ from the Bernoulli collection $\mathbb X$ in \eqref{Bernoulli warehouse}.  Combined with the random assignments that the basic ancestors of $Q_{j+1}$ have received, this produces an image of $Q_{j+1}$ under $\sigma$:
\begin{equation} \label{random variable assignment}
\sigma_{\mathbb X}(Q_{j+1}) := \Psi_{j+1}(X_{Q_1}, \cdots, X_{Q_{j+1}}) \in \mathcal H_{j+1}(\Omega_N), \qquad Q_{j+1} \subsetneq \cdots \subsetneq Q_1. 
\end{equation} 
Each $\sigma(Q_{j+1})$ is the unique identifier of some $\gamma \in \mathcal G_{j+2}(\Omega_N)$. We decompose $Q_{j+1}$ into subcubes $Q'_{j+1}$ of height $h(\gamma)$ (in some cases no further decomposition may be needed) and set $\sigma(Q_{j+1}') = \gamma$. This results in a newer and finer partition of the root hyperplane into spatial cubes $Q_{j+1}'$ of $(j+1)$th splitting height, producing an analogue of \eqref{jth step outcome} for the $(j+2)$th step and allowing us to carry the induction forward. 

Continuing the procedure described above for $N$ steps, we obtain a decomposition of the root hyperplane into a family of basic cubes of order $N$, each of which is of sidelength $M^{-J}$, and hence is by definition a root cube. Every such cube $t= Q_{N}(t)$ is contained in a unique chain of basic spatial cubes of lower order: 
\begin{equation} \label{chain of basic cubes containing t} t = Q_{N}(t) \subsetneq Q_{N-1}(t) \subsetneq \cdots Q_2(t) \subsetneq Q_1(t) 
\end{equation}  and is assigned a slope $\sigma_{\mathbb X}(t) = \Psi_N(X_{Q_1}, \cdots, X_{Q_N})$ in $\mathcal H_N(\Omega_N) = \Omega_N$. We will shortly expand on further structural properties of the slope map $t \mapsto \sigma_{\mathbb X}(t)$, but first observe that it gives rise to a random set 
\begin{equation} \label{random sticky Kakeya}
K_N(\mathbb X) := K(\sigma_{\mathbb X}; N, J)
\end{equation}    
according to the prescription \eqref{generic sticky Kakeya}.

\subsection{Features of the construction}
We pause briefly to summarize the important features of the construction above:
\begin{enumerate}[-]
\item Randomization only occurs for  cubes in the root hyperplane that correspond to the fundamental heights, though all cubes of a given fundamental height need not receive a random assignment. 
\item The only cubes that receive a random binary assignment from $\mathbb X$ are by definition the basic spatial cubes. {\em{Unlike the basic slope cubes that constitute $\mathcal H_j(\Omega_N)$, a basic spatial cube $Q_j$ is a random quantity}}. For instance, the size of a $j$th basic spatial cube $Q_j$ always ranges in the set $\{h(\theta) : \theta \in \mathcal H_j(\Omega_N) \} \subseteq \mathcal R$, but the exact value of the size depends on the binary assignment $X_{Q_1}, \cdots, X_{Q_{j-1}}$ received by its basic ancestors. Similarly, a spatial cube $Q_j'$ of $j$th splitting height is random, though of course a splitting vertex in $\mathcal G_j(\Omega_N)$ is not. 
\item On the other hand, the random variable $X_{Q_j}$ that a basic cube $Q_j$ {\em{receives}} is independent of all random variables used in previous or concurrent steps of the process, by virtue of our choice of \eqref{Bernoulli warehouse}. In other words, 
\begin{equation} \label{independence}
\text{The collection of random variables } \left\{X_{Q_j} : Q_j \text{ basic} \right\} \text{ is independent.} 
\end{equation}  
This fact is vital in computing slope assignment probabilities in Sections \ref{UpperBoundSection} and \ref{probability estimation section}.  
\item Thus far, $\sigma$ has been prescribed only for basic cubes and their subcubes of splitting heights. Having achieved this, it is not difficult to extend $\sigma$ as a sticky map between the root tree and the slope tree. We address this in the next lemma. 
\end{enumerate}
\begin{lemma} \label{sigma sticky lemma} 
For every realization of $\mathbb X$, there exists a sticky map 
\[ \sigma_{\mathbb X} : \mathcal T_J(\{0\} \times [0,1)^d;M) \rightarrow \mathcal T_J(\Omega_N;M)\]
that agrees with the slope assignment algorithm prescribed in \eqref{random variable assignment}.  
 \end{lemma}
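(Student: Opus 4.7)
The plan has two parts. First, I would verify that, restricted to the collection of basic spatial cubes $\{Q_j\}$ and splitting-height spatial cubes $\{Q_j'\}$ produced by the algorithm, the assignment $\sigma_{\mathbb X}$ already preserves heights and lineages; this is essentially the content of \eqref{sigma preserves heights and lineages}, obtained by tracing through the inductive construction along any chain of the form \eqref{chain of basic and splitting cubes}. Second, I would extend this partial sticky map to all remaining vertices of $\mathcal T_J(\{0\} \times [0,1)^d; M)$ by top-down induction on the height.

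For the first part, by the recursive construction, along any descending chain $Q_1 \supsetneq Q_1' \supsetneq Q_2 \supsetneq \cdots \supsetneq Q_N = t$, the image $\sigma_{\mathbb X}(Q_{j+1}) = \Psi_{j+1}(X_{Q_1}, \ldots, X_{Q_{j+1}}) \in \mathcal H_{j+1}(\Omega_N)$ is a basic slope cube descending from $\sigma_{\mathbb X}(Q_j') \in \mathcal G_{j+1}(\Omega_N)$, which in turn is identified by the parent basic slope cube $\sigma_{\mathbb X}(Q_j) \in \mathcal H_j(\Omega_N)$. Heights match by the very definitions of the maps $\Psi_j$ and of the collections $\mathcal G_j, \mathcal H_j$: the cube $\sigma_{\mathbb X}(Q_j)$ and its preimage $Q_j$ are both at the fundamental height $\lambda_j$ dictated by the splitting ancestor $\sigma_{\mathbb X}(Q_{j-1}')$, while $\sigma_{\mathbb X}(Q_j')$ and $Q_j'$ both live at the splitting height of the corresponding vertex in $\mathcal G_{j+1}(\Omega_N)$.

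For the second part I would work from the root of the root tree downward. Since every vertex of the slope tree has at most two children by Proposition \ref{pruning stage 1}(\ref{two children per split}), while every vertex of the root tree has $M^d$ children, the inductive step amounts to collapsing the $M^d$ root-tree children of a processed vertex $t''$ onto the one or two children of $\sigma_{\mathbb X}(t'')$ in the slope tree. When $\sigma_{\mathbb X}(t'')$ has a single child $w$ (equivalently, $\sigma_{\mathbb X}(t'')$ is not a splitting vertex of $\Omega_N$), all $M^d$ children of $t''$ are sent to $w$, and stickiness is forced. When $\sigma_{\mathbb X}(t'')$ has two children $w_1, w_2$, the $M^d$ root-tree children of $t''$ are distributed between the two sides according to the algorithmic images of the basic cubes contained in each.

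The central technical obstacle is to verify that this two-way partition is unambiguous at intermediate heights lying strictly between two consecutive fundamental heights of $\Omega_N$, where more than one basic cube $Q_{j+1}$ sits inside a single intermediate root-tree vertex $t'$ and the associated $X$-values can differ. The resolution uses the key structural observation from the pruning: between any splitting vertex $\gamma \in \mathcal G_j(\Omega_N)$ and its two descendants in $\mathcal H_j(\Omega_N)$, the slope tree consists of exactly two non-branching paths, one through each of $w_1$ and $w_2$. This two-path structure reduces the extension to producing a binary $\{w_1, w_2\}$-labeling of the vertices of the root tree at each intermediate height lying in the gap, compatible with the $X$-driven binary labeling at the basic-cube level below; a straightforward top-down propagation yields such a labeling, and the resulting map is sticky by construction.
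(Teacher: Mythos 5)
Your plan has the same architecture as the paper's proof: verify \eqref{sigma preserves heights and lineages} on the basic and splitting-height spatial cubes, then extend to the remaining vertices of the root tree (the paper does the extension in one shot, sending a cube $Q$ with $Q_{\bar j+1}\subsetneq Q\subseteq Q_{\bar j}$ to the height-$h(Q)$ vertex on the ray identifying $\sigma(Q_{\bar j+1})$, rather than by top-down induction). The difficulty is that the step you correctly single out as ``the central technical obstacle'' is not resolved by the top-down propagation you describe, and in fact cannot be. Take $\gamma\in\mathcal G_{j+1}(\Omega_N)$ with $\lambda_{j+1}(\gamma)>h(\gamma)+1$, a spatial cube $Q_j'$ of $(j+1)$th splitting height with $\sigma(Q_j')=\gamma$, and an $M$-adic cube $Q\subsetneq Q_j'$ with $h(\gamma)<h(Q)<\lambda_{j+1}(\gamma)$. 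Then $Q$ contains at least $M^d$ of the $(j+1)$th basic spatial cubes, each carrying an independent Bernoulli variable. If two of them, $Q_{j+1}$ and $\widetilde Q_{j+1}$, receive different values, then $\sigma(Q_{j+1})$ and $\sigma(\widetilde Q_{j+1})$ are the two distinct elements of $\mathcal H_{j+1}(\Omega_N)$ descended from the two distinct children of $\gamma$, so $D(\sigma(Q_{j+1}),\sigma(\widetilde Q_{j+1}))=\gamma$ has height $h(\gamma)<h(Q)$. Stickiness would force $\sigma(Q)$ to be a common ancestor of both images at height $h(Q)>h(\gamma)$, which does not exist. Hence no labeling of $Q$ ``compatible with the $X$-driven labeling at the basic-cube level below'' is available, and top-down propagation cannot manufacture one, because the conflict sits at the bottom.

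You should also be aware that the paper's own proof elides the identical point: its prescription for $\sigma_{\mathbb X}(Q)$ depends on which chain $Q_N\subsetneq\cdots\subsetneq Q_{\bar j+1}\subsetneq Q$ is chosen, and for $Q$ as above different chains give different answers, so the asserted well-definedness and stickiness fail at these intermediate heights. What is true, and what the subsequent arguments actually use (e.g.\ in the proof of Lemma \ref{percolation preparation lemma}), is the weaker statement that for root cubes $t\ne t'$ with $u=D(t,t')$, the rays identifying $\sigma(t)$ and $\sigma(t')$ agree up to and including every basic-slope-cube height $\eta_i(\sigma(t))\le h(u)$. This follows from your Part 1 together with the observation that if $i$ is the last level at which $t$ and $t'$ share a basic spatial ancestor, then $h(u)<\eta_{i+1}$, so any $\eta_j\le h(u)$ satisfies $\eta_j\le\eta_i\le h(\gamma_{i+1})\le h(D(\sigma(t),\sigma(t')))$. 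To obtain a genuinely sticky map on the full root tree you would have to either modify the randomization (e.g.\ place the coin flip at height $h(\gamma)+1$ and propagate it downward to the basic cubes) or restrict the domain to the subtree generated by the basic spatial cubes; as written, neither your extension nor the paper's produces one.
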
 
\begin{proof}
For every $1 \leq j \leq N$, the root hyperplane is partitioned into $j$th basic spatial cubes. Any $M$-adic cube $Q$ is therefore either a basic spatial cube or contained in one. Thus there exists for every $Q$ a unique index $\bar{j} = \bar{j}_{\mathbb X}(Q)$ and a nested sequence of $k$th basic spatial cubes $Q_k$ such that 
\[ Q_{N} \subsetneq \cdots Q_{\bar{j} + 1}\subsetneq Q \subseteq Q_{\bar{j}} \subsetneq Q_{\bar{j}-1} \subsetneq \cdots \subsetneq Q_1. \]
Recalling that $\sigma(Q_k)$ has been defined for all $1 \leq k \leq N$ obeying the requirements \eqref{sigma preserves heights and lineages} of preserving height and lineage, we set \[ \sigma_{\mathbb X}(Q) := \left\{ \begin{aligned} &\text{ unique vertex in $\mathcal T_J(\Omega_N;M)$ of height } \\ &\text{ $h(Q)$ on the ray identifying $\sigma(Q_{\bar{j} + 1})$} \end{aligned} \right\}. \] 
Then $\sigma_{\mathbb X}$ is well-defined, sticky, and consistent with the prescriptions made in \eqref{random variable assignment}. 
\end{proof} 
\subsection{Theorem \ref{MainThm1} revisited}
We will now invest our efforts into proving that with positive probability the sets $K_N(\mathbb X)$ just created in \eqref{random sticky Kakeya} are of Kakeya type. 
\begin{proposition}\label{main theorem reformulated}
There exist positive absolute constants $c = c(d,M)$ and $C = C(d,M)$ obeying the property described below. For every $N \geq 1$ and $\Omega_N$ as in Proposition \ref{pruning stage 1}, the random set $K_N(\mathbb X)$ defined in \eqref{random sticky Kakeya} satisfies   the following inequalities: 
\begin{align} 
\text{Pr}\Bigl( \Bigl\{ \mathbb X :  |K_N(\mathbb X) \cap [0,1] \times \mathbb R^d| \geq c\frac{\sqrt{\log N}}{N} \Bigr\} \Bigr) &\geq \frac{3}{4}, \label{random Kakeya lower bound} \\ 
\mathbb E_{\mathbb X} \bigl|K_N(\mathbb X) \cap [A_0, A_0+1] \times \mathbb R^d \bigr| &\leq \frac{C}{N}.  \label{random Kakeya upper bound} 
\end{align} 
\end{proposition}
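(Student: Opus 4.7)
\textbf{Upper bound \eqref{random Kakeya upper bound}.} The plan is to write
\[ \mathbb E \bigl| K_N(\mathbb X) \cap [A_0, A_0+1] \times \mathbb R^d \bigr| = \int_{[A_0, A_0+1] \times \mathbb R^d} \text{Pr}\bigl(x \in K_N(\mathbb X)\bigr) \, dx \]
and estimate the integrand pointwise.  By Lemma \ref{percolation preparation lemma}(\ref{inclusion criterion}) the inclusion $x \in K_N(\mathbb X)$ is equivalent to the percolation-type event that some full-length ray of the reference tree $\mathcal N_x$ satisfies $\iota_{\sigma_{\mathbb X}} \equiv \kappa$ along its edges, and by \eqref{independence} the $\iota_{\sigma_{\mathbb X}}$ are independent Bernoulli$(\tfrac12)$ random variables across distinct edges at each level of $\mathcal N_x$.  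Since $\mathcal N_x$ has at most $C\cdot 2^j$ vertices at height $j$ (Lemma \ref{vertex counting in N_x}), the induced branching process of surviving rays is critical, and a level-by-level first-moment computation in the style of \cite{Bateman} yields $\text{Pr}(x \in K_N(\mathbb X)) \lesssim 1/N$ uniformly in $x$; integration over the slab finishes the proof.

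\textbf{Lower bound \eqref{random Kakeya lower bound}.} I would apply the Paley-Zygmund inequality to the random variable $Y := |K_N(\mathbb X) \cap [0,1] \times \mathbb R^d|$, so that
\[ \text{Pr}\bigl(Y \geq \tfrac12 \mathbb E Y\bigr) \geq \tfrac14 \frac{(\mathbb E Y)^2}{\mathbb E Y^2}. \]
It then suffices to establish the pair of estimates
\[ \mathbb E Y \gtrsim \frac{\sqrt{\log N}}{N}, \qquad \mathbb E Y^2 \lesssim (\mathbb E Y)^2, \]
which together force $Y \ge \tfrac12 \mathbb E Y$ on an event of probability at least $\tfrac34$.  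The first moment is again obtained by integrating $\text{Pr}(x \in K_N(\mathbb X))$, but now over the near slab $x_1 \in [0,1]$, where the tubes are concentrated close to their roots and the percolation on $\mathcal N_x$ remains supercritical throughout a range of heights governed by the fundamental heights $\mathcal R$ of \eqref{height of youngest splitting child}; a careful accounting across these heights produces the $\sqrt{\log N}$ factor over the critical branching-process decay of $1/N$.

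\textbf{The main obstacle.} As the authors signal in Section \ref{layout section}, the substantive difficulty is the matching bound $\mathbb E Y^2 \lesssim (\mathbb E Y)^2$.  Expanding the square yields
\[ \mathbb E Y^2 = \sum_{t_1, t_2, t_3, t_4} \mathbb E \bigl|P_{t_1,\sigma(t_1)} \cap P_{t_2,\sigma(t_2)} \cap P_{t_3,\sigma(t_3)} \cap P_{t_4,\sigma(t_4)} \cap [0,1] \times \mathbb R^d \bigr|, \]
and each summand demands a joint treatment of (i) geometric incidence bounds for configurations of three and four tubes, extending Lemmas \ref{intersection criterion lemma}--\ref{intersection size lemma} with heavy use of the Euclidean separation of Proposition \ref{pruning stage 1}(\ref{Euclidean separation}), and (ii) the probability that the sticky random assignment realizes four prescribed root-slope pairs simultaneously, which via \eqref{random variable assignment} and \eqref{independence} is determined by the joint lineage pattern of the four roots in the root tree and of the four slopes in $\mathcal T_J(\Omega_N;M)$.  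Executing these estimates in a sharp form is the content of Sections 8--11 and constitutes the principal novelty beyond the planar case of \cite{Bateman}, where a uniform pointwise bound on pairwise tube intersections had been available but is no longer accessible in higher dimensions.
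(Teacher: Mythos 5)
Your upper-bound argument is essentially the paper's: one integrates $\text{Pr}(x\in K_N(\mathbb X))$ over the far slab and shows this probability is $O(1/N)$ uniformly by comparing the event $\{x\in K_N(\mathbb X)\}$ with survival of a Bernoulli$(\frac12)$ percolation on $\mathcal N_x$ (note the comparison is only an implication, not an equivalence, which is all that is needed). One correction: a ``level-by-level first-moment computation'' cannot give the $1/N$ decay --- the expected number of surviving rays in $\mathcal N_x$ is $\Theta(1)$ since $n_j(x)\asymp 2^j$. What is actually used (both here and in Bateman's planar argument) is the second-moment/electrical-network bound of Lyons, Proposition \ref{survival prob} and Corollary \ref{survival probability reduced}: the resistance is at least $\sum_{j=1}^N 2^{j-1}/n_j(x)\gtrsim N$ by Lemma \ref{vertex counting in N_x}, whence the survival probability is $\leq C/N$.

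For the lower bound there is a genuine gap, on two counts. First, Paley--Zygmund gives $\text{Pr}(Y\geq\frac12\mathbb EY)\geq\frac14(\mathbb EY)^2/\mathbb EY^2\leq\frac14$, so even if both of your moment estimates held you could never reach the probability $\frac34$ demanded by \eqref{random Kakeya lower bound}; at best you would prove a weaker statement (which could still be patched into the final corollary by sharpening the Markov step for the upper-bound event, but does not prove the proposition as stated). Second, your displayed expansion of $\mathbb EY^2$ is incorrect: $|\bigcup_t P_t|^2$ is not $\sum_{t_1,\dots,t_4}|P_{t_1}\cap\cdots\cap P_{t_4}|$; squaring a union does not produce four-fold intersections. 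The paper's route avoids both issues and never lower-bounds $\mathbb EY$ at all. It works slab by slab: for each $R\in[c\log N,2c\log N]$ one sets $P^\ast_{t,\sigma,R}=P_{t,\sigma(t)}\cap[M^{-R},M^{-R+1}]\times\mathbb R^d$ and proves the second-moment bound of Proposition \ref{second moment proposition},
\[
\mathbb E_{\mathbb X}\Bigl[\Bigl(\sum_{t_1\neq t_2}|P^\ast_{t_1,\sigma,R}\cap P^\ast_{t_2,\sigma,R}|\Bigr)^2\Bigr]\leq C(NM^{-2R})^2,
\]
whose expansion is $\sum|P_{t_1}\cap P_{t_2}|\cdot|P_{t_3}\cap P_{t_4}|$ --- a \emph{product of two pairwise} intersections over root quadruples, which is exactly what the slope-assignment probabilities of Section \ref{probability estimation section} and the configuration-type counts of Section \ref{tube count section} are designed to handle. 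Markov's inequality then gives, with probability at least $\frac34$ simultaneously for all $R$ in the range, $\sum_{t_1\neq t_2}|P^\ast_{t_1}\cap P^\ast_{t_2}|\leq CN\sqrt{\log N}\,M^{-2R}$, and the deterministic Bateman--Katz inequality $|\bigcup_iA_i|\geq(\sum_i|A_i|)^2/\sum_{i,j}|A_i\cap A_j|$ yields $|\bigcup_tP^\ast_{t,\sigma,R}|\geq c/(N\sqrt{\log N})$ in each slab; summing over the $\asymp\log N$ disjoint slabs produces the $\sqrt{\log N}/N$ in \eqref{random Kakeya lower bound} (Corollary \ref{completing random Kakeya lower bound corollary}). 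Your instinct that the hard core is a fourth-order estimate over root and slope quadruples organized by their joint lineage patterns is right, but the quantity to be estimated is the one above, not a four-fold geometric intersection and not $\mathbb EY^2$.
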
 
The proof of the proposition will occupy the remainder of the paper, with the estimates \eqref{random Kakeya upper bound} and \eqref{random Kakeya lower bound} established in Sections \ref{UpperBoundSection} and \ref{LowerBoundSection} respectively. Before launching into them, let us observe that these two estimates combine to generate the Kakeya-type set whose existence is claimed in Theorem \ref{MainThm1} and subsequently reformulated in Proposition \ref{InfiniteSplitKakeya}.    
\begin{corollary}
Given Proposition \ref{main theorem reformulated}, the statement of Proposition \ref{InfiniteSplitKakeya} follows. Specifically, for every $N \geq 1$ there exists a realization of $\mathbb X$ for which the union of tubes defined by 
\begin{equation}  E_N  := K_N(\mathbb X) \cap [A_0, A_0+1] \times \mathbb R^d \quad \text{ obeys } \quad \frac{|E_N^{\ast}(2A_0+1)|}{|E_N|} \underset{N \rightarrow \infty}{\longrightarrow} \infty. \label{what is E_N} \end{equation} 
In other words, $\Omega$ admits Kakeya-type sets. 
\end{corollary}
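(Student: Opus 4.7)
The plan is to combine the two bounds in Proposition \ref{main theorem reformulated} with an elementary probabilistic union argument (via Markov's inequality), and then exploit a purely geometric inclusion relating $K_N(\mathbb X) \cap [0,1] \times \mathbb R^d$ to the dilated set $E_N^{\ast}(2A_0+1)$. Nothing delicate is required; the two estimates in Proposition \ref{main theorem reformulated} are tailored precisely so that their ratio diverges.

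First, I would apply Markov's inequality to \eqref{random Kakeya upper bound}: writing
$$Z_N(\mathbb X) := \bigl| K_N(\mathbb X) \cap [A_0, A_0+1] \times \mathbb R^d \bigr|,$$
Markov's inequality together with $\mathbb E\, Z_N \leq C/N$ yields $\mathrm{Pr}(Z_N \geq 4C/N) \leq 1/4$. Combined with \eqref{random Kakeya lower bound}, the event
$$\mathcal E_N := \Bigl\{ \mathbb X \; : \; \bigl|K_N(\mathbb X) \cap [0,1] \times \mathbb R^d\bigr| \geq c\,\tfrac{\sqrt{\log N}}{N} \; \text{ and } \; Z_N(\mathbb X) \leq \tfrac{4C}{N} \Bigr\}$$
has probability at least $3/4 - 1/4 = 1/2$. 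In particular $\mathcal E_N$ is nonempty, so for each $N$ there exists a realization $\mathbb X = \mathbb X_N$ for which both inequalities hold. With this choice, set $E_N := K_N(\mathbb X_N) \cap [A_0, A_0+1] \times \mathbb R^d$.

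The last step is the geometric observation that $K_N(\mathbb X) \cap [0,1] \times \mathbb R^d \subseteq E_N^{\ast}(2A_0+1)$. Recall from \eqref{tube defn} that every constituent tube $P_{t, \sigma(t)}$ has orientation $\omega = (1, \omega_2, \ldots, \omega_{d+1}) \in \Omega_N \subseteq \{1\} \times [0,1)^d$, so its intersection with $[A_0, A_0+1] \times \mathbb R^d$ is a sub-tube of $\omega$-length one, with center having first coordinate $A_0 + \tfrac{1}{2}$. Dilating this sub-tube by the factor $2A_0+1$ about its own center (as in Definition \ref{Kakeya-type set}) stretches its $x_1$-range to
$$\bigl[A_0 + \tfrac{1}{2} - \tfrac{2A_0+1}{2},\; A_0 + \tfrac{1}{2} + \tfrac{2A_0+1}{2}\bigr] = [0,\, 2A_0+1] \supseteq [0,1],$$
while preserving cross-sections. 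Hence the dilate of the sub-tube contains the portion of the original tube over $x_1 \in [0,1]$, and taking the union over $t$ gives the claimed inclusion. Consequently
$$\frac{|E_N^{\ast}(2A_0+1)|}{|E_N|} \geq \frac{|K_N(\mathbb X_N) \cap [0,1] \times \mathbb R^d|}{Z_N(\mathbb X_N)} \geq \frac{c\sqrt{\log N}/N}{4C/N} = \frac{c}{4C}\sqrt{\log N} \longrightarrow \infty,$$
verifying \eqref{what is E_N}. Since each tube segment in $E_N$ has length $|\omega| \geq 1$, orientation in $\Omega$, and cross-sectional radius $\delta_N := c_d M^{-J(\Omega,N)} \searrow 0$ as $N \nearrow \infty$, the sets $\{E_N\}$ meet all the requirements of Definition \ref{Kakeya-type set} with $A_0$ there taken to be $2A_0+1$, so $\Omega$ admits Kakeya-type sets. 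There is no serious obstacle here; the entire content of Proposition \ref{InfiniteSplitKakeya} is hidden in the proof of Proposition \ref{main theorem reformulated}.
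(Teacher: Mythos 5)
Your proposal is correct and follows essentially the same route as the paper: Markov's inequality applied to \eqref{random Kakeya upper bound}, intersected with the event in \eqref{random Kakeya lower bound} to get a realization satisfying both bounds with probability at least $\tfrac12$, followed by the inclusion $K_N(\mathbb X)\cap[0,1]\times\mathbb R^d\subseteq E_N^{\ast}(2A_0+1)$ and the resulting ratio $\geq c\sqrt{\log N}$. The only difference is that you spell out the dilation computation behind that inclusion and the verification of Definition \ref{Kakeya-type set}, which the paper leaves implicit; both are correct as written.
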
 
\begin{proof} 
The proof is identical to that of \cite [Proposition 2.1]{KrocPramanik}, so we briefly sketch the outline. The bound \eqref{random Kakeya upper bound} on the expected value implies that the estimate
\begin{equation} \label{random Kakeya upper bound 2}
\bigl|K_N(\mathbb X) \cap [A_0, A_0+1] \times \mathbb R^d \bigr| \leq \frac{4C}{N}
\end{equation} 
holds with probability at least $\frac{3}{4}$, by Markov's inequality. Combined with \eqref{random Kakeya lower bound}, this lets us conclude that both 
\eqref{random Kakeya upper bound 2} and 
\begin{equation} \label{random Kakeya lower bound 2} |K_N(\mathbb X) \cap [0,1] \times \mathbb R^d| \geq c\frac{\sqrt{\log N}}{N} \end{equation}  
must hold with probability at least $\frac{1}{2}$. Since $E_N^{\ast}(2A_0+1) \supseteq K_N(\mathbb X) \cap [0,1] \times \mathbb R^d$ for $E_N$ defined as in \eqref{what is E_N}, any $K_N(\mathbb X)$ obeying both \eqref{random Kakeya upper bound 2} and \eqref{random Kakeya lower bound 2} yields \[ \frac{|E_N^{\ast}(2A_0+1)|}{|E_N|} \geq c \sqrt{\log N} \rightarrow \infty,\] as claimed.   
\end{proof} 

\section{Proof of the upper bound \eqref{random Kakeya upper bound}} \label{UpperBoundSection}

\begin{proposition} \label{percolation proposition}
There exists a positive constant $C$ possibly depending on $d$ and $M$ but uniform in $x \in [A_0, A_0 +1] \times \mathbb R^d$ such that the probability Pr$(x) := \text{Pr}(x \in K_N(\mathbb X))$ obeys the estimate
\begin{equation} \label{bound on Pr(x)} \text{Pr}(x) \leq \frac{C}{N}. \end{equation} 
As a consequence, \eqref{random Kakeya upper bound} holds.  
\end{proposition}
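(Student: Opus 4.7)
The plan is to reduce the calculation of $\mathrm{Pr}(x)$ to a critical Bernoulli percolation estimate on the reference tree $\mathcal{N}_x$. By Lemma~\ref{percolation preparation lemma}(\ref{inclusion criterion}), the event $\{x \in K_N(\mathbb{X})\}$ is equivalent to the existence of a ray $R$ in $\mathcal{N}_x$ along which $\iota_{\sigma_{\mathbb{X}}}(e) = \kappa(e)$ for every edge $e$. Tracing the definitions \eqref{defn overlay function}, \eqref{defn nonoverlay function}, and the slope-assignment algorithm \eqref{random variable assignment}, this matching condition for an edge $e$ from $\Phi_j(t)$ to $\Phi_{j+1}(t)$ amounts to the prescribed value of the single Bernoulli variable $X_{Q_{j+1}^*(t)}$. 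Since distinct edges of $\mathcal{N}_x$ terminate at distinct vertices $\Phi_{j+1}$, and since the Bernoullis in $\mathbb{X}$ are mutually independent by \eqref{independence}, $\mathrm{Pr}(x)$ becomes exactly the probability of bond percolation from the root to depth $N$ in $\mathcal{N}_x$ with each edge open independently with probability $1/2$.

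With this reformulation in hand, the essential structural input is Lemma~\ref{vertex counting in N_x}, which caps the number of vertices of $\mathcal{N}_x$ at depth $j$ by $C\cdot 2^j$. Combined with edge survival probability $1/2$, this places the percolation precisely at criticality, and the desired bound $\mathrm{Pr}(x) \leq C/N$ is the tree-analogue of Kolmogorov's survival-probability estimate for critical Galton--Watson processes. The plan is to establish it by introducing $q_j(v) = \mathrm{Pr}(v \text{ percolates to depth } N)$ and the recursion
\[
q_j(v) \;\leq\; 1 - \prod_{u \text{ child of } v}\bigl(1 - q_{j+1}(u)/2\bigr),
\]
then using the vertex-count bound to pass to a differential-inequality-type recursion of the form $q_j \leq q_{j+1} - c\, q_{j+1}^{2}$ for $q_j := \sup_v q_j(v)$, which iterates from $q_N = 1$ down to $q_0 \leq C/N$.

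Once the pointwise bound \eqref{bound on Pr(x)} is in place, the expectation estimate \eqref{random Kakeya upper bound} follows by Fubini's theorem. The random set $K_N(\mathbb{X}) \cap [A_0, A_0+1]\times\mathbb{R}^d$ is contained in a fixed bounded region (tubes have length $\leq 10A_0$, roots in $\{0\}\times[0,1)^d$, and slopes in $\{1\}\times[0,1)^d$), so integrating the uniform bound $\mathrm{Pr}(x) \leq C/N$ over this region of bounded Lebesgue measure yields the desired $O(1/N)$ bound on the expected measure.

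The hardest step will be the critical percolation estimate on $\mathcal{N}_x$. The subtlety is that $\mathcal{N}_x$ is not a homogeneous binary tree: different vertices may have different numbers of children, and the bound $V_j \leq C 2^j$ is only just strong enough to keep the branching critical. The recursive analysis must therefore be performed delicately so that the ``excess branching'' above the binary case does not defeat the $1/N$ decay. A secondary technical point is verifying that the edge-events $\{\iota_{\sigma_{\mathbb{X}}}(e)=\kappa(e)\}$ are genuinely driven by mutually independent Bernoullis -- that is, that distinct edges of $\mathcal{N}_x$ correspond to distinct reference cubes $Q^*$ in $\mathbb{X}$ -- or, failing strict independence, that any residual overlaps can only produce positive correlations that are absorbed harmlessly into an upper bound.
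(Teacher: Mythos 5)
Your reduction is the same as the paper's: you identify the event $\{x \in K_N(\mathbb X)\}$ with survival of a ray in the reference tree $\mathcal N_x$ under a Bernoulli$(\frac12)$ edge percolation (only the forward implication is needed and is what the paper proves in Lemma~\ref{Relating Pr to survival}; your claim of equivalence is harmless but unnecessary), you correctly flag and resolve the independence issue via the fact that distinct edges terminate at distinct reference cubes, you invoke the right structural input $n_j(x)\le C2^j$ from Lemma~\ref{vertex counting in N_x}, and the Fubini step for \eqref{random Kakeya upper bound} matches the paper.

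The genuine gap is in your proposed survival-probability estimate. The recursion $q_j \le q_{j+1} - c\,q_{j+1}^2$ for $q_j := \sup_v q_j(v)$ does not follow from the per-level count $n_j \le C2^j$: that bound controls the \emph{total} number of vertices at each depth but places no bound on the number of children of an \emph{individual} vertex. A vertex $v$ at depth $j$ with $m_v$ children satisfies only $q_j(v) \le 1-(1-q_{j+1}/2)^{m_v}$, and if $m_v$ is large (which the level counts permit, e.g.\ one vertex at depth $j$ absorbing all $\sim 2^{j+1}$ vertices of depth $j+1$) this can be close to $1$ rather than close to $q_{j+1}/2$ per child; the sup-recursion then breaks down and the Kolmogorov-type iteration does not close. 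The paper avoids this by using Lyons' electrical-network bound (Proposition~\ref{survival prob} and Corollary~\ref{survival probability reduced}): shorting together all vertices at each level only decreases resistance, giving $p^*(x) \le 2/(1+R)$ with $R \ge \sum_{k=1}^N 2^{k-1}/n_k \ge cN$, an argument that depends \emph{only} on the level-wise counts and is therefore immune to local branching inhomogeneity. To repair your argument you would need either this resistance bound (equivalently, a weighted second-moment argument on the number of surviving rays) or a separate lemma bounding the maximal number of children of a vertex of $\mathcal N_x$, which you have not supplied.
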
 
\begin{proof}
The proof of \eqref{bound on Pr(x)} is a consequence of the three lemmas stated and proved below in this section. In Lemma \ref{Relating Pr to survival} and following the direction laid out in \cite{{BatemanKatz}, {Bateman}}, we establish that Pr$(x)$ is bounded above by the probability that the reference tree $\mathcal N_x$ survives a Bernoulli$(\frac{1}{2})$ percolation, as described in the appendix (Section \ref{percolation on trees section}). The details of the specific percolation criterion that permit this correspondence are described in Lemma \ref{percolation well-defined}. Using general facts about percolation collected in Section \ref{percolation on trees section} and information on $\mathcal N_x$ observed in Section \ref{general facts about tube families}, we compute in Lemma \ref{computing survival probability} a bound on the survival probability that is uniform in $x$ to obtain the claimed estimate \eqref{bound on Pr(x)}.   

Given \eqref{bound on Pr(x)}, the upper bound in \eqref{random Kakeya upper bound} follows easily. Since $\Omega_N \subseteq \{1\} \times [0,1)^d$, any tube, and hence $K_N(\mathbb X)$, is contained in the compact set $[0, 10 A_0]^{d+1}$. Thus 
\[ K_N(\mathbb X) \cap [A_0, A_0+1] \times \mathbb R^d= K_N(\mathbb X) \cap [A_0, A_0+1] \times [0, 10A_0]^d,\] 
and hence 
\begin{align*}
\mathbb E_{\mathbb X} \bigl|K_N(\mathbb X) \cap [A_0, A_0+1] \times \mathbb R^d \bigr| &= \mathbb E_{\mathbb X} \int_{[A_0, A_0+1] \times [0, 10A_0]^d} 1_{K_N(\mathbb X)}(x) \, dx \\ &= \int_{[A_0, A_0+1] \times [0, 10A_0]^d} \mathbb E_{\mathbb X}\bigl(1_{K_N(\mathbb X)}(x) \bigr) \, dx \\ & = \int_{[A_0, A_0+1] \times [0, 10A_0]^d} \text{Pr}(x) \, dx \\ &\leq \frac{C}{N}, 
\end{align*}
completing the proof. 
\end{proof} 
Much of the groundwork for Lemma \ref{Relating Pr to survival} has already been established in Section \ref{tubes and point section}. In particular, let us recall the definition of the reference tree $\mathcal N_x$ and reference cubes $Q_j^{\ast}(t)$ from \eqref{defn N}, \eqref{defn Phi_j}, and \eqref{s_j and theta_j}.  We will also need the reference slope function $\kappa$ as in \eqref{defn overlay function} defined on the edges of $\mathcal N_x$. Motivated by Lemma \ref{percolation preparation lemma}(\ref{inclusion criterion}), we define a random variable for each edge of $\mathcal N_x$:  
\begin{equation} \label{percolation criterion}
Y_e = Y_e(\mathbb X) := \begin{cases} 1 &\text{ if } X_{Q_{j+1}^{\ast}(t)} = \kappa(e), \\ 0 &\text{ otherwise, }\end{cases} 
\end{equation} 
where as usual $e$ denotes the edge in $\mathcal N_x$ joining $\Phi_j(t)$ and $\Phi_{j+1}(t)$. As described in Section \ref{percolation on trees section}, we use $Y_e$ to determine whether to retain or to remove the edge $e$ in $\mathcal N_x$, the value zero corresponding to removal. We emphasize that a reference cube $Q_{j+1}^{\ast}(t)$ is a deterministic vertex of the tree representing the root hyperplane, and need not in general coincide with the $(j+1)$th basic spatial cube $Q_{j+1}(t)$ described in \eqref{chain of basic cubes containing t}. The important point, as we will see in Lemma \ref{Relating Pr to survival}, is that if $x \in K_N(\mathbb X)$, then these two cubes do match for some $t$ and for every $j$.   
\begin{lemma} \label{percolation well-defined}
The retention-removal criterion described in \eqref{percolation criterion} gives rise to a well-defined Bernoulli$(\frac{1}{2})$ percolation on $\mathcal N_x$. 
\end{lemma}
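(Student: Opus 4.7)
The plan is to verify the three defining properties of a Bernoulli$(\tfrac{1}{2})$ percolation on $\mathcal{N}_x$: the assignment $e \mapsto Y_e$ from \eqref{percolation criterion} should be well-defined on the edge set, each $Y_e$ should have the symmetric Bernoulli marginal, and the family $\{Y_e\}$ should be mutually independent. The first two will be direct consequences of the definitions already in place; the third will form the heart of the argument.

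For well-definedness, the key observation is that in any tree every non-root vertex has a unique parent, so each edge $e$ of $\mathcal{N}_x$ is identified with its terminal child vertex $\Phi_{j+1}(t)$ (Lemma \ref{N to M sticky map}). If two representatives $t,t' \in \text{Poss}(x)$ give the same $\Phi_{j+1}(t)$, then in particular $Q_{j+1}^{\ast}(t) = Q_{j+1}^{\ast}(t')$, being the last coordinates of matching tuples, so the same Bernoulli variable from $\mathbb{X}$ is invoked; moreover $\kappa(e)$ is unambiguous by Corollary \ref{kappa makes sense}. For the marginal distribution, the cube $Q_{j+1}^{\ast}(t)$ has sidelength $M^{-\eta_{j+1}(v(t))}$ with $\eta_{j+1}(v(t)) \in \mathcal{R}$, so $X_{Q_{j+1}^{\ast}(t)}$ is one of the symmetric Bernoulli variables in the warehouse \eqref{Bernoulli warehouse}; since $\kappa(e) \in \{0,1\}$ is deterministic, $\text{Pr}(Y_e = 1) = \tfrac{1}{2}$.

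The decisive step is independence. It suffices to show that the map which sends an edge $e$ of $\mathcal{N}_x$ to the indexing cube $Q_{j+1}^{\ast}(t(e))$ is injective into $\bigcup_{k \in \mathcal{R}} \mathcal{Q}(k)$; once this is secured, the $Y_e$ are functions of disjoint members of the mutually independent family $\mathbb{X}$ and hence are mutually independent themselves. For two distinct edges sharing a generation $j+1$, the distinctness of terminal vertices $\Phi_{j+1}(t) \ne \Phi_{j+1}(t')$ will be shown to propagate to their last coordinates: if instead $Q_{j+1}^{\ast}(t) = Q_{j+1}^{\ast}(t')$, the common cube would contain both $t$ and $t'$, yielding $\eta_{j+1}(v(t)) = \eta_{j+1}(v(t')) \le h(D(t,t'))$, and Lemma \ref{N to M sticky map}(i) would then force $\Phi_{j+1}(t) = \Phi_{j+1}(t')$, a contradiction. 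For edges at distinct generations $j+1 \ne j'+1$, the argument will invoke the strict monotonicity of $\eta_k(\omega)$ in $k$ for each fixed $\omega$, together with the stratification of fundamental heights engineered by the pruning of Proposition \ref{pruning stage 1}, to rule out a single $M$-adic cube serving as a reference cube at two different levels of $\mathcal{N}_x$ simultaneously.

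The cross-generation piece of injectivity is the step I expect to be the main obstacle, since it is the only point where the layout of splitting vertices in the pruned slope tree must be exploited nontrivially; the remaining components of the lemma reduce to bookkeeping, with well-definedness forced by the tree structure of $\mathcal{N}_x$, the Bernoulli marginal supplied by \eqref{Bernoulli warehouse}, and the passage from injectivity to independence furnished by \eqref{independence}.
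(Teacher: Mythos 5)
Your proposal is correct and follows essentially the same route as the paper: consistency of $Y_e$ via the identification of a vertex of $\mathcal N_x$ with its terminal reference cube together with Corollary \ref{kappa makes sense}, the symmetric marginal supplied by the warehouse \eqref{Bernoulli warehouse}, and independence from the fact that distinct edges index distinct reference cubes combined with \eqref{independence}. You are in fact more careful than the paper on the one point it glosses over — the injectivity of the map sending a vertex $\Phi_{j+1}(t)$ to its last coordinate $Q_{j+1}^{\ast}(t)$ — and the cross-generation case you flag does close exactly as you anticipate: equality of two reference cubes at generations $j+1 < j'+1$ forces $\eta_{j+1}(v(t)) = \eta_{j'+1}(v(t'))$ and, via Lemma \ref{N to M sticky map}(\ref{consistency}) applied at the lower index, $\eta_{j+1}(v(t)) = \eta_{j+1}(v(t'))$, contradicting the strict monotonicity of $k \mapsto \eta_k(v(t'))$ guaranteed by $\lambda_j(\gamma) > h(\gamma)$ in the pruning.
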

\begin{proof} 
Since $Q_{j+1}^{\ast}(t)$ identifies the terminating vertex of the edge $e$, any two representations $\Phi_{j+1}(t) = \Phi_{j+1}(t')$ of this vertex gives rise to $Q_{j+1}^{\ast}(t) = Q_{j+1}^{\ast}(t')$. So $X_{Q_{j+1}^{\ast}(t)}$ is consistently defined on the edges. We have already seen in Corollary \ref{kappa makes sense} that $\kappa$ is a well-defined function on the edge set of $\mathcal N_x$, hence so is $Y_e$. The probability that $Y_e$ equals one is clearly $1/2$ since it is given by the Bernoulli$(\frac{1}{2})$ random variable $X_{Q_{j+1}^{\ast}(t)}$. Finally, any two distinct edges $e$ and $e'$ must have distinct terminating vertices, and therefore end in distinct reference cubes. The random variable assignments for such cubes are independent by our assumption on $\mathbb X$. Hence the events of retention and removal are independent for different edges, and the result follows.   
\end{proof}  
\begin{lemma} \label{Relating Pr to survival} 
Let $x$ be a point in $[A_0, A_0+1] \times \mathbb R^d$. If $x \in K_N(\mathbb X)$, then there is at least one ray of full length in $\mathcal N_x$ all of whose edges are retained after the percolation described by $Y_e(\mathbb X)$. As a result, the probability $\text{Pr}(x)$ defined in Proposition \ref{percolation proposition} admits the bound
\begin{equation}  \text{Pr}(x) \leq p^{\ast}(x),
\label{Pr(x) bounded above by survival probabilities} \end{equation}  
where $p^{\ast}(x)$ denotes the survival probability of $\mathcal N_x$ under the Bernoulli$(\frac{1}{2})$ percolation given in \eqref{percolation criterion}.   
\end{lemma}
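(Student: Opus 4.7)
The plan is to use Lemma \ref{percolation preparation lemma}(\ref{inclusion criterion}) to convert the event $\{x \in K_N(\mathbb X)\}$ into the statement that a certain ray in $\mathcal N_x$ survives the Bernoulli percolation driven by $\{Y_e\}$. If $x \in K_N(\mathbb X) = \bigcup_t P_{t,\sigma_{\mathbb X}(t)}$, then by the definition of $\text{Poss}(x)$ together with the uniqueness provided by Lemma \ref{leading to Poss(x)}, there must exist $t \in \text{Poss}(x)$ with $\sigma_{\mathbb X}(t) = v(t)$. Applying Lemma \ref{percolation preparation lemma}(\ref{inclusion criterion}) to $\sigma = \sigma_{\mathbb X}$ then furnishes a length-$N$ ray $R$ common to $\mathcal N_x$ and $\mathcal N_x(\sigma_{\mathbb X})$ which identifies $t$ and along each edge $e$ of which $\iota_{\sigma_{\mathbb X}}(e) = \kappa(e)$.

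The key step, and what I anticipate as the main bookkeeping obstacle, is to upgrade this edgewise agreement into the edgewise identity $Y_e(\mathbb X) = 1$ on all of $R$. This requires comparing the reference cube $Q_{j+1}^{\ast}(t)$, a deterministic ancestor of $t$ of height $\eta_{j+1}(v(t))$, with the $(j+1)$th basic spatial cube $Q_{j+1}(t)$ generated by the random construction, whose height is $\lambda_{j+1}(\gamma_{j+1})$ for $\gamma_{j+1}$ the $(j+1)$th splitting vertex on the ray of $\sigma_{\mathbb X}(t)$. Because $\sigma_{\mathbb X}(t) = v(t)$, the two splitting ancestors coincide and so do the heights; since both cubes contain $t$, they must in fact be the same cube. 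Hence $X_{Q_{j+1}^{\ast}(t)} = X_{Q_{j+1}(t)}$, and the random slope rule \eqref{random variable assignment} gives $(X_{Q_1(t)}, \ldots, X_{Q_N(t)}) = \Psi^{-1}(\sigma_{\mathbb X}(t)) = \Psi^{-1}(v(t))$. Iterating the definition \eqref{defn overlay function} of $\kappa$ along $R$ independently yields $\Psi^{-1}(v(t)) = (\kappa(e_1), \ldots, \kappa(e_N))$; matching the two presentations bitwise produces $X_{Q_{j+1}^{\ast}(t)} = \kappa(e_{j+1})$, which by \eqref{percolation criterion} is precisely $Y_{e_{j+1}}(\mathbb X) = 1$.

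Every edge of $R$ is therefore retained after the percolation, so $\mathcal N_x$ survives. This gives the inclusion $\{x \in K_N(\mathbb X)\} \subseteq \{\mathcal N_x \text{ survives}\}$, from which $\text{Pr}(x) \le p^{\ast}(x)$ follows upon taking probabilities. No genuinely new geometric input is required beyond what is already in Lemma \ref{percolation preparation lemma} and the construction of Section \ref{random construction section}; the only delicate point is the identification $Q_{j+1}^{\ast}(t) = Q_{j+1}(t)$ on the favourable event $\sigma_{\mathbb X}(t) = v(t)$, which ties the deterministic reference cubes appearing in the percolation criterion to the random basic cubes actually driving the construction.
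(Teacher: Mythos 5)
Your proof is correct and follows essentially the same route as the paper's: invoke Lemma \ref{percolation preparation lemma}(\ref{inclusion criterion}) to get a ray of $\mathcal N_x$ on which the slope data match, identify the deterministic reference cubes $Q_{j+1}^{\ast}(t)$ with the random basic spatial cubes $Q_{j+1}(t)$ on the event $\sigma_{\mathbb X}(t)=v(t)$, and conclude $Y_e=1$ along that ray. The only (harmless) difference is that you justify the cube identification directly by matching the heights $\eta_{j+1}(v(t))=\eta_{j+1}(\sigma_{\mathbb X}(t))$, whereas the paper reads the same fact off the equality $\Phi_j(t)=\Phi_j(t;\sigma)$ from \eqref{comparing Phi}.
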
 
\begin{proof}
If $x \in K_N(\mathbb X)$, then by Lemma \ref{percolation preparation lemma}(\ref{inclusion criterion}) there exists $t \in \text{Poss}(x)$ such that the ray identifying $t$ is common to $\mathcal N_x$ and $\mathcal N_x(\sigma_{\mathbb X})$. Restating \eqref{comparing Phi}, this means that $\Phi_j(t) = \Phi_j(t;\sigma)$ for all $1 \leq j \leq N$. But the left hand side of the preceding equality identifies the (deterministic) $j$th reference cube containing $t$, whereas the right hand side represents the (random) $j$th basic spatial cube containing $t$. In other words, we find that $Q_j(t ; \mathbb X) = Q_{j}^{\ast}(t)$ for all $1 \leq j \leq N$, and hence 
\[ \iota_{\sigma}(e) = X_{Q_{j+1}(t;\mathbb X)} = X_{Q_{j+1}^{\ast}(t)}. \]
Combined with \eqref{percolation criterion} and \eqref{equality on each edge}, this implies the existence of an entire ray in $\mathcal N_x$ (namely the one identifying $t$) that survives the percolation given by $Y_e$.  Summarizing, we obtain that 
\[ \{ \mathbb X : x \in K_N(\mathbb X)\} \subseteq 
\left\{ \mathbb X : \begin{aligned} &\mathcal N_x \text{ survives the Bernoulli$\bigl(1/2 \bigr)$} \\ &\text{percolation dictated by $Y_e(\mathbb X)$} \end{aligned} \right\}, \] 
from which \eqref{Pr(x) bounded above by survival probabilities} follows.   
\end{proof}
\begin{lemma} \label{computing survival probability} 
There is a positive constant $C$ that is uniform in $x \in [A_0, A_0+1] \times \mathbb R^d$ such that the survival probability $p^{\ast}(x)$ of $\mathcal N_x$ under Bernoulli$(\frac{1}{2})$ percolation is $\leq \frac{C}{N}$. 
\end{lemma}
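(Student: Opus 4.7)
The plan is to combine the deterministic bound on the size of $\mathcal N_x$ established in Lemma \ref{vertex counting in N_x} with a general fact about critical Bernoulli percolation on trees of controlled growth, which lives in the appendix (Section \ref{percolation on trees section}). By Lemma \ref{percolation well-defined}, the edge weights $\{Y_e\}$ in \eqref{percolation criterion} realize $\mathcal N_x$ as the host of a genuine Bernoulli$(1/2)$ percolation with independent edges, so the question reduces purely to a statement about trees with retention probability $1/2$.

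The deterministic input from Lemma \ref{vertex counting in N_x} is that the number $n_j(x)$ of vertices of $\mathcal N_x$ at height $j$ satisfies $n_j(x) \leq C 2^j$, with $C$ independent of $x \in [A_0, A_0 + 1]\times \mathbb R^d$. This is exactly the critical growth rate for Bernoulli$(1/2)$ percolation, since the mean number of surviving descendants at the next generation from any fixed vertex is $2 \cdot \frac{1}{2} = 1$. Consequently, the expected number $\mathbb E[W_N]$ of full-length surviving rays (which equals the first moment of the count of surviving rays of length $N$) is $O(1)$, not $o(1)$, and a first moment bound alone cannot yield the required decay.

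The key abstract percolation statement, which I expect will be proved in Section \ref{percolation on trees section}, has the following form: if $\mathcal T$ is a rooted tree of height $N$ whose generation sizes satisfy $|\{v : h(v) = j\}| \leq C 2^j$ for all $j$, then the probability that Bernoulli$(1/2)$ edge percolation on $\mathcal T$ produces a ray of length $N$ is at most $C'/N$, with $C'$ depending only on $C$. The standard route to this bound — and the main technical obstacle — is to set up the recursion $p_v = 1 - \prod_{u \,\text{child of}\, v}(1 - p_u/2)$ for the survival probability from vertex $v$ to level $N$, expand this in the form $p_v \leq \tfrac{1}{2}\sum_u p_u$ and $p_v \geq \tfrac{1}{2}\sum_u p_u - \tfrac{1}{8}(\sum_u p_u)^2$, and then combine these by summing level by level. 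Together with the growth bound $n_j(x) \leq C 2^j$, this produces a difference inequality of the form $q_{j+1}^{-1} \geq q_j^{-1} + c$ for a suitable average $q_j$ of survival probabilities at level $j$, from which $q_0 \leq C'/N$ follows. (A compact alternative is to compare the percolation on $\mathcal N_x$ with a critical Galton–Watson process of mean $1$ and invoke Kolmogorov's estimate on the survival time.)

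Granted the abstract percolation bound, the proof of Lemma \ref{computing survival probability} is immediate: apply the abstract bound to the tree $\mathcal N_x$ with constant $C$ inherited from Lemma \ref{vertex counting in N_x}, to conclude $p^\ast(x) \leq C'/N$ with $C'$ independent of $x$ in the prescribed slab. Combined with Lemma \ref{Relating Pr to survival}, this gives Pr$(x) \leq p^\ast(x) \leq C'/N$, which is exactly \eqref{bound on Pr(x)} and closes out the proof of Proposition \ref{percolation proposition}.
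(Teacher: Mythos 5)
Your top-level argument is exactly the paper's: Lemma \ref{Relating Pr to survival} reduces everything to the survival probability of $\mathcal N_x$, an abstract percolation estimate from the appendix converts the generation counts into a $1/N$ bound, and Lemma \ref{vertex counting in N_x} supplies $n_j(x)\leq C2^j$ uniformly in $x$. Where you diverge is in how you imagine the appendix result being proved. The paper does not run the survival-probability recursion; it invokes Lyons's electrical-network criterion (Proposition \ref{survival prob}, $\text{Pr}(\mathcal T_N)\leq 2/(1+R(\mathcal T_N))$) together with the ``shorting'' argument of Corollary \ref{survival probability reduced}, which gives the sharper, directly citable bound $p^{\ast}(x)\leq \bigl[\sum_{j=1}^N 2^{j-1}/n_j(x)\bigr]^{-1}$; plugging in $n_j(x)\leq C2^j$ then yields $\sum_j 2^{j-1}/n_j(x)\geq N/(2C)$ and hence $p^{\ast}(x)\leq C'/N$ with no further work. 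Your recursion route can be pushed through, but as sketched it has two gaps worth flagging: the pointwise inequality $p_v\leq \tfrac12\sum_u p_u-c\bigl(\sum_u p_u\bigr)^2$ fails when $\sum_u p_u$ is not small (a vertex of $\mathcal N_x$ may have up to $M^d$ children, so this case genuinely occurs), and passing from the per-vertex recursion to a difference inequality for a level average $q_j=2^{-j}\sum_{h(v)=j}p_v$ requires a Cauchy--Schwarz step that is precisely where the hypothesis $n_j\leq C2^j$ enters. The Lyons/resistance formulation absorbs both issues automatically, which is presumably why the paper (following \cite{Bateman} and \cite{KrocPramanik}) adopts it; since you explicitly defer the abstract statement to Section \ref{percolation on trees section}, your proof of the lemma itself stands once that deferral is resolved by Corollary \ref{survival probability reduced}.
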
 
\begin{proof}
In view of Corollary \ref{survival probability reduced}, $p^{\ast}(x)$ is bounded above by 
\[ \left[ \sum_{j=1}^{N} \frac{2^j}{n_j(x)} \right]^{-1} \text{ where } n_j(x) = \text{ number of vertices in $\mathcal N_x$ of height $j$}.  \] 
But Lemma \ref{vertex counting in N_x} gives that $n_j(x) \leq C2^j$, which leads to the stated bound. 
\end{proof} 

\section{Probability estimates for slope assignments} \label{probability estimation section} 
We now turn to \eqref{random Kakeya lower bound}, where we need to establish that with high probability, the volume of space close to the root hyperplane is much more widely populated by the random set $K_N(\mathbb X)$ than away from it. As indicated in Section \ref{layout section}, the proof requires detailed knowledge of the probability that a given subset of root cubes receives prescribed slope assignments. We establish the necessary probabilistic estimates in this section for easy reference in the proof of \eqref{random Kakeya lower bound}, which is presented in Section \ref{LowerBoundSection}. 
\subsection{A general rule}
To get started, let us recall from Section \ref{random construction section} that a slope assigned to a root is not completely arbitrary and has to obey the requirement of stickiness. The definition below, introduced to avoid vacuous root-slope combinations, draws attention to this constraint.  
\begin{definition}
Let $A$ be a collection of root cubes and $\Gamma_A = \{\alpha(t) : t \in A \} \subseteq \Omega_N$ a collection of slopes indexed by $A$. We say that the collection of root-slope pairs 
\begin{equation} \label{sticky admissible collection}  \left\{(t, \alpha(t))\, : \, t \in A \subseteq \mathcal Q(J), \; \alpha(t) \in \Gamma_A \subseteq \Omega_N \right\} \end{equation} 
is sticky-admissible if there exists a realization of $\mathbb X$ as in \eqref{Bernoulli warehouse} for which the sticky map $\sigma_{\mathbb X}$ described in Section \ref{random construction section} has the property that 
\begin{equation} \sigma_{\mathbb X}(t) = \alpha(t) \quad \text{ for all } t \in A. \label{sticky admissible event} \end{equation}   
\end{definition}
Given a sticky-admissible collection \eqref{sticky admissible collection}, we first prescribe a general algorithm for computing the probability of the event \eqref{sticky admissible event}. Preparatory to stating the result, let us define two collections consisting of tuples of vertices from the root tree and the slope tree respectively:
\begin{align}
\mathbb N (A;\alpha) &:= \{ \Phi_j(t; \alpha) : t \in A, \; 0 \leq j \leq N\},  \label{defn N(A, alpha)}\\   
\mathbb M(A;\alpha) &:= \{ \Theta_j(t; \alpha) : t \in A, \; 0 \leq j \leq N \}. \label{defn M(A, alpha)} 
\end{align}
These objects are analogous to the trees \eqref{defn N omega} and \eqref{defn M omega} introduced earlier, with the usual interpretation of $\Phi_j(t;\alpha)$ and $\Theta_j(t; \alpha)$ following those definitions. Namely, for $j \geq 1$, the element $\Theta_j(t;\alpha)$ is a vector  with $j$ entries, whose $i$th component represents the $i$th basic slope cube in $\mathcal T_J(\Omega_N)$ containing $\alpha(t)$. The vector $\Phi_j(t;\alpha)$ is also a $j$-long sequence. Its $i$th entry represents the unique cube containing $t$ located at the same height as the $i$th entry of $\Theta_j(t; \alpha)$. This common height is $\eta_i(\alpha(t))$ defined as in \eqref{defn of eta_j}. Not surprisingly, for a choice $A$ and $\alpha$ that gives rise to a sticky-admissible collection \eqref{sticky admissible collection}, the collections $\mathbb N(A;\alpha)$ and $\mathbb M(A;\alpha)$ are indeed trees (with the $0$th generations removed) that contain the information required for computing the probability of the event \eqref{sticky admissible event}. This is the content of Lemma \ref{general probability lemma} below, which forms the computational framework for all the probability estimates in this section. 
\begin{lemma} \label{general probability lemma}
Let $A \subseteq \mathcal Q(J)$ and $\Gamma_A = \{\alpha(t) : t \in A \} \subseteq \Omega_N$ be sets for which the collection given in \eqref{sticky admissible collection} is sticky-admissible. Then the following conclusions hold. 
\begin{enumerate}[(i)]
\item The collections $\mathbb N(A;\alpha)$ and $\mathbb M(A;\alpha)$ defined in \eqref{defn N(A, alpha)} and \eqref{defn M(A, alpha)} are well-defined trees in which  $\Phi_j(t;\alpha)$ and $\Theta_j(t;\alpha)$ are deemed vertices of height $j$, and parents of $\Phi_{j+1}(t;\alpha)$ and $\Theta_{j+1}(t;\alpha)$ respectively. 
\item If $n(A;\alpha)$ denotes the total number of vertices in $\mathbb N(A;\alpha)$ not counting the root,  then 
\begin{equation}  \text{Pr}(\sigma_{\mathbb X}(t) = \alpha(t) \text{ for all } t \in A) = 2^{-n(A;\alpha)}. \label{general probability equation}  \end{equation} 
\end{enumerate} 
\end{lemma}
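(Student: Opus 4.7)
The plan is to prove the two parts sequentially, with part (i) establishing the tree structure needed to organize the counting in part (ii). The argument for part (i) closely parallels Lemma \ref{percolation preparation lemma}(\ref{auxiliary slope map}). Sticky-admissibility supplies a sticky map $\sigma = \sigma_{\mathbb X}$ with $\sigma(t) = \alpha(t)$ for every $t \in A$. If $t, t' \in A$ satisfy $\Phi_j(t;\alpha) = \Phi_j(t';\alpha)$, then for $u = D(t,t')$ we have $h(u) \geq \eta_j(\alpha(t)) = \eta_j(\alpha(t'))$, and stickiness of $\sigma$ yields $h(D(\alpha(t),\alpha(t'))) \geq h(u) \geq \eta_j(\alpha(t))$. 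The rays identifying $\alpha(t)$ and $\alpha(t')$ therefore coincide up to height $\eta_j$, forcing $\Theta_j(t;\alpha) = \Theta_j(t';\alpha)$. This endows $\mathbb N(A;\alpha)$ with a well-defined tree structure and exhibits the map $\Phi_j(t;\alpha) \mapsto \Theta_j(t;\alpha)$ as a height-preserving tree homomorphism onto $\mathbb M(A;\alpha)$.

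For part (ii), the plan is to realize the event $\mathcal E(A,\alpha) := \{\sigma_{\mathbb X}(t) = \alpha(t) \text{ for all } t \in A\}$ as the intersection of independent Bernoulli events, one per non-root vertex of $\mathbb N(A;\alpha)$, each of probability $\frac{1}{2}$. For a single $t \in A$, write $(\epsilon_1(t),\ldots,\epsilon_N(t)) := \Psi^{-1}(\alpha(t))$ with $\Psi$ as in Proposition \ref{Splitting tree lemma}, and let $Q_j^*(t;\alpha)$ denote the terminal entry of $\Phi_j(t;\alpha)$. I would argue inductively on $j$ that under the partial event $\bigcap_{k<j}\{X_{Q_k^*(t;\alpha)} = \epsilon_k(t)\}$, the $j$th basic spatial cube selected by the dynamic construction of Section \ref{random construction section} coincides with the deterministic $Q_j^*(t;\alpha)$, using the identity $\sigma_{\mathbb X}(Q_{j-1}^*(t;\alpha)) = \Psi_{j-1}(\epsilon_1(t),\ldots,\epsilon_{j-1}(t))$ delivered by \eqref{random variable assignment} together with the fact that both cubes lie at height $\eta_j(\alpha(t))$ and contain $t$. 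Consequently $\mathcal E(A,\alpha)$ coincides with the conjunction of constraints $X_{Q_j^*(t;\alpha)} = \epsilon_j(t)$ as $(t,j)$ ranges over $A \times \{1,\ldots,N\}$.

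Collating across $t \in A$, each non-root vertex $v = \Phi_j(t;\alpha)$ of $\mathbb N(A;\alpha)$ supplies exactly one such constraint on the coin flip at its terminal cube. The tree structure from part (i) guarantees consistency: if a second root $t'$ identifies the same vertex, then $\Theta_j(t;\alpha) = \Theta_j(t';\alpha)$ forces $\epsilon_j(t) = \epsilon_j(t')$, so the demanded binary value is unambiguous. Hence $\mathcal E(A,\alpha)$ reduces to $n(A;\alpha)$ binary constraints on distinct basic spatial cubes, which are independent Bernoulli$(\frac{1}{2})$ random variables by \eqref{independence}; multiplying gives $\text{Pr}(\mathcal E(A,\alpha)) = 2^{-n(A;\alpha)}$.

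The main technical delicacy is the inductive unwrapping in the second paragraph: because the basic spatial cubes produced by the random construction are themselves random and depend on earlier coin flips, one must verify that the dynamic cubes $Q_j(t;\mathbb X)$ get pinned down to the deterministic reference cubes $Q_j^*(t;\alpha)$ once the prescribed binary values are enforced. This step requires careful bookkeeping with $\Psi$ and the basic/splitting cube hierarchy from Section \ref{random construction section}, but the argument is structural rather than computational.
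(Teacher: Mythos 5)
Your proposal is correct and follows essentially the same route as the paper's proof: part (i) via the stickiness inequality $h(D(\alpha(t),\alpha(t'))) \geq h(D(t,t'))$, and part (ii) by rewriting the event as a conjunction of constraints $X_{Q_j^{\ast}(t;\alpha)} = \pi_j\circ\Psi^{-1}\circ\alpha(t)$, with the same inductive identification of the random basic spatial cubes with the deterministic reference cubes and the same consistency check before invoking independence. The "technical delicacy" you flag is precisely the justification the paper supplies for the final equality in its chain \eqref{final event}.
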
 
\begin{proof}
The proof of the first claim bears a close resemblance with that of Lemma \ref{percolation preparation lemma}. To check that the trees are well-defined, we pick root cubes $t \ne t'$ with $u = D(t,t')$ and aim to show that 
\[ \Phi_j(t;\alpha) = \Phi_j(t', \alpha) \quad \text{ and } \quad \Theta_j(t;\alpha) = \Theta_j(t', \alpha)\]
for all $j$ such that $\eta_j(\alpha(t)) \leq h(u)$. But the collection \eqref{sticky admissible collection} is sticky-admissible by hypothesis, hence there is a sticky map $\sigma$ such that \eqref{sticky admissible event} holds. By the property of stickiness,
\[ h(D(\alpha(t), \alpha(t'))) = h(D(\sigma(t), \sigma(t'))) \geq h(u).\] 
Since $\alpha(t)$ and $\alpha(t')$ agree up to height $h(D(\alpha(t), \alpha(t')))$, it follows that $\Theta_j(t;\alpha)$ and $\Theta_{j}(t';\alpha)$ must match for $\eta_j(\alpha(t)) \leq h(u)$. This in turn implies that $\Phi_j(t;\alpha) = \Phi_j(t';\alpha)$.   

We turn now to the proof of \eqref{general probability equation}. Let us write
\begin{equation}  \Phi_j(t;\alpha) = \bigl(Q_1^{\ast}(t;\alpha), \cdots, Q_j^{\ast}(t;\alpha) \bigr) \quad \text{ and } \quad \Theta_j(t;\alpha) = \bigl( \theta_1(t;\alpha), \cdots, \theta_j(t;\alpha)\bigr). \label{Phi_j and Theta_j for alpha}\end{equation}  
In order to describe the event of interest, we need to recall from \eqref{chain of basic cubes containing t} the definition of basic spatial cubes $Q_j(t)$ containing $t$, their role in the random construction as explained in Section \ref{random construction section}, and also the definition of the maps $\Psi_j$ and $\Psi$ from \eqref{defn Psi_j} and Proposition \ref{Splitting tree lemma}.  Putting these together we find that 
\begin{align}
\bigl\{ \sigma_{\mathbb X}(t) = \alpha(t) &\text{ for all } t \in A \bigr\} \nonumber \\ &= \Bigl\{ \sigma_{\mathbb X}(Q_j(t)) = \theta_j(t;\alpha) \text{ for all $1 \leq j \leq N$ and all $t \in A$}\Bigr\} \nonumber \\ &= \Bigl\{ \Psi_N(X_{Q_1(t)}, \cdots, X_{Q_N(t)}) = \alpha(t) \text{ for all $t \in A$}\Bigr\} \nonumber \\&= \bigcap_{j=1}^N \bigcap_{t \in A}\Bigl\{X_{Q_j(t)} = \pi_j \circ \Psi^{-1} \circ \alpha(t) \Bigr\} \nonumber \\ &= \bigcap_{ j =1}^N \bigcap_{t \in A}\Bigl\{ Q_j(t) = Q_j^{\ast}(t;\alpha) \text{ and } X_{Q_j^{\ast}(t;\alpha)} = \pi_j \circ \Psi^{-1} \circ \alpha(t) \Bigr\}.    \label{final event}
\end{align} 
Here $\pi_j$ denotes the projection onto the $j$th component of an input sequence.  In the first two steps of the string of equations above, we have used the definition \eqref{random variable assignment} of $\sigma$ and its stickiness as ensured by Lemma \ref{sigma sticky lemma}. To justify the last step we observe that $Q_1(t) = Q_1^{\ast}(t;\alpha)$ is non-random; further if it is given that $Q_{\ell}(t) = Q_{\ell}^{\ast}(t;\alpha)$ for all $\ell \leq j$, then the additional requirement   
\[  X_{Q_j(t)} = \pi_j \circ \Psi^{-1} \circ \alpha(t) \quad \text{ implies } \quad Q_{j+1}(t) = Q_{j+1}^{\ast}(t;\alpha),\]
leading to the conclusion in \eqref{final event}. By virtue of our assumption of sticky-admissibility, the event described above is of positive probability; in particular the value assignment to the random variables in $\mathbb X$ as prescribed in \eqref{final event} is consistent, i.e., for $t \ne t'$, \[  \pi_j \circ \Psi^{-1} \circ \alpha(t) = \pi_j \circ \Psi^{-1} \circ \alpha(t') \quad \text{ whenever } \quad Q_j^{\ast}(t;\alpha) = Q_j^{\ast}(t';\alpha).\] In view of our assumption \eqref{Bernoulli warehouse} on the distribution of $\mathbb X$, the probability of the event in \eqref{final event} is half raised to a power that equals the number of distinct cubes in the collection $\{ Q_j^{\ast}(t;\alpha); 1 \leq j \leq N, \; t \in A\}$, in other words $n(A;\alpha)$. 
\end{proof} 
\subsection{Root configurations} \label{root configurations subsection}
Application of Lemma \ref{general probability lemma} requires explicit knowledge of the structure of the trees $\mathbb N(A;\alpha)$ and $\mathbb M(A;\alpha)$, from which $n(A;\alpha)$ can be computed. These objects depend in turn on the trees depicting $A$ and $\Gamma_A$. We now proceed to compute $n(A;\alpha)$ in some simple situations where $\#(A) \leq 4$. On one hand, the small size of $A$ permits the classification of possible root configurations into relatively few categories, each of which gives rise to a specific $n(A;\alpha)$. On the other hand, these cases cover all the probabilistic estimates that we will need in Section \ref{LowerBoundSection}. 

While each root configuration requires distinct consideration, it is recommended that the first-time reader focus on the cases when $\#(A) = 2$, and when $\#(A) = 4$ with the four roots in what we call a type 1 configuration (see Definition \ref{four point type definition}).  These cases contain many of the main ideas needed to push through the proof of the lower bound on the size of a typical $K_N(\mathbb{X})$ claimed in \eqref{random Kakeya lower bound}, Proposition \ref{main theorem reformulated}.  A thorough treatment of all distinct cases when $\#(A)\leq 4$ is needed to completely establish Proposition~\ref{main theorem reformulated}, but focusing on the two recommended cases should make the arguments far easier to absorb upon a first reading.  When $\#(A)=2$ in particular, the reader may focus attention on Lemmas~\ref{two point lemma}, \ref{C count lemma}, \ref{splitting vertex summation lemma} and~\ref{root tree summation lemma}, and the application of these lemmas in the proof of Proposition~\ref{first moment proposition}.  The treatment of the case of four distinct roots in type 1 configuration has been carried out on Lemmas~\ref{four point type 1 lemma}, \ref{E_{41} size lemma}, \ref{splitting vertex summation lemma} and~\ref{root tree summation lemma}, with the application of these lemmas occurring in the proof of Proposition~\ref{second moment proposition}, for which this is the generic case.

\subsection{Notation} Throughout this section the following notation will be used, in conjunction with the terminology of root hyperplane, root tree and root cube already set up in Section \ref{general facts about tube families}, page \pageref{defn Q_k}. Since any vertex $\Phi_j(t;\alpha)$ in $\mathbb N(A;\alpha)$ is uniquely identified by its last component $Q_j^{\ast}(t;\alpha)$ defined as in \eqref{Phi_j and Theta_j for alpha}, we write \begin{equation} \label{congruency notation}\Phi_j(t;\alpha) \cong Q_j^{\ast}(t;\alpha), \end{equation}  often opting to describe the left hand side by the right. In particular if $j=N$, then $\Phi_N(t;\alpha) \cong Q_N^{\ast}(t;\alpha) = t$, in which case the latter notation is used instead of the (more cumbersome) former. 

Given a vertex $u$ in the root tree, a vertex $\omega \in \mathcal T_J(\Omega_N)$ and a positive integer $k$ that is no larger than either $h(\omega)$ or $h(u)$, we also define  
 \begin{align}
\theta(\omega,k) &:= \text{ the basic slope cube containing $\omega$ of maximal height $\leq k$}, \text{ and } \label{defn theta} \\ 
\mu(\omega,k) &:= \text{ $j$ if } \theta(\omega,k) \in \mathcal H_j(\Omega_N) \nonumber \\  
&\;= \text{ number of basic slope cubes of height $\leq k$ that contain $\omega$}, \quad \text{ and } \label{defn mu} \\
Q_u[\omega,k] &:= \text{ ancestor of $u$ in the root tree at height $\mu(\omega,k)$.} \label{defn Q_u}
\end{align}
Figure \ref{Fig: theta and mu quantities} on page \pageref{Fig: theta and mu quantities} depicts these quantities. If $\omega' \subseteq \omega$ and/or $u' \subseteq u$, then it follows from the definitions above that 
\begin{align*} \theta(\omega,k) &= \theta(\omega',k), \quad \mu(\omega, k) = \mu(\omega',k),  \text{ and } \\ Q_{u}[\omega, k] &= Q_{u'}[\omega, k] = Q_{u}[\omega', k] = Q_{u'}[\omega',k]. \end{align*}  
These facts will be frequently used in the sequel without further reference. 
\vskip0.2cm
\begin{figure}[h!] 
\setlength{\unitlength}{0.8mm}
\begin{picture}(-50,0)(-75,10)

        \put(0,0){\special{sh 0.99}\ellipse{2}{2}}
        \put(-15,-25){\special{sh 0.99}\ellipse{2}{2}}
        \put(-15,-25){\circle{4}}
        \put(15,-25){\special{sh 0.99}\ellipse{2}{2}}
        \put(15,-25){\circle{4}}
        \put(25,-45){\special{sh 0.99}\ellipse{2}{2}}
        \put(45,-100){\special{sh 0.99}\ellipse{2}{2}}
        \put(35,-60){\special{sh 0.99}\ellipse{2}{2}}
        \put(35,-60){\circle{4}}        
        \put(10,-70){\special{sh 0.99}\ellipse{2}{2}}   
        \put(16,-60){\special{sh 0.99}\ellipse{2}{2}}
        \put(16,-60){\circle{4}}

        \path(-15,-25)(0,0)(15,-25)(25,-45)
        \dottedline{2}(-25,-50)(45,-50)
        \dottedline{2}(-25,-35)(35,-35)
        \dottedline{2}(-5,-65)(55,-65)
        \path(-25,-45)(-15,-25)(-5,-45)
        \path(35,-60)(25,-45)(10,-70)        
        \path(30,-75)(35,-60)(45,-75)        
        \path(5,-80)(10,-70)(15,-80)        
        \dashline{2}(45,-75)(45,-100)

\put(-2,5){\Large\shortstack{$\omega_j\in\mathcal {G}_{j}(\Omega_N)$}}
        \put(15,-22){\Large\shortstack{$\theta(\omega,k_1) = \theta(\omega,k_2)\in\mathcal {H}_j(\Omega_N)$}
}
        \put(37,-58){\Large\shortstack{$\theta(\omega,k_3)\in\mathcal {H}_{j+1}(\Omega_N)$}}        
        \put(-55,-51){\large\shortstack{height $=k_2$}}
        \put(-55,-36){\large\shortstack{height $=k_1$}}
        \put(-35,-66){\large\shortstack{height $=k_3$}}
        \put(43,-107){\Large\shortstack{$\omega\in\mathcal{T}_J(\Omega_N)$}}


\end{picture}
\vspace{9.5cm}
\caption{\label{Fig: theta and mu quantities} Given $\omega\in \Omega_N$ and a set of heights $k_i$, $i=1,2,3$, the basic slope cubes $\theta(\omega,k_i)$ are identified.  Here $\mu(\omega,k_1)=\mu(\omega,k_2)=j$ and $\mu(\omega,k_3)=j+1$. 
All vertices depicting basic slope cubes are circled.}
\end{figure}
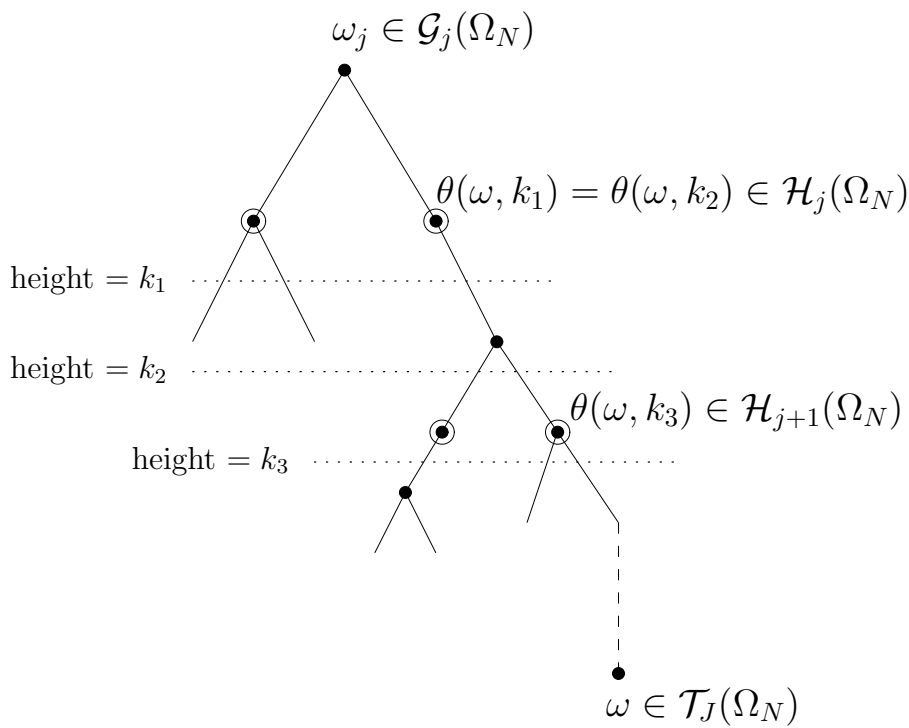

\subsubsection{The case of two roots} 
We start with the simplest case when $A$ consists of two root cubes. 
\begin{lemma}\label{two point lemma} 
Let $A = \{ t_1, t_2 \}$ be two distinct root cubes and $\Gamma_A = \{ \alpha(t_1) = v_1, \alpha(t_2) = v_2\} \subseteq \Omega_N$ be a subset of (not necessarily distinct) slopes such that $\{(t_1, v_1), (t_2, v_2)\}$ is sticky-admissible. If $u = D(t_1, t_2)$, $\omega = D(v_1, v_2)$ and $k = h(u)$, then $k \leq h(\omega)$, and 
\begin{equation}  \text{Pr}\bigl( \sigma(t_1) = v_1, \sigma(t_2) = v_2 \bigr) = \left(\frac{1}{2}\right)^{2N-\mu(\omega,k)}. \label{probability for two roots} \end{equation}  
\end{lemma}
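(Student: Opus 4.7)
The plan is to view this statement as a direct corollary of Lemma \ref{general probability lemma}, so that everything reduces to computing the number $n(A;\alpha)$ of distinct vertices in $\mathbb N(A;\alpha)$. Before doing that, I would settle the preliminary inequality $k \leq h(\omega)$. By sticky-admissibility, there is a sticky map $\sigma$ realizing $\sigma(t_i) = v_i$. The common ancestor $u = D(t_1, t_2)$ contains both $t_1$ and $t_2$, so by preservation of lineage $\sigma(u)$ contains both $v_1$ and $v_2$, hence contains their youngest common ancestor $\omega$. Since $\sigma$ preserves heights, $h(\sigma(u)) = k$, and $\omega \subseteq \sigma(u)$ forces $h(\omega) \geq k$.

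Next I would compute $n(A;\alpha)$. For each root $t_i$, the chain $\Phi_1(t_i;\alpha), \ldots, \Phi_N(t_i;\alpha)$ has exactly $N$ distinct entries, because the $j$th component $Q_j^\ast(t_i;\alpha)$ sits at height $\eta_j(v_i)$, and the heights of successive basic slope cubes containing $v_i$ are strictly increasing in $j$ (basic slope cubes are strictly nested as $j$ grows). So the two chains together account for $2N$ vertices, minus the number of indices $j$ for which the two chains coincide at level $j$.

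The overlap count comes from the well-definedness argument embedded in the proof of Lemma \ref{general probability lemma}: $\Phi_j(t_1;\alpha) = \Phi_j(t_2;\alpha)$ exactly when $\eta_j(v_1) \leq h(u) = k$ (note $\eta_j(v_1) = \eta_j(v_2)$ whenever the $j$-th basic slope cubes containing $v_1$ and $v_2$ agree, which they do as long as the common height is $\leq h(\omega)$, a condition already guaranteed once $\eta_j(v_1) \leq k \leq h(\omega)$). The number of such $j$ is precisely the number of basic slope cubes containing $v_1$ of height at most $k$. Since $v_1 \subseteq \omega$ and $k \leq h(\omega)$, every basic slope cube containing $v_1$ of height $\leq k$ is an ancestor of $\omega$ in the slope tree (and conversely, every ancestor of $\omega$ in $\bigcup_j \mathcal H_j(\Omega_N)$ of height $\leq k$ contains $v_1$), so this count equals $\mu(\omega, k)$ by the definition \eqref{defn mu}.

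Combining these two steps gives $n(A;\alpha) = 2N - \mu(\omega, k)$, and invoking \eqref{general probability equation} yields \eqref{probability for two roots}. There is no real obstacle here; the only subtle point is the identification of the overlap count with $\mu(\omega, k)$ rather than $\mu(v_1, k)$, which is a one-line observation using $v_1 \subseteq \omega$ and $k \leq h(\omega)$. This case will serve as a template for the more elaborate configurations with three and four roots to be treated later.
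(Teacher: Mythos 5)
Your proposal is correct and follows essentially the same route as the paper: both arguments establish $k \leq h(\omega)$ from stickiness and then reduce to Lemma \ref{general probability lemma} by computing $n(A;\alpha) = 2N - \mu(\omega,k)$, the paper phrasing the count as $\mu(\omega,k) + 2(N-\mu(\omega,k))$ via the height of $u_{\mathbb N} \cong Q_u[\omega,k]$ in $\mathbb N(A;\alpha)$, while you phrase it as $2N$ minus the overlap of the two rays. The identification of the overlap count with $\mu(\omega,k)$ via $\mu(v_1,k)=\mu(\omega,k)$ is exactly the observation the paper makes implicitly through the relation $Q_u[\omega,k]=Q_{t_1}[v_1,k]$.
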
 
\begin{proof}
Since there exists a sticky map $\sigma$ such that $\sigma(t_i) = v_i$ for $i=1,2$, we see that 
\begin{equation} \label{height and stickiness} h(\omega) = h\bigl( D(v_1, v_2 )\bigr) = h \bigl(D(\sigma(t_1), \sigma(t_2)) \bigr) \geq h \bigl( D(t_1, t_2)\bigr) = h(u) = k. \end{equation} 
In order to establish \eqref{probability for two roots} we invoke Lemma \ref{general probability lemma}. The tree $\mathbb N(A;\alpha)$ consists of two rays terminating at $\Phi_N(t_1;\alpha) \cong t_1$ and $\Phi_N(t_2;\alpha) \cong t_2$ respectively,  according to the notational rule prescribed in \eqref{congruency notation}.  Letting $u_{\mathbb N} = D_{\mathbb N}(t_1, t_2)$ denote the youngest common ancestor of $t_1$ and $t_2$ in $\mathbb N(A;\alpha)$, we observe that $u_{\mathbb N} \cong Q_u[\omega, k]$, with $Q_u[\omega,k]$ defined as in \eqref{defn Q_u}. Thus $u_{\mathbb N}$ lies at height $\mu(\omega, k)$ in $\mathbb N(A;\alpha)$. This allows us to compute $n(A;\alpha)$ as follows: $n(A;\alpha) = \mu(\omega, k) + 2(N - \mu(\omega, k)) = 2N - \mu(\omega,k)$. 
\end{proof} 
\subsubsection{The case of three roots}
Next we turn to the slightly more complex event where three distinct root cubes receive prescribed slopes. Here for the first time we observe the dependence of slope assignment probabilities on configuration types of the roots. 
\begin{definition}
Let $t_1, t_2, t_2'$ be three distinct root cubes. We say that the ordered tuple $\mathbb I = \{(t_1, t_2);(t_1, t_2') \}$ with 
\begin{equation} \label{u u' height relation}
u = D(t_1, t_2), \quad u' = D(t_1, t_2'), \quad u' \subseteq u
\end{equation} 
is in type 1 configuration if exactly one of the following conditions hold: 
\begin{enumerate}[(a)]
\item $u' \subsetneq u$, or
\item $u = u' = D(t_2, t_2')$.  
\end{enumerate}
A tuple $\mathbb I$ that obeys \eqref{u u' height relation} but is not of type 1 is said to be of type 2. Thus for $\mathbb I$ of type 2, one must have $u=u'$ and additionally $t = D(t_2, t_2')$ satisfies $t \subsetneq u$. If $\mathbb I = \{(t_1, t_2); (t_1, t_2')\}$ with the same definitions of $u$ and $u'$ does not meet the containment relation required by \eqref{u u' height relation}, i.e., if $u \subsetneq u'$, then we declare $\mathbb I$ to be of the same type as $\mathbb I' = \{(t_1, t_2');(t_1, t_2) \}$. 
\end{definition} 
The different structural possibilities are shown in Figure \ref{Fig: 3 point configurations}.   
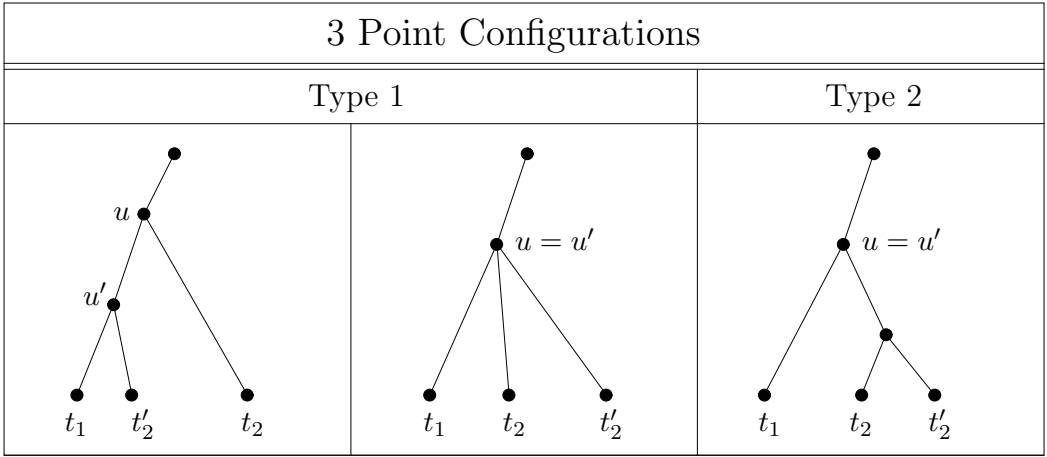
\begin{figure}[h]
\setlength{\unitlength}{0.8mm}
\begin{picture}(-50,0)(-106,35)

        \allinethickness{0.1mm}\path(-100,25)(-100,-40)(71,-40)(71,25)(-100,25)
        \path(-100,24)(71,24)
        \path(-100,15)(71,15)
        \path(-43,15)(-43,-40)
        \path(14,24)(14,-40)
        \path(-100,-40)(71,-40)
        \path(-100,25)(-100,35)(71,35)(71,25)

        \put(-47,28){\Large\shortstack{3 Point Configurations}}
        \put(-50,18){\large\shortstack{Type 1}}
        \put(35,18){\large\shortstack{Type 2}}

        \special{sh 0.99}\put(-72,10){\ellipse{2}{2}}
        \put(-82,-1){\shortstack{$u$}}
        \special{sh 0.99}\put(-77,0){\ellipse{2}{2}}
        \put(-87,-15){\shortstack{$u'$}}
        \special{sh 0.99}\put(-82,-15){\ellipse{2}{2}}
        \put(-90,-36){\shortstack{$t_1$}}
        \special{sh 0.99}\put(-88,-30){\ellipse{2}{2}}
        \put(-79,-36){\shortstack{$t_2'$}}
        \special{sh 0.99}\put(-79,-30){\ellipse{2}{2}}
        \put(-61,-36){\shortstack{$t_2$}}
        \special{sh 0.99}\put(-60,-30){\ellipse{2}{2}}

        \path(-72,10)(-77,0)
        \path(-82,-15)(-77,0)(-60,-30)
        \path(-88,-30)(-82,-15)(-79,-30)


\special{sh 0.99}\put(-14,10){\ellipse{2}{2}}
        \put(-16,-6){\shortstack{$u = u'$}}
        \special{sh 0.99}\put(-19,-5){\ellipse{2}{2}}
        \put(-2,-36){\shortstack{$t_2'$}}
        \special{sh 0.99}\put(-30,-30){\ellipse{2}{2}}
        \put(-31,-36){\shortstack{$t_1$}}
        \special{sh 0.99}\put(-17,-30){\ellipse{2}{2}}
        \put(-18,-36){\shortstack{$t_2$}}
        \special{sh 0.99}\put(-1,-30){\ellipse{2}{2}}

        \path(-14,10)(-19,-5)(-30,-30)
        \path(-17,-30)(-19,-5)(-1,-30)


        \special{sh 0.99}\put(43,10){\ellipse{2}{2}}
        \special{sh 0.99}\put(38,-5){\ellipse{2}{2}}
        \put(41,-6){\shortstack{$u = u'$}}
        \special{sh 0.99}\put(45,-20){\ellipse{2}{2}}
        \put(24,-36){\shortstack{$t_1$}}
        \special{sh 0.99}\put(25,-30){\ellipse{2}{2}}
        \put(39,-36){\shortstack{$t_2$}}
        \special{sh 0.99}\put(41,-30){\ellipse{2}{2}}
        \put(52,-36){\shortstack{$t_2'$}}
        \special{sh 0.99}\put(53,-30){\ellipse{2}{2}}

        \path(43,10)(38,-5)(25,-30)
        \path(53,-30)(45,-20)(41,-30)
        \path(38,-5)(45,-20)


\end{picture}
\vspace{6cm}
\caption{\label{Fig: 3 point configurations} All possible configurations of three distinct root cubes.}
\end{figure}

As in Lemma \ref{two point lemma}, the quantity $\mu$ defined in \eqref{defn mu} when evaluated at certain vertices of the slope tree dictated by $A = \{ t_1, t_2, t_2'\}$ provides the value of $n(A;\alpha)$ necessary for estimating the probability in \eqref{general probability equation}. 
\begin{lemma} \label{three point type 1 lemma} 
Let $A = \{ t_1, t_2, t_2' \}$ be three distinct root cubes such that the ordered tuple $\mathbb I = \{(t_1, t_2);(t_1, t_2') \}$ obeys \eqref{u u' height relation} and is of type 1. Set \[k = h(u), \quad k' = h(u').\] Suppose that $\Gamma_{A} = \{\alpha(t_1) = v_1, \; \alpha(t_2) = v_2, \; \alpha(t_2') = v_2' \}\subseteq \Omega_N$ is a subset of (not necessarily distinct) directions such that the collection $\{(t_1, v_1); (t_2, v_2); (t_2', v_2') \}$ is sticky-admissible.  Then the vertices defined by    
\[ \omega = D(v_1, v_2), \quad \omega' = D(v_1, v_2')\]
must satisfy the height relations 
\begin{equation}  k \leq h(\omega), \quad k' \leq h(\omega') \label{three point height relations} \end{equation}   
and the following equality holds:
\[
\text{Pr}\bigl( \sigma(t_1) = v_1, \; \sigma(t_2) = v_2, \; \sigma(t_2') = v_2' \bigr) \\ 
= \left( \frac{1}{2}\right)^{3N - \mu(\omega,k) - \mu(\omega', k')}. 
\]
\end{lemma}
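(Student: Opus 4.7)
The proof will follow the same general strategy as Lemma \ref{two point lemma}: reduce the probability computation to counting the non-root vertices in the tree $\mathbb N(A;\alpha)$ via Lemma \ref{general probability lemma}, then determine the exact tree structure from the slope assignment.

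The height inequalities $k \leq h(\omega)$ and $k' \leq h(\omega')$ are immediate from sticky-admissibility: if $\sigma$ is a sticky map with $\sigma(t_i)=v_i$, then by Definition \ref{D:stickiness}, $h(D(v_1,v_2)) = h(D(\sigma(t_1),\sigma(t_2))) \geq h(D(t_1,t_2)) = k$, and analogously for $\omega'$. Next, I will appeal to Lemma \ref{general probability lemma} to write the probability as $2^{-n(A;\alpha)}$, where $n(A;\alpha)$ is the number of non-root vertices in $\mathbb N(A;\alpha)$. As observed in the proof of Lemma \ref{two point lemma}, for any pair $t_i \neq t_j$ in $A$, the rays identifying $t_i$ and $t_j$ in $\mathbb N(A;\alpha)$ merge at height exactly $\mu(D(v_i,v_j), h(D(t_i,t_j)))$, since this is the deepest level at which both the slope-side ancestry and the root-side ancestry agree. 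The three rays therefore form a tripod (possibly degenerate) whose branch heights in $\mathbb N$ are determined by the three pairwise quantities $n_{12}=\mu(\omega,k)$, $n_{12'}=\mu(\omega',k')$, and $n_{22'}=\mu(D(v_2,v_2'),h(D(t_2,t_2')))$. If I call these three numbers $a,b,c$ with $a \leq b \leq c$, then in any tree the tripod has $b=c$, and the total non-root count is $3N - a - b$; so the lemma reduces to showing that the two smallest of $\{n_{12},n_{12'},n_{22'}\}$ are exactly $\mu(\omega,k)$ and $\mu(\omega',k')$.

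The remaining work is a case analysis matching the two type 1 subcases. In case (a), $u' \subsetneq u$ so $k < k'$, and $t_2 \not\subseteq u'$ forces $D(t_2,t_2')=u$, hence $h(D(v_2,v_2')) \geq k$. A short argument shows $\omega, \omega'$ are comparable (both contain $v_1$), and in each of the three possible containment relations between $\omega$ and $\omega'$ one checks that $\mu(D(v_2,v_2'),k) = \mu(\omega,k)$, using the key fact that $h(\omega) \geq k$ so no basic slope cube strictly between $\omega$ and any of its descendants has height $\leq k$. In case (b), $u = u' = D(t_2,t_2')$ and $k=k'$; one again splits on the relative position of $\omega$ and $\omega'$ and concludes that all three of $n_{12}, n_{12'}, n_{22'}$ equal $\mu(\omega,k) = \mu(\omega',k')$ (again exploiting $h(\omega), h(\omega') \geq k$). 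In both cases the two smallest merger heights are $\mu(\omega,k)$ and $\mu(\omega',k')$, yielding $n(A;\alpha) = 3N - \mu(\omega,k) - \mu(\omega',k')$.

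The main obstacle is not conceptual but combinatorial bookkeeping: verifying that the formula holds uniformly across all sub-configurations of $\omega, \omega'$ (ancestor, descendant, or equal), and in particular checking that $\mu$ evaluated at the ``third'' vertex $D(v_2,v_2')$ does not produce a genuinely new value. The observation that $h(\omega), h(\omega') \geq k, k'$ respectively rules out any basic slope cube lying in the intermediate heights, which is precisely what collapses $n_{22'}$ to one of the other two merger heights and thereby produces the clean final formula.
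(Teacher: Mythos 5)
Your overall strategy is the same as the paper's (reduce to counting $n(A;\alpha)$ via Lemma \ref{general probability lemma}, identify the pairwise merge heights in $\mathbb N(A;\alpha)$), and your height inequalities and the identification of the merge height of $t_i,t_j$ as $\mu(D(v_i,v_j),h(D(t_i,t_j)))$ are correct. But your tripod combinatorics is backwards, and this is a genuine error. For three leaves of a tree, it is the \emph{minimum} of the three pairwise youngest-common-ancestor heights that is attained at least twice, not the maximum: if one ray branches off at height $h_1$ and the other two separate at height $h_2\geq h_1$, the multiset of merge heights is $\{h_1,h_1,h_2\}$. Writing the sorted values as $a\leq b\leq c$, the correct facts are $a=b$ and $n(A;\alpha)=3N-a-c=3N-\min-\max$ (check with $N=3$, $h_1=1$, $h_2=2$: there are $1+2+3=6$ non-root vertices, and $3N-a-c=6$ while your $3N-a-b=7$). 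Consequently your stated reduction --- ``show that the two smallest of $\{n_{12},n_{12'},n_{22'}\}$ are $\mu(\omega,k)$ and $\mu(\omega',k')$'' --- is the wrong target, and is in fact false whenever $\mu(\omega,k)<\mu(\omega',k')$: your own case (a) analysis correctly shows $n_{22'}=\mu(\omega,k)$, so the two smallest are then \emph{both} $\mu(\omega,k)$, and your formula would output $3N-2\mu(\omega,k)$ rather than $3N-\mu(\omega,k)-\mu(\omega',k')$.

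The repair is small: the multiset of merge heights is $\{\mu(\omega,k),\mu(\omega,k),\mu(\omega',k')\}$ with $\mu(\omega,k)\leq\mu(\omega',k')$ (your computation that $h(D(t_2,t_2'))=k$ and that $D(v_2,v_2')$, $\omega$ are nested slope vertices of height $\geq k$ containing $v_2$, so $\mu(D(v_2,v_2'),k)=\mu(\omega,k)$, is exactly what is needed), and then $n(A;\alpha)=3N-\min-\max$ gives the claimed exponent. Note that the paper sidesteps the third merge height entirely: since $\omega$ and $\omega'$ both lie on the ray of $v_1$, one has $u_{\mathbb N}\cong Q_{t_1}[v_1,k]$ and $u'_{\mathbb N}\cong Q_{t_1}[v_1,k']$, so the two merge vertices are automatically nested along the ray of $t_1$ with $u'_{\mathbb N}\subseteq u_{\mathbb N}$, and one simply adds up the vertices on the three rays. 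That route avoids the tripod case analysis altogether and is less error-prone.
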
 
\begin{proof}
The inequalities in \eqref{three point height relations} are proved exactly as in Lemma \ref{two point lemma}; we omit these. The probability is again computed using Lemma \ref{general probability lemma}, via counting $n(A;\alpha)$. The tree $\mathbb N = \mathbb N(A;\alpha)$ now consists of three rays, terminating at $\Phi_N(t_1;\alpha)$, $\Phi_N(t_2;\alpha)$ and $\Phi_N(t_2';\alpha)$, which are identified with $t_1$, $t_2$ and $t_2'$ respectively. Let us recall from the proof of Lemma \ref{two point lemma} that $u_{\mathbb N} = D_{\mathbb N}(t_1, t_2)$ denotes the $M$-adic cube specifying the youngest common ancestor of $t_1$ and $t_2$ in $\mathbb N(A;\alpha)$. The vertex $u'_{\mathbb N} = D_{\mathbb N}(t_1, t_2')$ is defined similarly.  
Then using the notation \eqref{congruency notation}, \begin{equation}  u'_{\mathbb N} \cong Q_{u'}[\omega', k'] = Q_{t_1}[v_1,k'], \quad \text{ and } \quad  u_{\mathbb N} \cong Q_{u}[\omega,k] = Q_{t_1}[v_1,k]. \label{youngest ancestors 3 pt type 1} \end{equation}  Since $k \leq k'$, it follows from \eqref{youngest ancestors 3 pt type 1} above that $u'_{\mathbb N}\subseteq u_{\mathbb N}$. If $h_{\mathbb N}(\cdot)$ denotes the height of a vertex within the tree $\mathbb N(A;\alpha)$, then \eqref{youngest ancestors 3 pt type 1} also yields \[ h_{\mathbb N} \bigl(u_{\mathbb N}\bigr) = \mu(\omega,k) \quad \text{ and } \quad h_{\mathbb N}\bigl(u'_{\mathbb N}\bigr) = \mu(\omega',k'), \quad \text{ so that } \quad \mu(\omega,k) \leq \mu(\omega',k'). \] Using these relations and referring to Figure \ref{Fig: 3 point configurations}, we compute $n(A;\alpha)$ as follows,  
\begin{align*} n(A;\alpha)  &= \underset{\text{vertices on the ray of $t_2$ in $\mathbb N$}}{\underbrace{h_{\mathbb N} \bigl( u_{\mathbb N}\bigr) + \bigl[ N -   h_{\mathbb N} \bigl( u_{\mathbb N}\bigr) \bigr]}} + \underset{\text{vertices between $u_{\mathbb N}$ and $u'_{\mathbb N}$}}{\underbrace{\bigl[ h_{\mathbb N} \bigl( u'_{\mathbb N}\bigr) -h_{\mathbb N} \bigl( u_{\mathbb N}\bigr)  \bigr]}} + \underset{\text{vertices below $u'_{\mathbb N}$}}{\underbrace{2 \bigl[ N -h_{\mathbb N}(u'_{\mathbb N})\bigr]}} \\ &= \mu(\omega, k) + \bigl[ N - \mu(\omega, k) \bigl] + \bigl[ \mu(\omega',k') - \mu(\omega, k)\bigr] + 2\bigl[N - \mu(\omega',k') \bigr] \\ &= 3N - \mu(\omega,k) - \mu(\omega', k'),  
\end{align*}
which leads to the desired probability estimate by Lemma \ref{general probability lemma}.       
\end{proof}
\begin{lemma} \label{three point type 2 lemma} 
Let $A = \{ t_1, t_2, t_2'\}$ be three distinct root cubes such that the ordered tuple $\mathbb I = \{(t_1, t_2);(t_1, t_2') \}$ obeys \eqref{u u' height relation} and is of type 2. Set \[ k = h(u) = h(u'), \quad \text{ and } \quad \ell = h(t) \quad \text{ where } \quad t = D(t_2, t_2') \subsetneq u = u'.\] If $\{(t_1, v_1); (t_2, v_2); (t_2', v_2') \}$ is a sticky-admissible collection, then the vertices 
\[ \omega = D(v_1, v_2), \quad \omega' = D(v_1, v_2'), \quad \vartheta = D(v_2, v_2')\]
must satisfy the relations
\begin{equation} \label{three point type 2 height inequalities} k \leq \min\{h(\omega), h(\omega')\}, \quad \ell \leq h(\vartheta), \quad \mu(\omega, k) = \mu(\omega', k),\end{equation} 
and the following equality holds: 
\begin{equation} \label{three point type 2 probability} \text{Pr}\bigl( \sigma(t_1) = v_1, \; \sigma(t_2) = v_2, \; \sigma(t_2') = v_2' \bigr) = \left( \frac{1}{2}\right)^{3N - \mu(\omega,k) - \mu(\vartheta, \ell)}. \end{equation}  
\end{lemma}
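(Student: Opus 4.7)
The plan is to follow the template of Lemmas \ref{two point lemma} and \ref{three point type 1 lemma}, exploiting the fact that the needed probability has already been reduced, via Lemma \ref{general probability lemma}, to counting the vertices of $\mathbb{N}(A;\alpha)$. The inequalities $k \leq h(\omega)$, $k \leq h(\omega')$ and $\ell \leq h(\vartheta)$ in \eqref{three point type 2 height inequalities} are immediate from the stickiness of the map $\sigma$ witnessing admissibility, precisely as in the derivation of \eqref{height and stickiness}: the youngest common ancestor of two images under a sticky map has height at least that of the corresponding youngest common ancestor in the source tree.

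For the identity $\mu(\omega,k) = \mu(\omega',k)$, I would observe that both $\omega$ and $\omega'$ are ancestors of $v_1$ in $\mathcal{T}_J(\Omega_N;M)$ of height $\geq k$, so their unique ancestors at height $k$ coincide with the unique height-$k$ ancestor of $v_1$. Consequently $\theta(\omega,k) = \theta(\omega',k)$ (each is the maximal-height basic slope cube of height $\leq k$ containing $v_1$), and the equality of $\mu$-values follows from definition \eqref{defn mu}.

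The main task, and the only mild obstacle, is to work out the structure of $\mathbb{N}(A;\alpha)$ and count its vertices. Here the type 2 hypothesis plays its role: since $u = u'$ in the root tree, the vertices $u_{\mathbb{N}} := D_{\mathbb{N}}(t_1,t_2)$ and $u'_{\mathbb{N}} := D_{\mathbb{N}}(t_1,t_2')$ are both represented by the ancestor of the common root cube $u=u'$ corresponding to the basic generation index $\mu(\omega,k) = \mu(\omega',k)$; hence they are the same vertex of $\mathbb{N}(A;\alpha)$, placed at basic generation $a := \mu(\omega,k)$. Next I will verify that $t_{\mathbb{N}} := D_{\mathbb{N}}(t_2,t_2')$ sits strictly below $u_{\mathbb{N}}$: the argument used to identify $u_{\mathbb{N}}$ in Lemma \ref{three point type 1 lemma}, applied now to the pair $(t_2,t_2')$ with $t = D(t_2,t_2')$, shows that the rays of $t_2$ and $t_2'$ in $\mathbb{N}$ share exactly their first $b := \mu(\vartheta,\ell)$ vertices, so $t_{\mathbb{N}}$ lies at basic generation $b$. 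Since $u_{\mathbb{N}}$ is a common ancestor of $t_2$ and $t_2'$, one has $a \leq b$, and the three rays of $\mathbb{N}(A;\alpha)$ form a ``$Y$''-shape: a trunk of length $a$ from the root to $u_{\mathbb{N}}$, a branch of length $N-a$ from $u_{\mathbb{N}}$ to $t_1$, a common stem of length $b-a$ from $u_{\mathbb{N}}$ to $t_{\mathbb{N}}$, and two branches of length $N-b$ each from $t_{\mathbb{N}}$ to $t_2$ and $t_2'$.

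Summing these contributions yields
\[
n(A;\alpha) \;=\; a + (N-a) + (b-a) + 2(N-b) \;=\; 3N - \mu(\omega,k) - \mu(\vartheta,\ell),
\]
and plugging into Lemma \ref{general probability lemma} delivers \eqref{three point type 2 probability}. The only real subtlety is the bookkeeping identification of tree-vertices with root-tree cubes (via the convention \eqref{congruency notation}), which is already the guiding principle of the proofs of Lemmas \ref{two point lemma} and \ref{three point type 1 lemma}; once this is set up carefully, the type 2 case is just a rearrangement of the type 1 counting with the distinguished vertex $t_{\mathbb{N}}$ now placed on the $(t_2,t_2')$-side of the fork rather than on the $(t_1,\cdot)$-side.
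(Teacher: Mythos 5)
Your proposal is correct and follows essentially the same route as the paper: stickiness gives the height inequalities, the observation that $\omega$ and $\omega'$ both lie on the ray of $v_1$ gives $\theta(\omega,k)=\theta(\omega',k)$ and hence the equality of the $\mu$-values, and the vertex count $n(A;\alpha) = \mu(\omega,k) + [N-\mu(\omega,k)] + [\mu(\vartheta,\ell)-\mu(\omega,k)] + 2[N-\mu(\vartheta,\ell)]$ from the identifications $u_{\mathbb N}\cong Q_u[\omega,k]$ and $t_{\mathbb N}\cong Q_t[\vartheta,\ell]$ is exactly the paper's computation. The ``Y-shape'' bookkeeping you describe is precisely the structure the paper uses in its own tally.
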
  
\begin{proof}
The first two inequalities in \eqref{three point type 2 height inequalities} are consequences of stickiness, since there exists a sticky map $\sigma$ that assigns $\sigma(t_1) = v_1$, $\sigma(t_2) = v_2$, $\sigma(t_2') = v_2'$. Thus the first inequality in \eqref{three point type 2 height inequalities} is proved as in \eqref{height and stickiness}, while the second one also follows a similar route:
\[ h(\vartheta) = h(D(v_2, v_2')) = h \bigl( D(\sigma(t_2), \sigma(t_2'))\bigr) \geq h(D(t_2, t_2')) = h(t) = \ell. \] 
For the last identity in \eqref{three point type 2 height inequalities}, we observe that both $\omega$ and $\omega'$ lie on the ray identifying  $v_1$. Thus $\theta(\omega, k) = \theta(\omega', k)$ and hence $\mu(\omega, k) = \mu(\omega', k)$ by the first inequality in \eqref{three point type 2 height inequalities}.  

We now turn to the counting of $n(A;\alpha)$, which leads to the probability estimate \eqref{three point type 2 probability}  via Lemma \ref{general probability lemma}. Using the notation introduced in the proof of Lemma \ref{three point type 1 lemma}, the pairwise youngest common ancestors of the last generation vertices in $\mathbb N(A;\alpha)$ are seen to satisfy the following:   
\begin{align*} u_{\mathbb N} &=D_{\mathbb N}(t_1, t_2) \cong Q_u[\omega, k] = Q_{t_2}[v_2, k], \\ t_{\mathbb N} &= D_{\mathbb N}(t_2, t_2') \cong Q_{t}[\vartheta, \ell] = Q_{t_2}[v_2, \ell]. \end{align*}  Since the type of $\mathbb I$ guarantees that $k < \ell$, the relations above imply \[ t_{\mathbb N} \subseteq u_{\mathbb N} \quad \text{ and hence } \quad h_{\mathbb N}(u_{\mathbb N}) = \mu(\omega, k) \leq h_{\mathbb N}(t_{\mathbb N}) = \mu(\vartheta, \ell).\]
This enables us to compute, with the aid of Figure \ref{Fig: 3 point configurations},  
\begin{align*}
n(A;\alpha) &= \underset{\text{vertices on the ray of $t_1$ in $\mathbb N$}}{\underbrace{\mu(\omega, k) + \bigl[ N - \mu(\omega, k)\bigr]}} + \underset{\text{vertices between $u_{\mathbb N}$ and $t_{\mathbb N}$}}{\underbrace{\bigl[ \mu(\vartheta, \ell) - \mu(\omega, k)\bigr]}} + \underset{\text{vertices below $t_{\mathbb N}$}}{\underbrace{2 \bigl[ N - \mu(\vartheta, \ell)\bigr]}} \\ &= 3N - \mu(\omega, k) - \mu(\vartheta, \ell).  
\end{align*}  
This is the exponent claimed in \eqref{three point type 2 probability}. 
\end{proof}

\subsubsection{The case of four roots} \label{four roots section}
Finally we turn our attention to four point root configurations. Depending on the relative positions of root cubes within the root tree, 
we can classify the configuration types as follows. Let $\mathbb I = \{(t_1, t_2); (t_1', t_2') \}$ be an ordered tuple of four distinct root cubes, for which 
\begin{equation} \label{u u' height relation four points} 
u = D(t_1, t_2) \text{ and } u' = D(t_1', t_2') \text{ obey } h(u) \leq h(u').  
\end{equation}  
Then exactly one of the following conditions must hold: 
\begin{align}
&u \cap u' = \emptyset, \label{type1a}\\ 
&u = u' = D(t_i, t_j') \text{ for all } i, j = 1,2, \label{type1b} \\ 
&u' \subsetneq u, \label{type2}\\ 
&u = u', \text{ and $\exists$ indices $1 \leq i, j \leq 2$ such that } D(t_i, t_j') \subsetneq u.  \label{type3}  
\end{align}
\begin{definition} \label{four point type definition}
For an ordered tuple $\mathbb I = \{(t_1, t_2);(t_1', t_2') \}$ of four distinct root cubes meeting the requirement of \eqref{u u' height relation four points}, we say that $\mathbb I$ is of 
\begin{enumerate}[(a)]
\item type 1 if exactly one of \eqref{type1a} or \eqref{type1b} holds,  
\item type 2 if \eqref{type2} holds, and
\item type 3 if \eqref{type3} holds. 
\end{enumerate}
If $\mathbb I$ does not meet the height relation in \eqref{u u' height relation four points}, then $\mathbb I' = \{(t_1', t_2');(t_1, t_2) \}$ does, and the type of $\mathbb I$ is said to be the same as that of $\mathbb I'$. 
\end{definition} 
Several different structural possibilities for the root quadruple exist within the confines of a single type, excluding permutations within and between the pairs $\{t_1, t_2 \}$ and $\{t_1', t_2' \}$. These have been listed in Figure \ref{Fig: 4 point configurations}. We note in passing that the type definition above is slightly different from that in \cite{KrocPramanik}. Here, the main motivation for the nomenclature is the classification of the unconditional probabilities of slope assignment as exemplified in \eqref{general probability equation}, whereas in \cite{KrocPramanik} a simpler analysis involving  conditional probabilities only was possible.  

\begin{figure}[h!]
\setlength{\unitlength}{0.6mm} 
\begin{picture}(0,0)(-136,20)

        \allinethickness{0.1mm}\path(-100,20)(-100,-220)(71,-220)(71,20)(-100,20)
        \path(-43,20)(-43,-220)
        \path(-44,20)(-44,-220)
        \path(14,20)(14,-42)
        \path(14,-100)(14,-220)
        \path(-100,-40)(71,-40)
        \path(-100,-41)(71,-41)
        \path(-100,-42)(71,-42)
        \path(-43,-100)(71,-100)
        \path(-100,-160)(71,-160)
        \path(-100,-161)(71,-161)
        \path(-100,-162)(71,-162)

        
        \put(-93,-5){\large\shortstack{Type 1}}
        \put(-93,-15){\large\shortstack{Configurations}}

        \special{sh 0.99}\put(-14,10){\ellipse{2}{2}}
        \special{sh 0.99}\put(-14,0){\ellipse{2}{2}}
        \put(-29,-10){\shortstack{$u$}}
        \special{sh 0.99}\put(-24,-10){\ellipse{2}{2}}
        \put(-1,-17){\shortstack{$u'$}}
        \special{sh 0.99}\put(-4,-17){\ellipse{2}{2}}
        \put(-32,-36){\shortstack{$t_1$}}
        \special{sh 0.99}\put(-30,-30){\ellipse{2}{2}}
        \put(-22,-36){\shortstack{$t_2$}}
        \special{sh 0.99}\put(-21,-30){\ellipse{2}{2}}
        \put(-6,-36){\shortstack{$t_1'$}}
        \special{sh 0.99}\put(-5,-30){\ellipse{2}{2}}
        \put(4,-36){\shortstack{$t_2'$}}
        \special{sh 0.99}\put(5,-30){\ellipse{2}{2}}

        \path(-14,10)(-14,0)
        \path(-24,-10)(-14,0)(-4,-17)
        \path(-30,-30)(-24,-10)(-21,-30)
        \path(-5,-30)(-4,-17)(5,-30)


        \special{sh 0.99}\put(43,10){\ellipse{2}{2}}
        \put(46,-11){\shortstack{$u = u'$}}
        \special{sh 0.99}\put(43,-10){\ellipse{2}{2}}
        \put(21,-36){\shortstack{$t_1$}}
        \special{sh 0.99}\put(23,-30){\ellipse{2}{2}}
        \put(34,-36){\shortstack{$t_1'$}}
        \special{sh 0.99}\put(36,-30){\ellipse{2}{2}}
        \put(48,-36){\shortstack{$t_2$}}
        \special{sh 0.99}\put(50,-30){\ellipse{2}{2}}
        \put(61,-36){\shortstack{$t_2'$}}
        \special{sh 0.99}\put(63,-30){\ellipse{2}{2}}

        \path(43,10)(43,-10)
        \path(23,-30)(43,-10)(36,-30)
        \path(50,-30)(43,-10)(63,-30)

        \put(-93,-95){\large\shortstack{Type 2}}
        \put(-93,-105){\large\shortstack{Configurations}}

        \special{sh 0.99}\put(14,-50){\ellipse{2}{2}}
        \put(8,-61){\shortstack{$u$}}
        \special{sh 0.99}\put(14,-60){\ellipse{2}{2}}
        \put(8,-76){\shortstack{$u'$}}
        \special{sh 0.99}\put(14,-75){\ellipse{2}{2}}
        \put(32,-96){\shortstack{$t_2$}}
        \special{sh 0.99}\put(-6,-90){\ellipse{2}{2}}
        \put(-8,-96){\shortstack{$t_1$}}
        \special{sh 0.99}\put(9,-90){\ellipse{2}{2}}
        \put(7,-96){\shortstack{$t_1'$}}
        \special{sh 0.99}\put(21,-90){\ellipse{2}{2}}
        \put(19,-96){\shortstack{$t_2'$}}
        \special{sh 0.99}\put(34,-90){\ellipse{2}{2}}

        \path(14,-50)(14,-60)(14,-75)
        \path(21,-90)(14,-75)(9,-90)
        \path(34,-90)(14,-60)(-6,-90)

        
        \special{sh 0.99}\put(-14,-110){\ellipse{2}{2}}
        \special{sh 0.99}\put(-14,-120){\ellipse{2}{2}}
        \put(-11,-121){\shortstack{$u$}}
        \special{sh 0.99}\put(-14,-130){\ellipse{2}{2}}
        \put(-11,-131){\shortstack{$u'$}}
        \special{sh 0.99}\put(-5,-140){\ellipse{2}{2}}
        \put(-32,-156){\shortstack{$t_1$}}
        \special{sh 0.99}\put(-30,-150){\ellipse{2}{2}}
        \put(-22,-156){\shortstack{$t_1'$}}
        \special{sh 0.99}\put(-21,-150){\ellipse{2}{2}}
        \put(-6,-156){\shortstack{$t_2'$}}
        \special{sh 0.99}\put(-5,-150){\ellipse{2}{2}}
        \put(4,-156){\shortstack{$t_2$}}
        \special{sh 0.99}\put(5,-150){\ellipse{2}{2}}

        \path(-14,-110)(-14,-130)
        \path(-14,-120)(-30,-150)
        \path(-21,-150)(-14,-130)(-5,-140)(-5,-150)
        \path(-5,-140)(5,-150)


        \special{sh 0.99}\put(43,-110){\ellipse{2}{2}}
        \special{sh 0.99}\put(43,-120){\ellipse{2}{2}}
        \put(46,-121){\shortstack{$u$}}
        \special{sh 0.99}\put(43,-130){\ellipse{2}{2}}
        \special{sh 0.99}\put(52,-140){\ellipse{2}{2}}
        \put(55,-141){\shortstack{$u'$}}
        \put(25,-156){\shortstack{$t_1$}}
        \special{sh 0.99}\put(27,-150){\ellipse{2}{2}}
        \put(35,-156){\shortstack{$t_2$}}
        \special{sh 0.99}\put(36,-150){\ellipse{2}{2}}
        \put(51,-156){\shortstack{$t_1'$}}
        \special{sh 0.99}\put(52,-150){\ellipse{2}{2}}
        \put(61,-156){\shortstack{$t_2'$}}
        \special{sh 0.99}\put(62,-150){\ellipse{2}{2}}

        \path(43,-110)(43,-130)
        \path(43,-120)(27,-150)
        \path(36,-150)(43,-130)(52,-140)(52,-150)
        \path(52,-140)(62,-150)



        \put(-93,-185){\large\shortstack{Type 3}}
        \put(-93,-195){\large\shortstack{Configurations}}

        \special{sh 0.99}\put(-14,-170){\ellipse{2}{2}}
        \special{sh 0.99}\put(-14,-180){\ellipse{2}{2}}
        \put(-11,-181){\shortstack{$u = u'$}}
        \special{sh 0.99}\put(-24,-190){\ellipse{2}{2}}
        \special{sh 0.99}\put(-5,-200){\ellipse{2}{2}}
        \put(-32,-216){\shortstack{$t_1$}}
        \special{sh 0.99}\put(-30,-210){\ellipse{2}{2}}
        \put(-22,-216){\shortstack{$t_1'$}}
        \special{sh 0.99}\put(-21,-210){\ellipse{2}{2}}
        \put(-6,-216){\shortstack{$t_2$}}
        \special{sh 0.99}\put(-5,-210){\ellipse{2}{2}}
        \put(4,-216){\shortstack{$t_2'$}}
        \special{sh 0.99}\put(5,-210){\ellipse{2}{2}}

        \path(-14,-170)(-14,-180)
        \path(-21,-210)(-24,-190)(-30,-210)
        \path(-24,-190)(-14,-180)(-5,-200)(-5,-210)
        \path(-5,-200)(5,-210)


        \special{sh 0.99}\put(43,-170){\ellipse{2}{2}}
        \special{sh 0.99}\put(43,-180){\ellipse{2}{2}}
        \put(46,-181){\shortstack{$u = u'$}}
        \special{sh 0.99}\put(43,-195){\ellipse{2}{2}}
        \put(21,-216){\shortstack{$t_1$}}
        \special{sh 0.99}\put(23,-210){\ellipse{2}{2}}
        \put(34,-216){\shortstack{$t_2$}}
        \special{sh 0.99}\put(36,-210){\ellipse{2}{2}}
        \put(48,-216){\shortstack{$t_2'$}}
        \special{sh 0.99}\put(50,-210){\ellipse{2}{2}}
        \put(61,-216){\shortstack{$t_1'$}}
        \special{sh 0.99}\put(63,-210){\ellipse{2}{2}}

        \path(43,-170)(43,-195)
        \path(50,-210)(43,-195)(36,-210)
        \path(23,-210)(43,-180)(63,-210)


\end{picture}
\vspace{15cm}
\caption{\label{Fig: 4 point configurations} Configurations of four root cubes, up to 
permutations.}
\end{figure}
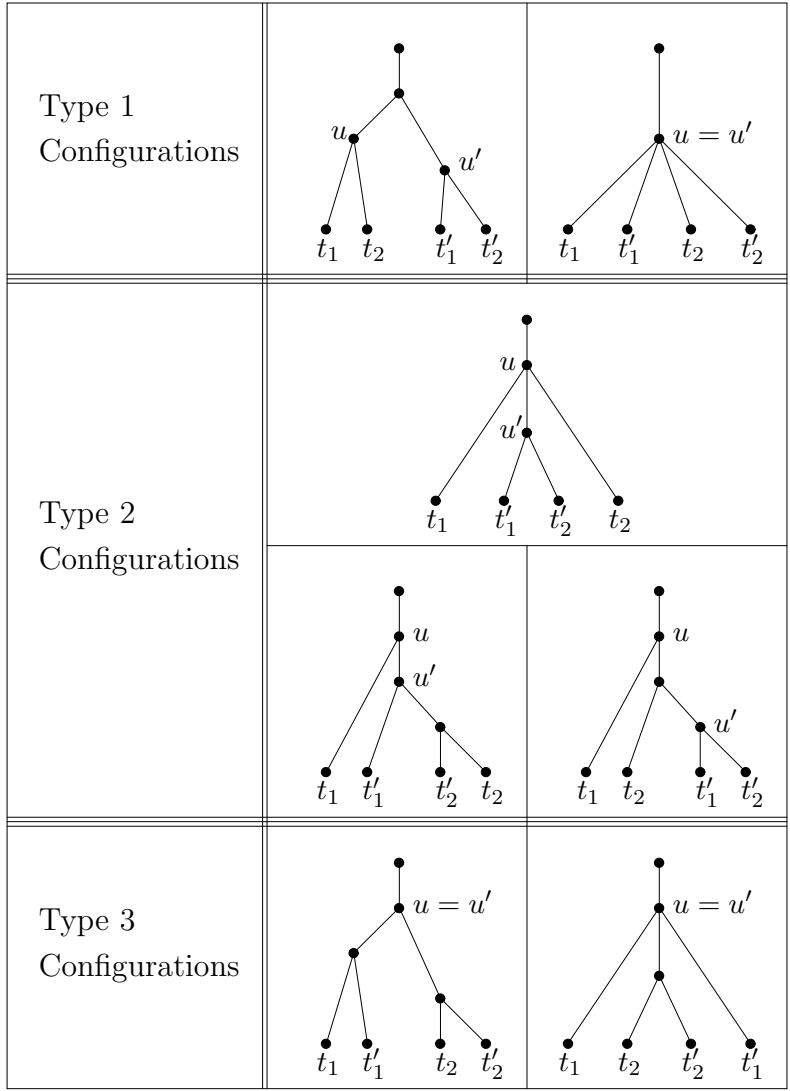

We now proceed to analyze how the configuration types affect the slope assignment probabilities. 
\begin{lemma} \label{four point type 1 lemma} 
Let $A = \{t_1, t_2, t_1', t_2'\}$ be a collection of four distinct root cubes such that  $\mathbb I = \{(t_1, t_2);(t_1', t_2') \}$ obeys \eqref{u u' height relation four points} and is of type 1. Let $\Gamma_{A} = \{v_1, v_2, v_1', v_2'\} = \{ \alpha(t_i) = v_i, \; \alpha(t_i') = v_i', \; i=1,2 \}\subseteq \Omega_N$ be a choice of slopes such that the collection $\{(t_i, v_i);(t_i', v_i'); i=1,2 \}$ is sticky-admissible. Set
\[z = D(u, u'), \quad
k = h(u), \quad k' = h(u'), \quad \ell = h(z), \] so that \[\begin{cases} u, u' \subsetneq z, \text{ and hence } \ell < k \leq k' \text{ if  \eqref{type1a} holds}, \\ u = u' = z, \text{ and hence } \ell = k = k' \text{ if \eqref{type1b} holds.} \end{cases}  \]  
Then the vertices 
\[ \omega = D(v_1, v_2), \quad  \omega' = D(v_1', v_2'), \quad  v = D(\omega, \omega'), \] must satisfy $k \leq h(\omega)$, $k' \leq h(\omega')$ and $\ell \leq h(v)$, and the following equality holds: 
\begin{equation}  \text{Pr}\bigl( \sigma(t_i) = v_i, \; \sigma(t_i') = v_i', \; i=1,2\bigr) = \left( \frac{1}{2}\right)^{4N-\mu(\omega,k) - \mu(\omega', k') - \mu(v, \ell)}.  \label{four point type 1 probability}\end{equation}  
\end{lemma}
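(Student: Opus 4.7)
The plan is to follow the same two-step template as the three-point lemmas: first verify the claimed height inequalities via the stickiness hypothesis, then identify the internal structure of the tree $\mathbb{N}(A;\alpha)$ and count its vertices in order to invoke Lemma~\ref{general probability lemma}. The feature that distinguishes type 1 from the more intricate types 2 and 3 is that the cross-pair root-tree ancestor $D(t_i, t_j')$ equals $z$ for \emph{every} pair $(i, j) \in \{1,2\}^2$: in subcase \eqref{type1b} this is by definition, while in subcase \eqref{type1a} it follows from $t_i \subseteq u$, $t_j' \subseteq u'$, and the $M$-adic disjointness of $u$ and $u'$. This uniformity is precisely what allows the four-ray tree $\mathbb{N}(A;\alpha)$ to factor cleanly into two two-ray subtrees joined above at a single cross-pair ancestor.

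For the height inequalities, $h(\omega) \geq k$ and $h(\omega') \geq k'$ are immediate from the stickiness argument already deployed in Lemma~\ref{two point lemma}. For $h(v) \geq \ell$, I would distinguish two cases depending on whether $\omega = \omega'$ or not. If $\omega \neq \omega'$, then $v_i$ and $v_j'$ lie in different branches emanating from $v$ (since $v_i \subseteq \omega$ and $v_j' \subseteq \omega'$), so $D(v_i, v_j') = v$; stickiness applied to any such cross pair then gives $h(v) = h(D(v_i, v_j')) \geq h(D(t_i, t_j')) = \ell$. If $\omega = \omega'$, then $v = \omega$ and the desired inequality $h(v) = h(\omega) \geq k \geq \ell$ reduces to one already proved.

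For the vertex count in $\mathbb{N} = \mathbb{N}(A;\alpha)$, I would identify the three internal meeting vertices
\[ u_{\mathbb{N}} = D_{\mathbb{N}}(t_1, t_2), \quad u'_{\mathbb{N}} = D_{\mathbb{N}}(t_1', t_2'), \quad z_{\mathbb{N}} = D_{\mathbb{N}}(t_1, t_1'),\]
which, by repeating the argument used in Lemma~\ref{three point type 1 lemma}, are congruent to $Q_u[\omega, k]$, $Q_{u'}[\omega', k']$, and $Q_z[v, \ell]$ respectively, placing them at heights $\mu(\omega, k)$, $\mu(\omega', k')$, and $\mu(v, \ell)$ in $\mathbb{N}$. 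Since $v \supseteq \omega$ and $\ell \leq k$ (and similarly for $\omega'$), the definitions of $\theta$ and $\mu$ force $\mu(v, \ell) \leq \mu(\omega, k)$ and $\mu(v, \ell) \leq \mu(\omega', k')$, so $z_{\mathbb{N}}$ lies above (or coincides with) both $u_{\mathbb{N}}$ and $u'_{\mathbb{N}}$ along the $t_1$- and $t_1'$-rays respectively. Decomposing the vertex set of $\mathbb{N}$ into the chain from the root to $z_{\mathbb{N}}$, the two chains from $z_{\mathbb{N}}$ down to $u_{\mathbb{N}}$ and $u'_{\mathbb{N}}$, and the four leaf-segments beneath them, yields
\[ n(A;\alpha) = \mu(v, \ell) + [\mu(\omega, k) - \mu(v, \ell)] + 2[N - \mu(\omega, k)] + [\mu(\omega', k') - \mu(v, \ell)] + 2[N - \mu(\omega', k')],\]
which simplifies to $4N - \mu(\omega, k) - \mu(\omega', k') - \mu(v, \ell)$, and Lemma~\ref{general probability lemma} then yields \eqref{four point type 1 probability}. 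The count is robust under degeneracies such as $z_{\mathbb{N}} = u_{\mathbb{N}}$, or all three vertices coinciding (as may happen under \eqref{type1b}), because the disappearing chains and the coinciding $\mu$-values cancel symmetrically.

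The principal obstacle — and the only place where a new observation beyond the three-point argument is needed — is the well-definedness of $z_{\mathbb{N}}$ independently of the cross pair $(i,j)$ chosen to define it. This amounts to the identity $\mu(D(v_i, v_j'), \ell) = \mu(v, \ell)$ for every $(i,j)$, which uses the fact that any basic slope cube of height $\leq \ell$ has sidelength at least $M^{-\ell} \geq M^{-h(v)}$ and hence, if it meets $v$ at all, must contain $v$ entirely. Combined with the type 1 identity $D(t_i, t_j') = z$ and the containment $D(v_i, v_j') \subseteq v$, this collapses all four cross-pair ancestors in $\mathbb{N}$ onto the single vertex at height $\mu(v, \ell)$, justifying the use of a single $z_{\mathbb{N}}$ throughout the count.
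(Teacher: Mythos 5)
Your proof is correct and follows essentially the same route as the paper: identify $u_{\mathbb N}$, $u'_{\mathbb N}$ and a single cross-pair ancestor $z_{\mathbb N}$ congruent to $Q_u[\omega,k]$, $Q_{u'}[\omega',k']$ and $Q_z[v,\ell]$ respectively, observe $\mu(v,\ell)\leq\min[\mu(\omega,k),\mu(\omega',k')]$, and count $n(A;\alpha)$ by the same chain decomposition before invoking Lemma~\ref{general probability lemma}. The paper sidesteps your well-definedness concern by defining $z_{\mathbb N}=D_{\mathbb N}(u_{\mathbb N},u'_{\mathbb N})$ and using the identification $Q_z[v,\ell]=Q_u[\omega,\ell]=Q_{u'}[\omega',\ell]$, but your explicit verification via the cross pairs is equivalent.
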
  
\begin{proof}
The proofs of the height relations may be reproduced verbatim from the previous lemmas in this section, so we focus only on the probability estimate. As before, 
\begin{align} &u_{\mathbb N} = D_{\mathbb N}(t_1, t_2) \cong Q_u[\omega, k], \quad u'_{\mathbb N} = D_{\mathbb N}(t_1', t_2') \cong Q_{u'}[\omega',k'] \nonumber \\ &z_{\mathbb N} = D_{\mathbb N}(u_{\mathbb N},u'_{\mathbb N}) \cong Q_z[v, \ell] = Q_u[w, \ell] = Q_{u'}[\omega', \ell], \text{ and hence } \label{z,u,u'} \\ &h_{\mathbb N}(u_{\mathbb N}) = \mu(\omega,k), \quad h_{\mathbb N}(u'_{\mathbb N}) = \mu(\omega', k'), \quad h_{\mathbb N}(z_{\mathbb N}) = \mu(v, \ell). \nonumber 
\end{align} 
Since $\ell \leq k \leq k'$, \eqref{z,u,u'} implies 
\[ u_{\mathbb N} \cup u'_{\mathbb N} \subseteq z_{\mathbb N}, \quad \text{ and thus } \quad \mu(v, \ell) \leq \min \bigl[ \mu(\omega,k), \mu(\omega',k') \bigr]. \] 
It is important to keep in mind that $\mathbb N(A;\alpha)$ need not inherit the same type of structure as $A$. For example, if \eqref{type1a} holds, it need not be true that $u_{\mathbb N} \cap u'_{\mathbb N} = \emptyset$; indeed the vertices $u_{\mathbb N}$, $u'_{\mathbb N}$ and $z_{\mathbb N}$ could be distinct or (partially) coincident depending on the structure of the slope tree. Nonetheless the information collected above is sufficient to compute the number of vertices in $\mathbb N(A;\alpha)$ (see Figure \ref{Fig: 4 point configurations}): 
\begin{align*}
n(A;\alpha) &= \underset{\text{vertices above $z_{\mathbb N}$}}{\underbrace{\mu(v, \ell)}} + \underset{\text{vertices between $z_{\mathbb N}$ and $u_{\mathbb N}$}}{\underbrace{\bigl[ \mu(\omega,k) - \mu(v, \ell) \bigr]}} + \underset{\text{vertices between $z_{\mathbb N}$ and $u'_{\mathbb N}$}}{\underbrace{\bigl[ \mu(\omega',k') - \mu(v, \ell) \bigr]}} \\ & \hskip1.3in + \underset{\begin{subarray}{c}\text{ancestors of $t_1$ and $t_2$} \\  \text{in $\mathbb N$ descended from $u_{\mathbb N}$} \end{subarray}}{\underbrace{2 \bigl[ N - \mu(\omega, k )\bigr]}} + \underset{\begin{subarray}{c} \text{ ancestors of $t_1'$ and $t_2'$ }\\ \text{in $\mathbb N$ descended from $u'_{\mathbb N}$}\end{subarray}}{\underbrace{2 \bigl[ N - \mu(\omega',k') \bigr]}} \\ &= 4N - \mu(\omega, k) - \mu(\omega', k') - \mu(v, \ell).  
\end{align*} 
Combined with Lemma \ref{general probability lemma}, this leads to \eqref{four point type 1 probability}. 
\end{proof} 
\begin{lemma} \label{four point type 2 lemma} 
Let $A = \{ t_i, t_i' ; i=1,2\}$ be a collection of four distinct root cubes such that $\mathbb I = \{(t_1, t_2);(t_1', t_2') \}$ obeys \eqref{u u' height relation four points} and is of type 2. Suppose that $\Gamma_{A} = \{\alpha(t_i) = v_i, \; \alpha(t_i') = v_i' \; ; i=1,2\}$ is a choice of slopes such that the collection $\{(t_i, v_i);(t_i', v_i'); i=1,2 \}$ is sticky-admissible. Set 
\[ \omega = D(v_1, v_2), \quad \omega' = D(v_1', v_2'), \quad k = h(u), \quad k' = h(u'),\] so that $k < k'$.  
Then the following inequalities hold: $k \leq h(\omega)$, $k' \leq h(\omega')$. Further, there exist permutations $\{ i_1, i_2 \}$ and $\{j_1, j_2\}$ of $\{1,2\}$ for which the quantities 
\[\vartheta = D(v_{i_2}, v_{j_2}'), \quad t = D(t_{i_2}, t_{j_2}'), \quad \ell = h(t)\]
obey the relation $\ell \leq h(\vartheta)$, and for which the probability of slope assignment can be computed as follows:
\begin{equation}  \text{Pr}\bigl( \sigma(t_i) = v_i, \sigma(t_i') = v_i' \text{ for } i= 1,2\bigr) = \left( \frac{1}{2}\right)^{4N - \mu(\omega, k) - \mu(\omega', k') - \mu(\vartheta, \ell)}. \label{four point type 2 probability} \end{equation} 
\end{lemma}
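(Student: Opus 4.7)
The plan is to follow the blueprint of Lemmas \ref{two point lemma}--\ref{four point type 1 lemma}: the height inequalities $k \leq h(\omega)$ and $k' \leq h(\omega')$ come from stickiness exactly as in Lemma \ref{two point lemma}, and by Lemma \ref{general probability lemma} the probability in \eqref{four point type 2 probability} reduces to a count of $n(A;\alpha)$, the number of non-root vertices of the tree $\mathbb N(A;\alpha)$.

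First I would fix the permutation $\{i_1, i_2\}$: since $u' \subsetneq u$ and $t_1, t_2$ occupy distinct children of $u$, exactly one child of $u$ contains $u'$, and $i_2$ denotes the index for which $t_{i_2}$ lies in that same child. Two sub-cases then arise. Sub-case (A): $t_{i_2} \notin u'$; the vertex $D(t_{i_2}, t_1') = D(t_{i_2}, t_2') =: u_{i_2,*}$ is a strict ancestor of $u'$ and a strict descendant of $u$, so any choice of $j_2$ gives the same $t$ and $\ell = h(u_{i_2,*}) \in (k, k')$. Sub-case (B): $t_{i_2} \in u'$; the two children of $u'$ partition $\{t_{i_2}, t_1', t_2'\}$ unequally, and $j_2$ is chosen so that $t_{i_2}$ and $t_{j_2}'$ share a child of $u'$, yielding $t = D(t_{i_2}, t_{j_2}') \subsetneq u'$ and $\ell > k'$. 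In both sub-cases $\ell \leq h(\vartheta)$ follows from stickiness applied to $(t_{i_2}, t_{j_2}')$.

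For the count, $\mathbb N(A;\alpha)$ is itself a tree with four leaves at depth $N$; if $d_1 \leq d_2 \leq d_3$ denote the depths of its three internal splits, then $n(A;\alpha) = 4N - d_1 - d_2 - d_3$. I would then establish
\[ \{d_1,\, d_2,\, d_3\} = \{\mu(\omega, k),\ \mu(\omega', k'),\ \mu(\vartheta, \ell)\} \]
in both sub-cases. From the proof of Lemma \ref{two point lemma} the youngest common ancestor in $\mathbb N$ of two leaves $t_a, t_b$ sits at height $\mu(D(v_a, v_b),\, h(D(t_a, t_b)))$. Combined with the \emph{single-ray principle} -- if $\omega_1$ is an ancestor of $\omega_2$ with $h(\omega_1) \geq k$, then $\theta(\omega_1, k) = \theta(\omega_2, k)$ and hence $\mu(\omega_1, k) = \mu(\omega_2, k)$ -- the shallowest split corresponds to $t_{i_1}$ branching off from the remaining three leaves, which collapses (applied along the ray to $v_{i_1}$) to the single depth $\mu(\omega, k)$. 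The remaining two splits are handled case by case: in sub-case (A), the splits at $u_{i_2,*}$ and $u'$ contribute $\mu(\vartheta, \ell)$ and $\mu(\omega', k')$ respectively, using the single-ray principle along the rays to $v_{i_2}$ and $v_{j_2}'$; in sub-case (B) the roles are interchanged but the same two depths appear.

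The principal obstacle is sub-case (B), where the ``intrusion'' of $t_{i_2}$ into $u'$ means the split at $u'$ in the root tree now involves three leaves rather than two, so the DCA in $\mathbb N$ of the pair $(t_{j_1}', t_{i_2})$ must be shown to coincide with that of $(t_{j_1}', t_{j_2}')$. This is precisely where the standard three-leaf tree identity applied inside the slope tree (to $\{v_{j_1}', v_{i_2}, v_{j_2}'\}$), together with the stickiness bound $h(D(v_{j_1}', v_{i_2})) \geq k'$, is needed to show that $Q_{u'}[D(v_{j_1}', v_{i_2}), k']$ agrees with $Q_{u'}[\omega', k']$, so that the $u'$-split in $\mathbb N$ collapses to a single vertex of depth $\mu(\omega', k')$. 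Once these identifications are in place, Lemma \ref{general probability lemma} produces the exponent $4N - \mu(\omega, k) - \mu(\omega', k') - \mu(\vartheta, \ell)$ and \eqref{four point type 2 probability} follows.
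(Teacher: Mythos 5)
Your overall strategy coincides with the paper's: reduce the probability to a count of $n(A;\alpha)$ via Lemma \ref{general probability lemma}, identify the pairwise youngest common ancestors in $\mathbb N(A;\alpha)$ with cubes $Q_\cdot[\cdot,\cdot]$ located at heights $\mu(\cdot,\cdot)$, and split into cases according to where $t$ sits on the ray joining $u$ and $u'$. Your sub-cases (A) and (B) correspond to the paper's second and third cases, and the ``single-ray principle'' you invoke is exactly the collapsing device the paper uses (e.g.\ $Q_u[\omega,k]=Q_{u'}[v'_{j_2},k]$).

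There is, however, a genuine gap: your case analysis implicitly treats the root tree as binary, whereas it is the full $M$-adic tree in dimension $d$, so every vertex has $M^d\geq 4$ children. Two configurations escape your dichotomy. First, $u'$ may lie in a child of $u$ that contains neither $t_1$ nor $t_2$ (this is the first type-2 diagram in Figure \ref{Fig: 4 point configurations}); then your index $i_2$ is undefined, $D(t_i,t_j')=u$ for every choice of indices, so $t=u$ and $\ell=k$, not $\ell\in(k,k')$. Second, inside your sub-case (B), $t_{i_2}$ may lie in a child of $u'$ containing neither $t_1'$ nor $t_2'$; then no $j_2$ satisfies your prescription, and $t=u'$, $\ell=k'$ rather than $\ell>k'$. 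In both degenerate situations the lemma still holds --- one takes an arbitrary admissible permutation, the corresponding split of $\mathbb N(A;\alpha)$ becomes a triple split (so one of the depths $d_1\leq d_2\leq d_3$ is repeated), and the exponent in \eqref{four point type 2 probability} is recovered only after the additional identities $\mu(\vartheta,k)=\mu(\omega,k)$ (both $\vartheta$ and $\omega$ contain $v_{i_2}$ and have height $\geq k$ by stickiness) or $\mu(\vartheta,k')=\mu(\omega',k')$, respectively. These steps are absent from your write-up, and the first configuration is precisely the case the paper treats first and in detail; as written, your argument does not establish \eqref{four point type 2 probability} there.
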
 
\begin{proof} 
The definition of the configuration type dictates that $u'$ is strictly contained in $u$, but depending on other properties of the ray joining $u$ and $u'$ we are led to consider several cases. If there does not exist any vertex in the root tree that is strictly contained in $u$ and also contains $t_i$ for some $i=1,2$, then any permutation of the root pairs $\{t_1, t_2\}$ and $\{t_1', t_2'\}$ works. In particular, it suffices to choose $i_1 = j_1 = 1$, $i_2 = j_2=2$. In this case $t = u$, hence $\ell = k$. In particular this implies 
\begin{equation} \label{equality of mu} 
\theta(\omega, k) = \theta(v_2, k) = \theta(v_2, \ell) = \theta(\vartheta, \ell), \quad \text{ hence } \quad \mu(\omega,k) = \mu(\vartheta, \ell). 
\end{equation} 
Further  \begin{align*} &u'_{\mathbb N} =Q_{u'}[\omega', k'] = Q_{t_1'}[v_1',k'] \subseteq Q_{t_1'}[v_1', k] = Q_{u}[\omega, k] =  u_{\mathbb N}, \text{ and } \\  &h_{\mathbb N}(u_{\mathbb N}) =  \mu(\omega, k), \quad h_{\mathbb N}(u'_{\mathbb N}) = \mu(\omega', k'). \end{align*} 
Referring to Figure \ref{Fig: 4 point configurations} we find that  
\begin{align*} n(A;\alpha) &= \mu(\omega, k) + 2\bigl[N - \mu(\omega, k) \bigr] + \bigl[ \mu(\omega', k') - \mu(\omega, k)\bigr] + 2 \bigl[N - \mu(\omega', k') \bigr] \\ &= 4N - 2 \mu(\omega,k) - \mu(\omega', k') \\ &= 4N - \mu(\omega, k) - \mu(\omega', k') - \mu(\vartheta, \ell),\end{align*}
where the last step uses one of the equalities in \eqref{equality of mu}. 
  
Suppose next that the previous case does not hold, and also that none of the descendants of $u'$ lying on the rays of $t_1', t_2'$ is an ancestor of $t_1$ or $t_2$. Then there is a vertex, let us call it $t$, such that $u' \subseteq t \subsetneq u$, and $t$ is of maximal height in this class subject to the restriction that it is an ancestor of some $t_i$, which we call $t_{i_2}$.
Thus $t_{i_1}$ is the unique element in $\{ t_1, t_2\}$ that is not a descendant of $t$. In this case, any permutation of $\{ t_1', t_2'\}$ works, and we can keep $j_1 = 1$, $j_2=2$. Then $t = D(t_{i_2}, t_{j_2}')$, $k < \ell \leq k'$, and 
\begin{align*}
u'_{\mathbb N} &= D_{\mathbb N}(t_1', t_2') \cong Q_{u'}[\omega', k'] = Q_{u'}[v_{j_2}', k'], \\
t_{\mathbb N} &= D_{\mathbb N}(t_{i_2}, t'_{j_2}) \cong Q_t[\vartheta, \ell] = Q_{u'}[v'_{j_2}, \ell], \text{ and } \\
u_{\mathbb N} &= D_{\mathbb N}(t_1, t_2) \cong Q_u[\omega, k]= Q_u[\omega_0, k] = Q_{u'}[v'_{j_2}, k],
\end{align*}
where the last line uses the fact that $u = D(t_1, t_2, t_1', t_2')$, so that the second equality in that line holds $\omega_0 = D(v_1, v_2, v_1', v_2')$.   
These relations imply that 
\begin{equation} \label{height relations four roots type 2} u'_{\mathbb N} \subseteq t_{\mathbb N} \subseteq u_{\mathbb N}  \text{ with }  h_{\mathbb N}(u_{\mathbb N}) = \mu(\omega, k), \;  h_{\mathbb N}(u'_{\mathbb N}) = \mu(\omega', k'), \; h_{\mathbb N}(t_{\mathbb N}) = \mu(\vartheta, \ell).\end{equation} 
Using this, we compute $n(A;\alpha)$ as follows, 
\begin{align*}
n(A;\alpha) &= \underset{\text{vertices of $t_{i_1}$ in $\mathbb N$}}{\underbrace{\mu(\omega, k) + \bigl[ N - \mu(\omega,k) \bigr]}} +  \underset{\text{vertices of $t_{i_2}$ in $\mathbb N$ below $u_{\mathbb N}$}}{\underbrace{\bigl[ \mu(\vartheta, \ell) - \mu( \omega,k)\bigr] +  \bigl[ N - \mu(\vartheta, \ell)\bigr]}}  \\ &\hskip1.3in + \underset{\text{vertices between $t_{\mathbb N}$ and $u'_{\mathbb N}$}}{\underbrace{\bigl[ \mu(\omega',k') - \mu(\vartheta, \ell) \bigr]}}  + \underset{\begin{subarray}{c}\text{vertices of $t_1'$ and $t_2'$}\\ \text{in $\mathbb N$ below $u'_{\mathbb N}$}\end{subarray}}{\underbrace{2 \bigl[ N - \mu(\omega', k')\bigr]}} \\ &= 4N - \mu(\omega, k) - \mu(\omega', k') - \mu(\vartheta, \ell), 
\end{align*}   
which is the required exponent. 

The last case, complementary to the ones already considered is when there exists a pair of indices, denoted $i_2, j_2 \in \{1,2\}$ such that $t = D(t_{i_2}, t'_{j_2}) \subsetneq u'$. In this case we leave the reader to verify by the usual means that \[ t_{\mathbb N} \subseteq u'_{\mathbb N} \subseteq u_{\mathbb N}, \]
with their heights given by the same expressions as in \eqref{height relations four roots type 2}. Accordingly, 
\begin{align*}
n(A;\alpha) &= \underset{\begin{subarray}{c}\text{vertices of $t_{i_1}$} \\ \text{in $\mathbb N$} \end{subarray}}{\underbrace{N}} + \underset{\begin{subarray}{c} \text{vertices on $t'_{j_1}$}\\ \text{in $\mathbb N$ below $u_{\mathbb N}$} \end{subarray}}{\underbrace{\bigl[N - \mu(\omega, k) \bigr]}} + \underset{\text{vertices between $t_{\mathbb N}$ and $u'_{\mathbb N}$}}{\underbrace{\bigl[ \mu(\vartheta, \ell) - \mu(\omega', k')\bigr]}} + \underset{\begin{subarray}{c}\text{vertices of $t_{i_2}$ and $t_{j_2}'$} \\ \text{in $\mathbb N$ below $t_{\mathbb N}$} \end{subarray}}{\underbrace{2 \bigl[N - \mu(\vartheta, \ell) \bigr]}} \\   &= 4N - \mu(\omega,k) - \mu(\omega', k') - \mu(\vartheta, \ell). 
\end{align*}  
Thus, despite structural differences, all the cases give rise to the same value of $n(A;\alpha)$ that agrees with the exponent in \eqref{four point type 2 probability}, completing the proof.  
\end{proof}
We pause for a moment to record a few properties of the youngest common ancestors of the roots and slopes that emerged in the proof of Lemma \ref{four point type 2 lemma}. 
\begin{corollary} \label{four point type 2 structure corollary} 
Let $A$ and $\Gamma_A$ be as in Lemma \ref{four point type 2 lemma}. 
\begin{enumerate}[(i)]
\item The possibly distinct vertices $u$, $u'$ and $t$, as described in Lemma \ref{four point type 2 lemma}, are linearly ordered in terms of ancestry, i.e., there is some ray of the root tree that they all lie on. Depending on $A$, the vertex $t$ may lie above or below $u'$, but always in $u$. 
\item The splitting vertices $\omega, \omega', \vartheta$ in the slope tree also obey certain inclusions; namely, for each of the pairs $(\omega, \vartheta)$ and $(\omega', \vartheta)$, one member of the pair is contained in the other.   
\end{enumerate} 
\end{corollary}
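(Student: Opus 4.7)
The plan is to read off both claims directly from the case analysis carried out inside the proof of Lemma \ref{four point type 2 lemma}, supplemented by one elementary tree-theoretic fact: any two ancestors of a common vertex lie on the single ray identifying that vertex, and are therefore comparable under $\subseteq$.

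For part (i), I would start from the standing hypothesis of the lemma, which is the type 2 relation $u' \subsetneq u$, and observe that $u = D(t_1, t_2)$ contains $u' = D(t_1', t_2')$ and hence contains all four roots. Consequently, for any index pair the vertex $D(t_{i_2}, t_{j_2}')$ is contained in $u$, so in particular $t \subseteq u$. The three cases enumerated in the proof of Lemma \ref{four point type 2 lemma} then exactly describe the location of $t$ relative to $u'$: either (a) no proper descendant of $u$ contains a $t_i$, forcing $t = u$; or (b) the youngest ancestor of some $t_i$ strictly inside $u$ sits at or above $u'$, giving $u' \subseteq t \subsetneq u$; or (c) some pair $(t_{i_2}, t_{j_2}')$ has its youngest common ancestor strictly below $u'$, so $t \subsetneq u' \subsetneq u$. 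In every case the three vertices $u$, $u'$, $t$ lie on a common ray of the root tree, and the possibility that $t$ sits above or below $u'$ (but never outside $u$) is exactly what the statement asserts.

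For part (ii), the argument is immediate. Since $i_2 \in \{1,2\}$, both $\omega = D(v_1, v_2)$ and $\vartheta = D(v_{i_2}, v_{j_2}')$ are ancestors of $v_{i_2}$, hence both lie on the ray identifying $v_{i_2}$ in $\mathcal T_J(\Omega_N;M)$; this forces one to contain the other. The same reasoning applied to $v_{j_2}'$ shows that $\omega'$ and $\vartheta$ are comparable. No further input about sticky-admissibility or the relations $k \leq h(\omega)$, $k' \leq h(\omega')$, $\ell \leq h(\vartheta)$ (already established in the lemma) is needed here.

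There is no real obstacle: the statement is essentially a bookkeeping summary of structural information that was already produced, case by case, in the course of proving Lemma \ref{four point type 2 lemma}. The only point requiring a small amount of attention is to verify that the defining choice of the indices $i_2, j_2$ in the proof of that lemma is consistent with the containment relations claimed in (i), so that $t = D(t_{i_2}, t_{j_2}')$ really is the vertex of the root tree lying on the ray through $u'$ (up to the position dictated by the case). Once this is observed, both parts follow in a few lines.
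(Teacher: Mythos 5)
Your proposal is correct and uses essentially the same idea as the paper: comparability of vertices follows from the nesting of $M$-adic cubes once each pair is seen to have a common descendant ($v_{i_2}$ for $(\omega,\vartheta)$ and $v_{j_2}'$ for $(\omega',\vartheta)$ in part (ii)). For part (i) the paper is slightly more direct — it simply notes that $u'$ and $t$ both lie on the ray identifying $t_{j_2}'$ and that $u$ lies on the ray of $u'$ by the type 2 assumption — whereas you route through the three cases of the lemma's proof, but this is the same argument in substance.
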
 
\begin{proof} 
Both $u'$ and $t$ lie on the ray identifying $t_{j_2}'$ by definition, and $u$ lies on the ray of $u'$ by the assumption on the type of the root configuration. This establishes the first claim. The definitions also imply that $v_{i_2} \subseteq \omega \cap \vartheta$ and $v_{j_2}' \subseteq \omega' \cap \vartheta$, hence both intersections are non-empty. The second conclusion then follows from the nesting property of $M$-adic cubes. 
\end{proof} 
\begin{lemma} \label{four point type 3 lemma} 
Let $A = \{ t_i, t_i' ; i=1,2\}$ be a collection of four distinct root cubes such that $\mathbb I = \{(t_1, t_2);(t_1', t_2') \}$ is of type 3. Suppose that $\Gamma_{A} = \{\alpha(t_i) = v_i, \; \alpha(t_i') = v_i' \; ; i=1,2\}$ is a choice of slopes such that the collection $\{(t_i, v_i);(t_i, v_i'); i=1,2 \}$ is sticky-admissible. Set 
\[ \omega = D(v_1, v_2), \quad \omega' = D(v_1', v_2'), \quad k = h(u) = h(u'). \]
Then the following relations must hold: $k \leq h(\omega)$, $k\leq h(\omega')$, $\mu(\omega, k) = \mu(\omega', k)$. 
Further, there exist permutations $\{ i_1, i_2\}$ and $\{j_1, j_2\}$ of $\{1,2\}$ such that the quantities 
\begin{align*} s_1 &= D(t_{i_1}, t'_{j_1}),  \quad s_2 = D(t_{i_2}, t'_{j_2}),   \quad \ell_1 = h(s_1), \\ \vartheta_1 &= D(v_{i_1}, v'_{j_1}), \; \; \; \vartheta_2 = D(v_{i_2}, v'_{j_2}),  \; \; \; \ell_2 = h(s_2)\end{align*}  
satisfy 
\begin{equation}  s_1 \subseteq u, \quad s_2 \subsetneq u, \quad k \leq \ell_1 \leq \ell_2, \quad \ell_i \leq h(\vartheta_i) \text{ for } i =1,2, \label{four point type 3 relations} \end{equation} 
and for which 
\begin{equation}  \text{Pr}\bigl( \sigma(t_i) = v_i, \sigma(t_i') = v_i' \text{ for } i = 1,2\bigr) = \left( \frac{1}{2}\right)^{4N - \mu(\omega, k) - \mu(\vartheta_1, \ell_1) - \mu(\vartheta_2, \ell_2)}. \label{four point type 3 probability} \end{equation}     
\end{lemma}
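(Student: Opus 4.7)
My plan is to follow the template used in Lemmas \ref{two point lemma} through \ref{four point type 2 lemma}: first establish the height relations and the equality $\mu(\omega,k) = \mu(\omega',k)$ by appealing to stickiness, then select the permutations, and finally count $n(A;\alpha)$ using the structure of $\mathbb N(A;\alpha)$ to conclude via Lemma \ref{general probability lemma}.

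The inequalities $k \leq h(\omega)$ and $k \leq h(\omega')$ follow from the stickiness of a map $\sigma$ realizing $t_i \mapsto v_i$, $t_i' \mapsto v_i'$, exactly as in \eqref{height and stickiness}. For the identity $\mu(\omega,k) = \mu(\omega',k)$, the key observation is that since $D(t_1, t_2, t_1', t_2') = u$ lies at height $k$, stickiness forces $w := D(v_1, v_2, v_1', v_2')$ to lie at height $\geq k$. Both $\omega$ and $\omega'$ are ancestors of $w$ of height $\geq k$, so both lie on the single ray from the root of the slope tree to $w$; their enclosing basic slope cubes at heights $\leq k$ coincide with those containing $w$, giving equal values of $\mu(\cdot, k)$.

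To choose the permutations, I exploit that in type 3 configurations the roots $\{t_1, t_2, t_1', t_2'\}$ occupy either two, three, or four distinct children of $u$; since the defining condition \eqref{type3} rules out the four-child case, only two subcases remain. If three children of $u$ are used, exactly one of the four root-pairings $(t_i, t_j')$ satisfies $D(t_i, t_j') \subsetneq u$, and I assign that pairing to $(i_2, j_2)$, forcing $(i_1, j_1)$ to be the unique complementary permutation, for which $s_1 = u$ and $\ell_1 = k$. If only two children of $u$ are used, either the parallel pairings $(1,1), (2,2)$ both satisfy $D \subsetneq u$ or the cross pairings $(1,2), (2,1)$ do; in either scenario I take both such pairings and order them so that $\ell_1 \leq \ell_2$. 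In all cases, $s_2 \subsetneq u$, $s_1 \subseteq u$, and $k \leq \ell_1 \leq \ell_2$; the bounds $\ell_i \leq h(\vartheta_i)$ are immediate consequences of stickiness applied to the pair $(s_i, \vartheta_i)$.

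The remaining task is to compute $n(A;\alpha)$ by dissecting $\mathbb N(A;\alpha)$. A top spine of length $\mu(\omega,k) = \mu(\omega',k)$ descends to a common vertex $u_{\mathbb N} \cong Q_u[\omega, k]$, which coincides with $D_{\mathbb N}(t_i, t_j')$ for every pairing satisfying $D(t_i, t_j') = u$. Below $u_{\mathbb N}$ the pair $(t_{i_2}, t_{j_2}')$ stays together until its $D_{\mathbb N}$-vertex at height $\mu(\vartheta_2, \ell_2)$; the pair $(t_{i_1}, t_{j_1}')$ either separates immediately (when $\ell_1 = k$, in which case the argument for the equality $\mu(\omega,k) = \mu(\omega',k)$ adapts to show $\mu(\vartheta_1, \ell_1) = \mu(\omega,k)$) or remains together until its $D_{\mathbb N}$-vertex at height $\mu(\vartheta_1, \ell_1)$. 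Counting vertices of $\mathbb N(A;\alpha)$ level by level—with $1$, $2$, $3$, or $4$ vertices per height according to the branching stage, following the pattern of Lemma \ref{four point type 2 lemma}—one obtains in both subcases
\begin{equation*}
n(A;\alpha) \;=\; 4N - \mu(\omega, k) - \mu(\vartheta_1, \ell_1) - \mu(\vartheta_2, \ell_2),
\end{equation*}
from which Lemma \ref{general probability lemma} delivers \eqref{four point type 3 probability}. I expect the main technical obstacle to be the $\ell_1 = k$ subcase, where the apparent loss of branching between $u_{\mathbb N}$ and $D_{\mathbb N}(t_{i_1}, t_{j_1}')$ must be absorbed by identifying $\mu(\vartheta_1, \ell_1)$ with $\mu(\omega,k)$ in the formula.
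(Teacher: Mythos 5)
Your proposal is correct and follows essentially the same route as the paper: stickiness gives the height relations and $\mu(\omega,k)=\mu(\omega',k)$ via the common ancestor $\omega_0=D(v_1,v_2,v_1',v_2')$ of height $\geq k$, the permutations are chosen by taking the cross-pairing $(i_2,j_2)$ with $D(t_{i_2},t_{j_2}')\subsetneq u$ of maximal height (your two-vs-three-children case split is just a more explicit organization of the same selection), and the vertex count of $\mathbb N(A;\alpha)$ decomposes identically into the spine to $u_{\mathbb N}$, the segments to $s_{i\mathbb N}$, and the four terminal rays. Your worry about the $\ell_1=k$ subcase is resolved exactly as you indicate — $\mu(\vartheta_1,k)=\mu(\omega,k)$ makes the corresponding segment contribute zero vertices — so the single formula for $n(A;\alpha)$ covers all cases, matching the paper.
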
 
\begin{proof}
Since $\mathbb I$ is of type 3, $u=u'$ is the youngest common ancestor of the four elements in $\mathbb I$. If $\omega_0$ is the youngest common ancestor of the slopes $\{ v_i, v_i' : i=1,2\}$, then $h(\omega_0) \geq h(u)=k$ by sticky admissibility. Thus $\theta(\omega, k) = \theta(\omega_0, k) = \theta(\omega',k)$, and therefore $\mu(\omega, k) = \mu(\omega',k)$, as claimed. 

We turn to \eqref{four point type 3 relations} and the probability estimate. The configuration type dictates that there exist indices $(i, j) \in \{1,2\}^2$ such that $D(t_i, t_j') \subsetneq u$. Among all such pairs $(i,j)$, we pick one for which $D(t_i, t_j')$ is of maximal height. Let us call this pair $(i_2, j_2)$, so that $h(D(t_{i_2}, t_{j_2}')) \geq h(D(t_i, t_j'))$ for all $1 \leq i,j \leq 2$. The first three relations in \eqref{four point type 3 relations} are now immediate. The last one follows from sticky admissibility and is left to the reader. 

It remains to compute $n(A;\alpha)$. The structure of $\mathbb N(A;\alpha)$ gives that 
\begin{align*}
&u_{\mathbb N} = u'_{\mathbb N} = D_{\mathbb N}(t_1, t_2) = D_{\mathbb N}(t_1', t_2') \\ &u_{\mathbb N} = Q_u[\omega, k] = Q_{u'}[\omega',k] = Q_{t_1}[v_1,k] = Q_{t_2}[v_2,k], \\ 
&s_{i\mathbb N} = D_{\mathbb N}(t_i, t'_i) = Q_{s_i}[\vartheta_i, \ell_i] = Q_{t_i}[v_i, \ell_i], \\ &s_{i\mathbb N} \subseteq u_{\mathbb N} = D_{\mathbb N}(s_{1 \mathbb N}, s_{2 \mathbb N})\; \text{ for } i = 1,2, \; \text{ so that } \\ 
&h_{\mathbb N}(u_{\mathbb N}) = \mu(\omega, k) \leq  h_{\mathbb N}(s_{i \mathbb N}) = \mu(\vartheta_i, \ell_i), \; i =1,2. 
\end{align*}   
Putting these together, the number of vertices in $\mathbb N(A;\alpha)$ is obtained as follows,
\begin{align*}
n(A;\alpha) &= \underset{\text{vertices up to $u_{\mathbb N}$}}{\underbrace{\mu(\omega, k)}} + \sum_{i=1}^2 \underset{\text{vertices between $u_{\mathbb N}$ and $s_{i \mathbb N}$}}{\underbrace{\bigl[\mu(\vartheta_i, \ell_i) - \mu(\omega, k)\bigr]}} + \sum_{i=1}^{2} \underset{\text{vertices below $s_{i \mathbb N}$}}{\underbrace{2 \bigl[ N - \mu(\vartheta_i, \ell_i)\bigr]}} \\ &= 4N - \mu(\vartheta_1, \ell_1) - \mu(\vartheta_2, \ell_2) - \mu(\omega, k).  
\end{align*}
The probability estimate claimed in \eqref{four point type 3 probability} now follows from Lemma \ref{general probability lemma}.
\end{proof}
\begin{corollary} \label{four point type 3 structure corollary} 
Let $\omega, \omega', \vartheta_1, \vartheta_2$ be as in Lemma \ref{four point type 2 lemma}. Then each of the pairs $(\omega, \vartheta_1)$,  $(\omega, \vartheta_2)$, $(\omega', \vartheta_1)$ and $(\omega', \vartheta_2)$ has the property that one member of the pair is contained in the other. 
\end{corollary}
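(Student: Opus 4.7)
The plan is to argue via the nesting property of $M$-adic cubes: any two vertices in the slope tree $\mathcal T_J(\Omega_N;M)$ are either disjoint (as subsets of $\{1\} \times [0,1)^d$) or one is contained in the other. Hence to establish nesting for each of the four pairs it suffices to exhibit a single point in $\Omega_N$ that lies in both members of the pair. This follows exactly the same strategy used in part (ii) of Corollary \ref{four point type 2 structure corollary}, and in fact is even simpler here because the pair of slopes whose youngest common ancestor defines each $\vartheta_i$ conveniently overlaps with one of the two slopes determining $\omega$ or $\omega'$.

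More concretely, I would unpack the definitions: $\omega = D(v_1, v_2)$, $\omega' = D(v_1', v_2')$, $\vartheta_1 = D(v_{i_1}, v_{j_1}')$ and $\vartheta_2 = D(v_{i_2}, v_{j_2}')$, where $\{i_1,i_2\}$ and $\{j_1,j_2\}$ are the permutations of $\{1,2\}$ supplied by Lemma \ref{four point type 3 lemma}. For the pair $(\omega, \vartheta_1)$, observe that $v_{i_1} \in \omega$ (since $\omega$ is an ancestor of both $v_1$ and $v_2$, hence of $v_{i_1}$) and $v_{i_1} \in \vartheta_1$ (since $\vartheta_1$ is an ancestor of $v_{i_1}$ by definition). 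Thus $\omega \cap \vartheta_1 \ne \emptyset$ in the slope tree, which forces one to contain the other. The three remaining pairs are handled identically: $v_{i_2} \in \omega \cap \vartheta_2$, $v'_{j_1} \in \omega' \cap \vartheta_1$, and $v'_{j_2} \in \omega' \cap \vartheta_2$.

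There is no substantive obstacle here; the corollary is a purely combinatorial consequence of how the ancestry relation interacts with the $M$-adic partition structure, together with the explicit form of the permutations selected in Lemma \ref{four point type 3 lemma}. I would therefore present the argument as a brief two-or-three line proof, parallel in style to Corollary \ref{four point type 2 structure corollary}, without invoking any probabilistic content from the slope assignment setup.
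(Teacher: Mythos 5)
Your proposal is correct and is essentially identical to the paper's own proof: both observe that $v_{i_1} \subseteq \omega \cap \vartheta_1$, $v_{i_2} \subseteq \omega \cap \vartheta_2$, $v_{j_1}' \subseteq \omega' \cap \vartheta_1$, $v_{j_2}' \subseteq \omega' \cap \vartheta_2$, so all four intersections are nonempty and the nesting property of $M$-adic cubes gives the conclusion. You also correctly read the statement's reference to the type~2 lemma as a slip for the type~3 lemma, where the permutations $\{i_1,i_2\}$, $\{j_1,j_2\}$ are actually defined.
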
  
\begin{proof}
Since $v_{i_1} \subseteq \omega \cap \vartheta_1$, $v_{i_2} \subseteq \omega \cap \vartheta_2$, $v_{j_1}' \subseteq \omega' \cap \vartheta_1$ and $v_{j_2}' \subseteq \omega' \cap \vartheta_2$,  all four intersections are nonempty, and the desired conclusion follows from the nesting property of $M$-adic cubes. 
\end{proof} 
As the reader has noticed, the classification of probability estimates in this section is predicated on the configuration types of the roots, {\em{not}} the slopes. Of course, such definitions of type apply equally well to slope tuples $\{(v_1, v_2);(v_1', v_2')\}$. Indeed, a point worth noting is that configuration types are not preserved under sticky maps; see for example the diagram in Figure \ref{Fig: configurations not preserved} below, where a four tuple of roots of type 1 maps to a sticky image of type 3. In view of these considerations, we shall refrain for the most part from using any type properties of slopes. In the rare instances where structural properties of slopes are relevant, a case in point being Section \ref{S_{43} estimation section}, we need to consider all possible configurations.
\vskip1cm
\begin{figure}[h]
\setlength{\unitlength}{.8mm}
\begin{picture}(-50,0)(0,5)

        \special{sh 0.99}\put(40,0){\ellipse{2}{2}}
        \special{sh 0.99}\put(40,-10){\ellipse{2}{2}}
        \special{sh 0.99}\put(30,-20){\ellipse{2}{2}}
        \put(5,-20){\Large\shortstack{$D(t_1,t_2)$}}
        \special{sh 0.99}\put(50,-30){\ellipse{2}{2}}
        \put(51,-26){\Large\shortstack{$D(t_1',t_2')$}}
        \special{sh 0.99}\put(28,-30){\ellipse{2}{2}}
        \special{sh 0.99}\put(32,-30){\ellipse{2}{2}}
        \special{sh 0.99}\put(33,-40){\ellipse{2}{2}}
        \special{sh 0.99}\put(27,-40){\ellipse{2}{2}}
        \special{sh 0.99}\put(53,-40){\ellipse{2}{2}}
        \special{sh 0.99}\put(47,-40){\ellipse{2}{2}}
        \special{sh 0.99}\put(35,-60){\ellipse{2}{2}}
        \put(33,-67){\Large\shortstack{$t_2$}}
        \special{sh 0.99}\put(25,-60){\ellipse{2}{2}}
        \put(23,-67){\Large\shortstack{$t_1$}}
        \special{sh 0.99}\put(45,-60){\ellipse{2}{2}}
        \put(43,-67){\Large\shortstack{$t_1'$}}
        \special{sh 0.99}\put(55,-60){\ellipse{2}{2}}
        \put(53,-67){\Large\shortstack{$t_2'$}}

        \path(40,0)(40,-10)(50,-30)(53,-40)(55,-60)
        \path(50,-30)(47,-40)(45,-60)
        \path(40,-10)(30,-20)(32,-30)(33,-40)(35,-60)
        \path(30,-20)(28,-30)(27,-40)(25,-60)


        \special{sh 0.99}\put(135,0){\ellipse{2}{2}}
        \special{sh 0.99}\put(135,-30){\ellipse{2}{2}}
        \special{sh 0.99}\put(125,-40){\ellipse{2}{2}}
        \special{sh 0.99}\put(145,-40){\ellipse{2}{2}}
        \special{sh 0.99}\put(120,-60){\ellipse{2}{2}}
        \put(128,-68){\Large\shortstack{$\sigma(t_2)$}}
        \special{sh 0.99}\put(133,-60){\ellipse{2}{2}}
        \put(113,-68){\Large\shortstack{$\sigma(t_1')$}}
        \special{sh 0.99}\put(148,-60){\ellipse{2}{2}}
        \put(143,-68){\Large\shortstack{$\sigma(t_1)$}}
        \special{sh 0.99}\put(163,-60){\ellipse{2}{2}}
        \put(158,-68){\Large\shortstack{$\sigma(t_2')$}}

        \path(135,0)(135,-30)(145,-40)(163,-60)
        \path(145,-40)(148,-60)
        \path(135,-30)(125,-40)(133,-60)
        \path(125,-40)(120,-60)


        \dashline{3}(28,-30)(135,-30)
        \path(80,-60)(100,-60)
        \path(98,-58)(100,-60)(98,-62)
        \path(80,-62)(80,-58)
        \put(89,-58){\Large\shortstack{$\sigma$}}
\end{picture}
\vspace{6cm}
\caption{\label{Fig: configurations not preserved} An example of a four tuple of roots of type 1 mapping to a sticky image of type 3.  Notice that $D(\sigma(t_1),\sigma(t_2)) = D(\sigma(t_1'),\sigma(t_2'))$.}
\end{figure}
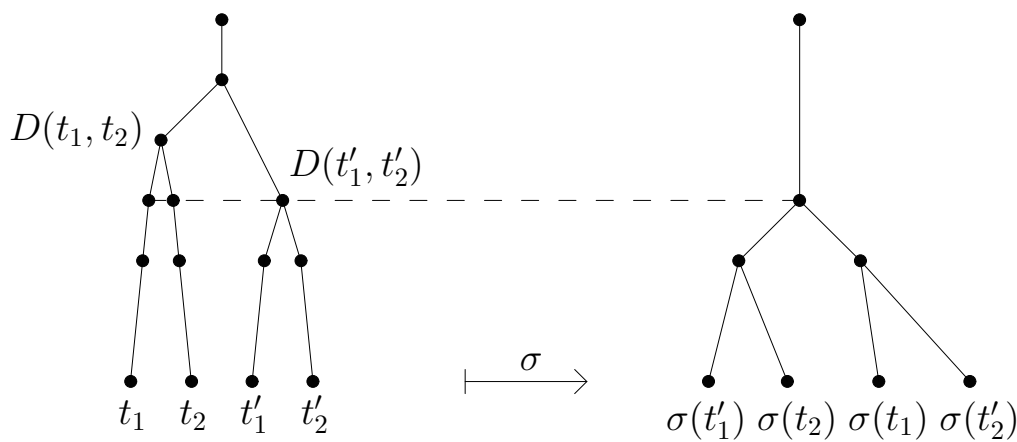
    

\section{Tube counts} \label{tube count section} 
A question of considerable import, the full significance of which will emerge in Section \ref{LowerBoundSection}, is the following: what is the maximum possible cardinality of a sticky-admissible collection of tube tuples that admit certain pairwise intersections in a pre-fixed segment of space? The answer depends, among other things, on the size and configuration type of the roots of the tubes. In this section, we discuss these size counts for collections that are simple enough in the sense that an element in the collection is either a pair, a triple or at most a quadruple of tubes, so that the configuration type of the roots has to fall in one of the categories described in Section \ref{root configurations subsection}.  

\subsection{Collections of two intersecting tubes} \label{tube count two roots section} 
Let us start with the case where the collection consists of pairs of tubes. To phrase the question above in more refined terms we define a collection of root-slope tuples $\mathcal E_2[u, \omega; \varrho]$, where $u$ is a vertex of the root tree, $\omega$ is a splitting vertex of the slope tree, and $\varrho \in [M^{-J}, 10A_0]$ is a constant that represents the (horizontal) distance from the root hyperplane to where intersection takes place. 
\begin{equation} \label{defn C_uw} 
\mathcal E_{2}[u, \omega; \varrho] := \left\{\underset{\text{sticky-admissible}}{\{(t_1, v_1), (t_2, v_2) \}} \Biggl| \begin{aligned}  &t_1, t_2 \in \mathcal Q(J), \; u = D(t_1,t_2), \; t_1 \ne t_2, \\ &v_1,v_2 \in \Omega_N, \; \omega = D(v_1,v_2), \\ &P_{t_1,v_1} \cap P_{t_2,v_2} \cap [\varrho, C_1 \varrho] \times \mathbb R^d \ne \emptyset \end{aligned}  \right\}.  
\end{equation} 
In this context the question at the beginning of this section can be restated as: what is the cardinality of $\mathcal E_{2}[u, \omega; \varrho]$? We answer this question in Lemma \ref{C count lemma} of this section, splitting the necessary work between two intermediate lemmas whose content will also be used in later counting arguments. To be specific, Lemma \ref{counting lemma 1} obtains a uniform bound on a $t_2$-slice of $\mathcal E_{2}[u, \omega; \varrho]$ for fixed $t_1, v_1$ and $v_2$. The cardinality of the projection of $\mathcal E_{2}[u, \omega; \varrho]$ onto the $t_1$ coordinate is obtained in Lemma \ref{counting lemma 2}.    
\begin{lemma} \label{counting lemma 1}
Let $\mathcal E_2[u, \omega;\varrho]$ be the collection defined in \eqref{defn C_uw}, and let $\rho_{\omega} = \sup\{|a-b| : a, b \in \Omega_N, \; D(a,b) = \omega \}$ be the quantity defined in \eqref{sup distance}.  
\begin{enumerate}[(i)]
\item \label{comparing scales} If $\mathcal E_2[u, \omega; \varrho]$ is nonempty, then $2C_1\varrho \rho_\omega \geq M^{-J}$.
\item \label{count for t' given t, v, v'} Given a constant $C_1 > 0$ used to define $\mathcal E_2[u, \omega;\varrho]$, there exists a constant $C_2 = C_2(d,M, C_0,  A_0, C_1)>0$ with the following property. For any fixed choice of $t_1 \in \mathcal Q(J)$ and $v_1, v_2 \in \Omega_N$ the following estimate holds: 
\begin{equation} \label{count for t' given t, v, v' inequality} 
\# \bigl\{t_2 \in \mathcal Q(J) \; : \; \{(t_1, v_1), (t_2, v_2)\} \in \mathcal E_2[u, \omega;\varrho] 
\bigr\} \leq C_2 \varrho \rho_\omega M^J.  
\end{equation} 
\end{enumerate} 
\end{lemma}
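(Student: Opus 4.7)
The plan for part (\ref{comparing scales}) is a direct combination of two facts already available in the paper. Nonemptiness of $\mathcal{E}_2[u,\omega;\varrho]$ yields a point $x \in P_{t_1,v_1} \cap P_{t_2,v_2}$ with $\varrho \leq x_1 \leq C_1\varrho$. Corollary \ref{which is bigger corollary} gives the lower bound $|x_1||v_1-v_2| \geq \tfrac{1}{2}M^{-J}$, while the identity $\omega = D(v_1,v_2)$ forces $v_1,v_2$ to lie in distinct children of $\omega$, so Corollary \ref{Two distances comparable} bounds $|v_1-v_2| \leq \rho_\omega$. Combining these with $x_1 \leq C_1\varrho$ produces the claimed inequality $2C_1\varrho \rho_\omega \geq M^{-J}$.

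For part (\ref{count for t' given t, v, v'}), the key observation is that once $t_1$, $v_1$, $v_2$ are fixed, Lemma \ref{intersection criterion lemma} pins down the location of $\operatorname{cen}(t_2)$ very tightly. Indeed, any admissible $t_2$ must carry an intersection point $x$ for which
\[
\bigl|\operatorname{cen}(t_2) - \bigl(\operatorname{cen}(t_1) - x_1(v_2-v_1)\bigr)\bigr| \leq 2c_d\sqrt{d}\,M^{-J}.
\]
As $x_1$ ranges over $[\varrho, C_1\varrho]$, the ``ideal'' point $\operatorname{cen}(t_1) - x_1(v_2-v_1)$ traces out a line segment in the root hyperplane of length $(C_1-1)\varrho|v_2-v_1| \leq (C_1-1)\varrho\rho_\omega$ (recall $v_1,v_2 \in \{1\}\times[0,1)^d$, so $v_2-v_1$ has zero $e_1$-component and the segment lives in $\{0\}\times\mathbb{R}^d$). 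Hence $\operatorname{cen}(t_2)$ is forced into the Minkowski sum of this segment with a $d$-dimensional ball of radius $2c_d\sqrt{d}\,M^{-J}$, a ``pill-shaped'' region of $d$-volume at most $C(d,C_1)\,\varrho\rho_\omega\,M^{-J(d-1)} + C(d)\,M^{-Jd}$.

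The counting is then routine: the centres of distinct $M^{-J}$-cubes in $\mathcal{Q}(J)$ form an $M^{-J}$-spaced lattice, so the number of such centres contained in the above region is bounded, up to a dimensional constant, by (volume)/$M^{-Jd}$, yielding $C(d,M,A_0,C_1)(\varrho\rho_\omega M^{J} + 1)$. Invoking part (\ref{comparing scales}) to absorb the additive constant into $\varrho\rho_\omega M^J$ delivers the bound \eqref{count for t' given t, v, v' inequality}. The main conceptual step is the parametric description of the locus of $\operatorname{cen}(t_2)$; the remaining work is a volume-versus-lattice-spacing comparison and presents no real obstacle.
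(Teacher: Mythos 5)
Your proposal is correct and follows essentially the same route as the paper: part (i) combines Corollary \ref{which is bigger corollary} with $|x_1|\leq C_1\varrho$ and $|v_1-v_2|\leq\rho_\omega$, and part (ii) confines $\operatorname{cen}(t_2)$ to an $O(M^{-J})$-neighborhood of the segment $\{\operatorname{cen}(t_1)-s(v_2-v_1):\varrho\leq s\leq C_1\varrho\}$ and counts $M^{-J}$-separated lattice points there, using part (i) to absorb the additive constant. The only cosmetic difference is that the paper reaches part (i) via the lower bound $|\operatorname{cen}(t_2)-\operatorname{cen}(t_1)|\geq M^{-J}$ and the triangle inequality, whereas you appeal to Corollary \ref{which is bigger corollary} directly; both are fine.
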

\begin{proof}
If $\{(t_1, v_1), (t_2, v_2)\}$ is a tuple that lies in $\mathcal E_2[u, \omega;\varrho]$, then there exists $x \in [\varrho, C_1 \varrho] \times \mathbb R^d$ such that $x$ also belongs to $P_{t_1,v_1} \cap P_{t_2,v_2}$. By Lemma  \ref{intersection criterion lemma},  an appropriate version of inequality \eqref{intersection criterion inequality} must hold, i.e., there exists $x_1 \in [\varrho, C_1 \varrho]$ such that 
\begin{equation} |\text{cen}(t_2) - \text{cen}(t_1) + x_1 (v_2 - v_1)| \leq 2c_d \sqrt{d}M^{-J}. \label{intersection inequality restated} \end{equation} 
In conjunction with Corollary \ref{which is bigger corollary}, this leads to the inequality 
\begin{align*}
M^{-J} \leq |\text{cen}(t_2) - \text{cen}(t_1)| &\leq |x_1||v_2-v_1| + 2 c_d \sqrt{d} M^{-J} \\ &\leq 2|x_1||v_2-v_1| \leq 2C_1 \varrho \rho_{\omega}, 
\end{align*}  
which is the conclusion of part (\ref{comparing scales}). The inequality \eqref{intersection inequality restated} also implies that $\text{cen}(t_2)$ is constrained to lie in a $O(M^{-J})$ neighborhood of the line segment 
\begin{equation} 
\text{cen}(t_1) - s(v_2 - v_1), \qquad \varrho \leq s \leq C_1 \varrho.
\end{equation}   
The length of this segment is at most $C_1 \varrho |v_2-v_1| \leq C_1 \varrho \rho_\omega$, since $v_1$ and $v_2$ must lie in distinct children on $\omega$. In view of part (\ref{comparing scales}), the number of possible choices for $M^{-J}$-separated points $\text{cen}(t_2)$, and hence for $t_2$, lying within this neighborhood is $O(\varrho \rho_\omega M^J)$, as claimed in part (\ref{count for t' given t, v, v'}). 
\end{proof} 
\vskip1cm
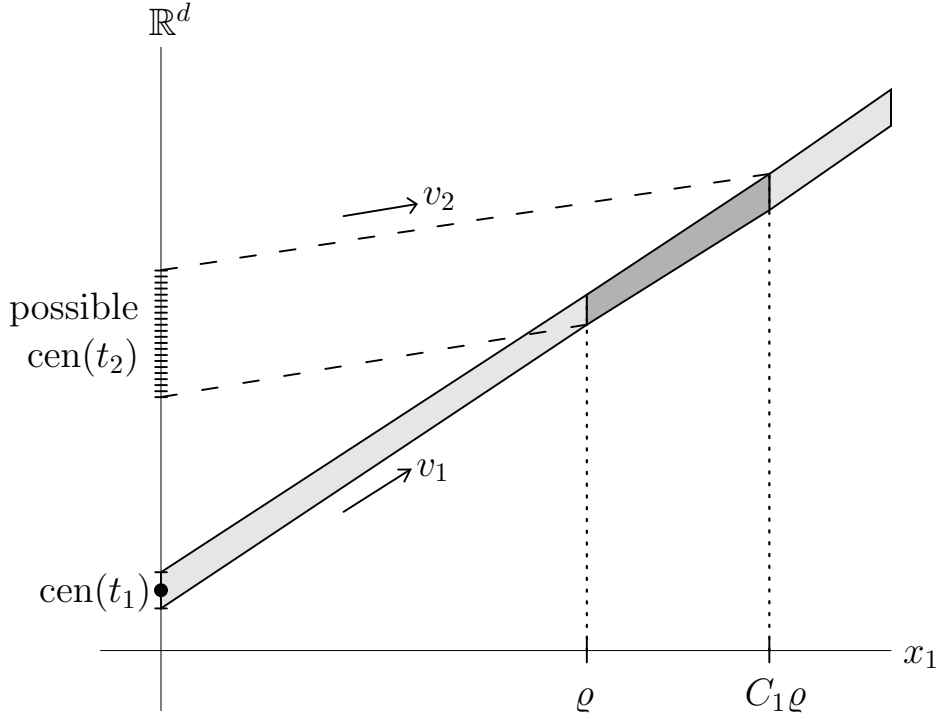
\begin{figure}[h!]
\setlength{\unitlength}{0.8mm}
\begin{picture}(-50,0)(-30,107)

        \path(-10,0)(120,0)
        \put(122,-2){\Large\shortstack{$x_1$}}
        \path(0,-10)(0,100)     
        \put(-2,102){\Large\shortstack{$\mathbb{R}^d$}}

        \allinethickness{0.254mm}\special{sh 0.1}\path(0,7)(70,54)(70,59)(0,13)(0,7)
        \allinethickness{0.254mm}\special{sh 0.1}\path(100,73)(120,87)(120,93)(100,79)(100,73)
        \allinethickness{0.254mm}\special{sh 0.3}\path(70,54)(100,73)(100,79)(70,59)(70,54)

        \special{sh 0.99}\put(0,10){\ellipse{2}{2}}
        \put(-20,8){\Large\shortstack{$\text{cen}(t_1)$}}
        \path(-1,7)(1,7)
        \path(-1,13)(1,13)

        \path(70,-2)(70,2)
        \put(68,-9){\Large\shortstack{$\varrho$}}
        \path(100,-2)(100,2)
        \put(96,-9){\Large\shortstack{$C_1\varrho$}}
        \dottedline{2}(70,0)(70,59)
        \dottedline{2}(100,0)(100,79)
        \dashline{3}(100,79)(0,63)
        \dashline{3}(70,54)(0,42)

        \path(30,23)(41,30)
        \path(39,30)(41,30)(40,28)      
        \put(42,29){\Large\shortstack{$v_1$}}

\path(30,72)(42,74)     
        \path(40.5,75)(42,74)(41,72.5)
        \put(43,74){\Large\shortstack{$v_2$}}

        \path(-1,63)(1,63)
        \path(-1,61)(1,61)
        \path(-1,59)(1,59)
        \path(-1,57)(1,57)
        \path(-1,55)(1,55)
        \path(-1,53)(1,53)
        \path(-1,51)(1,51)
        \path(-1,49)(1,49)
        \path(-1,47)(1,47)
        \path(-1,45)(1,45)
        \path(-1,43)(1,43)
        \path(-1,62)(1,62)
        \path(-1,60)(1,60)
        \path(-1,58)(1,58)
        \path(-1,56)(1,56)
        \path(-1,54)(1,54)
        \path(-1,52)(1,52)
        \path(-1,50)(1,50)
        \path(-1,48)(1,48)
        \path(-1,46)(1,46)
        \path(-1,44)(1,44)
        \path(-1,42)(1,42)
        \put(-25,55){\Large\shortstack{possible}}
        \put(-22,47){\Large\shortstack{$\text{cen}(t_2)$}}

\end{picture}
\vspace{9.5cm}
\caption{\label{Fig: counting lemma 1 figure} An illustration of the proof of Lemma~\ref{counting lemma 1}.}  
\end{figure}
\vskip1cm 
\begin{lemma}\label{counting lemma 2} 
Given $C_1 > 0$, there exists a positive constant $C_2 = C_2(d, M, A_0, C_1)$ with the following property. For any $\mathcal E_{2}[u, \omega; \varrho]$ defined as in \eqref{defn C_uw}, the following estimate holds:
\begin{equation} 
\#\left\{t_1 \in \mathcal Q(J) \Bigl| \begin{aligned} &\exists t_2 \in \mathcal Q(J) \text{ and } v_1,v_2 \in \Omega_N \text{ such } \\ 
&\text{ that } \{ (t_1,v_1);(t_2,v_2) \} \in \mathcal E_{2}[u, \omega; \varrho] \end{aligned}
\right\} \leq C_2 \varrho \rho_\omega M^{-(d-1)h(u) + dJ}, \label{t projection count} 
\end{equation}
where $\rho_\omega$ is as in \eqref{sup distance}.
\end{lemma}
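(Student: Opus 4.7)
The plan is to localize $t_1$ geometrically inside the cube $u$, leveraging the constraint $u = D(t_1,t_2)$ together with the intersection condition. First I would show that if $t_1$ is counted, then there exists a root cube $t_2\subseteq u$, lying in a different child of $u$ from $t_1$, with
\[
|\text{cen}(t_1) - \text{cen}(t_2)| \;\leq\; C\,\varrho\,\rho_\omega
\]
for a constant $C = C(d,M,A_0,C_1,c_d)$. This follows from Lemma~\ref{intersection criterion lemma} applied at the intersection point $x$ (whose first coordinate $x_1$ lies in $[\varrho,C_1\varrho]$), combined with $|v_1-v_2|\leq\rho_\omega$ (since $\omega = D(v_1,v_2)$ and $v_i\in\Omega_N$). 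The resulting additive error $2c_d\sqrt{d}\,M^{-J}$ is absorbed into $\varrho\rho_\omega$ via Lemma~\ref{counting lemma 1}(\ref{comparing scales}). The fact that $t_1$ and $t_2$ live in \emph{different} children of $u$ is immediate from $u = D(t_1,t_2)$.

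The key geometric observation is now that $t_1\subseteq u$ must lie within Euclidean distance $C\varrho\rho_\omega$ of the \emph{internal face system} of $u$, i.e., the union of the $d(M-1)$ coordinate hyperplanes that slice $u$ into its $M^d$ children. This face system is a $(d-1)$-dimensional object inside $u$ of total $(d-1)$-measure $\lesssim_{d,M} M^{-(d-1)h(u)}$. Consequently its $C\varrho\rho_\omega$-neighborhood in $u$ has $d$-dimensional Lebesgue measure
\[
\lesssim_{d,M,C}\; \varrho\,\rho_\omega\cdot M^{-(d-1)h(u)}.
\]
Since each root cube in $\mathcal Q(J)$ has $d$-measure $M^{-dJ}$ and lies in the root hyperplane, the number of distinct root cubes meeting this neighborhood — which is an upper bound for the quantity on the left-hand side of \eqref{t projection count} — is bounded above by
\[
C_2\,\varrho\,\rho_\omega\cdot M^{-(d-1)h(u)+dJ},
\]
with $C_2$ depending only on $d,M,A_0,C_1$. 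This is exactly the desired estimate.

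The only real obstacle is bookkeeping: one must verify that every constant swallowed in the chain of inequalities depends only on the permissible parameters $(d,M,A_0,C_1)$, and in particular that the step converting $M^{-J}$ into $\varrho\rho_\omega$ invokes Lemma~\ref{counting lemma 1}(\ref{comparing scales}) rather than introducing a new constant depending on $h(u)$ or $\varrho$. Conceptually, the bound is entirely combinatorial--geometric and contains no probabilistic content; unlike Lemma~\ref{counting lemma 1}(\ref{count for t' given t, v, v'}), where the vector $v_2-v_1$ was \emph{fixed} so that $t_2$ was confined to a thickened segment of length $\lesssim \varrho\rho_\omega$, here the freedom in choosing $(v_1,v_2,t_2)$ enlarges the admissible location of $t_1$ only by the same overall drift $O(\varrho\rho_\omega)$, which is precisely what produces the factor $\varrho\rho_\omega$ in the target bound.
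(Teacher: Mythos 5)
Your proposal is correct and follows essentially the same route as the paper: derive $|\text{cen}(t_1)-\text{cen}(t_2)|\leq C\varrho\rho_\omega$ from the intersection inequality, use $u=D(t_1,t_2)$ to force $\text{cen}(t_1)$ into a $C\varrho\rho_\omega$-neighborhood of the child-boundary structure of $u$ (the paper's set $\mathcal A_u$, your ``internal face system''), and convert the volume bound $C\varrho\rho_\omega M^{-(d-1)h(u)}$ into a cube count using $\varrho\rho_\omega\gtrsim M^{-J}$ from Lemma~\ref{counting lemma 1}(\ref{comparing scales}). The only cosmetic difference is your phrasing of $\mathcal A_u$ as a neighborhood of the slicing hyperplanes rather than of the children's boundaries; the measure estimate is identical.
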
 
\begin{proof}
If $\{(t_1,v_1), (t_2,v_2)\} \in \mathcal E_{2}[u, \omega; \varrho]$, then there exists $x = (x_1, \cdots, x_{d+1}) \in P_{t_1,v_1} \cap P_{t_2,v_2}$ with $\varrho \leq x_1 \leq C_1 \varrho$. Combining inequality \eqref{intersection inequality restated} obtained from Lemma \ref{intersection criterion lemma} along with Corollary \ref{which is bigger corollary} as we did in Lemma \ref{counting lemma 1}, we obtain 
\begin{equation} \label{dist c(t) and c(t')}
\begin{aligned}
|\text{cen}(t_2) - \text{cen}(t_1)| &\leq |x_1||v_2-v_1| + 2 c_d \sqrt{d} M^{-J} \\ &\leq (1 + 4 c_d \sqrt{d}) |x_1||v_1-v_2| \\ &\leq C_1  (1 + 4 c_d \sqrt{d}) \varrho \rho_\omega = C \varrho \rho_\omega,
\end{aligned}  
\end{equation}
where the last step follows from the definition of $\omega$. This means that $\text{cen}(t_1)$ and $\text{cen}(t_2)$ must be within distance $C\varrho \rho_\omega$ of each other. On the other hand, it is known as part of the definition of $\mathcal E_{2}[u, \omega; \varrho]$ that $u = D(t_1,t_2)$, so $\text{cen}(t_1)$ and $\text{cen}(t_2)$ must lie in distinct children of $u$. This forces the location of $\text{cen}(t_1)$ to be within distance $C \varrho \rho_\omega$ of the boundary of some child of $u$,  to allow for the existence of a point $\text{cen}(t_2)$ contained in a different child and obeying the constraint of \eqref{dist c(t) and c(t')}.  In other words, $\text{cen}(t_1)$ belongs to the set
\begin{equation} \label{defn A_u} \mathcal A_u = \bigl\{ s \in u : \text{dist}(s, \text{bdry}(u')) \leq C \varrho \rho_\omega \text{ for some child $u'$ of $u$ }\bigr\},\end{equation}  
which is the union of at most $dM$ parallelepipeds of dimension $d$, with length $M^{-h(u)}$ in $(d-1)$ ``long'' directions and $C \varrho \rho_{\omega}$ in the remaining ``short'' direction. Note that $\rho_{\omega} \leq M^{-h(\omega)} \leq M^{-h(u)}$ by sticky-admissibility, hence $\varrho \rho_{\omega} = O(M^{-h(u)})$, which justifies this description. 

Since $C \varrho \rho_{\omega}\geq M^{-J}$ by Lemma \ref{counting lemma 1}(\ref{comparing scales}), the constituent parallelpipeds of $\mathcal A_u$ as described above are thick relative to the finest scale $M^{-J}$ in all directions. The volume of $\mathcal A_u$ is then easily computed as
\[ |\mathcal A_u| \leq C \varrho \rho_\omega M^{-(d-1)h(u)}.\] 
Therefore the number of $M^{-J}$ separated points $\text{cen}(t_1)$, and hence the number of possible root cubes $t_1$, contained in $\mathcal A_u$ is at most  $C_2 \varrho \rho_\omega M^{-(d-1)h(u) + dJ}$, as claimed.  
\end{proof} 
\vskip1cm
\begin{figure}[h]
\setlength{\unitlength}{0.8mm}
\begin{picture}(-50,0)(-95,20)

        \allinethickness{0.5mm}\path(-45,20)(-45,-70)(45,-70)(45,20)(-45,20)
        \allinethickness{0.5mm}\path(-45,-10)(45,-10)
        \path(-15,20)(-15,-70)
        \path(-45,-40)(45,-40)
        \path(15,20)(15,-70)
        \allinethickness{0.1mm}\path(-17,20)(-17,-70)
        \path(-19,20)(-19,-70)
        \path(-13,20)(-13,-70)
        \path(-11,20)(-11,-70)
        \path(-43,20)(-43,-70)
        \path(-41,20)(-41,-70)
        \path(13,20)(13,-70)
        \path(11,20)(11,-70)
        \path(17,20)(17,-70)
        \path(19,20)(19,-70)
        \path(41,20)(41,-70)
        \path(43,20)(43,-70)
        \path(-45,-42)(45,-42)
        \path(-45,-44)(45,-44)
        \path(-45,-66)(45,-66)
        \path(-45,-68)(45,-68)


\path(-45,18)(45,18)
        \path(-45,16)(45,16)
        \path(-45,-38)(45,-38)
        \path(-45,-36)(45,-36)
        \path(-45,-8)(45,-8)
        \path(-45,-6)(45,-6)
        \path(-45,-12)(45,-12)
        \path(-45,-14)(45,-14)

        \path(-39,20)(-39,16)
        \path(-37,20)(-37,16)
        \path(-35,20)(-35,16)
        \path(-33,20)(-33,16)
        \path(-31,20)(-31,16)
        \path(-29,20)(-29,16)
        \path(-27,20)(-27,16)
        \path(-25,20)(-25,16)
        \path(-23,20)(-23,16)
        \path(-21,20)(-21,16)
        \path(-19,20)(-19,16)
        \path(-17,20)(-17,16)

        \path(-39,-44)(-39,-36)
        \path(-37,-44)(-37,-36)
        \path(-35,-44)(-35,-36)
        \path(-33,-44)(-33,-36)
        \path(-31,-44)(-31,-36)
        \path(-29,-44)(-29,-36)
        \path(-27,-44)(-27,-36)
        \path(-25,-44)(-25,-36)
        \path(-23,-44)(-23,-36)
        \path(-21,-44)(-21,-36)
        \path(-19,-44)(-19,-36)
        \path(-17,-44)(-17,-36)
        \path(-39,-70)(-39,-66)
        \path(-37,-70)(-37,-66)
        \path(-35,-70)(-35,-66)
        \path(-33,-70)(-33,-66)
        \path(-31,-70)(-31,-66)
        \path(-29,-70)(-29,-66)
        \path(-27,-70)(-27,-66)
        \path(-25,-70)(-25,-66)
        \path(-23,-70)(-23,-66)
        \path(-21,-70)(-21,-66)
        \path(-19,-70)(-19,-66)
        \path(-17,-70)(-17,-66)

        \path(9,20)(9,16)
        \path(7,20)(7,16)
        \path(5,20)(5,16)
        \path(3,20)(3,16)
        \path(1,20)(1,16)
        \path(-1,20)(-1,16)
        \path(-3,20)(-3,16)
        \path(-5,20)(-5,16)
        \path(-7,20)(-7,16)
        \path(-9,20)(-9,16)
        \path(-11,20)(-11,16)
        \path(-13,20)(-13,16)

        \path(39,20)(39,16)
        \path(37,20)(37,16)
        \path(35,20)(35,16)
        \path(33,20)(33,16)
        \path(31,20)(31,16)
        \path(29,20)(29,16)
        \path(27,20)(27,16)
        \path(25,20)(25,16)
        \path(23,20)(23,16)
        \path(21,20)(21,16)

        \path(39,-6)(39,-14)
        \path(37,-6)(37,-14)
        \path(35,-6)(35,-14)
        \path(33,-6)(33,-14)
        \path(31,-6)(31,-14)
        \path(29,-6)(29,-14)
        \path(27,-6)(27,-14)
        \path(25,-6)(25,-14)
        \path(23,-6)(23,-14)
        \path(21,-6)(21,-14)

        \path(9,-44)(9,-36)
        \path(7,-44)(7,-36)
        \path(5,-44)(5,-36)
        \path(3,-44)(3,-36)
        \path(1,-44)(1,-36)
        \path(-1,-44)(-1,-36)
        \path(-3,-44)(-3,-36)
        \path(-5,-44)(-5,-36)
        \path(-7,-44)(-7,-36)
        \path(-9,-44)(-9,-36)
        \path(-11,-44)(-11,-36)
        \path(-13,-44)(-13,-36)

        \path(9,-70)(9,-66)
        \path(7,-70)(7,-66)
        \path(5,-70)(5,-66)
        \path(3,-70)(3,-66)
        \path(1,-70)(1,-66)
        \path(-1,-70)(-1,-66)
        \path(-3,-70)(-3,-66)
        \path(-5,-70)(-5,-66)
        \path(-7,-70)(-7,-66)
        \path(-9,-70)(-9,-66)
        \path(-11,-70)(-11,-66)
        \path(-13,-70)(-13,-66)

        \path(-45,14)(-41,14)
        \path(-45,12)(-41,12)
        \path(-45,10)(-41,10)
        \path(-45,8)(-41,8)
        \path(-45,6)(-41,6)
        \path(-45,4)(-41,4)
        \path(-45,2)(-41,2)
        \path(-45,0)(-41,0)
        \path(-45,-2)(-41,-2)
        \path(-45,-4)(-41,-4)

        \path(17,-44)(17,-36)
        \path(19,-44)(19,-36)
        \path(21,-44)(21,-36)
        \path(23,-44)(23,-36)
        \path(25,-44)(25,-36)
        \path(27,-44)(27,-36)
        \path(29,-44)(29,-36)
        \path(31,-44)(31,-36)
        \path(33,-44)(33,-36)
        \path(35,-44)(35,-36)
        \path(37,-44)(37,-36)
        \path(39,-44)(39,-36)

        \path(17,-70)(17,-66)
        \path(19,-70)(19,-66)
        \path(21,-70)(21,-66)
        \path(23,-70)(23,-66)
        \path(25,-70)(25,-66)
        \path(27,-70)(27,-66)
        \path(29,-70)(29,-66)
        \path(31,-70)(31,-66)
        \path(33,-70)(33,-66)
        \path(35,-70)(35,-66)
        \path(37,-70)(37,-66)
        \path(39,-70)(39,-66)

\path(-45,-16)(-41,-16)
        \path(-45,-18)(-41,-18)
        \path(-45,-20)(-41,-20)
        \path(-45,-22)(-41,-22)
        \path(-45,-24)(-41,-24)
        \path(-45,-26)(-41,-26)
        \path(-45,-28)(-41,-28)
        \path(-45,-30)(-41,-30)
        \path(-45,-32)(-41,-32)
        \path(-45,-34)(-41,-34)

        \path(-45,-46)(-41,-46)
        \path(-45,-48)(-41,-48)
        \path(-45,-50)(-41,-50)
        \path(-45,-52)(-41,-52)
        \path(-45,-54)(-41,-54)
        \path(-45,-56)(-41,-56)
        \path(-45,-58)(-41,-58)
        \path(-45,-60)(-41,-60)
        \path(-45,-62)(-41,-62)
        \path(-45,-64)(-41,-64)

        \path(19,14)(11,14)
        \path(19,12)(11,12)
        \path(19,10)(11,10)
        \path(19,8)(11,8)
        \path(19,6)(11,6)
        \path(19,4)(11,4)
        \path(19,2)(11,2)
        \path(19,0)(11,0)
        \path(19,-2)(11,-2)
        \path(19,-4)(11,-4)

        \path(45,14)(41,14)
        \path(45,12)(41,12)
        \path(45,10)(41,10)
        \path(45,8)(41,8)
        \path(45,6)(41,6)
        \path(45,4)(41,4)
        \path(45,2)(41,2)
        \path(45,0)(41,0)
        \path(45,-2)(41,-2)
        \path(45,-4)(41,-4)

        \path(19,-16)(11,-16)
        \path(19,-18)(11,-18)
        \path(19,-20)(11,-20)
        \path(19,-22)(11,-22)
        \path(19,-24)(11,-24)
        \path(19,-26)(11,-26)
        \path(19,-28)(11,-28)
        \path(19,-30)(11,-30)
        \path(19,-32)(11,-32)
        \path(19,-34)(11,-34)

        \path(45,-16)(41,-16)
        \path(45,-18)(41,-18)
        \path(45,-20)(41,-20)
        \path(45,-22)(41,-22)
        \path(45,-24)(41,-24)
        \path(45,-26)(41,-26)
        \path(45,-28)(41,-28)
        \path(45,-30)(41,-30)
        \path(45,-32)(41,-32)
        \path(45,-34)(41,-34)

        \path(19,-46)(11,-46)
        \path(19,-48)(11,-48)
        \path(19,-50)(11,-50)
        \path(19,-52)(11,-52)
        \path(19,-54)(11,-54)
        \path(19,-56)(11,-56)
        \path(19,-58)(11,-58)
        \path(19,-60)(11,-60)
        \path(19,-62)(11,-62)
        \path(19,-64)(11,-64)

        \path(45,-46)(41,-46)
        \path(45,-48)(41,-48)
        \path(45,-50)(41,-50)
        \path(45,-52)(41,-52)
        \path(45,-54)(41,-54)
        \path(45,-56)(41,-56)
        \path(45,-58)(41,-58)
        \path(45,-60)(41,-60)
        \path(45,-62)(41,-62)
        \path(45,-64)(41,-64)

        \path(-19,14)(-11,14)
        \path(-19,12)(-11,12)
        \path(-19,10)(-11,10)
        \path(-19,8)(-11,8)
        \path(-19,6)(-11,6)
        \path(-19,4)(-11,4)
        \path(-19,2)(-11,2)
        \path(-19,0)(-11,0)
        \path(-19,-2)(-11,-2)
        \path(-19,-4)(-11,-4)

\path(-19,-16)(-11,-16)
        \path(-19,-18)(-11,-18)
        \path(-19,-20)(-11,-20)
        \path(-19,-22)(-11,-22)
        \path(-19,-24)(-11,-24)
        \path(-19,-26)(-11,-26)
        \path(-19,-28)(-11,-28)
        \path(-19,-30)(-11,-30)
        \path(-19,-32)(-11,-32)
        \path(-19,-34)(-11,-34)

        \path(-19,-46)(-11,-46)
        \path(-19,-48)(-11,-48)
        \path(-19,-50)(-11,-50)
        \path(-19,-52)(-11,-52)
        \path(-19,-54)(-11,-54)
        \path(-19,-56)(-11,-56)
        \path(-19,-58)(-11,-58)
        \path(-19,-60)(-11,-60)
        \path(-19,-62)(-11,-62)
        \path(-19,-64)(-11,-64)

        \path(-39,-6)(-39,-14)
        \path(-37,-6)(-37,-14)
        \path(-35,-6)(-35,-14)
        \path(-33,-6)(-33,-14)
        \path(-31,-6)(-31,-14)
        \path(-29,-6)(-29,-14)
        \path(-27,-6)(-27,-14)
        \path(-25,-6)(-25,-14)
        \path(-23,-6)(-23,-14)
        \path(-21,-6)(-21,-14)
        \path(-19,-6)(-19,-14)
        \path(-17,-6)(-17,-14)

        \path(9,-6)(9,-14)
        \path(7,-6)(7,-14)
        \path(5,-6)(5,-14)
        \path(3,-6)(3,-14)
        \path(1,-6)(1,-14)
        \path(-1,-6)(-1,-14)
        \path(-3,-6)(-3,-14)
        \path(-5,-6)(-5,-14)
        \path(-7,-6)(-7,-14)
        \path(-9,-6)(-9,-14)
        \path(-11,-6)(-11,-14)
        \path(-13,-6)(-13,-14)


        \path(-48,20)(-50,20)(-50,-40)
        \path(-50,-40)(-50,-70)(-48,-70)
        \put(-68,-27){\shortstack{$M^{-h(u)}$}}
        \path(47,-6)(49,-6)(49,-10)(47,-10)
        \put(51,-10){\shortstack{$C\varrho\rho_{\omega}$}}

\end{picture}
\vspace{7.5cm}
\caption{\label{Fig: counting lemma 2 figure} Proof of Lemma~\ref{counting lemma 2} illustrated, with $d=2$ and $M=3$.  The outermost square is $u$, and the smallest squares depict the root cubes in $\mathcal{A}_u$.}
\end{figure}
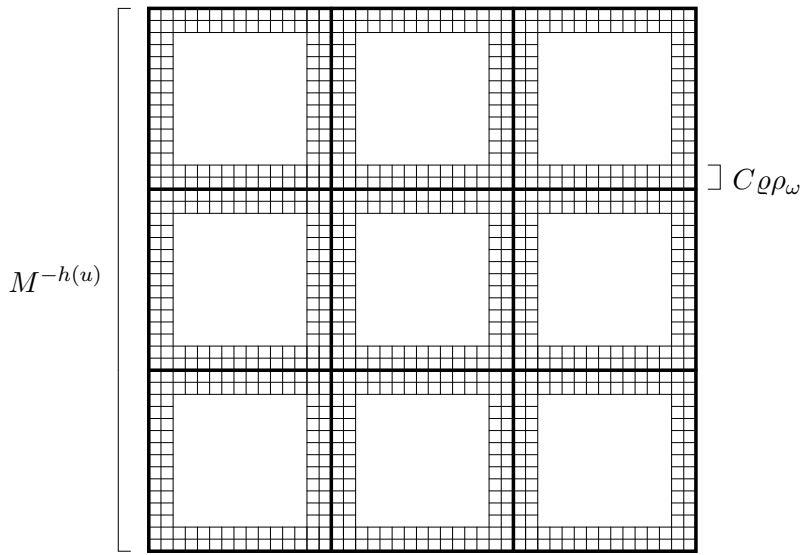
\vskip1cm
\begin{lemma} \label{C count lemma}  
Let $\mathcal E_{2}[u, \omega; \varrho]$ be the collection of pairs of tubes defined in \eqref{defn C_uw}. Then 
\[ \#(\mathcal E_{2}[u, \omega;\varrho]) \leq C \bigl(\varrho \rho_\omega \bigr)^2 2^{2(N - \nu(\omega))} M^{-(d-1)h(u) + (d+1)J}.\]
Here $\nu(\omega)$ denotes the index of the splitting vertex $\omega$, as defined in \eqref{defn nu}.   
\end{lemma}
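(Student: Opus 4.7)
The plan is to bound $\#(\mathcal E_2[u,\omega;\varrho])$ by partitioning the count into three independent factors: the number of admissible choices for the root $t_1$, the number of admissible slope pairs $(v_1,v_2)$ with $D(v_1,v_2)=\omega$, and, with these quantities fixed, the number of admissible choices for the companion root $t_2$. Each factor will be supplied by a result already established in the excerpt or by the structure of $\mathcal T_J(\Omega_N;M)$ produced in Proposition~\ref{pruning stage 1}.

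First, applying Lemma~\ref{counting lemma 2} directly yields that the projection of $\mathcal E_2[u,\omega;\varrho]$ onto its first root coordinate has cardinality at most
\[
C\,\varrho\,\rho_\omega\,M^{-(d-1)h(u)+dJ}.
\]
Second, for each such $t_1$ and each admissible slope pair $(v_1,v_2)$, Lemma~\ref{counting lemma 1}(\ref{count for t' given t, v, v'}) provides the bound
\[
\#\bigl\{t_2\in\mathcal Q(J):\{(t_1,v_1),(t_2,v_2)\}\in\mathcal E_2[u,\omega;\varrho]\bigr\}\le C\,\varrho\,\rho_\omega\,M^J.
\]
Third, I would count the slope pairs $(v_1,v_2)\in\Omega_N\times\Omega_N$ with $v_1\ne v_2$ and $D(v_1,v_2)=\omega$. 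Here the pruning guarantees that $\omega\in\mathcal G_{\nu(\omega)}(\Omega_N)$ has exactly two children in $\mathcal T_J(\Omega_N;M)$; moreover, by the binary-tree isomorphism $\Psi:\mathcal B_N\to\mathcal H(\Omega_N)$ of Proposition~\ref{Splitting tree lemma}, the set of leaves of $\Omega_N$ descended from $\omega$ has cardinality $2^{N-\nu(\omega)+1}$, with exactly $2^{N-\nu(\omega)}$ of these in each of the two children of $\omega$. Since $v_1$ and $v_2$ must lie in distinct children of $\omega$ in order for $D(v_1,v_2)=\omega$, the number of such ordered pairs is at most $2\cdot 2^{N-\nu(\omega)}\cdot 2^{N-\nu(\omega)}\le 2^{2(N-\nu(\omega))+1}$, which is $O(2^{2(N-\nu(\omega))})$.

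Combining these three estimates multiplicatively gives
\[
\#(\mathcal E_2[u,\omega;\varrho])\le C\,\bigl(\varrho\,\rho_\omega\bigr)^2\,2^{2(N-\nu(\omega))}\,M^{-(d-1)h(u)+(d+1)J},
\]
which is the desired inequality. Since each of the three factors has already been established, I anticipate no real obstacle; the only point requiring a touch of care is the leaf count inside a splitting vertex, which follows cleanly from the isomorphism in Proposition~\ref{Splitting tree lemma} together with the two-children property in Proposition~\ref{pruning stage 1}(\ref{two children per split}). A minor bookkeeping issue to keep in mind is that the factor from Lemma~\ref{counting lemma 1} is nominally stated with $t_1,v_1,v_2$ fixed, so one should order the multiplication as (number of $t_1$)$\times$(number of $(v_1,v_2)$)$\times$(number of $t_2$), matching the dependencies in the two auxiliary lemmas.
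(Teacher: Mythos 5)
Your proposal is correct and follows essentially the same route as the paper: the paper likewise multiplies the $t_1$-projection bound from Lemma~\ref{counting lemma 2}, the fixed-$(t_1,v_1,v_2)$ count of $t_2$ from Lemma~\ref{counting lemma 1}(\ref{count for t' given t, v, v'}), and the count of slope pairs descended from distinct children of $\omega$ (which the paper records as $2^{N-\nu(\omega)}$ choices for each of $v_1$ and $v_2$, matching your leaf count up to the harmless factor of $2$).
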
 
\begin{proof}
We combine the counts from Lemmas \ref{counting lemma 1} and \ref{counting lemma 2}. For fixed $t_1, v_1$ and $v_2$, the number of possible $t_2$ such that $\{ (t_1,v_1), (t_2,v_2)\} \in \mathcal E_{2}[u, \omega; \varrho]$ is bounded above by the quantity on the right hand side of \eqref{count for t' given t, v, v' inequality}. The number of possible $t_1$ is at most the right hand side of \eqref{t projection count}, whereas the number of possible $v_1$, hence also $v_2$, is $2^{N - \nu(\omega)}$ due to the binary nature of $\Omega_N$ as discussed in Section \ref{binary slope tree}. The claimed size estimate of $\mathcal E_{2}[u, \omega; \varrho]$ is simply the product of all the quantities mentioned above.   
\end{proof} 


\subsection{Counting slope tuples}
Variations of the arguments presented in Section \ref{tube count two roots section} also apply to more general collections. For the proof of the lower bound \eqref{random Kakeya lower bound}, we will need to estimate, in addition to the above, the sizes of collections consisting of tube triples and tube quadruples with certain pairwise intersections. The collections of tube tuples whose cardinalities are of interest are analogues of $\mathcal E_2[u, \omega;\varrho]$ of greater complexity, and their constructions share the common feature that the probability of slope assignment for any tube tuple within a collection is constant and falls into one of the categories classified in Section \ref{probability estimation section}. As we have seen in that section, the probability depends, among other things, on certain splitting vertices of the slope tree occurring as pairwise youngest common ancestors. In particular, which subset of pairwise youngest common ancestors has to be considered, whether for root or slope, is dictated by the root configuration type. An important component of tube-counting is therefore to estimate how many possible slope tuples can be generated from a given set of such splitting vertices.  Before moving on to the main counting arguments in this section presented in Sections \ref{four tube counting section}  and \ref{three tube counting section}, we observe a few facts that help in counting tuples of slopes, given some information about their ancestry. 
\begin{lemma} \label{slope vertex counting lemma} 
\begin{enumerate}[(i)]
\item \label{how many distinct pairwise ancestors} Given any $\Gamma \subseteq \Omega_N$, $\#(\Gamma) \leq 4$, there exist at most three distinct vertices $\{ \varpi_i : i=1,2,3\} \subseteq \mathcal G(\Omega_N)$ with the properties 
\begin{equation} \label{youngest common ancestor relations}
h(\varpi_1) \leq h(\varpi_2) \leq h(\varpi_3), \quad \varpi_2, \varpi_3 \subseteq \varpi_1,  
\end{equation}  
such that $D(w,w') \in \{ \varpi_i : i=1,2,3 \}$ for any $w \ne w'$, $w, w' \in \Gamma$. 
\item \label{how many slopes given ancestors} Suppose now that we are given $\{ \varpi_i : i=1,2,3 \}$, possibly distinct splitting vertices of the slope tree obeying \eqref{youngest common ancestor relations}. Define 
\begin{equation} \label{defn m}
m = m[\varpi_1, \varpi_2, \varpi_3] := \begin{cases} 2(\nu(\varpi_3) + \nu(\varpi_2))  &\text{ if } \varpi_3 \not\subseteq \varpi_2,  \\ 2 \nu(\varpi_3) + \nu(\varpi_2) + \nu(\varpi_1) &\text{ if } \varpi_3 \subseteq \varpi_2. \end{cases} 
\end{equation}  
Fix three distinct pairs of indices $\{(i_k, j_k) : \; i_k \ne j_k, \; 1\leq k \leq 3\} \subseteq \{1,2,3,4 \}^2$ with the property that $\bigcup \{i_k, j_k : k = 1,2,3 \} = \{1,2,3,4 \}$. Then
\[ \# \bigl\{(w_1, w_2, w_3, w_4) \in \Omega_N^4: D(w_{i_k}, w_{j_k}) = \varpi_k, \; 1 \leq k \leq 3 \bigr\} \leq C2^{4N -m} \]
provided the collection is nonempty.  
\end{enumerate}
\end{lemma}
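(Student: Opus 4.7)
For part (i), the plan is a simple tree-theoretic argument. The subtree of $\mathcal T_J(\Omega_N;M)$ spanned by the rays identifying the elements of $\Gamma$ has at most four leaves; its branching vertices (those with more than one child in the spanned subtree) are exactly the distinct values of $D(w, w')$ for $w \neq w'$ in $\Gamma$. Since a finite rooted tree with $k$ leaves has at most $k - 1$ branching vertices, at most three distinct such $D(w, w')$'s can arise. The vertex of minimal height among these is the common ancestor of all of $\Gamma$, hence an ancestor of every other branching vertex. Labeling it $\varpi_1$ and ordering the remaining vertices by non-decreasing height (allowing coincidences) yields the desired triple $\varpi_1, \varpi_2, \varpi_3$ satisfying $\varpi_2, \varpi_3 \subseteq \varpi_1$.

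For part (ii), I would transport the count to the binary tree $\mathcal B_N$ through the isomorphism $\Psi$ of Proposition~\ref{Splitting tree lemma}. Writing $w_i = \Psi(b^{(i)})$ for a unique $b^{(i)} \in \{0,1\}^N$, each splitting vertex $\varpi_k$ of index $\nu_k$ corresponds to a fixed binary prefix $A_k \in \{0,1\}^{\nu_k - 1}$ (the binary identifier of the $(\nu_k-1)$-th basic slope cube containing $\varpi_k$), and the constraint $D(w_{i_k}, w_{j_k}) = \varpi_k$ becomes: $b^{(i_k)}$ and $b^{(j_k)}$ both have prefix $A_k$ and disagree at position $\nu_k$. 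The containment relations from part (i) translate to the statement that both $A_2$ and $A_3$ extend $A_1$.

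The counting now splits according to the two subcases in the definition of $m$. When $\varpi_3 \not\subseteq \varpi_2$, the prefixes $A_2$ and $A_3$ must diverge at position $\nu_1$, so the four indices partition into two disjoint pairs lying in distinct children of $\varpi_1$, joining at $\varpi_2$ and $\varpi_3$ respectively. Each pair independently contributes $O(2^{2N - 2\nu_k + 1})$ tuples (prefix of length $\nu_k - 1$ fixed, a binary choice at position $\nu_k$, then $N - \nu_k$ free digits per member), giving a total of $O(2^{4N - 2\nu_2 - 2\nu_3}) = O(2^{4N - m})$; the $\varpi_1$-constraint is automatic from the branch structure. When instead $\varpi_3 \subseteq \varpi_2 \subseteq \varpi_1$, the chain structure forces two leaves at $\varpi_3$, one branching off at $\varpi_2$, and one branching off at $\varpi_1$; counting free digits per leaf yields $O(2^{2N - 2\nu_3 + 1} \cdot 2^{N - \nu_2} \cdot 2^{N - \nu_1}) = O(2^{4N - m})$.

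The main bookkeeping obstacle, which I anticipate to be the most tedious step, is verifying compatibility between the prescribed pair-labeling $\{(i_k, j_k)\}$ and the subtree topology (only compatible labelings give a nonempty collection), and handling the degenerate subcases where the $\varpi_k$'s coincide (for instance $\varpi_1 = \varpi_2$ collapses the chain case to only two branching levels). In each such subcase one must recount the free digits per leaf and check that the formula for $m$ continues to match the exponent; this calculation, while essentially routine once the topology is fixed, must be done uniformly across all subcases to deliver the single constant $C$ claimed in the statement.
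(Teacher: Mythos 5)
Your proposal is correct and follows essentially the same route as the paper: part (i) is the same selection of the maximal-height pairwise ancestors (the paper picks $\varpi_3$, then $\varpi_2$, of maximal height in $\mathcal D_\Gamma$ and uses the binary splitting structure to identify their descendants in $\Gamma$), and part (ii) is the same count — $2^{N-\nu(\varpi)+1}$ elements of $\Omega_N$ descend from a splitting vertex of index $\nu(\varpi)$ — which your binary-prefix bookkeeping via $\Psi$ reproduces exactly. The only remark worth making is that the ``compatibility and degeneracy'' analysis you flag as the tedious step is avoidable: since only an upper bound is claimed, and only when the collection is nonempty, the paper simply selects four \emph{distinct} indices ($i_3$, $j_3$, then $\ell_2 \in \{i_2,j_2\}\setminus\{i_3,j_3\}$ and $\ell_1 \in \{i_1,j_1\}\setminus\{i_3,j_3,\ell_2\}$ in the chain case) and bounds the number of admissible choices for each independently, which handles all coincidences among the $\varpi_k$ and all labelings uniformly without any case-by-case recount.
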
 
\begin{proof} 
If $\Gamma$ is given, we arrange all the pairwise youngest common ancestors of $\Gamma$, i.e,  the vertices in $\mathcal D_{\Gamma} := \{D(w, w') : w \ne w', \; w,w' \in \Gamma\}$, in increasing order of height, where distinct vertices of the same height can be arranged in any way, say according to the lexicographic ordering. We define $\varpi_3$ to be a vertex of maximal height in $\mathcal D_{\Gamma}$, and $\varpi_2$ to be a vertex of maximal height in $\mathcal D_{\Gamma} \setminus \{\varpi_3\}$. Due to maximality of height and the binary nature of the slope tree as ensured by Proposition \ref{pruning stage 1}, $\varpi_3$ has exactly two descendants in $\Gamma$, say $w_1$ and $w_2$.  

If $\varpi_3 \not\subseteq \varpi_2$, then there is no overlap among the descendants of these two vertices. Thus the two descendants $w_3$ and $w_4$ of $\varpi_2$ must be distinct from $w_1, w_2$, thus accounting for all the elements of $\Gamma$. In this case the conclusion of the lemma holds with $\varpi_1 = D(\varpi_2, \varpi_3)$. If $\varpi_3 \subsetneq \varpi_2$, then again by maximality of height $\varpi_2$ can contribute exactly one member of $\Gamma$ that is neither $w_1$ nor $w_2$. Let us call this new member $w_3$. If $\#(\Gamma) = 3$, then the proof is completed by setting $\varpi_1 = D(\varpi_2, \varpi_3) = \varpi_2$. If $\#(\Gamma) = 4$, we call the remaining child $w_4$, which is not descended from $\varpi_2$, and set $\varpi_1 = D(\varpi_2, w_4)$. This selection meets \eqref{youngest common ancestor relations}, and also accounts for all the pairwise youngest common ancestors of $\Gamma$, as required by part (\ref{how many distinct pairwise ancestors}) of the lemma.   

A very similar argument can be used to prove part (\ref{how many slopes given ancestors}). Since the total number of slopes in $\Omega_N$ generated by $\varpi_3$ is exactly $2^{N - \nu(\varpi_3)+1}$, this is the maximum number of possible choices for each of $w_{i_3}$ and $w_{j_3}$. If $\varpi_3 \not\subseteq \varpi_2$, then $\{ i_2, j_2 \} \cap \{ i_3, j_3 \} = \emptyset$. Since each of $w_{i_2}$ and $w_{j_2}$ admits at most $2^{N - \nu(\varpi_2)+1}$ possibilities by the same reasoning, the size of possible four tuples $(w_1, w_2, w_3, w_4)$ in this case is at most 2 raised to the power $2(N - \nu(\varpi_3) + 1) + 2(N - \nu(\varpi_2)+1)$, which gives the claimed estimate. If $\varpi_3 \subseteq \varpi_2 \subseteq \varpi_1$, then by our assumptions on $i_k, j_k$, there exist indices $\ell_2 \in \{i_2, j_2\} \setminus \{i_3, j_3\}$ and $\ell_1 \in \{i_1, j_1\} \setminus \{i_3, j_3, \ell_2\}$. Since $i_3, j_3, \ell_1, \ell_2$ are distinct indices and the number of possible choices of $w_{i_3}$, $w_{j_3}$, $w_{\ell_1}$ and $w_{\ell_2}$ are at most $2^{N - \nu(\varpi_3)}$, $2^{N - \nu(\varpi_3)}$, $2^{N - \nu(\varpi_1)}$ and $2^{N - \nu(\varpi_2)}$ respectively, the result follows. 
\end{proof} 
Minor modifications of the argument above yield the following analogue for slope triples. The proof is left to the reader. 
\begin{lemma} \label{three point slope counting lemma} 
\begin{enumerate}[(i)]
\item Given a collection $\Gamma \subseteq \Omega_N$, $\#(\Gamma) \leq 3$, it is possible to rearrange the collection of vertices $\{ D(w,w'); w \ne w', \; w, w' \in \Gamma\}$ as $\{ \varpi_1, \varpi_2\}$ with $\varpi_2 \subseteq \varpi_1$.  
\item Given a pair $\{ \varpi_1, \varpi_2\} \subseteq \mathcal G(\Omega_N)$ with $\varpi_2 \subseteq \varpi_1$, define 
\begin{equation} \label{defn mhat}
\widehat{m} = \widehat{m}[\varpi_1, \varpi_2] := 2 \nu(\varpi_2) + \nu(\varpi_1). 
\end{equation} 
Let $(i_1, j_1) \ne (i_2, j_2)$ be two pairs of indices such that $\{ i_1, j_1, i_2, j_2 \} = \{1, 2, 3\}$.
Then the following estimate holds:
\[ \#\{ (w_1, w_2, w_3) : D(w_{i_1}, w_{j_1}) = \varpi_1, \; D(w_{i_2}, w_{j_2}) = \varpi_2 \} \leq 2^{3N-\widehat{m}}.  \] 
\end{enumerate}
\end{lemma}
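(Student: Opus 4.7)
The plan is to mirror, at the level of three slopes, the argument used for Lemma~\ref{slope vertex counting lemma}, exploiting the fact (Proposition~\ref{pruning stage 1}(i),(ii)) that every ray in $\mathcal T_J(\Omega_N;M)$ splits exactly $N$ times and that every splitting vertex has exactly two children. These two facts pin down, for any splitting vertex $\varpi$ of index $\nu(\varpi)$, the number of leaves in each of its children as $2^{N-\nu(\varpi)}$, which is the single ingredient driving all counts below.

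For part (i), I would first dispose of the case $\#(\Gamma)\le 2$, where the set of pairwise youngest common ancestors has at most one element and we may set $\varpi_1=\varpi_2$. When $\Gamma=\{w_1,w_2,w_3\}$, I would define $\varpi_1:=D(w_1,w_2,w_3)$, the youngest ancestor from which all three slopes descend. Because $\varpi_1$ is a splitting vertex with exactly two children, the three slopes must partition as ``two in one child, one in the other'' (they cannot all sit in a single child, by minimality of $\varpi_1$). If $w_c$ is the solitary slope and $\{w_a,w_b\}$ the pair sharing a child, then $D(w_c,w_a)=D(w_c,w_b)=\varpi_1$, while $\varpi_2:=D(w_a,w_b)$ lies strictly inside the child that contains $w_a,w_b$, giving $\varpi_2\subsetneq\varpi_1$. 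This exhausts the pairwise youngest common ancestor set, as required.

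For part (ii), I would first observe that the hypothesis $(i_1,j_1)\ne(i_2,j_2)$ together with $\{i_1,j_1,i_2,j_2\}=\{1,2,3\}$ forces exactly one shared index $k^{\ast}\in\{i_1,j_1\}\cap\{i_2,j_2\}$; let $a,b$ be the remaining two indices, so that $D(w_{k^{\ast}},w_a)=\varpi_1$ and $D(w_{k^{\ast}},w_b)=\varpi_2$. The key geometric point is that since $\varpi_2\subseteq\varpi_1$ and both vertices lie on the ray identifying $w_{k^{\ast}}$, the slope $w_{k^{\ast}}$ is free to be any of the $2^{N-\nu(\varpi_2)+1}$ leaves descended from $\varpi_2$. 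Once $w_{k^{\ast}}$ is selected (in particular fixing the child of $\varpi_2$ in which it lies), $w_b$ is confined to the opposite child of $\varpi_2$, giving at most $2^{N-\nu(\varpi_2)}$ choices; and the constraint $D(w_{k^{\ast}},w_a)=\varpi_1$ forces $w_a$ to lie in the child of $\varpi_1$ not containing $w_{k^{\ast}}$, allowing at most $2^{N-\nu(\varpi_1)}$ options. Multiplying these three bounds gives $2^{3N-2\nu(\varpi_2)-\nu(\varpi_1)+1}=2\cdot 2^{3N-\widehat m}$, and the constant~$2$ is harmless (absorbed into the implicit multiplicative constants appearing throughout the paper). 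The degenerate case $\varpi_1=\varpi_2$ follows by the same argument, with the only modification being that both $w_a$ and $w_b$ are then forced into the unique child of $\varpi_1$ not containing $w_{k^{\ast}}$.

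No real obstacle is anticipated; the sole technical care is bookkeeping, namely identifying the shared index $k^{\ast}$ correctly so that the inclusion $\varpi_2\subseteq\varpi_1$ is used to funnel $w_{k^{\ast}}$ through the smaller ancestor first, which is what yields the exponent $2\nu(\varpi_2)+\nu(\varpi_1)$ in $\widehat m$ rather than the symmetric (and strictly weaker) $\nu(\varpi_1)+\nu(\varpi_2)+\nu(\varpi_2)$ obtained by a naive pair-by-pair count. Once this ordering is fixed, the subtree-leaf counts from Proposition~\ref{pruning stage 1} close the argument immediately.
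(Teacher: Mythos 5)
Your argument is correct and is precisely the ``minor modification'' of the proof of Lemma \ref{slope vertex counting lemma} that the paper has in mind (the paper leaves this proof to the reader). The extra factor of $2$ you pick up is indeed harmless, since the paper itself absorbs such factors into unspecified constants both in Lemma \ref{slope vertex counting lemma}(ii) and in every downstream application of the present lemma in Section \ref{three tube counting section}.
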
 

\subsection{Collections of four tubes with at least two pairwise intersections} \label{four tube counting section}

\subsubsection{Four roots of type 1} \label{subsection four root type 1 size count}
We start with the simplest and generic situation, when the root quadruple is of type 1. Motivated by the expression of the probability obtained in \eqref{four point type 1 probability}, let us first fix two vertex triples $(u,u',z)$ and $(\omega, \omega', v)$ in the root tree and slope tree respectively that satisfy the height and containment relations prescribed in Lemma \ref{four point type 1 lemma}. For such a selection and with $\varrho \in [M^{-J}, 10A_0]$, we define a collection $\mathcal E_{41} = \mathcal E_{41}[u,u',z;\omega, \omega', v; \varrho]$ of sticky-admissible tube quadruples of the form $\{(t_1, v_1), (t_2, v_2), (t_1', v_1'), (t_2', v_2') \}$, obeying the following restrictions:
\begin{equation} \label{defn E_{41}}
\left\{ \begin{aligned} &\mathbb I = \{(t_1, t_2); (t_1', t_2')\} \text{ is of type 1,} \; t_1 \ne t_2, \; t_1' \ne t_2', \; u = D(t_1, t_2), \\  &u' = D(t_1', t_2'), \; z = D(u,u'), \; \omega = D(v_1, v_2), \; \omega' = D(v_1', v_2'), \; v = D(\omega, \omega'), \\ &P_{t_1, v_1} \cap P_{t_2, v_2} \cap [\varrho, C_1 \varrho] \times \mathbb R^d \ne \emptyset, \; P_{t_1', v_1'} \cap P_{t_2', v_2'} \cap [\varrho, C_1 \varrho] \times \mathbb R^d \ne \emptyset. \end{aligned} \right\}
\end{equation} 
The result below provides a bound on the size of $\mathcal E_{41}$.  
\begin{lemma} \label{E_{41} size lemma}
There exists a constant $C > 0$ such that
\[ \#\bigl(\mathcal E_{41} \bigr) \leq C \bigl( \varrho^2 \rho_{\omega} \rho_{\omega'}\bigr)^2 2^{4N - 2\bigl( \nu(\omega) + \nu(\omega') \bigr)} M^{-(d-1) \bigl[ h(u) + h(u') \bigr] + 2(d+1)J}.  \]  
\end{lemma}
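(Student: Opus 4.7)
The plan is to reduce the count to two independent applications of Lemma \ref{C count lemma} by exhibiting an injection
$$\mathcal{E}_{41}[u,u',z;\omega,\omega',v;\varrho] \hookrightarrow \mathcal{E}_{2}[u,\omega;\varrho] \times \mathcal{E}_{2}[u',\omega';\varrho].$$
Given any quadruple in $\mathcal{E}_{41}$, project onto its two constituent pairs $\{(t_1,v_1),(t_2,v_2)\}$ and $\{(t_1',v_1'),(t_2',v_2')\}$. Each sub-pair is itself sticky-admissible, because any sticky map witnessing admissibility of the full quadruple also witnesses admissibility of any subcollection. The defining requirements in \eqref{defn E_{41}} on the first pair (namely $t_1 \ne t_2$, $D(t_1,t_2)=u$, $D(v_1,v_2)=\omega$, and the prescribed intersection in $[\varrho,C_1\varrho]\times\mathbb{R}^d$) are exactly the membership conditions for $\mathcal{E}_{2}[u,\omega;\varrho]$, and analogously for the primed pair. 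The projection is injective because the ordering $h(u)\leq h(u')$ together with the type 1 condition unambiguously identifies which pair is the unprimed one.

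The claimed bound then follows by multiplication. Lemma \ref{C count lemma} supplies
$$\#\bigl(\mathcal{E}_{2}[u,\omega;\varrho]\bigr) \leq C(\varrho\rho_\omega)^{2}\, 2^{2(N-\nu(\omega))}\, M^{-(d-1)h(u)+(d+1)J},$$
and the analogous estimate with primes. Multiplying the two and regrouping
$$(\varrho\rho_\omega)^{2}(\varrho\rho_{\omega'})^{2} = (\varrho^{2}\rho_\omega\rho_{\omega'})^{2}, \qquad 2^{2(N-\nu(\omega))}\cdot 2^{2(N-\nu(\omega'))} = 2^{4N-2(\nu(\omega)+\nu(\omega'))},$$
and combining the two $M$-exponents as $-(d-1)[h(u)+h(u')]+2(d+1)J$, produces the inequality exactly as stated.

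The extra structural constraints in \eqref{defn E_{41}} involving $z$ and $v$, and the stipulation that $\mathbb{I}$ be of type 1 in the strong sense, serve only to restrict the domain of $\mathcal{E}_{41}$ and can be dropped freely when passing to an upper bound. There is no serious obstacle here: the geometric counting work has already been encapsulated in Lemmas \ref{counting lemma 1}--\ref{C count lemma} (slope multiplicity $2^{2(N-\nu(\omega))}$, root location in the neighborhood $\mathcal{A}_u$ of the boundary of a child of $u$, and the tube-pair intersection estimate giving the factor $\varrho\rho_\omega M^J$), and the present lemma is simply recording the product form needed in Section \ref{LowerBoundSection}, where it will be paired with the probability bound from Lemma \ref{four point type 1 lemma} to drive the second-moment calculation behind \eqref{random Kakeya lower bound}.
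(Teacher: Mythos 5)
Your argument is correct and is essentially identical to the paper's proof: both rest on the containment $\mathcal E_{41}[u,u',z;\omega,\omega',v;\varrho] \subseteq \mathcal E_{2}[u,\omega;\varrho] \times \mathcal E_{2}[u',\omega';\varrho]$ followed by two applications of Lemma \ref{C count lemma} and multiplication of the resulting bounds. The extra remarks on sticky-admissibility of sub-pairs and injectivity of the projection are fine but not needed beyond what the paper states.
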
 
\begin{proof}
Since the intersection and ancestry conditions imply that 
\[ \mathcal E_{41}[u,u',z;\omega, \omega', v] \subseteq \mathcal E_2[u, \omega; \varrho] \times \mathcal E_2[u', \omega'; \varrho], \] 
the stated size bound for $\mathcal E_{41}$ is the product of the sizes of the two factors on the right. These are obtained from Lemma \ref{C count lemma} in Section \ref{tube count two roots section}, applied twice. 
\end{proof}  
\subsubsection{Four roots of type 2} \label{E_{42} size section}
The treatment of this case follows a similar route, though with certain important variations. The main distinction from Section \ref{subsection four root type 1 size count} is that the intersection and type requirements place greater constraints on the selection of the roots and slopes, and hence on the number of tube quadruples. Thus better bounds are possible, compared to the trivial ones exploited in Lemma \ref{C count lemma}. 

Let $(u,u', t)$ and $(\omega, \omega', \vartheta)$ be vertex triples in the root tree and slope tree respectively that meet the requirement of Corollary \ref{four point type 2 structure corollary}. In other words, the vertices $u,u',t$ are linearly ordered in terms of ancestry, and obey $u' \subsetneq u$, while $\omega \cap \vartheta \ne \emptyset$ and $\omega' \cap \vartheta \ne \emptyset$. Holding these fixed, define $\mathcal E_{42} = \mathcal E_{42}[u,u',t;\omega, \omega', \vartheta;\varrho]$ to be the collection of all sticky-admissible tuples of the form $\{(t_i, v_i), (t_i', v_i') : i=1,2\}$ obeying the properties:
\begin{equation} \label{defn E_{42}}
\left\{ \begin{aligned} &\mathbb I = \{(t_1, t_2); (t_1', t_2')\} \text{ is of type 2,} \; u' = D(t_1', t_2') \subsetneq u = D(t_1, t_2), \\ &t = D(t_2, t_2'),   \; \omega = D(v_1, v_2), \; \omega' = D(v_1', v_2'), \; \vartheta = D(v_2, v_2'), \\ &P_{t_1, v_1} \cap P_{t_2, v_2} \cap [\varrho, C_1 \varrho] \times \mathbb R^d \ne \emptyset, \; P_{t_1', v_1'} \cap P_{t_2', v_2'} \cap [\varrho, C_1 \varrho] \times \mathbb R^d \ne \emptyset. \end{aligned} \right\}
\end{equation} 
The vertex triple $(\omega, \omega', \vartheta)$ obeys the hypothesis of Lemma \ref{slope vertex counting lemma}(\ref{how many slopes given ancestors}), permitting the application of this lemma in the counting argument presented in Lemma \ref{E_{42} size lemma}. 
\begin{lemma} \label{applying the slope vertex counting lemma}
If the vertex pairs $(\omega, \vartheta)$ and $(\omega', \vartheta)$ both have the property that one member of the pair is contained in the other, then there exists a rearrangement of $\{\omega, \omega', \vartheta\}$ as $\{\varpi_1, \varpi_2, \varpi_3 \}$ that meets the requirement \eqref{youngest common ancestor relations}.  
\end{lemma}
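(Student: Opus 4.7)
The plan is to verify the statement by a direct case analysis on which member of each of the two nested pairs $(\omega, \vartheta)$ and $(\omega', \vartheta)$ is the ancestor. Since each pair gives rise to two alternatives, there are four cases in total, and in each I need to identify a single vertex among $\{\omega, \omega', \vartheta\}$ that contains the other two; this vertex will play the role of $\varpi_1$, and the remaining two can be labelled $\varpi_2, \varpi_3$ in increasing order of height, automatically satisfying \eqref{youngest common ancestor relations}.

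Three of the four cases are essentially immediate. If $\omega \subseteq \vartheta$ and $\omega' \subseteq \vartheta$, then $\vartheta$ contains both, so I set $\varpi_1 = \vartheta$. If $\vartheta \subseteq \omega$ and $\omega' \subseteq \vartheta$, then $\omega' \subseteq \vartheta \subseteq \omega$ and $\omega$ contains the other two, so $\varpi_1 = \omega$; the symmetric case with $\omega$ and $\omega'$ swapped is handled identically with $\varpi_1 = \omega'$. In each of these situations, the heights of the remaining pair are ordered by taking the one of smaller height as $\varpi_2$ and the other as $\varpi_3$, and the chain of containments automatically forces $h(\varpi_1) \leq h(\varpi_2) \leq h(\varpi_3)$.

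The only slightly subtle case is $\vartheta \subseteq \omega$ and $\vartheta \subseteq \omega'$, in which neither $\omega$ nor $\omega'$ is a priori comparable with the other. Here I would invoke the nesting property of $M$-adic cubes: since both $\omega$ and $\omega'$ contain the nonempty set $\vartheta$, they have nonempty intersection, and any two $M$-adic cubes with nonempty intersection are nested. Thus one of $\omega, \omega'$ contains the other; say $\omega \subseteq \omega'$ (the other sub-case being symmetric). Then $\omega'$ is the desired ancestor, and I take $\varpi_1 = \omega'$, $\varpi_2 = \omega$, $\varpi_3 = \vartheta$, with $h(\varpi_1) \leq h(\varpi_2) \leq h(\varpi_3)$ following from $\omega' \supseteq \omega \supseteq \vartheta$.

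I do not anticipate any real obstacle here; the argument is an elementary case split that relies only on the tree-theoretic fact that two $M$-adic cubes are either disjoint or nested. The main point of the lemma is to package the conclusion in a form that feeds directly into Lemma~\ref{slope vertex counting lemma}\eqref{how many slopes given ancestors} during the subsequent bound on $\#(\mathcal E_{42})$.
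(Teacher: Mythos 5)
Your argument is correct and is essentially the same as the paper's: both proofs are elementary case analyses that reduce to the disjoint-or-nested property of $M$-adic cubes, the only difference being that the paper splits on whether $\omega \cap \omega' = \emptyset$ while you split on the four combinations of containment directions within the two pairs. All four of your cases are handled correctly, including the one genuinely nontrivial case where $\vartheta$ is contained in both $\omega$ and $\omega'$.
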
 
\begin{proof}
If $\omega \cap \omega' = \emptyset$, then $\vartheta$ must contain both $\omega$ and $\omega'$. In this case, we rename $\vartheta$ as $\varpi_1$ and call $\varpi_3$ the element of $\{ \omega, \omega' \}$ with greater height. If $\omega \cap \omega' \ne \emptyset$, then the inclusion requirements imply that there must be a ray which contains all three vertices. Since the vertices are linearly ordered, we rename them based on height.    
\end{proof} 
Lemma \ref{applying the slope vertex counting lemma} above allows us to define the quantity $m$ as in \eqref{defn m}, which by a slight abuse of notation we denote by $m[\omega, \omega', \vartheta]$. We are now in a position to state the main result of this subsection, namely the size estimate for $\mathcal E_{42}$. The location of $t$ relative to $u, u'$ affects the size estimate of $\mathcal E_{42}$, even though we have seen that the probability estimate in \eqref{four point type 2 probability} remains unchanged with respect to this property. 
\begin{lemma}\label{E_{42} size lemma}
The following conclusions hold: 
\begin{enumerate}[(i)]
\item \label{u' smaller} If $u' \subseteq t \subseteq u$, then  $\mathcal E_{42}$ is non-empty only if dist$(t, \text{bdry}(u_{\ast})) \leq C\varrho \rho_{\omega}$. Here $u_{\ast}$ is defined to be the unique child of $u$ containing $t$ if $t \subsetneq u$ and is set to be equal to $u$ if $t=u$. In either case,  
\begin{align} 
\#\bigl( \mathcal E_{42}) &\leq C \bigl(\varrho^3 \rho_{\omega'}^2 \rho_{\omega} \bigr)\min \bigl[ \varrho \rho_{\omega}, M^{-h(t)}\bigr]  2^{4N - m[\omega, \omega', \vartheta]} \nonumber \\ &\hskip1.5in \times M^{-(d-1) \bigl( h(t) + h(u') \bigr) + 2(d+1)J}.  \nonumber 
\end{align} 
\item \label{t smaller} If $t \subsetneq u' \subsetneq u$, then $\mathcal E_{42}$ is non-empty only if \[ \text{dist}(t, \text{bdry}(u_{\ast})) \leq C \varrho \rho_{\omega} \quad \text{ and } \quad \text{dist}(t, \text{bdry}(u'_{\ast})) \leq C \varrho \rho_{\omega'},\]
where $u_{\ast}, u'_{\ast}$ are the children of $u, u'$ respectively that contain $t$. In this case,
\begin{equation*} 
\begin{aligned}
\#\bigl( \mathcal E_{42}\bigr) &\leq C \bigl(\varrho^2 \rho_{\omega} \rho_{\omega'} \bigr) 2^{4N - m[\omega, \omega', \vartheta]} \min\left[ \varrho \rho_{\omega}, M^{-h(t)}\right] \\ &\hskip1.3in \times \min\left[ \varrho \rho_{\omega'}, M^{-h(t)} \right]  M^{-2(d-1)h(t) + 2(d+1)J}.
\end{aligned} 
\end{equation*} 
\end{enumerate}
\end{lemma}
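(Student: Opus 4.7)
The proof factors $\#\mathcal E_{42}$ as a product $S \cdot R$, where $S$ counts admissible slope $4$-tuples consistent with the given vertex triple $(\omega, \omega', \vartheta)$ and $R$ counts root $4$-tuples per fixed slope tuple. For the slope count, Corollary \ref{four point type 2 structure corollary}(ii) guarantees that each of $(\omega, \vartheta)$ and $(\omega', \vartheta)$ is a nested pair, so Lemma \ref{applying the slope vertex counting lemma} recasts $\{\omega, \omega', \vartheta\}$ as a triple $\{\varpi_1, \varpi_2, \varpi_3\}$ obeying \eqref{youngest common ancestor relations}. Applying Lemma \ref{slope vertex counting lemma}(ii) to the index pairs $\{(1,2),(3,4),(2,4)\}$, where $(v_1, v_2, v_1', v_2')$ is relabeled as $(w_1, w_2, w_3, w_4)$, then yields $S \leq C 2^{4N - m[\omega, \omega', \vartheta]}$. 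This step is common to both cases. The necessary distance conditions arise from the intersection inequality of Lemma \ref{intersection criterion lemma}, which yields $|\text{cen}(t_1) - \text{cen}(t_2)| \leq C \varrho \rho_{\omega}$ and $|\text{cen}(t_1') - \text{cen}(t_2')| \leq C\varrho \rho_{\omega'}$. Combined with the requirement $D(t_1, t_2) = u$ and with $t_2 \in t \subseteq u_{\ast}$, these force $\text{dist}(t, \text{bdry}(u_{\ast})) \leq C\varrho \rho_{\omega}$ in case (i) (trivially satisfied when $t = u$); the analogous argument applied to the primed pair produces $\text{dist}(t, \text{bdry}(u'_{\ast})) \leq C\varrho \rho_{\omega'}$ in case (ii).

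For the root count in case (i), I iterate through $(t_1, t_2, t_1', t_2')$ per slope tuple. The key refinement is that the constraint $t_2 \in t$ (from $D(t_2, t_2') = t$ combined with $t_2' \in u' \subseteq t$) localizes the admissible $\text{cen}(t_1)$ strip appearing in the proof of Lemma \ref{counting lemma 2} from an $O(\varrho\rho_{\omega})$-neighborhood of an entire $(d-1)$-face of $u_{\ast}$, of area $M^{-(d-1)h(u)}$, down to the $O(\varrho\rho_{\omega})$-neighborhood of the $(d-1)$-face of $t$ adjacent to the siblings of $u_{\ast}$, of area $M^{-(d-1)h(t)}$. This yields $C\varrho\rho_{\omega} M^{-(d-1)h(t) + dJ}$ choices for $t_1$. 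Given $t_1$, the segment of admissible $\text{cen}(t_2)$ from Lemma \ref{counting lemma 1}(ii) has length $\leq C\varrho\rho_{\omega}$, and its intersection with $t$ has length $\leq C\min(\varrho\rho_{\omega}, M^{-h(t)})$, producing $\min(\varrho\rho_{\omega}, M^{-h(t)}) M^J$ choices for $t_2$. The primed pair lies entirely within $u' \subseteq t$, so no further refinement is needed: Lemma \ref{counting lemma 2} gives $C\varrho\rho_{\omega'} M^{-(d-1)h(u') + dJ}$ for $t_1'$ and Lemma \ref{counting lemma 1}(ii) gives $C\varrho\rho_{\omega'} M^J$ for $t_2'$. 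Multiplying these four factors by $S$ reproduces exactly the bound of part (i).

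Case (ii) with $t \subsetneq u' \subsetneq u$ is treated identically for the unprimed pair, while the primed pair inherits the same refinement as the unprimed one: $t_1'$ lies in an $O(\varrho\rho_{\omega'})$-neighborhood of $t$ outside $u'_{\ast}$, producing $C\varrho\rho_{\omega'} M^{-(d-1)h(t) + dJ}$ choices, and the constraint $t_2' \in t$ produces an additional $\min(\varrho\rho_{\omega'}, M^{-h(t)}) M^J$ factor by the same segment-intersection argument. Multiplying with $S$ gives the bound claimed in part (ii). The main technical obstacle is rigorously executing the refined $t_1$ and $t_1'$ counts: one must revisit the proof of Lemma \ref{counting lemma 2} and observe that the set $\mathcal A_u$ constructed there is a disjoint union of boundary strips associated to the individual children of $u$, so that further restricting to those $t_1$ within distance $C\varrho\rho_{\omega}$ of $t$ singles out a single such strip on the face of $t$ adjacent to the sibling children of $u_{\ast}$. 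The geometric hypothesis $\text{dist}(t, \text{bdry}(u_{\ast})) \leq C\varrho\rho_{\omega}$ is precisely what makes this refined region non-empty, so both the necessary condition and the size estimate emerge from the same geometric input.
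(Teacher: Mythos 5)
Your overall strategy is the paper's: you factor $\#(\mathcal E_{42})$ into the slope count $C2^{4N-m[\omega,\omega',\vartheta]}$ obtained from Lemma \ref{applying the slope vertex counting lemma} and Lemma \ref{slope vertex counting lemma}(\ref{how many slopes given ancestors}), times root counts built from Lemmas \ref{counting lemma 1} and \ref{counting lemma 2}, and your derivation of the necessary distance conditions is the same. The gap is in your refined $t_1$-count. You count $t_1$ first, asserting that it lies in an $O(\varrho\rho_{\omega})$-neighborhood of a face of $t$ ``of area $M^{-(d-1)h(t)}$'' and hence admits $C\varrho\rho_{\omega}M^{-(d-1)h(t)+dJ}$ choices. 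That volume computation is valid only when $\varrho\rho_{\omega}\lesssim M^{-h(t)}$. The admissible region for $\text{cen}(t_1)$ is $\{s\in u\setminus u_{\ast}:\text{dist}(s,t)\leq C\varrho\rho_{\omega}\}$, and a $\Delta$-neighborhood of a cube (or face) of side $M^{-h(t)}$ spreads laterally by $\Delta$, not by the side length; when $\Delta=C\varrho\rho_{\omega}>M^{-h(t)}$ its volume can be as large as $C(\varrho\rho_{\omega})^d$, which exceeds your claimed $\varrho\rho_{\omega}M^{-(d-1)h(t)}$ by the factor $(\varrho\rho_{\omega}M^{h(t)})^{d-1}>1$. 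This is precisely the regime in which the $\min$ in the statement equals $M^{-h(t)}$, and it genuinely occurs (the later estimation of $\mathfrak S_{42}^{\circ}$ splits on exactly this dichotomy), so as written your argument does not prove the stated bound there. The same defect propagates to both pairs in your part (\ref{t smaller}).

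The repair is to reverse the order of the two root counts, which is what the paper does: count $t_2$ (resp.\ $t_2'$) first --- it is confined to $t$ \emph{and} to a $C\varrho\rho_{\omega}$-slab near $\text{bdry}(t)$, so its range has volume at most $C\min[\varrho\rho_{\omega},M^{-h(t)}]M^{-(d-1)h(t)}$, the containment in $t$ supplying the cap that produces the $\min$ --- and then count $t_1$ given $t_2,v_1,v_2$ by the segment argument of Lemma \ref{counting lemma 1}(\ref{count for t' given t, v, v'}), which gives $C\varrho\rho_{\omega}M^{J}$ unconditionally. The product of the two factors is formally identical to yours, which is why your final expressions match the lemma; but the individual factor you assign to $t_1$ is not justified, and with your ordering the correct bound on the $t_1$-region would spoil the estimate in the regime $\varrho\rho_{\omega}>M^{-h(t)}$.
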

\begin{proof}
Both statements in the lemma involve similar arguments. We only prove part (\ref{u' smaller}) in detail, and leave a brief sketch for the other part. The argument here follows the basic structure of Lemma \ref{C count lemma}, since we still have the trivial containment 
\begin{equation} \label{trivial containment}  \mathcal E_{42}[u,u',t;\omega, \omega', \vartheta; \varrho] \subseteq \mathcal E_{2}[u, \omega; \varrho] \times\mathcal E_2[u', \omega';\varrho], \end{equation}  
but with a few modifications resulting from the more refined information about the roots and slopes available from $t$ and $\vartheta$. For instance, combining the defining assumptions that $t_2 \subseteq t$ and $u = D(t_1, t_2)$ with the intersection inequality $|\text{cen}(t_2) - \text{cen}(t_1)| \leq 2C_1 \varrho \rho_{\omega}$ derived from \eqref{intersection inequality restated} in Lemma \ref{counting lemma 1}, we deduce that $t$ has to lie within distance $2C_1 \varrho \rho_{\omega}$ of the boundary of $u_{\ast}$. This is the first conclusion of part (\ref{u' smaller}). For the size bound, we reason as follows. By Lemma \ref{counting lemma 1}(\ref{count for t' given t, v, v'}), the number of $t_1$ and $t_1'$, if everything else is held fixed, is $\leq C (\varrho \rho_{\omega} M^J) (\varrho \rho_{\omega'} M^J) \leq C \varrho^2 \rho_{\omega} \rho_{\omega'} M^{2J}$. Turning to slope counts, we apply Lemma \ref{slope vertex counting lemma}(\ref{how many slopes given ancestors}), the use of which has already been justified in Lemma \ref{applying the slope vertex counting lemma}, to deduce that the number of possible slope quadruples $(v_1, v_2, v_1', v_2')$ is $2^{4N - m}$. It remains to compute the size of the $t_2$ and $t_2'$ projections of $\mathcal E_{42}$. In view of \eqref{trivial containment}, a bound on the size of the $t_2'$ projection is given by the  right hand side of \eqref{t projection count} with $u$ replaced by $u'$. On the other hand, $t_2$ is restricted to lie within $t$ and within distance $2C_1 \varrho \rho_{\omega}$ from the boundary of $t$ if $t \subsetneq u$. This places a nontrivial spatial restriction on $t_2$ only if $2C_1 \varrho \rho_{\omega} < M^{-h(t)}$. If $t = u$, the argument leading up to \eqref{t projection count} shows that $t_2$ lies in $\mathcal A_u$ defined in \eqref{defn A_u}. In either event the volume of the region where $t_2$ can range is at most $C\min(\varrho \rho_{\omega}, M^{-h(t)}) M^{-(d-1)h(t)}$, hence the cardinality of the $t_2$ projection is at most $M^{dJ}$ times this quantity (see Figure~\ref{Fig: E_{42} size lemma}). Combining all these counts yields the bound on the size of $\mathcal E_{42}$ given in part (\ref{u' smaller}).   

\begin{figure}[h!]
\setlength{\unitlength}{0.6mm}
\begin{picture}(-50,0)(-60,-20)

	\special{sh 0.3}\path(30,-30)(30,-60)(34,-60)(34,-30)(30,-30)
	\special{sh 0.3}\path(60,-30)(60,-60)(56,-60)(56,-30)(60,-30)
	\special{sh 0.3}\path(34,-30)(34,-34)(56,-34)(56,-30)(34,-30)
	\special{sh 0.3}\path(34,-56)(34,-60)(56,-60)(56,-56)(34,-56)

        \allinethickness{0.5mm}\path(10,-20)(10,-110)(100,-110)(100,-20)(10,-20)
	\path(30,-30)(60,-30)(60,-60)(30,-60)(30,-30)
	\path(36,-36)(44,-36)(44,-44)(36,-44)(36,-36)

	\allinethickness{0.25mm}\path(26,-28)(64,-28)
	\path(26,-26)(64,-26)
	\path(26,-62)(64,-62)
	\path(26,-64)(64,-64)
	\path(28,-26)(28,-64)
	\path(26,-26)(26,-64)
	\path(62,-26)(62,-64)
	\path(64,-26)(64,-64)
	\path(26,-30)(64,-30)
	\path(26,-60)(64,-60)
	\path(30,-26)(30,-64)
	\path(60,-26)(60,-64)

	\path(32,-26)(32,-64)
	\path(34,-26)(34,-64)
	\path(58,-26)(58,-64)
	\path(56,-26)(56,-64)
	\path(26,-32)(64,-32)
	\path(26,-34)(64,-34)
	\path(26,-56)(64,-56)
	\path(26,-58)(64,-58)

	\path(26,-36)(34,-36)
	\path(26,-38)(34,-38)
	\path(26,-40)(34,-40)
	\path(26,-42)(34,-42)
	\path(26,-44)(34,-44)
	\path(26,-46)(34,-46)
	\path(26,-48)(34,-48)
	\path(26,-50)(34,-50)
	\path(26,-52)(34,-52)
	\path(26,-54)(34,-54)

	\path(56,-36)(64,-36)
	\path(56,-38)(64,-38)
	\path(56,-40)(64,-40)
	\path(56,-42)(64,-42)
	\path(56,-44)(64,-44)
	\path(56,-46)(64,-46)
	\path(56,-48)(64,-48)
	\path(56,-50)(64,-50)
	\path(56,-52)(64,-52)
	\path(56,-54)(64,-54)

	\path(36,-26)(36,-34)
	\path(38,-26)(38,-34)
	\path(40,-26)(40,-34)
	\path(42,-26)(42,-34)
	\path(44,-26)(44,-34)
	\path(46,-26)(46,-34)
	\path(48,-26)(48,-34)
	\path(50,-26)(50,-34)
	\path(52,-26)(52,-34)
	\path(54,-26)(54,-34)

	\path(36,-56)(36,-64)
	\path(38,-56)(38,-64)
	\path(40,-56)(40,-64)
	\path(42,-56)(42,-64)
	\path(44,-56)(44,-64)
	\path(46,-56)(46,-64)
	\path(48,-56)(48,-64)
	\path(50,-56)(50,-64)
	\path(52,-56)(52,-64)
	\path(54,-56)(54,-64)


	\put(92,-106){\Large\shortstack{$u$}}
	\put(50,-54){\Large\shortstack{$t$}}
	\put(37,-42){\shortstack{$u'$}}

	\path(30,-66)(30,-68)(60,-68)(60,-66)
	\put(38,-77){\shortstack{$M^{-h(t)}$}}
	\path(66,-26)(68,-26)(68,-30)(66,-30)
	\put(70,-30){\shortstack{$2C_1\varrho\rho_{\omega}$}}

\end{picture}
\vspace{5.5cm}
\caption{\label{Fig: E_{42} size lemma} Illustration of the spatial restriction on $t_2$ imposed by the conditions $u'\subset t\subset u$, $t_2\subset t$, $\text{dist}(t_1, t_2) \leq 2C_1\varrho\rho_{\omega} < M^{-h(t)}$.  Here, $t_2$ must lie within the shaded region along the boundary of $t$, with $t_1$ falling just outside this boundary in the unshaded thatched region.}
\end{figure}
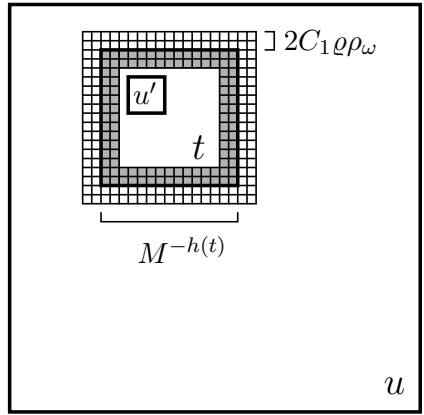 

For part (\ref{t smaller}), the size estimate of $\mathcal E_{42}$ is a product of a number of factors analogous to the ones already considered, the origins of which are indicated below. 
\begin{align*}
&\begin{aligned} &\#(t_1 \text{ given } v_1, v_2, t_2) \leq C \varrho \rho_{\omega} M^J, \\ &\#(t_1' \text{ given } v_1', v_2', t_2') \leq C \varrho \rho_{\omega'} M^J,  \end{aligned} \Biggr\}  \quad \text{(Lemma \ref{counting lemma 1}(\ref{count for t' given t, v, v'}))}\\
&\begin{aligned} &\#(t_2) \leq C \min \bigl[\varrho \rho_{\omega}, M^{-h(t)} \bigr] M^{-(d-1)h(t) + dJ}, \\ &\#(t_2') \leq C \min \bigl[\varrho \rho_{\omega'}, M^{-h(t)} \bigr] M^{-(d-1)h(t) + dJ}, \end{aligned} \Biggr\} \quad \text{(arguments similar to part (\ref{u' smaller}))}, \\ 
&\#(v_1, v_2, v_1', v_2') \leq 2^{4N - m[\omega, \omega', \vartheta]} \quad \text{(from Lemma \ref{slope vertex counting lemma}(\ref{how many slopes given ancestors}))}.
\end{align*} 
We omit the details.  
\end{proof} 

\subsubsection{Four roots of type 3} 
To complete the discussion of size for collections consisting of intersecting tube quadruples, it remains to consider the case where the root configuration is of type 3. Motivated by the conclusions of Lemma \ref{four point type 3 lemma} and Corollary \ref{four point type 3 structure corollary}, we fix two vertex tuples $(u, s_1, s_2)$ and $(\omega, \omega', \vartheta_1, \vartheta_2)$ in the root tree and the slope tree respectively, with the properties that 
$s_1, s_2 \subseteq u$, $h(u) \leq h(s_1) \leq h(s_2)$, $\omega \cap \vartheta_i \ne \emptyset$, and $\omega' \cap \vartheta_i \ne \emptyset$ for $i=1,2$. For such a selection, we define $\mathcal E_{43}[u, s_1, s_2; \omega, \omega', \vartheta_1, \vartheta_2; \varrho]$ to be the collection of all sticky-admissible tuples $\{(t_i, v_i), (t_i', v_i') : i=1,2 \}$ that satisfy the list of conditions below:    
\begin{equation} \label{defn E_{43}}
\left\{ \begin{aligned} &\mathbb I = \{(t_1, t_2); (t_1', t_2')\} \text{ is of type 3}, \; u = D(t_1, t_2) = D(t_1', t_2'), \\ & \omega' = D(v_1', v_2') \subseteq \omega = D(v_1, v_2) , \; s_i = D(t_i, t_i'),   \; \vartheta_i = D(v_i, v_i'), \; i=1,2, \\ &P_{t_1, v_1} \cap P_{t_2, v_2} \cap [\varrho, C_1 \varrho] \times \mathbb R^d \ne \emptyset, \; P_{t_1', v_1'} \cap P_{t_2', v_2'} \cap [\varrho, C_1 \varrho] \times \mathbb R^d \ne \emptyset. \end{aligned} \right\}
\end{equation} 
Since $\mathbb I$ is of type 3, interchanging $(t_1, t_2)$ and $(t_1', t_2')$ leaves $u$ unchanged. Hence we may assume without loss of generality that $\rho_\omega \leq \rho_{\omega'}$. Further, Lemma \ref{slope vertex counting lemma}(\ref{how many distinct pairwise ancestors}) dictates that for $\mathcal E_{43}$ to be non-empty, at most three out of the four vertices $\omega, \omega', \vartheta_1, \vartheta_2$ can be distinct. 
We leave the reader to verify that Lemma \ref{applying the slope vertex counting lemma} can be applied to any triple of these four vertices. Thus for any choice of an eligible tuple $\{\omega, \omega', \vartheta_1, \vartheta_2\}$, there exists a rearrangement of its entries as $\{\varpi_1, \varpi_2, \varpi_3\}$ obeying the hypothesis and hence the conclusion of Lemma \ref{slope vertex counting lemma}(\ref{how many slopes given ancestors}).  This permits an unambiguous definition of the quantity $m[\omega, \omega', \vartheta_1, \vartheta_2]$ as in \eqref{defn m}, which we use  in the statement of the lemma below. 
\begin{lemma} \label{E_{43} size lemma}
If $s_i \subsetneq u$, let $u_i$ denote the child of $u$ that contains $s_i$. Set $\Delta := \min[\varrho \rho_{\omega}, \varrho \rho_{\omega'}]$. 
\begin{enumerate}[(i)]
\item \label{s_1 s_2 location} The collection $\mathcal E_{43}$ is nonempty only if 
\begin{equation} \label{distance constraints} \sum_{i=1}^2\text{dist}\bigl(s_i,\text{bdry}(u_i) \bigr) \leq C \Delta, \end{equation} 
where dist$(s_1, \text{bdry}(u_1))$ is defined to be zero if $u = s_1$. 
\item \label{Delta small} If $\Delta \leq M^{-h(s_1)}$ and $\mathcal E_{43}$ is nonempty, then in addition to \eqref{distance constraints}, one of the following two conditions must hold: 
\begin{enumerate}[1.]
\item $s_2 \subsetneq s_1 = u$, in which case $s_2$ lies within distance $C \Delta$ of the boundary of some child of $s_1 = u$.   
\item $s_2 \cap s_1 = \emptyset$, in which case dist$(s_2, \text{bdry}(s_1)) \leq C \Delta$. 
\end{enumerate}   
In either case, $s_2$ is constrained to lie in the union of at most $2^d M$ slab-like parallepipeds, each with $(d-1)$ ``long'' directions of sidelength $M^{-h(s_1)}$ and one ``short'' direction of sidelength $\Delta$.  
\item \label{Delta large} If $\Delta \geq M^{-h(s_1)}$ and $\mathcal E_{43}$ is nonempty, then in addition to \eqref{distance constraints}, $s_2$ has to lie within a thin tube-like parallelepiped of length $\varrho \min(M^{-h(\omega)}, M^{-h(\omega')})$ in one ``long'' direction and thickness $CM^{-h(s_1)}$ in the remaining $(d-1)$ ``short'' directions; more precisely, both the following inequalities must hold: 
\begin{align}
&|\text{cen}(s_2) - \text{cen}(s_1) + x_1 (\text{cen}(\omega \cap \vartheta_2) - \text{cen}(\omega \cap \vartheta_1))| \leq CM^{-h(s_1)},  \text{ and } \label{cylinder1} \\
&|\text{cen}(s_2) - \text{cen}(s_1) + x_1' (\text{cen}(\omega' \cap \vartheta_2) - \text{cen}(\omega' \cap \vartheta_1))| \leq CM^{-h(s_1)} \label{cylinder2} 
\end{align} 
for some $x_1, x_1' \in [\varrho, C_1 \varrho]$. Here $\text{cen}(t)$ denotes the centre of the cube $t$. 
\item In all cases, if $\mathcal E_{43}$ is nonempty,  
\begin{align*}
\#(\mathcal E_{43}) \leq C 2^{4N - m[\omega, \omega', \vartheta_1, \vartheta_2]} &M^{-2(d-1)h(s_2) + 2(d+1)J} \\ & \times \prod_{i=1}^{2} \Bigl[\min \bigl[ \varrho \rho_{\omega}, M^{-h(s_i)}\bigr] \min \bigl[ \varrho \rho_{\omega'}, M^{-h(s_i)}\bigr] \Bigr]. 
\end{align*}  
\end{enumerate}
\end{lemma}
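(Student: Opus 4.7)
My plan is to mirror the structure of the proof of Lemma \ref{E_{42} size lemma}, handling the four assertions in order, with the bulk of the new work concentrated on the thin-tube restriction in part (\ref{Delta large}) and its accounting in the final count.

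For part (\ref{s_1 s_2 location}), I would start from the pairwise intersection $(t_1,v_1)\cap(t_2,v_2)\neq\emptyset$ in the slab $[\varrho,C_1\varrho]\times\mathbb R^d$. Applying Lemma~\ref{intersection criterion lemma} and Corollary~\ref{which is bigger corollary} exactly as in the proof of Lemma~\ref{counting lemma 1}(\ref{count for t' given t, v, v'}) gives $|\mathrm{cen}(t_1)-\mathrm{cen}(t_2)|\leq C\varrho\rho_\omega$. Because $t_i\subseteq s_i\subseteq u_i$ with $u_1\neq u_2$ children of $u$, this forces $s_1$ to sit within $C\varrho\rho_\omega$ of $\mathrm{bdry}(u_1)$ (and the same bound on $s_2$). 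Repeating with the $(t_1',t_2')$ pair gives the competing bound $C\varrho\rho_{\omega'}$. Taking the smaller of the two produces \eqref{distance constraints}, with the $s_1=u$ convention handled trivially.

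For parts (\ref{Delta small}) and (\ref{Delta large}), the inputs are the two intersection inequalities written at the level of cube centres,
\begin{equation*}
|\mathrm{cen}(t_2)-\mathrm{cen}(t_1)+x_1(v_2-v_1)|\leq 2c_d\sqrt d\,M^{-J},\qquad |\mathrm{cen}(t_2')-\mathrm{cen}(t_1')+x_1'(v_2'-v_1')|\leq 2c_d\sqrt d\,M^{-J},
\end{equation*}
together with the containments $t_i,t_i'\subseteq s_i$, which let me swap $\mathrm{cen}(t_i)$ for $\mathrm{cen}(s_i)$ at a cost of $O(M^{-h(s_1)})$ (since $h(s_2)\geq h(s_1)$), and likewise swap $v_i$ for $\mathrm{cen}(\omega\cap\vartheta_i)$ using $v_i\in\omega\cap\vartheta_i$. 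In the regime $\Delta\leq M^{-h(s_1)}$ the resulting inequality collapses to $|\mathrm{cen}(s_2)-\mathrm{cen}(s_1)|\leq CM^{-h(s_1)}$, and splitting into the cases $s_2\subsetneq s_1$ (so $s_1=u$, forced by the type~3 hypothesis combined with $h(s_1)\leq h(s_2)$) versus $s_1\cap s_2=\emptyset$ yields the slab description in part (\ref{Delta small}). In the regime $\Delta\geq M^{-h(s_1)}$ I instead keep the direction-dependent term: the first inequality rearranges to \eqref{cylinder1} and the analogue for $(t_1',t_2')$ rearranges to \eqref{cylinder2}, constraining $\mathrm{cen}(s_2)$ to the intersection of two thin cylinders of thickness $O(M^{-h(s_1)})$ centred on $\mathrm{cen}(s_1)$.

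For the final count in part (iv), I would first invoke Lemma~\ref{slope vertex counting lemma}(\ref{how many slopes given ancestors}) (applicable by Corollary~\ref{four point type 3 structure corollary} and Lemma~\ref{applying the slope vertex counting lemma}) to obtain the $2^{4N-m[\omega,\omega',\vartheta_1,\vartheta_2]}$ slope factor, then fix $s_1,s_2$ and count roots as follows. The pair $(t_1,t_2)$ with $t_1\subseteq s_1$, $t_2\subseteq s_2$ admits at most $M^{d(J-h(s_2))}$ choices for $t_2$, and for each such $t_2$ a restricted version of Lemma~\ref{counting lemma 1}(\ref{count for t' given t, v, v'}) (the line segment for $\mathrm{cen}(t_1)$ is truncated by $s_1$) gives at most $C\min[\varrho\rho_\omega,M^{-h(s_1)}]M^J$ choices for $t_1$. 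The same bookkeeping for $(t_1',t_2')$ contributes a matching factor with $\rho_{\omega'}$. Collecting the four geometric factors produces $M^{-2(d-1)h(s_2)+2(d+1)J}$ times $\prod_{i=1}^2\min[\varrho\rho_\omega,M^{-h(s_i)}]\min[\varrho\rho_{\omega'},M^{-h(s_i)}]$; the $h(s_2)$ dependence rather than $h(s_1)+h(s_2)$ results from pairing each $t_i$-factor with a matching $t_i'$-factor at the common scale $h(s_i)$.

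The main obstacle I anticipate is the clean extraction of the tube-like constraint in part (\ref{Delta large}) from the two intersection inequalities, specifically controlling the error incurred when replacing $v_i$ by $\mathrm{cen}(\omega\cap\vartheta_i)$. This error is proportional to $\varrho\cdot\mathrm{diam}(\omega\cap\vartheta_i)$, and showing that it is absorbed into $O(M^{-h(s_1)})$ requires using sticky-admissibility to compare $h(\omega\cap\vartheta_i)$ against $h(s_i)\geq h(s_1)$ together with the basic relation $\varrho\rho_\omega\geq M^{-J}$ from Lemma~\ref{counting lemma 1}(\ref{comparing scales}). Once this geometric reduction is in place, the counting is a routine product, but the accountancy that turns the $s_1$-scale and $s_2$-scale constraints into the final $\min[\varrho\rho_\omega,M^{-h(s_i)}]$-factors needs to be checked carefully in each of the regimes identified in (\ref{Delta small}) and (\ref{Delta large}).
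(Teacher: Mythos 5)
Your parts (i)--(iii) track the paper's argument. For (i), the paper bounds $\text{dist}(s_i,\text{bdry}(u_i))$ by $\min\bigl[\text{dist}(t_1,t_2),\,\text{dist}(t_1',t_2')\bigr]$ using $t_i,t_i'\subseteq s_i$, which is your ``competing bounds'' observation; for (ii) the two cases reduce to \eqref{distance constraints} when $s_2\subsetneq s_1=u$ and to $\text{dist}(s_2,\text{bdry}(s_1))\leq \min\bigl[\text{dist}(t_2,t_1),\text{dist}(t_2',t_1')\bigr]$ when $s_1\cap s_2=\emptyset$; and for (iii) the replacement error $|x_1|\,|\text{cen}(\omega\cap\vartheta_i)-v_i|\leq C\varrho M^{-h(\vartheta_i)}$ is absorbed into $O(M^{-h(s_1)})$ exactly as you anticipate, via the sticky-admissibility inequalities $h(\vartheta_i)\geq h(s_i)\geq h(s_1)$ from Lemma \ref{four point type 3 lemma}.

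Part (iv), however, has a concrete gap. You count $t_2$ (and $t_2'$) trivially, as all $M^{d(J-h(s_2))}$ root cubes of $s_2$, and apply the truncated line-segment count only to $t_1$ and $t_1'$. Multiplying the four factors you list gives
\begin{equation*}
C\,2^{4N-m}\min\bigl[\varrho\rho_\omega,M^{-h(s_1)}\bigr]\min\bigl[\varrho\rho_{\omega'},M^{-h(s_1)}\bigr]\,M^{-2dh(s_2)+2(d+1)J},
\end{equation*}
which exceeds the claimed bound by the factor $M^{-2h(s_2)}\bigl(\min[\varrho\rho_\omega,M^{-h(s_2)}]\min[\varrho\rho_{\omega'},M^{-h(s_2)}]\bigr)^{-1}\geq 1$; this factor is unbounded when $\varrho\rho_\omega\ll M^{-h(s_2)}$, so the stated estimate does not follow, and the ``pairing at the common scale $h(s_i)$'' remark does not repair the arithmetic. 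The missing ingredient is the slab restriction on $t_2$: since $t_1$ and $t_2$ lie in distinct children of $u$ and $|\text{cen}(t_1)-\text{cen}(t_2)|\leq C\varrho\rho_\omega$, the cube $t_2$ must lie in $s_2\cap\mathcal A_u$ with $\mathcal A_u$ as in \eqref{defn A_u}, a region of volume at most $C\min\bigl[\varrho\rho_\omega,M^{-h(s_2)}\bigr]M^{-(d-1)h(s_2)}$, whence $\#(t_2)\leq C\min\bigl[\varrho\rho_\omega,M^{-h(s_2)}\bigr]M^{-(d-1)h(s_2)+dJ}$, and analogously for $t_2'$ with $\rho_{\omega'}$. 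This is the same device as Lemma \ref{counting lemma 2} and the $t_2$-count in Lemma \ref{E_{42} size lemma}; it is what produces the $i=2$ factors in the product, and those factors are genuinely needed downstream (see the $s_2$-summation in \eqref{S_{43} inner sum 1}), so the weaker trivial count cannot be substituted.
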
 
\begin{proof}
Let us fix a tuple $\{(t_i, v_i); (t_i', v_i') : i =1,2\}$ in $\mathcal E_{43}$. As in previous proofs such as Lemma \ref{counting lemma 2} (applications of which have appeared in the counting arguments of Lemma \ref{C count lemma} and \ref{E_{42} size lemma}), the key elements are the inequalities 
\begin{equation} \label{intersection inequalities again} 
|\text{cen}(t_2) - \text{cen}(t_1)| \leq C \varrho \rho_{\omega} \quad \text{ and } \quad |\text{cen}(t_2') - \text{cen}(t_1')| \leq C \varrho \rho_{\omega'}.
\end{equation} 
They are proved exactly in the same way as \eqref{dist c(t) and c(t')} follows from \eqref{intersection inequality restated}, resulting from the nontrivial intersection conditions that define $\mathcal E_{43}$. Combined with the set inclusion relations $u = D(t_1, t_2) = D(t_1', t_2')$ and $t_1, t_1' \subseteq s_1$ and $t_2, t_2' \subseteq s_2$ that are guaranteed by the type assumption on the roots, this yields that 
\begin{align*}
\text{dist}(s_i, \text{bdry}(u_i)) &\leq \inf \bigl[\text{dist}(t_i, \text{bdry}(u_i)), \text{dist}(t_i', \text{bdry}(u_i)) \bigr] \\ &= \inf \bigl[ \text{dist}(t_i, u_i^c), \text{dist}(t_i', u_i^c) \bigr] \\ 
&\leq \inf \bigl[\text{dist}(t_1, t_2), \text{dist}(t_1', t_2') \bigr] \\ &\leq C \min[\varrho \rho_{\omega}, \varrho \rho_{\omega'}] =  C\Delta, 
\end{align*}
leading to the distance constraints in \eqref{distance constraints}. Incidentally, the inequalities \eqref{intersection inequalities again} also prove the relation in part (\ref{Delta small}) if $s_2 \cap s_1 = \emptyset$. On the other hand, if $s_2 \subseteq s_1$, then $s_1 = u$ and the desired inequality is simply a restatement of the one in (\ref{distance constraints}). For part (\ref{Delta large}), we refer again to the intersection inequality \eqref{intersection inequality restated}, using it to deduce that 
\begin{align*}
\bigl| &\text{cen}(s_2) - \text{cen}(s_1) + x_1 \bigl(\text{cen}(\omega \cap \vartheta_2) - \text{cen}(\omega \cap \vartheta_1) \bigr)\bigr| \\ &\leq \sum_{i=1}^{2}|\text{cen}(s_i) - \text{cen}(t_i)| + |x_1| \sum_{i=1}^{2}\bigl| \text{cen}(\omega \cap \vartheta_i) - v_i \bigr| +  |\text{cen}(t_2) - \text{cen}(t_1) + x_1(v_2 - v_1)| \\ &\leq \sqrt{d} \sum_{i=1}^{2}M^{-h(s_i)} + C_1 \varrho \sqrt{d} \sum_{i=1}^{2} M^{-h(\vartheta_i)} + 2 c_d \sqrt{d} M^{-J} \leq C M^{-h(s_1)}. 
\end{align*}   
Here we have also used the height and inclusion relations associated with the root configuration type established in Lemma \ref{four point type 3 lemma}; namely, \[ t_i \subseteq s_i, \quad v_i \in \omega \cap \vartheta_i, \quad h(s_i) \leq h(\vartheta_i), \quad h(s_1) \leq h(s_2), \quad i=1,2.\] The inequality above implies that $s_2$ has to lie within distance $O(M^{-h(s_1)})$ of a line segment of length at most $\varrho|\text{cen}(\omega \cap \vartheta_2) - \text{cen}(\omega \cap \vartheta_1)| \leq \varrho M^{-h(\omega)}$. The inequality \eqref{cylinder2} is proved in an identical manner, using $t_i', v_i', \omega'$ instead of $t_i, v_i, \omega$. The first statement in part (\ref{Delta large}) is a consequence of both these inequalities.  

The bound on the size of $\mathcal E_{43}$ uses the same machinery as in the proof of Lemma \ref{E_{42} size lemma}, so we simply indicate the breakdown of the contributions from the different sources: 
\begin{align*}
\#(\mathcal E_{43}) &\leq \underset{\text{$\#(t_1)$ with $t_2, v_1, v_2$ fixed}}{\underbrace{C\min\bigl[ \varrho \rho_{\omega}, M^{-h(s_1)}\bigr] M^{J}}} \times \underset{\text{$\#(t_1')$ with $t_2', v_1', v_2'$ fixed}}{\underbrace{C\min\bigl[ \varrho \rho_{\omega'}, M^{-h(s_1)}\bigr] M^{J}}} 
\times \underset{\begin{subarray}{c}\text{$\#(v_1, v_2, v_1', v_2')$} \\ {\text{from Lemma \ref{slope vertex counting lemma}(\ref{how many slopes given ancestors})}} \end{subarray}}{\underbrace{2^{4N - m[\omega, \omega', \vartheta_1, \vartheta_2]} }}\\
&\quad \times \underset{\text{$\#(t_2$-projection)}}{\underbrace{C\min\bigl[ \varrho \rho_{\omega}, M^{-h(s_2)}\bigr] M^{-(d-1)h(s_2) + dJ}}} \times \underset{\text{$\#(t_2'$-projection)}}{\underbrace{\min\bigl[ \varrho \rho_{\omega'}, M^{-h(s_2)}\bigr] M^{-(d-1)h(s_2) + dJ}}}, 
\end{align*}    
which leads to the stated estimate.
\end{proof} 

\subsection{Collections of three tubes with at least two pairwise intersections} \label{three tube counting section}
For the sake of completeness and book-keeping, we record in this section the cardinality of collections consisting of intersecting tube triples. No new ideas are involved in the proofs, which are in fact simpler than the ones in Section \ref{four tube counting section}. These are left to the interested reader.   

Using the notation set up in Lemmas \ref{three point type 1 lemma} and \ref{three point type 2 lemma}, we define the collections $\mathcal E_{31} = \mathcal E_{31}[u,u'; \omega, \omega'; \varrho]$ and $\mathcal E_{32} = \mathcal E_{32}[u,t; \omega, \omega', \vartheta; \varrho]$ in exactly the same way $\mathcal E_{4i}$ were defined. Namely, $\mathcal E_{3i}$ consists of all sticky-admissible tuples of the form $\{(t_1, v_1), (t_2, v_2), (t_2', v_2') \}$ such that $\mathbb I = \{t_1, t_2, t_2' \}$ is of type $i$ and 
\[ P_{t_1, v_1} \cap P_{t_2, v_2} \cap [\varrho, C_1 \varrho] \times \mathbb R^d \ne \emptyset, \quad P_{t_1, v_1} \cap P_{t_2', v_2'} \cap [\varrho, C_1 \varrho] \times \mathbb R^d \ne \emptyset.\]  
In addition, the members of $\mathcal E_{3i}$ must satisfy 
\[ u = D(t_1, t_2), \quad u' = D(t_1, t_2'), \quad \omega = D(v_1, v_2), \quad \omega' = D(v_1, v_2'),\]
with $u = u'$, $t = D(t_2, t_2')$ and $\vartheta = D(v_2, v_2')$ if $i=2$. We also define the quantities $\widehat{m}[\omega, \omega']$ for $\mathcal E_{31}$ and $\widehat{m}[\omega, \omega', \vartheta]$ for $\mathcal E_{32}$; both are expressed using the formula \eqref{defn mhat},  where $\{ \varpi_1, \varpi_2\}$ with $\varpi_2 \subseteq \varpi_1$ is a rearrangement of $\{ \omega, \omega' \}$ for $\mathcal E_{31}$ and of $\{ \omega, \omega', \vartheta \}$ for $\mathcal E_{32}$, by virtue of Lemma \ref{three point slope counting lemma}. With this notation in place, the size estimates on $\mathcal E_{3i}$ are as follows. 
\begin{lemma}
\begin{enumerate}[(i)]
\item Set $\Delta := \min(\varrho \rho_{\omega}, \varrho \rho_{\omega'})$. Then \[ \#(\mathcal E_{31}) \leq C \Delta \varrho^2 \rho_{\omega} \rho_{\omega'} 2^{3N - \widehat{m}[\omega, \omega']} M^{-(d-1)(h(u) + h(u')) + (2d+1)J}.  \] 
\item With the same definition of $\Delta$ as in part (i), 
\[ \#(\mathcal E_{32}) \leq C \Delta \min[\varrho \rho_{\omega}, M^{-h(t)} ] \min[\varrho \rho_{\omega'}, M^{-h(t)} ]  2^{3N - \widehat{m}[\omega, \omega', \vartheta]} M^{-2(d-1)h(t) + (2d+1)J}.  \] 
\end{enumerate}
\end{lemma}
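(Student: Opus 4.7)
The plan is to follow the counting template used throughout Section~\ref{tube count section}, in particular the proof of Lemma~\ref{E_{42} size lemma}: factor $\#(\mathcal{E}_{3i})$ as a product of independent counts for slopes, the ``anchor'' root $t_1$, and the two ``extending'' roots $t_2,t_2'$ conditioned on $t_1$ and the slopes. With only three tubes instead of four, no new geometric observation is required---the bounds follow directly by combining Lemmas~\ref{counting lemma 1}(ii), \ref{counting lemma 2}, \ref{C count lemma} and the slope-counting Lemma~\ref{three point slope counting lemma}(ii).

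For part (i) (type 1 triples), I will decompose the triple as a pair in $\mathcal{E}_2[u,\omega;\varrho]$ augmented by a single extra tube $(t_2',v_2')$: bound the pair by Lemma~\ref{C count lemma} and the extra tube by Lemma~\ref{counting lemma 1}(ii), together with the ancillary slope bound $\#\{v_2': D(v_1,v_2')=\omega'\}\leq 2^{N-\nu(\omega')}$. This yields the asymmetric bound $\varrho^3\rho_\omega^2\rho_{\omega'}\,2^{3N-2\nu(\omega)-\nu(\omega')}\,M^{-(d-1)h(u)+(d+2)J}$. The mirror decomposition using $\mathcal{E}_2[u',\omega';\varrho]$ gives the same bound with $\omega,\omega'$ and $u,u'$ swapped. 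Taking whichever is smaller---always the one weighted by the deeper of $\omega,\omega'$, which carries the smaller $\rho$ by Corollary~\ref{Two distances comparable}---produces the target prefactor $\Delta\,\varrho^2\rho_\omega\rho_{\omega'}\,2^{3N-\widehat m[\omega,\omega']}$. The $M$-exponent I obtain is smaller than the target's $M^{-(d-1)(h(u)+h(u'))+(2d+1)J}$ by a factor $M^{(d-1)(J-h(u_*))}\geq 1$, where $u_*$ is the shallower of $u,u'$, so the claimed bound follows.

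For part (ii) (type 2 triples), I will use Lemma~\ref{counting lemma 2} to bound $t_1$: both intersection conditions force $t_1$ into a slab of the form in \eqref{defn A_u}, one of thickness $C\varrho\rho_\omega$ and one of thickness $C\varrho\rho_{\omega'}$, both aligned with the same family of face hyperplanes of the children of $u=u'$. Their intersection is therefore a slab of thickness $\Delta$, yielding $\#(t_1)\leq C\Delta\,M^{-(d-1)h(u)+dJ}$. The extra structural constraint $t_2,t_2'\subseteq t$ truncates the line segments in the proof of Lemma~\ref{counting lemma 1}(ii) to length $\min(\varrho\rho_\omega,M^{-h(t)})$ and $\min(\varrho\rho_{\omega'},M^{-h(t)})$ respectively, furnishing the two minimum factors in the target. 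The slope count is supplied by Lemma~\ref{three point slope counting lemma}(ii) after rearranging $\{\omega,\omega',\vartheta\}$ as a nested pair (whose existence is argued as in Lemma~\ref{applying the slope vertex counting lemma}). Multiplying the four factors gives an $M$-exponent of $-(d-1)h(u)+(d+2)J$, which is dominated by the target's $-2(d-1)h(t)+(2d+1)J$ since $h(u)\leq h(t)\leq J$.

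The main obstacle is bookkeeping rather than conceptual: in part (i), recognizing that the two symmetric pair-plus-one decompositions of the triple together furnish the min-weighted $\Delta$ factor (forcing one into the smaller of two intrinsically asymmetric bounds); and in part (ii), observing that the two slab constraints on $t_1$ share their face hyperplanes, so that the intersection has thickness $\min(\varrho\rho_\omega,\varrho\rho_{\omega'})$ rather than a mere overlap of volumes. No new probabilistic or geometric input is required beyond the tools already developed in Sections~\ref{tube count two roots section} and~\ref{four tube counting section}.
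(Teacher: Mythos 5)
Part (i) of your argument is sound: the two asymmetric ``pair plus one'' decompositions are both valid, choosing the one anchored at the deeper of $\omega,\omega'$ produces both the exponent $2^{3N-\widehat m[\omega,\omega']}$ and the factor $\varrho\rho_{\varpi_2}\leq C\Delta$ (the deeper splitting vertex does carry the smaller $\rho$, by Proposition \ref{pruning stage 1}(iii) together with Corollary \ref{Two distances comparable}), and the missing factor $M^{-(d-1)h(\cdot)}$ in your $M$-exponent is safely replaced by $M^{-(d-1)J}$. (The paper gives no proof of this lemma, but this matches the intended template from Lemmas \ref{C count lemma} and \ref{E_{41} size lemma}.)

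Part (ii), however, has a genuine gap at the last step. Your factorization assigns the $d$-dimensional slab count to $t_1$ (giving $\Delta M^{-(d-1)h(u)+dJ}$) and the one-dimensional truncated-segment counts to $t_2,t_2'$ (giving $\min[\varrho\rho_\omega,M^{-h(t)}]M^J$ and $\min[\varrho\rho_{\omega'},M^{-h(t)}]M^J$), so your total $M$-exponent is $-(d-1)h(u)+(d+2)J$. The claimed domination by the target's $-2(d-1)h(t)+(2d+1)J$ is equivalent to $2h(t)\leq h(u)+J$, and the hypothesis $h(u)\leq h(t)\leq J$ points the \emph{wrong} way: take $h(u)$ small and $h(t)$ close to $J$ (e.g.\ $d=2$, $h(u)=0$, $h(t)=J$ gives your exponent $4J$ versus the target's $3J$). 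So your bound, while a valid upper bound for $\#(\mathcal E_{32})$, does not imply the stated one. The fix is to swap the roles, mirroring Lemma \ref{E_{42} size lemma}(ii): count $t_2$ and $t_2'$ by their slab volumes inside $t$ --- each lies in $t$ and within distance $C\varrho\rho_\omega$ (resp.\ $C\varrho\rho_{\omega'}$) of the boundary of the child of $u$ containing $t$, contributing $C\min[\varrho\rho_\omega,M^{-h(t)}]M^{-(d-1)h(t)+dJ}$ and $C\min[\varrho\rho_{\omega'},M^{-h(t)}]M^{-(d-1)h(t)+dJ}$ --- and then count $t_1$ \emph{conditionally} on $(t_2,v_1,v_2)$ and $(t_2',v_1,v_2')$ via Lemma \ref{counting lemma 1}(ii), taking the shorter of the two admissible segments to get $\#(t_1)\leq C\Delta M^J$. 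The product of these with the slope count $2^{3N-\widehat m[\omega,\omega',\vartheta]}$ is exactly the stated bound, with $M$-exponent $-2(d-1)h(t)+2dJ+J=-2(d-1)h(t)+(2d+1)J$.
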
 
\section{Sums over root and slope vertices} \label{summation section} 
A recurrent feature of the proof of \eqref{random Kakeya lower bound}, as we will soon see in Section \ref{LowerBoundSection}, is the use of certain sums over specific subsets of vertices in the root and slope trees. We record the outcomes of these summation procedures in this section for easy reference later.  
\begin{lemma} \label{splitting vertex summation lemma} 
Fix a vertex $\varpi_0 \in \mathcal G(\Omega_N)$, i.e. $\varpi_0$ is a splitting vertex of the slope tree. Then the following estimates hold. 
\begin{enumerate}[(i)]
\item \label{splitting vertex summation 1}
For any $\alpha \in \mathbb R$, 
\[ 
\sum_{\begin{subarray}{c}\varpi \in \mathcal G(\Omega_N) \\ \varpi \subseteq \varpi_0 \end{subarray}} 2^{-\alpha \nu(\varpi)} \leq \begin{cases} C_{\alpha} 2^{-\alpha \nu(\varpi_0)} &\text{ if } \alpha > 1, \\ N 2^{-\nu(\varpi_0)} &\text{ if } \alpha =1, \\ C_{\alpha} 2^{-\alpha \nu(\varpi_0) + N(1 - \alpha)} &\text{ if } \alpha < 1. \end{cases} \]
\item \label{splitting vertex summation 2}
For $M \geq 2$, $\beta > 0$ and $\alpha \geq 1$, 
\[ \sum_{\begin{subarray}{c}\varpi \in \mathcal G(\Omega_N) \\ \varpi \subseteq \varpi_0 \end{subarray}} M^{- \beta h(\varpi)} 2^{- \alpha \nu(\varpi)} \leq C_{\alpha, \beta} M^{-\beta h(\varpi_0)} 2^{-\alpha \nu(\varpi_0)}. \]
\end{enumerate}
\end{lemma}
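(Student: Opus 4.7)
The plan rests on two structural facts from Proposition \ref{pruning stage 1} combined with the definitions of $\mathcal G_j(\Omega_N)$ and $\nu(\cdot)$: each splitting vertex in $\mathcal G_j(\Omega_N)$ has exactly two children in $\mathcal G_{j+1}(\Omega_N)$, and any child strictly exceeds its parent in height. Consequently, for any $\varpi_0 \in \mathcal G_{j_0}(\Omega_N)$ and $j_0 \leq j \leq N$, the splitting descendants of $\varpi_0$ of index $j$ number exactly $2^{j-j_0}$, and any such descendant $\varpi$ satisfies $h(\varpi) \geq h(\varpi_0) + (j - j_0)$.

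For part (i), the plan is to group the sum by splitting index and apply this counting to reduce matters to a one-dimensional geometric sum:
\[
\sum_{\varpi \subseteq \varpi_0} 2^{-\alpha\nu(\varpi)} \;=\; \sum_{j=j_0}^{N} \#\{\varpi \in \mathcal G_j(\Omega_N) : \varpi \subseteq \varpi_0\}\, 2^{-\alpha j} \;\leq\; 2^{-j_0}\sum_{j=j_0}^N 2^{(1-\alpha)j}.
\]
The three stated cases then correspond to the standard trichotomy for geometric series: the inner sum is dominated by its first term $2^{(1-\alpha)j_0}$ when $\alpha > 1$ (yielding the constant $C_\alpha 2^{-\alpha j_0}$), has at most $N - j_0 + 1 \leq N$ terms of equal size when $\alpha = 1$, and is dominated by its last term $2^{(1-\alpha)N}$ when $\alpha < 1$; the elementary bound $2^{-j_0} \leq 2^{-\alpha j_0}$ (valid since $j_0 \geq 1$ and $\alpha \leq 1$) absorbs the discrepancy in the $\alpha < 1$ case to match the stated bound.

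For part (ii), since the summand carries an extra factor $M^{-\beta h(\varpi)}$, the trivial estimate $M^{-\beta h(\varpi)} \leq M^{-\beta h(\varpi_0)}$ followed by (i) would lose a factor of $N$ exactly at the threshold $\alpha = 1$. To avoid this, I intend to exploit the strict unit height gap between parent and child splitting vertices via a downward induction on $j_0 = \nu(\varpi_0)$. Writing $S(\varpi_0)$ for the left-hand side of (ii) and letting $\gamma_1, \gamma_2 \in \mathcal G_{j_0 + 1}(\Omega_N)$ denote the two children of $\varpi_0$ (so that $h(\gamma_i) \geq h(\varpi_0) + 1$), separating the root term yields
\[
S(\varpi_0) \;=\; M^{-\beta h(\varpi_0)} 2^{-\alpha j_0} + S(\gamma_1) + S(\gamma_2).
\]
The induction hypothesis $S(\gamma_i) \leq C\, M^{-\beta h(\gamma_i)} 2^{-\alpha(j_0+1)}$, combined with $h(\gamma_i) \geq h(\varpi_0) + 1$, produces
\[
S(\varpi_0) \;\leq\; M^{-\beta h(\varpi_0)} 2^{-\alpha j_0}\bigl(1 + 2 C M^{-\beta}2^{-\alpha}\bigr),
\]
which closes the induction as long as $C$ is chosen with $C\bigl(1 - 2 M^{-\beta}2^{-\alpha}\bigr) \geq 1$. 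The only technical point is the existence of such a $C$: the hypotheses $M \geq 2$, $\beta > 0$, $\alpha \geq 1$ together give $2 M^{-\beta}2^{-\alpha} \leq 2^{-\beta} < 1$, which is precisely where all three parameter restrictions in (ii) are used. No step constitutes a genuine obstacle; the only planning decision of substance is to proceed by induction rather than term-by-term summation for (ii), since the latter route fails to yield a dimension-free constant at the endpoint $\alpha = 1$.
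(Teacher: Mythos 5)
Your proposal is correct. Part (i) is exactly the paper's argument: the number of index-$j$ splitting descendants of $\varpi_0$ is $2^{j-\nu(\varpi_0)}$, and the three cases are just the trichotomy of the resulting geometric series. For part (ii) the paper relies on the same key fact you do — $h(\varpi)-h(\varpi_0)\geq \nu(\varpi)-\nu(\varpi_0)$, because heights strictly increase at each splitting generation — but deploys it in closed form rather than recursively: writing $M^{-\beta h(\varpi)}\leq M^{-\beta h(\varpi_0)}M^{-\beta j}$ with $j=\nu(\varpi)-\nu(\varpi_0)$ reduces the sum to $M^{-\beta h(\varpi_0)}2^{-\alpha \nu(\varpi_0)}\sum_{j\geq 0}M^{-\beta j}2^{-(\alpha-1)j}$, which converges under precisely the hypotheses $M\geq 2$, $\beta>0$, $\alpha\geq 1$ that make your induction close. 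Your downward induction is a valid repackaging; the only additional points it needs — the base case $\nu(\varpi_0)=N$ and the exact decomposition of $\{\varpi\subseteq\varpi_0\}$ into $\{\varpi_0\}$ together with the descendants of its two index-$(\nu(\varpi_0)+1)$ splitting children — both hold by the structure of the pruned tree from Proposition \ref{pruning stage 1}.
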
 
\begin{proof}
By Proposition \ref{Splitting tree lemma}, the number of splitting vertices descended from $\varpi_0$ with the property that $\nu(\varpi) = \nu(\varpi_0) + j$ is $2^j$. Since $j$ can be at most $N$, we see that   
\[ \sum_{\begin{subarray}{c}\varpi \in \mathcal G(\Omega_N) \\ \varpi \subseteq \varpi_0 \end{subarray}} 2^{- \alpha \nu(\varpi)} \leq \sum_{j} 2^{-\alpha(\nu(\varpi_0) + j)} 2^j \leq 2^{-\alpha \nu(\varpi_0)} \sum_{j=1}^N 2^{j(1-\alpha)},    \]
from which part (\ref{splitting vertex summation 1}) follows. On the other hand, if $\nu(\varpi) = \nu(\varpi_0) + j$, then $h(\varpi) - h(\varpi_0) \geq \nu(\varpi) - \nu(\varpi_0) = j$. Thus, a similar computation shows that 
 \begin{align*} \sum_{\begin{subarray}{c}\varpi \in \mathcal G(\Omega_N) \\ \varpi \subseteq \varpi_0 \end{subarray}} M^{- \beta h(\varpi)} 2^{- \alpha \nu(\varpi)} &= \sum_{j} M^{- \beta \bigl(h(\varpi_0) + j \bigr)} 2^{- \alpha \bigl(\nu(\varpi_0) + j \bigr)} 2^j \\ &\leq M^{-\beta h(\varpi_0)} 2^{-\alpha \nu(\varpi_0)} \sum_{j=1}^{\infty} M^{-\beta j} 2^{-(\alpha-1)j}.\end{align*} 
The last sum in the displayed expression is convergent, establishing the desired conclusion in part (\ref{splitting vertex summation 2}). 
\end{proof} 

\begin{lemma} \label{root tree summation lemma} 
Fix a vertex $y$ in the root tree and a splitting vertex $\varpi$ in the slope tree such that $h(y) \leq h(\varpi)$. Given a constant $\beta$, one of the following estimates holds for  
\[ \mathfrak s(\beta) := \sum_{z}' M^{-\beta h(z)} 2^{\mu(\varpi, h(z))}, \] 
where the sum $\sum'$ takes place over all vertices $z$ of the root tree such that $z \subseteq y$ and $h(z) \leq h(\varpi)$. 
\begin{enumerate}[(i)]
\item \label{beta<d} If $\beta < d$, then $\mathfrak s(\beta) \leq C_{\beta} 2^{\nu(\varpi)} M^{(d - \beta)h(\varpi) - dh(y)}$.  
\item \label{beta=d} If $\beta = d$, then $\mathfrak s(d) \leq C 2^{\nu(\varpi)} h(\varpi) M^{-dh(y)}$. 
\item \label{beta>d} If $\beta > d$, then $\mathfrak s(\beta) \leq C_{\beta} 2^{\nu(\varpi)} M^{-\beta h(y)}$. 
\item \label{beta>d special M} If $\beta > d$ is large enough so that $2M^d < M^{\beta}$, then $\mathfrak s(\beta) \leq C_{\beta}M^{-dh(y)}$. 
\end{enumerate}
\end{lemma}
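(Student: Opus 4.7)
The plan is to reindex the sum by the height $k := h(z)$ of the summation variable. Since the root tree is the full $M^d$-adic tree in dimension $d$ (each vertex has $M^d$ children), there are exactly $M^{d(k-h(y))}$ descendants of $y$ at height $k$, so
\[
\mathfrak{s}(\beta) = M^{-dh(y)}\sum_{k=h(y)}^{h(\varpi)} M^{(d-\beta)k}\,2^{\mu(\varpi,k)}.
\]
The whole proof is then an exercise in one-dimensional geometric summation, governed by the tug-of-war between the factor $M^{(d-\beta)k}$ and the nondecreasing step function $2^{\mu(\varpi,k)}$. Two monotonicity facts for $\mu(\varpi,k)$ drive everything: trivially $\mu(\varpi,k)\le\nu(\varpi)$, and, more subtly, $\mu(\varpi,k)\le k$. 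The second inequality follows from the pruning in Proposition~\ref{pruning stage 1}: successive splitting vertex ancestors of $\varpi$ strictly increase in height, and each $\theta\in\mathcal H_i(\Omega_N)$ sits strictly below the $i$th splitting vertex (by \eqref{jth basic slope cubes} and \eqref{height of youngest splitting child}), so the $i$th basic slope cube containing $\varpi$ has height at least $i$.

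The first three cases are then routine. For (i), $\beta<d$, the factor $M^{(d-\beta)k}$ grows geometrically in $k$, so the sum is controlled by the top index $k=h(\varpi)$; combining with $2^{\mu(\varpi,k)}\le 2^{\nu(\varpi)}$ gives the claimed bound. For (ii), $\beta=d$, the geometric factor equals $1$, the sum has at most $h(\varpi)+1$ terms, and each term is bounded by $2^{\nu(\varpi)}$. For (iii), $\beta>d$, the factor $M^{(d-\beta)k}$ decays geometrically, so the sum is dominated by its bottom term $k=h(y)$, and the crude bound $2^{\mu(\varpi,k)}\le 2^{\nu(\varpi)}$ yields $M^{(d-\beta)h(y)}2^{\nu(\varpi)}$, which combines with the prefactor $M^{-dh(y)}$ to produce the claimed estimate.

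The main obstacle is (iv), since the conclusion must lose the factor $2^{\nu(\varpi)}$ present in (iii) and become uniform in $\varpi$. I would abandon the crude bound $\mu(\varpi,k)\le\nu(\varpi)$ in favor of the height bound $\mu(\varpi,k)\le k$, obtaining
\[
M^{(d-\beta)k}\,2^{\mu(\varpi,k)} \;\le\; \bigl(2M^{d-\beta}\bigr)^k.
\]
The hypothesis $2M^d<M^\beta$ reads precisely as $2M^{d-\beta}<1$, so the tail $\sum_{k\ge h(y)}(2M^{d-\beta})^k$ is a convergent geometric series with sum
\[
\frac{(2M^{d-\beta})^{h(y)}}{1-2M^{d-\beta}} \;\le\; \frac{1}{1-2M^{d-\beta}} \;=: C_\beta,
\]
where the final inequality uses $(2M^{d-\beta})^{h(y)}\le 1$. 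Multiplying through by the prefactor $M^{-dh(y)}$ completes the proof of (iv). The delicate point is that without the sharp inequality $\mu(\varpi,k)\le k$, no such $\varpi$-uniform bound is available; this is where the finer structure of the pruning of $\Omega_N$ enters the estimate.
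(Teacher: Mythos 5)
Your proof is correct and follows essentially the same route as the paper: reindex by height $k=h(z)$ using the count $M^{d(k-h(y))}$ of descendants of $y$ at each level, sum the resulting geometric series in the three regimes $\beta<d$, $\beta=d$, $\beta>d$ with the crude bound $2^{\mu(\varpi,k)}\le 2^{\nu(\varpi)}$, and switch to the sharper bound $\mu(\varpi,k)\le k$ for part (iv). Your justification of $\mu(\varpi,k)\le k$ via the strictly increasing heights of the splitting ancestors and $h(\theta)=\lambda(\gamma_i)>h(\gamma_i)$ is the right one (the paper states this inequality without proof), and the only cosmetic difference is that the paper first stratifies the sum by the value of $\mu$ before recombining, which amounts to the same estimate.
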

\begin{proof}
Since $\varpi$ is a splitting vertex of the slope tree, there exists an integer $j \in [1, N]$ such that $\varpi \in \mathcal G_j(\Omega_N)$, i.e., $\nu(\varpi) = j$. By definition, every $j$th splitting vertex is either itself a $(j-1)$th basic slope cube or is contained in one. Let $\varpi_\ell \in \mathcal H_{\ell}(\Omega_N)$ be the $\ell$th slope cube that contains $\varpi$, so that 
\[ \varpi_1 \supsetneq \varpi_2 \supsetneq \cdots \supsetneq \varpi_{j-1} \supseteq \varpi. \]
If $z$ is a vertex of the root tree such that $h(\varpi_{\ell-1}) \leq h(z) < h(\varpi_\ell)$ for some $\ell \leq j-1$, then $\mu(\varpi, h(z)) = \ell -1$; on the other hand, if $h(\varpi_{j-1}) \leq h(z) \leq h(\varpi)$, then $\mu(\varpi, h(z)) = j-1$. This suggests decomposing the sum defining $\mathfrak s(\beta)$ according to the heights of the slope cubes containing $\varpi$. Implementing this and recalling that $\#\{ z : z \subseteq y, \; h(z) = k \} = M^{dk-dh(y)}$, we obtain
\begin{align*}
\mathfrak s(\beta) &= \sum_{\ell=1}^{j-1}\sum_{k= h(\varpi_{\ell-1})}^{h(\varpi_{\ell})-1} 2^{\ell-1} \sum_{z: h(z) = k}' M^{- \beta k}  + \sum_{k = h(\varpi_{j-1})}^{h(\varpi)} 2^{j-1} \sum_{z : h(z) = k}' M^{- \beta k} \\ &\leq C \Bigl[\sum_{\ell=1}^{j}\sum_{k= h(y)}^{h(\varpi)} 2^{\ell-1} M^{-\beta k} M^{dk - dh(y)} \Bigr] \\ &\leq C M^{-dh(y)}\sum_{\ell=1}^{j} 2^{\ell-1} \sum_{k = h(y)}^{h(\varpi)}M^{(d - \beta)k}  \\ &\leq C M^{-dh(y)}2^{j} \sum_{k = h(y)}^{h(\varpi)}M^{(d - \beta)k} \\ &\leq C 2^{\nu(\varpi)} M^{-dh(y)} \begin{cases} M^{(d- \beta) h(\varpi)} &\text{ if } \beta < d, \\ h(\varpi) &\text{ if } \beta = d, \\ M^{-(\beta -d) h(y)} &\text{ if } \beta > d. \end{cases} 
\end{align*}
Upon simplification, these are the estimates claimed in parts (\ref{beta<d})-(\ref{beta>d}) of the lemma. Part (\ref{beta>d special M}) follows from the observation that $\mu(\varpi, h(z)) \leq h(z)$, hence 
\begin{align*}
\mathfrak s(\beta) \leq \sum_z' M^{-\beta h(z)} 2^{h(z)} &\leq \sum_k 2^k M^{-\beta k + d(k- h(y))} \\ &\leq M^{-dh(y)} \sum_k \left( \frac{2M^d}{M^{\beta}}\right)^k \leq C_{\beta} M^{-dh(y)}.  
\end{align*}  
\end{proof} 
In view of spatial constraints on the ancestors of root cubes as encountered in Lemmas \ref{E_{42} size lemma} and \ref{E_{43} size lemma}, occasionally the sums that we consider take place over more restricted ranges of vertices than the one in Lemma \ref{root tree summation lemma}, even though the summands may retain the same form. The next result makes this quantitatively precise. Let $\varpi$ be a splitting vertex of the slope tree, and $\mathcal R$ a fixed parallelepiped in the root hyperplane with sidelength $\beta$ in $(d-r)$ directions and $\gamma$ in the remaining $r$ directions, where $1 \leq r \leq d-1$ and $\beta \geq \gamma \geq M^{-J}$.  Given constants $\epsilon \geq M^{-h(\varpi)}$ and $\alpha \in \mathbb R$, we define 
\begin{equation} \label{defn spm}
\mathfrak s_{\pm} = \mathfrak s_{\pm}(\alpha, \epsilon, \mathcal R, \varpi) := \sum_{z \in \mathcal Z_{\pm}} M^{-\alpha h(z)} 2^{\mu(\omega, h(z))}, 
\end{equation} 
where the index sets $\mathcal Z_{\pm}$ are collections of vertices of the root tree defined as follows:
\begin{align*}
\mathcal Z &:= \{ z \subseteq \mathcal R : h(z) \leq h(\varpi), \; M^{-h(z)} \leq \epsilon \}, \\ \mathcal Z_{+} &:= \mathcal Z \cap \{z : M^{-h(z)} \geq \gamma \}, \\ \mathcal Z_{-} & := \mathcal Z \cap \{z : M^{-h(z)} \leq \gamma \}.  
\end{align*} 
\begin{lemma}  \label{finer sum lemma} 
The following estimates hold for $\mathfrak s_{\pm}$ defined in \eqref{defn spm}.
\begin{enumerate}[(i)]
\item \label{s+} If $\alpha > d-r$ and $\epsilon \geq \gamma$ then $\mathfrak s_{+} \leq  C 2^{\nu(\varpi)} \beta^{d-r} \epsilon^{\alpha - d + r}$.
\item \label{s-} If $\alpha > d$, then $\mathfrak s_{-} \leq C 2^{\nu(\varpi)} \beta^{d-r} \gamma^r \min(\epsilon, \gamma)^{\alpha -d}$. 
\end{enumerate}
\end{lemma}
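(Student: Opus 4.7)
The plan is to carry out a direct geometric counting argument for each of the two cases $\mathcal Z_{\pm}$, followed by a geometric series estimate. The starting point is the observation that for any integer $k$ with $0 \leq k \leq h(\varpi)$,
\[ \#\bigl\{ z \in \mathcal T(\{0\} \times [0,1)^d;M) : h(z) = k, \; z \cap \mathcal R \neq \emptyset \bigr\} \leq \begin{cases} C \beta^{d-r} \gamma^r M^{dk} &\text{ if } M^{-k} \leq \gamma, \\ C \beta^{d-r} M^{(d-r)k} &\text{ if } \gamma \leq M^{-k} \leq \beta. \end{cases} \]
The first bound is the ordinary volumetric count when $z$ fits inside $\mathcal R$ in every direction. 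The second uses that when $M^{-k} \geq \gamma$ the cube $z$ already covers $\mathcal R$ in the $r$ short directions, so $z$ is free to move only in the remaining $(d-r)$ long directions, each of which contributes at most $\lceil \beta M^k \rceil \lesssim \beta M^k$ choices.

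For the index $\mu(\varpi, h(z))$, I will use the trivial bound $\mu(\varpi, h(z)) \leq \nu(\varpi)$, which allows us to pull the factor $2^{\nu(\varpi)}$ out of the sum. (This mirrors the way the factor $2^{\nu(\varpi)}$ appears in Lemma \ref{root tree summation lemma}, and any finer decomposition based on the chain of basic slope cubes containing $\varpi$ would not improve the final bound under our hypotheses $\alpha > d-r$ and $\alpha > d$.)

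Inserting these two ingredients into \eqref{defn spm} reduces the proof to estimating two geometric series. For part (\ref{s+}), after enforcing the constraint $\gamma \leq M^{-k} \leq \epsilon$ (which is nonempty precisely because $\epsilon \geq \gamma$),
\[ \mathfrak s_{+} \leq C 2^{\nu(\varpi)} \beta^{d-r} \sum_{k \,:\, \gamma \leq M^{-k} \leq \epsilon} M^{(d-r-\alpha)k}. \]
Since $\alpha > d-r$, the summand is geometrically decreasing in $k$, so the sum is dominated by the term at $k_{\min} \approx \log_M(1/\epsilon)$, yielding $M^{(d-r-\alpha)k_{\min}} \approx \epsilon^{\alpha - d + r}$, which is the desired bound. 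For part (\ref{s-}), analogously,
\[ \mathfrak s_{-} \leq C 2^{\nu(\varpi)} \beta^{d-r} \gamma^r \sum_{k \,:\, M^{-k} \leq \min(\gamma, \epsilon)} M^{(d-\alpha)k}, \]
and since $\alpha > d$, the sum is again dominated by its smallest index, giving $\min(\epsilon, \gamma)^{\alpha-d}$.

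The arguments are routine; the only real step requiring care is the geometric count of $M$-adic cubes intersecting the asymmetric parallelepiped $\mathcal R$, and in particular the regime-switching at the scale $M^{-k} = \gamma$, which is precisely the reason the definitions split $\mathcal Z$ into $\mathcal Z_{+}$ and $\mathcal Z_{-}$.
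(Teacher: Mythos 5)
Your proposal is correct and follows essentially the same route as the paper: the same two cube-counting bounds (only the $(d-r)$ long directions contribute when $M^{-k}\geq\gamma$, the full volumetric count $\beta^{d-r}\gamma^r M^{dk}$ when $M^{-k}\leq\gamma$), the same crude bound $\mu(\varpi,h(z))\leq\nu(\varpi)$ to extract $2^{\nu(\varpi)}$, and the same geometric-series summation dominated by the smallest admissible height.
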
 
\begin{proof}
We have already established in the proof of Lemma \ref{root tree summation lemma} that $\mu(\varpi, h(z)) \leq \nu(\varpi)-1$. Further if $M^{-k} \geq \gamma$, then there can be at most a constant number of possible choices of $k$th generation $M$-adic cubes $z$ that are contained in $\mathcal R$ and intersect with a slice of $\mathcal R$ that fixes coordinates in the $(d-r)$ long directions. Thus we only need to count the number of possible $z$ in the long directions, obtaining 
\begin{equation} \label{z count} \#\{ z \in \mathcal Q(k) : z \subseteq \mathcal R \} \leq C_r \beta^{d-r} M^{(d-r)k}.  \end{equation} 
Taking this into account, we obtain
\[
\mathfrak s_{+} \leq 2^{\nu(\varpi)} \sum_{k: \gamma \leq M^{-k} \leq \epsilon} M^{-\alpha k} \beta^{d-r} M^{(d-r)k} \leq C 2^{\nu(\varpi)} \beta^{d-r} \epsilon^{\alpha - d+r}, \]
as claimed in part (\ref{s+}).  Part (\ref{s-}) follows in an identical manner; the only difference is that now all directions of $\mathcal R$ are thick relative to the scale of $z$, hence \eqref{z count} has to be replaced by 
\[ \#\{z \in \mathcal Q(k) : z \subseteq \mathcal R \} \leq C \gamma^r \beta^{d-r} M^{dk}. \] 
\end{proof} 

\section{Proof of the lower bound \eqref{random Kakeya lower bound}} \label{LowerBoundSection} 
We are now in a position to complete the proof of Proposition \ref{main theorem reformulated} by verifying the probabilistic statement on the lower bound of $K_N(\mathbb X)$ claimed in \eqref{random Kakeya lower bound}. The two propositions stated below are the main results of this section and allow passage to this final step.  
\begin{proposition} \label{first moment proposition}
Fix integers $N$ and $R$ with $N \gg M$ and $10 \leq R \leq \frac{1}{10} \log_MN$. Define \[P^{\ast}_{t, \sigma, R} := P_{t, \sigma(t)} \cap [M^{-R}, M^{-R+1}] \times \mathbb R^d,\] 
where $\sigma = \sigma_{\mathbb X}$ is the randomized sticky map described in Section \ref{random construction section}. Then there exists a constant $C = C(M,d) > 0$ such that  
\begin{equation} \label{first moment estimate} 
\mathbb E_{\mathbb X}\Bigl[ \sum_{t_1 \ne t_2} \bigl|P^{\ast}_{t_1, \sigma, R} \cap P^{\ast}_{t_2, \sigma, R} \bigr| \Bigr] \leq C NM^{-2R}. 
\end{equation} 
\end{proposition}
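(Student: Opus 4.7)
The plan is to linearize the expectation over the random assignment $\sigma = \sigma_{\mathbb X}$, reorganize the resulting deterministic double sum by the pair of youngest common ancestors $u = D(t_1,t_2)$ and $\omega = D(v_1,v_2)$ in the root and slope trees respectively, and then invoke in turn the probability formula of Lemma~\ref{two point lemma}, the intersection-volume bound of Lemma~\ref{intersection size lemma}, the counting estimate of Lemma~\ref{C count lemma} for the collections $\mathcal E_2[u,\omega;\varrho]$, and finally the summation estimates of Section~\ref{summation section}.

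First I will use linearity of expectation to rewrite the left side of \eqref{first moment estimate} as
\[
\sum_{t_1 \ne t_2}\sum_{v_1,v_2 \in \Omega_N} \mathrm{Pr}\bigl(\sigma(t_1)=v_1,\,\sigma(t_2)=v_2\bigr) \,\bigl|P_{t_1,v_1}\cap P_{t_2,v_2}\cap [M^{-R},M^{-R+1}]\times\mathbb R^d\bigr|.
\]
Pairs with $v_1=v_2$ contribute zero, since for $t_1 \ne t_2$ the first-coordinate equation forces $P_{t_1,v}\cap P_{t_2,v}=\emptyset$; so only the tuples with $v_1 \ne v_2$ need be tracked. For such a tuple, the existence of a sticky $\sigma$ with $\sigma(t_i)=v_i$ forces $h(\omega)\ge h(u)$, and the quadruples whose two tubes additionally meet the slab $[M^{-R},M^{-R+1}]\times \mathbb R^d$ are exactly the members of $\mathcal E_2[u,\omega;M^{-R}]$ (taking $C_1=M$ in \eqref{defn C_uw}).

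Setting $\varrho := M^{-R}$, I will then substitute three uniform bounds that depend only on $(u,\omega)$: (i) the probability factor equals $2^{\mu(\omega,h(u)) - 2N}$ by Lemma~\ref{two point lemma}; (ii) by Lemma~\ref{intersection size lemma} combined with Corollary~\ref{Two distances comparable} and Proposition~\ref{pruning stage 1}(iv), the single pairwise intersection obeys
\[
\bigl|P_{t_1,v_1} \cap P_{t_2,v_2}\bigr| \le \frac{C_d\,M^{-J(d+1)}}{M^{-J} + |v_1-v_2|} \le \frac{C\,M^{-J(d+1)}}{\rho_\omega},
\]
since $|v_1 - v_2|\ge \delta_\omega \ge (1+2\sqrt{d}/C_0)^{-1}\rho_\omega$; and (iii) Lemma~\ref{C count lemma} supplies
\[
\#\bigl(\mathcal E_2[u,\omega;\varrho]\bigr) \le C\,(\varrho \rho_\omega)^2 \, 2^{2(N-\nu(\omega))} \, M^{-(d-1)h(u) + (d+1)J}.
\]
Multiplying these three ingredients cancels the powers of $M^J$ and $2^N$ exactly, leaving the total contribution of $(u,\omega)$ bounded by $C\,\varrho^2\,\rho_\omega \cdot 2^{\mu(\omega,h(u)) - 2\nu(\omega)} \, M^{-(d-1)h(u)}$.

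It remains to sum this over $u$ and $\omega$. For each fixed $\omega$, I will apply Lemma~\ref{root tree summation lemma}(i) with $\beta = d-1$ and $y = \{0\}\times[0,1)^d$ to obtain
\[
\sum_{u:\,h(u)\le h(\omega)} M^{-(d-1)h(u)} \, 2^{\mu(\omega,h(u))} \le C\,2^{\nu(\omega)} \, M^{h(\omega)}.
\]
Substituting this and invoking the trivial diameter bound $\rho_\omega \le \sqrt{d}\,M^{-h(\omega)}$ collapses the remaining expression to $C\,\varrho^2 \sum_{\omega \in \mathcal G(\Omega_N)} 2^{-\nu(\omega)}$. Since Proposition~\ref{Splitting tree lemma} guarantees exactly $2^{j-1}$ splitting vertices of index $j$ for $1\le j\le N$, this last sum equals $\sum_{j=1}^{N} 2^{j-1}\cdot 2^{-j} = N/2$, yielding the required bound $C N M^{-2R}$.

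Essentially all the hard work is already done by the machinery of Sections~\ref{general facts about tube families}--\ref{summation section}, so there is no genuine obstacle here. The one feature worth flagging is that the slab restriction is never actually used in the volume bound: the $N$-growth on the right of \eqref{first moment estimate} emerges purely from the binary branching of $\mathcal G(\Omega_N)$ competing with the Bernoulli weight $2^{-\nu(\omega)}$ to produce the harmonic-type sum $\sum_{j\le N}2^{-1}=N/2$. As foreshadowed in Section~\ref{layout section}, the real difficulty lies not here but in the complementary second-moment estimate underlying \eqref{random Kakeya lower bound}, which will require the far more elaborate three- and four-tube counts of Sections~\ref{four tube counting section}--\ref{three tube counting section}.
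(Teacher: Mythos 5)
Your proposal is correct and follows essentially the same route as the paper's own proof: linearize the expectation, reorganize by the youngest common ancestors $u=D(t_1,t_2)$ and $\omega=D(v_1,v_2)$, and multiply the probability formula of Lemma~\ref{two point lemma}, the volume bound of Lemma~\ref{intersection size lemma} (with Corollary~\ref{Two distances comparable}), and the count of Lemma~\ref{C count lemma}, before closing with Lemmas~\ref{root tree summation lemma} and~\ref{splitting vertex summation lemma}. The only cosmetic differences are that you dispose of the $v_1=v_2$ terms explicitly and evaluate the final sum $\sum_{\omega}2^{-\nu(\omega)}=N/2$ directly rather than citing Lemma~\ref{splitting vertex summation lemma}(\ref{splitting vertex summation 1}).
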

\begin{proposition} \label{second moment proposition} 
Under the same hypotheses as Proposition \ref{first moment proposition}, 
\begin{equation} \label{second moment estimate} 
\mathbb E_{\mathbb X} \Bigl[ \Bigl( \sum_{t_1 \ne t_2} \bigl|P^{\ast}_{t_1, \sigma, R} \cap P^{\ast}_{t_2, \sigma, R} \bigr|\Bigr)^2 \Bigr] \leq C \bigl(NM^{-2R}\bigr)^2. 
\end{equation} 
\end{proposition}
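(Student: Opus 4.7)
The plan is to expand the square and group the resulting quadruple sum according to the configuration type of the root quadruple, then combine the probability estimates of Section~7, the cardinality estimates of Section~8, the pairwise intersection bound of Lemma~\ref{intersection size lemma}, and the summation lemmas of Section~9. Explicitly, taking expectations and using the sticky-admissibility formalism,
\[
\mathbb{E}_{\mathbb{X}}\Bigl[\bigl(\textstyle\sum_{t_1 \neq t_2}|P^{\ast}_{t_1,\sigma,R} \cap P^{\ast}_{t_2,\sigma,R}|\bigr)^{2}\Bigr]
= \sum_{\{(t_i, v_i), (t_i', v_i')\}}\mathrm{Pr}\bigl[\sigma(t_i)=v_i,\;\sigma(t_i')=v_i'\bigr]\;\mathcal{I}\;\mathcal{I}',
\]
where the sum ranges over sticky-admissible root--slope quadruples with $t_1\ne t_2$, $t_1'\ne t_2'$, and $\mathcal{I}=|P_{t_1,v_1}\cap P_{t_2,v_2}\cap[M^{-R},M^{-R+1}]\times\mathbb{R}^d|$, similarly $\mathcal{I}'$. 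The quadruples are split by the cardinality of $\{t_1,t_2,t_1',t_2'\}$: namely $2$, $3$, or $4$ distinct roots.

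The generic and dominant case is four distinct roots in type~1 configuration. Here one fixes the vertex tuples $(u,u',z)$ and $(\omega,\omega',v)$ and collects the contribution from the class $\mathcal{E}_{41}[u,u',z;\omega,\omega',v;M^{-R}]$. By Lemma \ref{four point type 1 lemma} the probability factor is $2^{-4N+\mu(\omega,h(u))+\mu(\omega',h(u'))+\mu(v,h(z))}$, by Lemma \ref{intersection size lemma} each intersection $\mathcal{I}$, $\mathcal{I}'$ is bounded by $CM^{-J(d+1)}/\rho_{\omega}$ and $CM^{-J(d+1)}/\rho_{\omega'}$ respectively, and by Lemma \ref{E_{41} size lemma} the count of $\mathcal{E}_{41}$ supplies the factor $(\varrho^{2}\rho_{\omega}\rho_{\omega'})^{2}\,2^{4N-2(\nu(\omega)+\nu(\omega'))}M^{-(d-1)(h(u)+h(u'))+2(d+1)J}$ with $\varrho=M^{-R}$. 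Multiplying these three factors, the powers of $M^{J}$ cancel precisely, leaving
\[
C\,M^{-4R}\,\rho_{\omega}\rho_{\omega'}\,2^{\mu(\omega,h(u))+\mu(\omega',h(u'))+\mu(v,h(z))-2\nu(\omega)-2\nu(\omega')}\,M^{-(d-1)(h(u)+h(u'))}.
\]
One then sums first over the root ancestors $u\subseteq z$ and $u'\subseteq z$ at fixed splitting vertices using Lemma \ref{root tree summation lemma} (which absorbs the $M^{-(d-1)h(\cdot)}2^{\mu(\cdot)}$ factors and produces an $M^{-dh(z)}$ against which $\rho_{\omega}\rho_{\omega'}\lesssim M^{-h(\omega)-h(\omega')}\le M^{-2h(z)}$ is harmless), then over $z$ in the root tree, and finally over the splitting vertices $v\subseteq\omega\cap\omega'$ via Lemma \ref{splitting vertex summation lemma}. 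The logarithmic factor $N$ enters at precisely the step where the sum over splitting vertices with $\alpha=1$ is encountered, exactly as in the first moment computation, and it appears once for each pair $\{t_i,t_i'\}$. This yields the target bound $C(NM^{-2R})^{2}$ for this case.

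The remaining cases are handled analogously but require extra care. For four distinct roots of type~2, Lemma \ref{E_{42} size lemma} replaces Lemma \ref{E_{41} size lemma}, and the factor $\min[\varrho\rho_{\omega},M^{-h(t)}]$ together with the placement of $t$ relative to $u,u'$ forces one to distinguish the subcases $u'\subseteq t\subseteq u$ and $t\subsetneq u'$; in both, the refined sums from Lemma \ref{finer sum lemma} are used to sum over $t_2$ along boundary slabs of thickness $\varrho\rho_{\omega}$. For type~3 (Lemma \ref{E_{43} size lemma}), the constraint that $s_{2}$ lies near the boundary of $s_{1}$ (small $\Delta$) or in a thin tube-like parallelepiped (large $\Delta$) must again be absorbed via Lemma \ref{finer sum lemma}, and the slope multiplicity $m[\omega,\omega',\vartheta_{1},\vartheta_{2}]$ is controlled by Lemma \ref{slope vertex counting lemma}. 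The three-distinct-root cases (Section \ref{three tube counting section}) correspond to diagonal-like contributions where one of $\{t_1,t_2\}$ coincides with one of $\{t_1',t_2'\}$; the analogous product of Lemmas \ref{three point type 1 lemma}/\ref{three point type 2 lemma} with Section \ref{three tube counting section} produces a contribution of order $N\cdot M^{-3R}$, which is subordinate to $(NM^{-2R})^{2}$ because $R\ge 10$ and $N\ll M^{R}$. The case of two distinct roots is controlled by Proposition \ref{first moment proposition} and the trivial bound $\mathcal{I}\le CM^{-Rd}$.

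The principal obstacle will be the type~3 analysis. The spatial constraints \eqref{cylinder1}--\eqref{cylinder2} on $s_{2}$ are coupled to the slope vertices $\omega,\omega',\vartheta_{1},\vartheta_{2}$, so the summations over root ancestors and slope splitting vertices cannot be decoupled naively; we expect to handle this by an upfront reduction, conditioning on the relative heights of $\omega\cap\vartheta_{1}$ and $\omega\cap\vartheta_{2}$, applying Lemma \ref{finer sum lemma}(\ref{s-}) with $r=d-1$ to the tube-like region, and then using Lemma \ref{splitting vertex summation lemma}(\ref{splitting vertex summation 2}) with $\alpha$ large enough to absorb the resulting powers of $M^{-h(s_{1})}$. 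The combinatorial bookkeeping for $m[\omega,\omega',\vartheta_{1},\vartheta_{2}]$ (Lemma \ref{slope vertex counting lemma}(\ref{how many slopes given ancestors})) is designed precisely so that two copies of the logarithmic $N$ arise, one from each pair $\{t_{i},t_{i}'\}$, matching the target $(NM^{-2R})^{2}$.
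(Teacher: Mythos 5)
Your proposal follows essentially the same route as the paper: expand the square, split by the number of distinct roots and then by the configuration type of the root quadruple, and for each type multiply the probability estimate (Lemmas \ref{four point type 1 lemma}--\ref{four point type 3 lemma}), the cardinality bound on the corresponding collection $\mathcal E_{4i}$ (Lemmas \ref{E_{41} size lemma}, \ref{E_{42} size lemma}, \ref{E_{43} size lemma}), and the intersection-size bound of Lemma \ref{intersection size lemma} (with $|v_1-v_2|\gtrsim\rho_{\omega}$ from Corollary \ref{Two distances comparable}), finishing with the summation lemmas of Section \ref{summation section} --- including your identification of type 1 as the generic case, the exact cancellation of the powers of $M^{J}$, and the double appearance of the $\alpha=1$ splitting-vertex sum as the source of $N^{2}$, all of which match the paper's argument. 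One small correction to your treatment of the lower-order terms: to dismiss the three-distinct-root contribution you invoke ``$N\ll M^{R}$,'' but the hypothesis $R\le\frac{1}{10}\log_M N$ gives the opposite inequality $M^{R}\le N^{1/10}$, which is what actually makes $NM^{-3R}$ (in fact $NM^{-3R-J}$, as the paper shows) subordinate to $N^{2}M^{-4R}$; likewise the trivial one-tube volume bound should read $CM^{-R-dJ}$ rather than $CM^{-Rd}$.
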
 
Propositions \ref{first moment proposition} and \ref{second moment proposition} should be viewed as the direct generalizations of \cite[Propositions 8.2 and 8.3]{KrocPramanik} for arbitrary direction sets. These are proved below in Sections \ref{first moment proof section} and \ref{second moment proof section} respectively. Of the two results, Proposition \ref{second moment proposition} is of direct interest, since it leads to \eqref{random Kakeya lower bound}, as we will see momentarily in Corollary \ref{completing random Kakeya lower bound corollary}. Proposition \ref{first moment proposition}, while not strictly speaking relevant to \eqref{random Kakeya lower bound}, nevertheless provides a context for presenting the core arguments within a simpler framework.   
\begin{corollary} \label{completing random Kakeya lower bound corollary}
Proposition \ref{second moment proposition} implies \eqref{random Kakeya lower bound}.  
\end{corollary}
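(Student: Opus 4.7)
The plan is a standard Paley--Zygmund / Cauchy--Schwarz argument applied slab-by-slab. Partition $[0,1] \times \mathbb R^d$ into the pairwise disjoint slabs $S_R := [M^{-R}, M^{-R+1}] \times \mathbb R^d$ for integers $R \in [10, \tfrac{1}{10} \log_M N]$, and set $K^{\ast}_R := \bigcup_{t \in \mathcal Q(J)} P^{\ast}_{t, \sigma, R}$, so that $|K_N(\mathbb X) \cap [0,1] \times \mathbb R^d| \geq \sum_R |K^{\ast}_R|$. Because $\omega_1 = 1$ for every $\omega \in \Omega_N$, the slice $P^{\ast}_{t, \sigma, R}$ has deterministic Lebesgue measure $c_d^d(M-1) M^{-Jd-R}$, independent of $\sigma(t)$; summing over $t \in \mathcal Q(J)$ gives the deterministic identity $\mathcal V_R := \sum_t |P^{\ast}_{t, \sigma, R}| = c_M M^{-R}$ for a positive constant $c_M = c_M(d, M)$. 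Applying the Cauchy--Schwarz inequality to $f_R := \sum_t 1_{P^{\ast}_{t, \sigma, R}}$, which is supported on $K^{\ast}_R$, produces the key deterministic bound
\[
|K^{\ast}_R| \;\geq\; \frac{\bigl(\int f_R\bigr)^2}{\int f_R^2} \;=\; \frac{\mathcal V_R^2}{\mathcal V_R + Y_R},\qquad Y_R := \sum_{t_1 \neq t_2} \bigl|P^{\ast}_{t_1,\sigma,R} \cap P^{\ast}_{t_2,\sigma,R}\bigr|.
\]

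The next step is to control $Y_R$ uniformly in $R$. By Chebyshev's inequality and Proposition \ref{second moment proposition}, for each admissible $R$ one has $\text{Pr}(Y_R > \lambda N M^{-2R}) \leq \mathbb E_{\mathbb X}[Y_R^2]/(\lambda N M^{-2R})^2 \leq C / \lambda^2$. Since the number of admissible values of $R$ is at most $\log_M N / 10$, taking $\lambda = C_1 \sqrt{\log N}$ with $C_1$ sufficiently large and invoking the union bound yields
\[
\text{Pr}\Bigl(\,Y_R \leq C_1 \sqrt{\log N}\,N M^{-2R} \text{ for every admissible } R\Bigr) \;\geq\; \tfrac{3}{4}.
\]
On this favorable event, the constraint $R \leq \tfrac{1}{10} \log_M N$ gives $M^R \leq N^{1/10}$, so that, for $N$ large, the deterministic term $\mathcal V_R = c_M M^{-R}$ is dominated by $C_1 \sqrt{\log N}\, N M^{-2R}$, whence $\mathcal V_R + Y_R \leq 2 C_1 \sqrt{\log N}\, N M^{-2R}$. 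Substituting into the Cauchy--Schwarz estimate gives
\[
|K^{\ast}_R| \;\geq\; \frac{c_M^2 M^{-2R}}{2 C_1 \sqrt{\log N}\, N M^{-2R}} \;=\; \frac{c_M^2}{2 C_1 \sqrt{\log N}\, N},
\]
uniformly in $R$. Summing this lower bound over the (at least $c \log N$) disjoint slabs then produces $|K_N(\mathbb X) \cap [0,1] \times \mathbb R^d| \geq c'\sqrt{\log N}/N$ on the same event of probability at least $\tfrac{3}{4}$, which is exactly \eqref{random Kakeya lower bound}.

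The principal subtlety is the simultaneous-in-$R$ control of $Y_R$. A first-moment bound from Proposition \ref{first moment proposition} alone, combined with Markov, would only give $Y_R \leq K N M^{-2R}$ per slab with failure probability on the order of $1/K$; to survive the union bound over the $\log N$ admissible slabs one would need $K$ to grow at least like $\log N$, which would degrade the per-slab contribution to roughly $(N \log N)^{-1}$ and the summed estimate to only $1/N$ -- short of the target by a full $\sqrt{\log N}$ factor. It is precisely the quadratic sharpening in Proposition \ref{second moment proposition} (on the order of the square of the first-moment bound) that, through Chebyshev, supplies both the $\sqrt{\log N}$ concentration rate needed to pay for the union bound and the additional $\sqrt{\log N}$ factor appearing in the final lower bound \eqref{random Kakeya lower bound}.
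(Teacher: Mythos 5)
Your proposal is correct and follows essentially the same route as the paper: Markov/Chebyshev applied to the second moment gives a per-slab failure probability of order $1/\log N$, a union bound over the $O(\log N)$ admissible slabs keeps the good event at probability $\geq 3/4$, the Cauchy--Schwarz (Bateman--Katz) inequality converts the intersection bound into a per-slab volume lower bound of order $(N\sqrt{\log N})^{-1}$, and summing over the disjoint slabs yields $c\sqrt{\log N}/N$. The only cosmetic differences are that you derive the Cauchy--Schwarz step directly rather than citing it and use the full range of $R$ instead of $[c\log N, 2c\log N]$; neither affects the argument.
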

\begin{proof}
The argument here is identical to \cite[Corollary 8.4]{KrocPramanik}, and is briefly sketched. The estimate \eqref{second moment estimate} implies that for any fixed integer $R \in [10, \frac{1}{10} \log N]$, the event
\begin{equation} \label{event fixed R} 
\sum_{t_1 \ne t_2} \bigl| P^{\ast}_{t_1, \sigma, R} \cap P^{\ast}_{t_2, \sigma, R}\bigr| > C N \sqrt{\log N} M^{-2R}
\end{equation} 
holds with probability at most $(C\log N)^{-1}$, by Markov's inequality. Choosing a constant $c > 0$ sufficiently small, one can ensure that the probability of occurrence of the event \eqref{event fixed R} for some $R \in [c \log N, 2c \log N]$ cannot exceed $\frac{1}{10}$. Thus for an approriate choice of small but positive $c$, the revised estimate 
\[ \begin{aligned} \sum_{t_1, t_2} \bigl| P^{\ast}_{t_1, \sigma, R} \cap P^{\ast}_{t_2, \sigma, R}\bigr| &= \Bigl[\sum_{t_1 \ne t_2} + \sum_{t_1 = t_2} \Bigr] \bigl| P^{\ast}_{t_1, \sigma, R} \cap P^{\ast}_{t_2, \sigma, R}\bigr| \\ &\leq CN\sqrt{\log N} M^{-2R} + C M^{-R} M^{-dJ} M^{dJ}  \\ &\leq CN\sqrt{\log N} M^{-2R} \end{aligned} \] 
continues to hold for every $R \in [c \log N, 2c \log N]$ and with probability at least $\frac{9}{10}$. A general measure-theoretic observation, originally due to Bateman and Katz \cite[Proposition 2, p.75]{BatemanKatz} (which was subsequently applied in identical contexts in \cite[Lemma 8, p.64]{Bateman} and \cite[Lemma 8.1]{KrocPramanik}), says that an upper bound on the total size of pairwise intersections of a family of sets translates to a lower bound on the size of their union, according to the following prescription: 
\[ \bigl| \bigcup_t P^{\ast}_{t, \sigma, R}\bigr| \geq \frac{(M^{-R} M^{-dJ} M^{dJ})^2}{C N \sqrt{\log N} M^{-2R}} \geq C^{-1} \frac{1}{N \sqrt{\log N}}. \]
 For each $R$ in the specified range, the union of tubes on the left hand side of the displayed inequality above is contained in $K_N(\mathbb X) \cap [0,1]\times \mathbb R^d$. As $R$ varies, these sets are also essentially disjoint, since they lie in disjoint horizontal strips. Since these bounds are available with high probability for all $R \in [c \log N, 2c \log N]$, we can combine them to obtain
\[ \bigl| K_N(\mathbb X) \cap [0,1] \times \mathbb R^d \bigr| \geq \sum_{R = c \log N}^{2c \log N} \bigl| \bigcup_t P^{\ast}_{t, \sigma, R}\bigr| \geq C^{-1} \frac{\sqrt{\log N}}{N}, \] 
which is the statement \eqref{random Kakeya lower bound}.   
\end{proof} 
\subsection{Proof of Proposition \ref{first moment proposition}} \label{first moment proof section} 
\begin{proof}
We first recast the sum on the left hand side of \eqref{first moment estimate} in a form that brings into focus its connections with the material in Sections \ref{probability estimation section} and \ref{tube count section}. By Lemma \ref{intersection size lemma}, 
\begin{equation} \sum_{t_1 \ne t_2} \bigl| P^{\ast}_{t_1, \sigma, R}  \cap P^{\ast}_{t_2, \sigma, R}\bigr| \leq \sum_1 \frac{C_d M^{-(d+1)J}}{|\sigma(t_1) - \sigma(t_2)| + M^{-J}}, \label{intersection step 1} \end{equation} 
where $\sum_1$ denotes the sum over all root pairs $(t_1, t_2)$ such that $t_1 \ne t_2$ and $P^{\ast}_{t_1, \sigma, R} \cap P^{\ast}_{t_2, \sigma, R} \ne \emptyset$. Unravelling the implications of the intersection we find that 
\begin{align}  \bigl\{(t_1, t_2) :  t_1 \ne t_2, \; &P^{\ast}_{t_1, \sigma, R} \cap P^{\ast}_{t_2, \sigma, R} \ne \emptyset \bigr\} \nonumber \\ &\subseteq \left\{ (t_1, t_2) \Biggl| \begin{aligned} &\exists \text{ a unique pair } (v_1, v_2) \in \Omega_N^2 \text{ such that } \\  &P_{t_1, v_1} \cap P_{t_2, v_2} \cap [M^{-R}, M^{-R+1}] \times \mathbb R^d \ne \emptyset, \\ &\text{ and } \sigma(t_1) = v_1, \; \sigma(t_2) = v_2, \; t_1 \ne t_2 \end{aligned} \right\}.  \label{towards C}  \end{align}  
For a given root pair $(t_1, t_2)$, there may exist more than one slope pair $(v_1, v_2)$ that meets the intersection criterion in \eqref{towards C}. But only one pair will also satisfy, for a given $\sigma$, the requirement $\sigma(t_1) = v_1$, $\sigma(t_2) = v_2$, which explains the uniqueness claim in \eqref{towards C}. Using this, the expression on the right hand side of \eqref{intersection step 1} can be expanded as follows, 
\begin{align}
\sum_{t_1 \ne t_2} \bigl| P^{\ast}_{t_1, \sigma, R}  \cap P^{\ast}_{t_2, \sigma, R}\bigr| &\leq \sum_1  \frac{C_d M^{-(d+1)J}}{|\sigma(t_1) - \sigma(t_2)|} \nonumber 
\\ &\leq \sum_2 \frac{C_d M^{-(d+1)J}}{|v_1-v_2|} T((t_1, v_1), (t_2,v_2)) \nonumber \\ &\leq \sum_{u, \omega} \frac{C_d M^{-(d+1)J}}{\delta_{\omega}}  \sum_3 T((t_1, v_1), (t_2, v_2)), \label{intersection final step}
\end{align} 
where the notation $\sum_2$ in the second step denotes summation over the collection in \eqref{towards C}, and $T((t_1, v_1), (t_2, v_2))$ is a binary (random) counter given by 
\begin{equation} \label{defn Trv}
T((t_1, v_1), (t_2, v_2)) = \begin{cases} 1 &\text{ if } \sigma(t_1) = v_1 \text{ and } \sigma(t_2) = v_2, \\ 0 &\text{ otherwise. }\end{cases} 
\end{equation}  
In the last step \eqref{intersection final step} of the string of inequalities above, we have rearranged the sum in terms of the youngest common ancestors $u = D(t_1, t_2)$ and $\omega = D(v_1, v_2)$ in the root tree and in the slope tree respectively. The summation $\sum_3$ takes place over {\em{all}} sticky-admissible tube pairs $\{(t_1,v_1),(t_2, v_2)\}$ in the deterministic collection $\mathcal E_{2}[u, \omega; \varrho]$ defined in \eqref{defn C_uw}, with $\varrho = \varrho_R = M^{-R}$ and $C_1 = M$.  Incidentally, the requirement of sticky-admissibility restricts $u$ and $\omega$ to obey the height relation $h(u) \leq h(\omega)$. The quantity $\delta_{\omega}$ has been defined in \eqref{inf distance}, and is therefore $\leq |v_1 - v_2|$. 

With this preliminary simplification out of the way, we proceed to compute the expected value of the expression in \eqref{intersection final step}, combining the geometric facts and counting arguments from Section \ref{tube count section} with appropriate probability estimates from Section \ref{probability estimation section}. Accordingly, we get
\begin{align*}
\mathbb E_{\mathbb X} \Bigl[ \sum_{t_1 \ne t_2} \bigl| P^{\ast}_{t_1, \sigma, R}  \cap P^{\ast}_{t_2, \sigma, R}\bigr| \Bigr] &\leq \sum_{u, \omega} \frac{C_d M^{-(d+1)J}}{\delta_{\omega}}  \sum_{\mathcal E_{2}[u, \omega; \varrho]} \mathbb E_{\mathbb X} \bigl[T((t_1, v_1), (t_2, v_2)) \bigr] \\ 
&\leq \sum_{u, \omega} \frac{C_d M^{-(d+1)J}}{\underset{\text{Corollary \ref{Two distances comparable}}}{\underbrace{C_2\rho_{\omega}}}}  \sum_{\mathcal E_{2}[u, \omega;\varrho]}\text{Pr} \bigl( \sigma(t_1) = v_1, \; \sigma(t_2) = v_2 \bigr) \\ 
&\leq C M^{-(d+1)J} \sum_{u, \omega} \rho_{\omega}^{-1} \#(\mathcal E_{2}[u, \omega;\varrho]) \underset{\text{Lemma \ref{two point lemma}}}{\underbrace{\left( \frac{1}{2}\right)^{2N - \mu(\omega, h(u))}}} \\ 
&\leq C M^{-(d+1)J} \sum_{u, \omega} \rho_{\omega}^{-1} \underset{\text{Lemma \ref{C count lemma}}}{\underbrace{(\varrho \rho_{\omega})^2 2^{2(N - \nu(\omega))} M^{-(d-1)h(u) + (d+1)J}}} \\ &\hskip2.3in \times \left( \frac{1}{2}\right)^{2N - \mu(\omega, h(u))} \\
&\leq C M^{-2R} \sum_{u, \omega} M^{-h(\omega) - (d-1)h(u)} 2^{\mu(\omega, h(u)) - 2\nu(\omega)}, 
\end{align*}    
where the last step uses the fact that $\rho_{\omega} \leq \text{diam}(\omega) = \sqrt{d} M^{-h(\omega)}$. To establish the conclusion claimed in \eqref{first moment estimate}, it remains to show that the last expression in the displayed steps above is bounded by $CN$. This follows from a judicious use of the summation results proved in Section \ref{summation section}; namely,   
\begin{align*}
\sum_{u, \omega} M^{-h(\omega) - (d-1)h(u)} 2^{\mu(\omega, h(u)) - 2\nu(\omega)} &= \sum_{\omega \in \mathcal G} M^{-h(\omega)} 2^{-2 \nu(\omega)} \sum_u M^{-(d-1)h(u)} 2^{\mu(\omega, h(u))} \\ &\leq C \sum_{\omega \in \mathcal G} M^{-h(\omega)} 2^{-2 \nu(\omega)} \Bigl[ 2^{\nu(\omega)} M^{h(\omega)} \Bigr] \\ &\leq C \sum_{\omega \in \mathcal G} 2^{-\nu(\omega)} \leq  CN,   
\end{align*}  
where the second and last steps are consequences, respectively, of  Lemma \ref{root tree summation lemma}(\ref{beta<d}) with $\varpi=\omega$ and $\beta = d-1$ and of Lemma \ref{splitting vertex summation lemma}(\ref{splitting vertex summation 1}) with $\alpha = 1$. In both applications, $y$ and $\varpi_0$ have been chosen to be the unit cube, in the root tree and the slope tree respectively. 
\end{proof} 

\subsection{Proof of Proposition \ref{second moment proposition}} \label{second moment proof section} 
We are now ready to prove the main Proposition \ref{second moment proposition}. 
\begin{proof}
As in the proof of Proposition \ref{first moment proposition}, an initial processing of the sum on the left hand side of \eqref{second moment estimate} is needed before embarking on the evaluation of the expectation. Accordingly, we decompose and simplify the quantity of interest as follows, 
\begin{align*}
\Bigl[ \sum_{t_1 \ne t_2} \bigl| P^{\ast}_{t_1, \sigma, R} \cap P^{\ast}_{t_2, \sigma, R} \bigr| \Bigr]^2 &= \sum_{\begin{subarray}{c} t_1 \ne t_2 \\ t_1' \ne t_2' \end{subarray}} \bigl| P^{\ast}_{t_1, \sigma, R} \cap P^{\ast}_{t_2, \sigma, R} \bigr| \times \bigl| P^{\ast}_{t_1', \sigma, R} \cap P^{\ast}_{t_2', \sigma, R} \bigr| \\ &=\mathfrak S_2 + \mathfrak S_3 + \mathfrak S_4, 
\end{align*}
where for $i=2,3,4$,
\begin{align*}
\mathfrak S_i &:= \sum_{\mathfrak I_i} \bigl| P^{\ast}_{t_1, \sigma, R} \cap P^{\ast}_{t_2, \sigma, R} \bigr| \times \bigl| P^{\ast}_{t_1', \sigma, R} \cap P^{\ast}_{t_2', \sigma, R} \bigr|, \text{ and }  \\
\mathfrak I_i &:= \left\{ \mathbb I = \{ (t_1, t_2); (t_1', t_2') \} \Bigl| \begin{aligned} &t_1, t_2, t_1', t_2' \in \mathcal Q(J),  t_1 \ne t_2, \; t_1' \ne t_2', \\ &i = \text{number of distinct elements in $\mathbb I$} \end{aligned} \right\}.  
\end{align*} 
Without loss of generality, by interchanging the pairs $(t_1, t_2)$ and $(t_1', t_2')$ if necessary, we may assume that $h(D(t_1, t_2)) \leq h(D(t_1', t_2'))$ for all quadruples $\mathbb I \in \mathfrak I_i$. We will continue to make this assumption for the treatment of all three terms $\mathfrak S_i$.  

The claimed inequality in \eqref{second moment estimate} is a consequence of the three main estimates below:
\begin{align}
\mathbb E_{\mathbb X} (\mathfrak S_{2}) &\leq CN M^{-2R-dJ}, \label{expectation of S_2} \\ 
\mathbb E_{\mathbb X} (\mathfrak S_{3}) &\leq C NM^{-3R-J}, \text{ and } \label{expectation of S_3} \\ 
\mathbb E_{\mathbb X} (\mathfrak S_{4}) &\leq CN^2 M^{-4R}. \label{expectation of S_4}
\end{align} 
We will prove \eqref{expectation of S_4}  in full detail, since this clearly makes the primary contribution among the three terms mentioned above.  The other two estimates follow analogous and in fact simpler routes using the machinery developed in Sections \ref{probability estimation section} and \ref{tube count section}. We leave their verification to the reader. 

The configuration type of the quadruple $\mathbb I = \{(t_1, t_2); (t_1', t_2')\}$ of distinct roots, as introduced in Section \ref{four roots section}, plays a decisive role in the estimation of \eqref{expectation of S_4}. Recalling the type definitions from that section, we decompose $\mathfrak I_4$ as 
\[ \mathfrak I_4 = \bigsqcup_{i=1}^{3} \mathfrak I_{4i} \text{ where } \mathfrak I_{4i} := \left\{ \mathbb I \in \mathfrak I_4 \Bigl| \begin{aligned} & \text{ $\mathbb I$ is of type $i$ in the } \\ &\text{ sense of Definition \ref{four point type definition}}\end{aligned} \right\}.\]
This results in a corresponding decomposition of $\mathfrak S_4$:
\[ \mathfrak S_4 = \mathfrak S_{41} + \mathfrak S_{42} + \mathfrak S_{43}, \quad \text{ where } \quad \mathfrak S_{4i} = \sum_{\mathfrak I_{4i}} \bigl| P^{\ast}_{t_1, \sigma, R} \cap P^{\ast}_{t_2, \sigma, R} \bigr| \times \bigl| P^{\ast}_{t_1', \sigma, R} \cap P^{\ast}_{t_2', \sigma, R} \bigr|. \] 
We will prove in Sections \ref{S_{41} estimation section}-\ref{S_{43} estimation section} below that 
\begin{equation} \label{S_{4i} expectations}
\mathbb E_{\mathbb X} \bigl[ \mathfrak S_{4i}\bigr] \leq C N^2 M^{-4R} \quad \text{ for } i=1,2, 3.   
\end{equation} 
\end{proof}

\subsubsection{Expected value of $\mathfrak S_{41}$} \label{S_{41} estimation section}
We start with $\mathfrak S_{41}$, simplifying it initially along the same lines as in Proposition \ref{first moment proposition}. As before, a summand in $\mathfrak S_{41}$ is nonzero if and only if the tuple $\{(t_1, t_2); (t_1', t_2')\}$ lies in the set 
\begin{equation} \label{sum_1 range}
\left\{\{(t_1, t_2); (t_1', t_2')\} \in \mathfrak I_{41}\Bigl| \begin{aligned} &\; P^{\ast}_{t_1, \sigma, R} \cap P^{\ast}_{t_2, \sigma, R} \ne \emptyset \\ &\; P^{\ast}_{t_1', \sigma, R} \cap P^{\ast}_{t_2', \sigma, R} \ne \emptyset \end{aligned} \right\},  
\end{equation} 
which in turn is contained in  
\begin{equation} \label{sum_2 range}
\left\{ \{(t_1, t_2); (t_1', t_2')\} \in \mathfrak I_{41}  \; \Biggl| \; \begin{aligned} &\exists \text{ a unique tuple } (v_1, v_2, v_1', v_2') \in \Omega_N^4 \ni \\ &P_{t_1, v_1} \cap P_{t_2, v_2} \cap [M^{-R}, M^{-R+1} ] \times \mathbb R^d \ne \emptyset, \\  &P_{t_1', v_1'} \cap P_{t_2', v_2'} \cap [M^{-R}, M^{-R+1}] \times \mathbb R^d \ne \emptyset, \\ &\sigma(t_i) = v_i, \; \sigma(t_i') = v_i', \;  i=1,2\end{aligned} \right\}.  
\end{equation}
Incorporating this information into the simplification of the sum, we obtain 
\begin{align}
\mathfrak S_{41} &\leq \sum_1 \underset{\text{Lemma \ref{intersection size lemma}}}{\underbrace{\frac{C_d M^{-(d+1)J}}{|\sigma(t_1) - \sigma(t_2)|}  \times \frac{C_d M^{-(d+1)J}}{|\sigma(t_1') - \sigma(t_2')|}}} \nonumber \\ &\leq C M^{-2(d+1)J} \sum_2 \frac{T((t_1, v_1), (t_2, v_2))}{|v_1 - v_2|} \times \frac{T((t_1', v_1'), (t_2', v_2'))}{|v_1' - v_2'|} \nonumber \\ &\leq C M^{-2(d+1)J}\sum_{\begin{subarray}{c} u, u', z \\ \omega, \omega', v \end{subarray}} \frac{1}{\delta_{\omega} \delta_{\omega'}} \sum_3 T((t_1, v_1),(t_2, v_2)) T((t_1', v_1'), (t_2', v_2')), \label{S_{41} prelim} \\ &:= \overline{\mathfrak S}_{41} \nonumber 
\end{align} 
where the summations $\sum_1$ and $\sum_2$ range over the root quadruples in \eqref{sum_1 range} and \eqref{sum_2 range} respectively. The notation $T((t_1,v_1), (t_2, v_2))$ and $\delta_{\omega}$ represent the same quantities as they did in Proposition \ref{first moment proposition}, with their definitions in \eqref{defn Trv} and \eqref{inf distance} respectively. Following the same reasoning that led to \eqref{towards C}, in the last step we have stratified the sum in terms of the root vertices $u = D(t_1, t_2)$, $u' = D(t_1', t_2')$, $z = D(u,u')$ and the (splitting) slope vertices $\omega = D(v_1, v_2)$, $\omega' = D(v_1', v_2')$, $v = D(\omega, \omega')$, so that the summation $\sum_3$ takes place over the tube tuples in the collection $\mathcal E_{41} = \mathcal E_{41}[u,u', z; \omega, \omega, v; \varrho]$ defined in \eqref{defn E_{41}}, with $\varrho = M^{-R}$, $C_1=M$.  We are now in a position to compute the expected value of $\mathfrak S_{41}$.  
\begin{lemma} \label{S_{41} expectation lemma}
The estimate in \eqref{S_{4i} expectations} holds for $i=1$. 
\end{lemma}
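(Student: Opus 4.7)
The starting point is the simplification $\overline{\mathfrak{S}}_{41}$ in \eqref{S_{41} prelim}. Applying Corollary \ref{Two distances comparable} to replace each $\delta_\omega$ by a constant multiple of $\rho_\omega$, taking expectation, and noting that $\mathbb{E}_{\mathbb{X}}[T((t_1,v_1),(t_2,v_2))\,T((t_1',v_1'),(t_2',v_2'))]$ equals the probability computed in Lemma \ref{four point type 1 lemma}, one obtains
\begin{equation} \label{S41 expectation expansion}
\mathbb{E}_{\mathbb{X}}[\mathfrak{S}_{41}] \leq C M^{-2(d+1)J} \sum_{\substack{u,u',z \\ \omega,\omega',v}} \frac{\#(\mathcal{E}_{41}[u,u',z;\omega,\omega',v;\varrho])}{\rho_\omega \rho_{\omega'}} \left(\tfrac{1}{2}\right)^{4N - \mu(\omega,h(u)) - \mu(\omega',h(u')) - \mu(v,h(z))}.
\end{equation}
Substituting the size bound of Lemma \ref{E_{41} size lemma} for $\#(\mathcal{E}_{41})$, using $\rho_\omega \leq \sqrt{d}\,M^{-h(\omega)}$, $\rho_{\omega'} \leq \sqrt{d}\,M^{-h(\omega')}$, and collecting the powers of $\varrho = M^{-R}$, the right hand side of \eqref{S41 expectation expansion} simplifies to a constant multiple of
\begin{equation} \label{S41 clean sum}
M^{-4R} \sum_{\substack{u,u',z \\ \omega,\omega',v}} M^{-h(\omega)-h(\omega')-(d-1)(h(u)+h(u'))} \; 2^{-2\nu(\omega)-2\nu(\omega')+\mu(\omega,h(u))+\mu(\omega',h(u'))+\mu(v,h(z))},
\end{equation}
where $v = D(\omega,\omega')$ and $z = D(u,u')$, and the summation is over sticky-admissible tuples.

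The plan is now to evaluate \eqref{S41 clean sum} by iterated application of the summation results from Section \ref{summation section}, performed in the order $(u,u') \to z \to (\omega,\omega',v)$. Fixing $z$, $\omega$ and $\omega'$, the inner sums over $u \subseteq z$ with $h(u) \leq h(\omega)$ and over $u' \subseteq z$ with $h(u') \leq h(\omega')$ are each of the form covered by Lemma \ref{root tree summation lemma}(\ref{beta<d}) with $\beta = d-1$ (valid since $d \geq 2$), $y = z$, and $\varpi = \omega$ or $\omega'$. These yield
\begin{equation*}
\sum_{u \subseteq z,\,h(u) \leq h(\omega)} M^{-(d-1)h(u)} 2^{\mu(\omega,h(u))} \leq C\, 2^{\nu(\omega)} M^{h(\omega) - dh(z)},
\end{equation*}
and an analogous bound with $\omega'$ in place of $\omega$. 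Substituting these, the powers of $M^{h(\omega)}$ and $M^{h(\omega')}$ cancel exactly, and one is left with an inner sum over $z$ of $M^{-2dh(z)}\,2^{\mu(v,h(z))}$ subject to $h(z) \leq h(v)$. For this sum, the \emph{sharper} estimate in Lemma \ref{root tree summation lemma}(\ref{beta>d special M}) applies with $\beta = 2d$: the hypothesis $2M^d < M^{2d}$ reduces to $M^d > 2$, which is satisfied for all $d \geq 2$, $M \geq 2$. With $y$ chosen to be the root of the entire root tree (height $0$), the $z$-sum is therefore bounded by an absolute constant, crucially not carrying a factor of $2^{\nu(v)}$.

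After these three inner summations, \eqref{S41 clean sum} collapses to a constant multiple of
\begin{equation*}
M^{-4R} \sum_{\omega,\omega' \in \mathcal{G}} 2^{-\nu(\omega)-\nu(\omega')} = M^{-4R} \Bigl(\sum_{\omega \in \mathcal{G}} 2^{-\nu(\omega)}\Bigr)^2,
\end{equation*}
where the constraint $v = D(\omega,\omega')$ is now vacuous since the $z$-sum produced no dependence on $v$. By Lemma \ref{splitting vertex summation lemma}(\ref{splitting vertex summation 1}) applied with $\alpha = 1$ and $\varpi_0$ the unique first splitting vertex of $\Omega_N$, the inner sum is at most $CN$, producing the claimed bound $\mathbb{E}_{\mathbb{X}}[\mathfrak{S}_{41}] \leq C N^2 M^{-4R}$. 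The main technical issue is the bookkeeping that ensures the $z$-sum can be estimated via Lemma \ref{root tree summation lemma}(\ref{beta>d special M}) rather than part (\ref{beta>d}); using the latter would introduce an extra factor of $2^{\nu(v)}$, which when coupled with the subsequent $(\omega,\omega')$ sums would degrade the final estimate by a factor of $N$ and yield only $CN^3 M^{-4R}$. The hypothesis $d \geq 2$, built into the whole construction via Theorem \ref{MainThm1}, is what makes the sharper $z$-sum estimate available and delivers the correct power of $N$.
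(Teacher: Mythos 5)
Your argument is correct and follows essentially the same route as the paper's proof: the same chain of Corollary \ref{Two distances comparable}, Lemma \ref{four point type 1 lemma}, and Lemma \ref{E_{41} size lemma} reduces the expectation to the sum \eqref{S41 clean sum}, the inner $u,u'$ sums are handled by Lemma \ref{root tree summation lemma}(\ref{beta<d}) with $\beta=d-1$, the $z$-sum by Lemma \ref{root tree summation lemma}(\ref{beta>d special M}) with $\beta=2d$, and the remaining slope sum by Lemma \ref{splitting vertex summation lemma}(\ref{splitting vertex summation 1}). The only (immaterial) difference is at the last step, where you note that $v=D(\omega,\omega')$ is determined by the pair and factor the sum directly, whereas the paper first sums over $\omega,\omega'\subseteq v$ and then over $v$; both give $CN^2$, and your closing remark about why part (\ref{beta>d special M}) rather than part (\ref{beta>d}) is needed for the $z$-sum is accurate.
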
 
\begin{proof} 
Let us refer to the bound $\overline{\mathfrak S}_{41}$ on $\mathfrak S_{41}$ defined by \eqref{S_{41} prelim} that we obtained from the preliminary simplification. Assembling the various components of the estimation from the previous sections, the expected value of $\mathfrak S_{41}$ is estimated as follows, 
\begin{align}
\mathbb E_{\mathbb X} \bigl( &\mathfrak S_{41} \bigr) \leq \mathbb E_{\mathbb X} \bigl(\overline{\mathfrak S}_{41} \bigr)\nonumber \\ 
&\leq C M^{-2(d+1)J}\sum_{\begin{subarray}{c} u, u', z \\ \omega, \omega', v \end{subarray}} \underset{\text{Corollary \ref{Two distances comparable}}}{\frac{1}{{\underbrace{\rho_{\omega} \rho_{\omega'}}}}} \sum_{3} \text{Pr} \bigl( \sigma(t_i) = v_i, \; \sigma(t_i') = v_i', \; i=1,2\bigr) \nonumber \\ &\leq  CM^{-2(d+1)J}\sum_{\begin{subarray}{c} u, u' , z\\ \omega, \omega' , v\end{subarray}} \frac{\# \bigl(\mathcal E_{41}\bigr)}{\rho_{\omega} \rho_{\omega'}}  \underset{\text{\eqref{four point type 1 probability} from Lemma \ref{four point type 1 lemma}}}{\underbrace{\left( \frac{1}{2}\right)^{4N - \mu(\omega, h(u)) - \mu(\omega', h(u')) - \mu(v, h(z))}}}  \nonumber \\ &\leq CM^{-2(d+1)J} \sum_{\begin{subarray}{c} u, u' , z\\ \omega, \omega', v \end{subarray}} \underset{\text{bound on the size of $\mathcal E_{41}$ from Lemma \ref{E_{41} size lemma}}}{\underbrace{\bigl( \varrho^2 \rho_{\omega} \rho_{\omega'} \bigr)^2 2^{4N - 2(\nu(\omega) + \nu(\omega'))} M^{-(d-1)(h(u) + h(u')) + 2(d+1)J}}} \nonumber \\ &\hskip2in  \times \frac{1}{\rho_{\omega} \rho_{\omega'}} \times 2^{-4N + \mu(\omega, h(u)) + \mu(\omega', h(u')) + \mu(v, h(z))} \nonumber \\ 
&\leq C M^{-4R} \mathfrak S_{41}^{\ast}, \quad \text{ where } \nonumber \end{align}
\begin{equation}  
\begin{aligned} 
\mathfrak S_{41}^{\ast} := \sum_{\omega, \omega', v} & 2^{- 2(\nu(\omega) + \nu(\omega'))}  M^{- [h(\omega) + h(\omega')]}\sum_{u,u',z} 2^{\mu(\omega, h(u)) + \mu(\omega', h(u')) + \mu(v, h(z))} \\ &\hskip1.5in \times M^{-(d-1)[h(u) + h(u')]}.  \end{aligned} \label{S_{41} final sum}  
\end{equation} 
It remains to use the appropriate summation results in Section \ref{summation section} to show that $\mathfrak S_{41}^\ast$ is bounded above by a constant multiple of $N^2$.  We start with the inner sum. 
\begin{align}
\sum_{u, u', z} &M^{-(d-1)(h(u) + h(u'))} 2^{\mu(\omega, h(u)) + \mu(\omega', h(u')) + \mu(v, h(z))} \nonumber \\ 
&\leq \sum_z 2^{\mu(v, h(z))}  \underset{\text{apply Lemma \ref{root tree summation lemma}(\ref{beta<d}), $\beta = d-1$}}{\underbrace{\Bigl[\sum_{u \subseteq z}  M^{-(d-1)h(u)} 2^{\mu(\omega, h(u))}  \Bigr]}}  \underset{\text{apply the same lemma again}}{\underbrace{\Bigl[\sum_{u' \subseteq z}  M^{-(d-1)h(u')} 2^{\mu(\omega', h(u'))}  \Bigr]}} \nonumber \\ 
&\leq \sum_z 2^{\mu(v, h(z))} \Bigl[ M^{-dh(z) + h(\omega)} 2^{\nu(\omega)}\Bigr] \Bigl[ M^{-dh(z) + h(\omega')} 2^{\nu(\omega')}\Bigr] \nonumber \\ 
&\leq CM^{h(\omega) + h(\omega')}2^{\nu(\omega) + \nu(\omega')} \underset{\text{apply Lemma \ref{root tree summation lemma}(\ref{beta>d special M}), } h(y) = 0}{\underbrace{\Bigl[\sum_z 2^{\mu(v, h(z))} M^{-2dh(z)} \Bigr]}} \nonumber \\ 
&\leq  C M^{h(\omega) + h(\omega')} 2^{\nu(\omega) + \nu(\omega')}. \label{inner sum completed} 
\end{align} 
Note that Lemma \ref{root tree summation lemma}(\ref{beta>d special M}) applies with $\beta = 2d$ since $2M^d < M^{2d}$ for $M\geq 2$ and $d \geq 2$. Inserting the expression in \eqref{inner sum completed} into the inner sum of \eqref{S_{41} final sum}, we proceed to complete the outer sum in $\mathfrak S_{41}^{\ast}$. 
\begin{align*}
\mathfrak S_{41}^{\ast} & \leq C \sum_{\omega, \omega', v \in \mathcal G} M^{-h(\omega) - h(\omega')} 2^{-2(\nu(\omega) + \nu(\omega'))} \Bigl[  M^{h(\omega) + h(\omega')} 2^{\nu(\omega) + \nu(\omega')} \Bigr] \\ &\leq C \sum_{\omega, \omega' , v\in \mathcal G} 2^{-\nu(\omega) - \nu(\omega')} \\ 
&\leq C \sum_{v \in \mathcal G} \underset{\text{apply Lemma \ref{splitting vertex summation lemma}(\ref{splitting vertex summation 1}), $\alpha = 1$}}{\underbrace{\Bigl[ \sum_{\omega \in \mathcal G, \omega \subseteq v} 2^{-\nu(\omega)} \Bigr]}} \times \underset{\text{same lemma again}}{\underbrace{\Bigl[ \sum_{\omega' \in \mathcal G, \omega' \subseteq v} 2^{-\nu(\omega')} \Bigr]}} \\ &\leq C \sum_{v \in \mathcal G} \bigl[ N 2^{-\nu(v)}\bigr]^2 \leq CN^2 \sum_{v \in \mathcal G} 2^{-2\nu(v)} \leq CN^2,
\end{align*} 
where at the last step we have again used Lemma \ref{splitting vertex summation lemma} (\ref{splitting vertex summation 1}) with $\alpha = 2$, and $\nu(\varpi_0) = 0$. This completes the proof of the lemma. 
\end{proof} 

\subsubsection{Expected value of $\mathfrak S_{42}$} \label{S_{42} estimation section}
We turn to $\mathfrak S_{42}$ next. After the usual preliminary simplification similar to that of $\mathfrak S_{41}$, we find that $\mathfrak S_{42}$ is bounded by a sum $\overline{\mathfrak S}_{42}$ of the form \eqref{S_{41} prelim}, where 
\begin{equation} \label{bar S_{42}} 
\overline{\mathfrak S}_{42} := CM^{-2(d+1)J} \sum' \frac{1}{\delta_{\omega} \delta_{\omega'}} \sum_3 T((t_1,v_1), (t_2, v_2)) T((t_1', v_1'), (t_2', v_2')).
\end{equation} 
In view of Lemma \ref{four point type 2 lemma} we may assume, after a permutation of $(t_1, t_2)$ and of $(t_1', t_2')$ if necessary, that the outer sum $\sum'$ in \eqref{bar S_{42}} is over all vertex tuples $(u, u', t)$ and $(\omega, \omega', \vartheta)$  in the root tree and the slope tree respectively, such that $u,u',t$ lies on a single ray with $u' \subsetneq u$, while $\omega, \omega', \vartheta \in \mathcal G(\Omega_N)$, $\omega \cap \vartheta \ne \emptyset$, $\omega' \cap \vartheta \ne \emptyset$. The inner sum $\sum_3$ in $\overline{\mathfrak S}_{42}$ ranges over the collection $\mathcal E_{42} = \mathcal E_{42}[u,u',t;\omega, \omega', \vartheta;\varrho]$ defined in \eqref{defn E_{42}} with the usual $\varrho = M^{-R}$ and $C_1 = M$.
\begin{lemma}
The estimate in \eqref{S_{4i} expectations} holds for $i=2$.
\end{lemma}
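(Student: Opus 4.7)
The plan is to mirror the argument for $\mathfrak S_{41}$ given in Lemma \ref{S_{41} expectation lemma}, while invoking the probability estimate and cardinality bounds specific to type-2 root configurations, and handling the extra case split dictated by the relative position of the vertex $t = D(t_2, t_2')$ with respect to $u' = D(t_1', t_2')$. Starting from \eqref{bar S_{42}}, I would take expectations inside the sum, replace $\mathbb E_{\mathbb X}[T \cdot T']$ by the probability obtained in \eqref{four point type 2 probability} of Lemma \ref{four point type 2 lemma}, and substitute for $\#(\mathcal E_{42})$ the bound provided by Lemma \ref{E_{42} size lemma}. The factor $M^{-2(d+1)J}$ cancels cleanly against the $M^{2(d+1)J}$ in the cardinality estimate, the exponential weight $2^{-4N + \mu(\omega,h(u)) + \mu(\omega',h(u')) + \mu(\vartheta,h(t))}$ cancels the $2^{4N-m[\omega,\omega',\vartheta]}$ factor up to a residual $2^{-m[\omega,\omega',\vartheta] + \mu(\omega,h(u)) + \mu(\omega',h(u')) + \mu(\vartheta,h(t))}$, and using $\rho_\omega \leq \sqrt{d}\,M^{-h(\omega)}$, $\rho_{\omega'} \leq \sqrt{d}\,M^{-h(\omega')}$ together with the crude choice $\min[\varrho\rho_\omega,M^{-h(t)}] \leq \varrho\rho_\omega$ (and similarly for the analogous $\omega'$-minimum in case (ii)) turns the factor of $\varrho^4 = M^{-4R}$ out front.

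After this initial reduction I would split into the two geometric sub-cases separated in Lemma \ref{E_{42} size lemma}, namely (a) $u' \subseteq t \subseteq u$ and (b) $t \subsetneq u' \subsetneq u$, with Corollary \ref{four point type 2 structure corollary} ensuring that $u$, $u'$, $t$ lie on a single ray. In each case the residual sum has the schematic form
\begin{equation*}
\sum_{\omega,\omega',\vartheta \in \mathcal G} 2^{-m[\omega,\omega',\vartheta]} M^{-h(\omega)-h(\omega')} \sum_{u,u',t} M^{-(d-1)(h(u')+h(t))}\, 2^{\mu(\omega,h(u))+\mu(\omega',h(u'))+\mu(\vartheta,h(t))}
\end{equation*}
(with the exponent on $M^{-h(t)}$ adjusted to $2(d-1)$ in sub-case (b)). The inner triple sum over root vertices is performed iteratively, fixing $t$ first and applying Lemma \ref{root tree summation lemma}(i) with $\beta = d-1$ to the sum over $u$ and separately to the sum over $u'$ (the ranges being the ancestors of $t$ in $u$ and the descendants of $t$ in $u$ for case (a), or both ancestors of $t$ for case (b)); this produces factors of $2^{\nu(\omega)}M^{h(\omega)}$ and $2^{\nu(\omega')}M^{h(\omega')}$. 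The outer sum over $t$ is then bounded by Lemma \ref{root tree summation lemma}(iv) with $\beta = 2d$, eliminating all residual powers of $M^{-h(t)}$ via $2M^{d} < M^{2d}$.

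After this reduction the sum reduces to one of the form $\sum_{\omega,\omega',\vartheta} 2^{-m[\omega,\omega',\vartheta] + \nu(\omega) + \nu(\omega')}$, which by Lemma \ref{applying the slope vertex counting lemma} can be reorganized using the linearly nested rearrangement $\varpi_1 \supseteq \varpi_2 \supseteq \varpi_3$ of $\{\omega,\omega',\vartheta\}$; with $m$ given by \eqref{defn m}, the net weight on each $\varpi_i$ is at worst $2^{-\nu(\varpi_i)}$, and three applications of Lemma \ref{splitting vertex summation lemma}(i) with $\alpha = 1$ (starting from the innermost $\varpi_3$ and peeling outward) bound the triple sum by $C N^2$, delivering the claimed estimate $\mathbb E_{\mathbb X}[\mathfrak S_{42}] \leq C N^2 M^{-4R}$. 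The main obstacle, and the reason it is worth isolating a dedicated lemma, is the bookkeeping needed to verify that the inclusion constraint $\omega \cap \vartheta \neq \emptyset$, $\omega' \cap \vartheta \neq \emptyset$ from Corollary \ref{four point type 2 structure corollary} is actually exploited correctly; without this constraint the triple sum over $(\omega,\omega',\vartheta)$ would carry an extra factor of $N$. The inclusion relations pin $\vartheta$ onto the ray joining $\omega$ and $\omega'$ (or contain both), and this is precisely what makes $2^{-m[\omega,\omega',\vartheta]}$ summable as three nested sums producing $N \cdot N \cdot 1$ rather than $N^3$.
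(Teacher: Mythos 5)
Your overall architecture (probability estimate from Lemma \ref{four point type 2 lemma}, cardinality bound from Lemma \ref{E_{42} size lemma}, then root sums via Lemma \ref{root tree summation lemma} and slope sums via Lemma \ref{splitting vertex summation lemma}) matches the paper's, but the execution of the root-vertex summation has a genuine gap. In the type-2 case the $M$-weight is $M^{-(d-1)(h(t)+h(u'))}$ (or $M^{-2(d-1)h(t)}$ in sub-case (b)): there is \emph{no} decaying weight attached to $h(u)$. Your plan to ``fix $t$ first and apply Lemma \ref{root tree summation lemma}(i) with $\beta=d-1$ to the sum over $u$'' therefore cannot work: that lemma sums over \emph{descendants} of a fixed vertex against a weight $M^{-\beta h(\cdot)}$, whereas here $u$ ranges over the \emph{ancestors} of $t$ and carries only the factor $2^{\mu(\omega,h(u))}$. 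Summing that over the $\leq h(t)+1$ ancestors costs a factor of order $h(t)\leq J$, which is not controlled by $N$. The sums must be performed innermost-out ($u'\subseteq t$ first via Lemma \ref{root tree summation lemma}(\ref{beta<d}), then $t\subseteq u$ and finally $u$ via Lemma \ref{root tree summation lemma}(\ref{beta>d}), the weight $M^{-(2d-1)h(\cdot)}$ accumulating as you go). This also corrects your output: the inner sum produces only \emph{one} factor $M^{h(\omega')}$, not $M^{h(\omega)}M^{h(\omega')}$, so after cancelling it against $\rho_{\omega'}$ the residual $\rho_\omega\leq CM^{-h(\varpi_1)}$ survives into the slope sum. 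That residual weight is essential: your final sum $\sum 2^{-m+\nu(\omega)+\nu(\omega')}$ with ``net weight $2^{-\nu(\varpi_i)}$ on each $\varpi_i$'' gives $N\cdot N\cdot N=N^3$ under three applications of Lemma \ref{splitting vertex summation lemma}(\ref{splitting vertex summation 1}) with $\alpha=1$, not $N\cdot N\cdot 1$; the extra $M^{-h(\varpi_1)}$ is what converts the outermost $N$ into an $O(1)$ via Lemma \ref{splitting vertex summation lemma}(\ref{splitting vertex summation 2}). An $N^3$ second-moment bound is not sufficient for Corollary \ref{completing random Kakeya lower bound corollary}.

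Separately, sub-case (b) for $d=2$ cannot be dispatched by the crude bounds $\min[\varrho\rho_\omega,M^{-h(t)}]\leq\varrho\rho_\omega$ followed by Lemma \ref{root tree summation lemma}(\ref{beta>d special M}): the exponent on $M^{-h(t)}$ is $2(d-1)$, not $2d$, and for $d=2$ this equals the critical value $\beta=d$, where Lemma \ref{root tree summation lemma}(\ref{beta=d}) only yields a bound with an extra factor $h(\varpi)$ (which may be as large as $J$). Here you must invoke the finer spatial localization of $t$ from Lemma \ref{E_{42} size lemma}(\ref{t smaller}) --- that $t$ lies in $O(1)$ thin rectangles of width $\varrho\rho_{\omega'}$ --- and sum via Lemma \ref{finer sum lemma}, then absorb the remaining logarithmic factors $h(\varpi_1)h(\varpi_2)$ into the weights $M^{-h(\varpi_1)}M^{-h(\varpi_2)}$ coming from $\rho_\omega\rho_{\omega'}$. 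For $d\geq 3$ your crude reduction does go through (with Lemma \ref{root tree summation lemma}(\ref{beta>d}), since $2(d-1)>d$), but the $d=2$ case needs the separate argument.
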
 
\begin{proof}
As in Lemma \ref{S_{41} expectation lemma}, the evaluation of the expectation requires a combination of the appropriate probabilistic estimate from Section \ref{root configurations subsection} (specifically Lemma \ref{four point type 2 lemma}), size estimate of $\mathcal E_{42}$ from Section \ref{E_{42} size section} (specifically Lemma \ref{E_{42} size lemma}) and the summation results from Section \ref{summation section}. Putting these together, we obtain  
\begin{align}
\mathbb E_{\mathbb X} \bigl( \mathfrak S_{42} \bigr) &\leq \mathbb E_{\mathbb X} \bigl( \overline{\mathfrak S}_{42} \bigr) \nonumber \\ 
&\leq C M^{-2(d+1)J} \sum' \frac{1}{\rho_{\omega} \rho_{\omega'}} \sum_3 \text{Pr}\bigl( \sigma(t_i) = v_i, \; \sigma(t_i') = v_i', \; i=1,2\bigr) \nonumber \\ &\leq C M^{-2(d+1)J} \sum' \frac{\# \bigl( \mathcal E_{42}\bigr)}{\rho_{\omega} \rho_{\omega'}}  \underset{\eqref{four point type 2 probability} \text{ from Lemma \ref{four point type 2 lemma}}}{\underbrace{\left( \frac{1}{2}\right)^{4N - \mu(\omega, h(u)) - \mu(\omega', h(u')) - \mu(\vartheta, h(t))}}} \nonumber \\ 
&\leq CM^{-4R} \bigl[ \mathfrak S_{42}^{\ast} + \mathfrak S_{42}^{\circ} \bigr], \nonumber 
\end{align} 
where the closed form expressions for $\mathfrak S_{42}^{\ast}$ and $\mathfrak S_{42}^{\circ}$ at the last step are obtained from the count on the size of $\mathcal E_{42}$ from Lemma \ref{E_{42} size lemma}, and reflect the two complementary cases considered therein. To be precise,  
\begin{align} 
\mathfrak S_{42}^{\ast} &:= \varrho^{-1} \sum'_{u' \subseteq t \subseteq u} \rho_{\omega'} \min \bigl[ \varrho \rho_{\omega}, M^{-h(t)} \bigr] M^{-(d-1) \bigl( h(t) + h(u') \bigr)} \label{S_{42} type2a}  \\ & \hskip1.5in \times 2^{\mu(\omega, h(u)) + \mu(\omega', h(u')) + \mu(\vartheta, h(t)) - m[\omega, \omega', \vartheta]}, \quad \text{ and } \nonumber \\
\mathfrak S_{42}^{\circ} &:= \varrho^{-2} \sum'_{t \subsetneq u' \subsetneq u} \min \bigl[\varrho \rho_{\omega}, M^{-h(t)} \bigr] \min \bigl[\varrho \rho_{\omega'}, M^{-h(t)} \bigr] M^{-2(d-1)h(t)} \label{S_{42} type2b} \\ &\hskip1.5in \times  2^{\mu(\omega, h(u)) + \mu(\omega', h(u')) + \mu(\vartheta, h(t)) - m[\omega, \omega', \vartheta]},  \nonumber 
\end{align} 
where the notation $\sum'_{\mathcal P}$ indicates the subsum of $\sum'$ subject to the additional requirement $\mathcal P$. These two quantities are estimated via the usual channels.  Lemma \ref{E_{42} size lemma} places certain restrictions on the spatial location of $t$, but for a large part of the proof the full strength of these statements will not be needed. For instance, replacing min$(\varrho \rho_{\omega}, M^{-h(t)})$ in \eqref{S_{42} type2a} by $\varrho \rho_{\omega}$, we arrive at the following bound for $\mathfrak S_{42}^{\ast}$:
\begin{align} 
\mathfrak S_{42}^{\ast} &\leq \sum'_{u' \subseteq t \subseteq u} \rho_{\omega} \rho_{\omega'} M^{-(d-1) \bigl( h(t) + h(u')\bigr)} \nonumber 2^{\mu(\omega, h(u)) + \mu(\omega', h(u')) + \mu(\vartheta, h(t)) - m[\omega, \omega', \vartheta]} \nonumber \\
&\leq \sum_{\omega, \omega', \vartheta} \rho_{\omega} \rho_{\omega'} 2^{-m[\omega, \omega', \vartheta]} \mathfrak S_{42}^{\ast}(\text{inner}),  \label{outer and inner} 
\end{align} 
where the inner expression $\mathfrak S_{42}^{\ast}(\text{inner})$ is a sequence of three summations in root vertices, the computation of each requiring a suitable form of Lemma \ref{root tree summation lemma}. Precisely, 
\begin{align}
\mathfrak S_{42}^{\ast}( \text{inner} )&:= \sum_{\begin{subarray}{c} u,u',t \\ u' \subseteq t \subseteq u\end{subarray}} M^{-(d-1) \bigl( h(t) + h(u')\bigr) } 2^{\mu(\omega, h(u)) + \mu(\omega', h(u')) + \mu(\vartheta, h(t))} \nonumber \\   
&= \sum_{\begin{subarray}{c}(t,u) \\ t \subseteq u \end{subarray}} M^{-(d-1) h(t)} 2^{\mu(\omega, h(u)) + \mu(\vartheta, h(t))} \Bigl[\sum_{ u' \subseteq t} M^{-(d-1)h(u')} 2^{\mu(\omega', h(u'))} \Bigr] \nonumber \\ 
&  \leq C \sum_{\begin{subarray}{c}(t,u) \\ t \subseteq u \end{subarray}} M^{-(d-1) h(t)} 2^{\mu(\omega, h(u)) + \mu(\vartheta, h(t))}  \underset{\text{from Lemma \ref{root tree summation lemma}(\ref{beta<d}), $\beta = d-1$}}{\underbrace{\Bigl[2^{\nu(\omega')} M^{-dh(t) + h(\omega')} \Bigr]}} \nonumber \\ 
&\leq C  2^{\nu(\omega')} M^{h(\omega')} \sum_{u} 2^{\mu(\omega, h(u))} \sum_{t: t \subseteq u} M^{-(2d-1)h(t)} 2^{\mu(\vartheta, h(t))} \nonumber \\
& \leq C  2^{\nu(\omega')} M^{h(\omega')} \sum_{u} 2^{\mu(\omega, h(u))} \underset{\text{from Lemma \ref{root tree summation lemma}(\ref{beta>d}), $\beta = 2d-1$}}{\underbrace{\Bigl[ M^{-(2d-1)h(u)} 2^{\nu(\vartheta)}\Bigr]}} \nonumber \\ 
&\leq C  2^{\nu(\omega') + \nu(\vartheta)} M^{h(\omega')} \sum_{u} 2^{\mu(\omega, h(u))} M^{-(2d-1)h(u)} \nonumber \\  
&\leq C  2^{\nu(\omega') + \nu(\vartheta) + \nu(\omega)} M^{h(\omega')}, \label{inner last} 
\end{align}    
where the summation in $u$ in the last step also follows from Lemma \ref{root tree summation lemma}(\ref{beta>d}), since $\beta = 2d-1 > d$. Inserting the estimate \eqref{inner last} of $\mathfrak S_{42}^{\ast}(\text{inner})$ into \eqref{outer and inner}, we proceed to simplify the outer sum.  Let us recall from Lemma \ref{applying the slope vertex counting lemma} that $\{ \omega, \omega', \vartheta \}$ can be rearranged as $\{\varpi_1, \varpi_2, \varpi_3 \}$ satisfying \eqref{youngest common ancestor relations}, and that $m[\omega, \omega', \vartheta]$ is defined  as in \eqref{defn m}. Since the definition of $m$ involves two possibilities, we write $\sum^{[a]}$ and $\sum^{[b]}$ to denote the sum over vertex triples $(\omega, \omega', \vartheta)$ for which $\varpi_3 \not\subseteq \varpi_2$ and $\varpi_3 \subseteq \varpi_2$ respectively. This means that
\begin{align}
\mathfrak S_{42}^{\ast} &\leq C \sum_{\omega, \omega', \vartheta} \rho_{\omega} \rho_{\omega'} 2^{-m[\omega, \omega', \vartheta]} \bigl[ 2^{\nu(\omega') + \nu(\vartheta) + \nu(\omega)} M^{h(\omega')} \bigr] \nonumber \\ 
&\leq C  \Bigl[\sum^{[a]} + \sum^{[b]} \Bigr] \rho_{\omega} \rho_{\omega'} 2^{-m[\omega, \omega', \vartheta]} \bigl[ 2^{\nu(\omega') + \nu(\vartheta) + \nu(\omega)} M^{h(\omega')} \bigr].  \nonumber 
\end{align}
Using the trivial bounds \[\rho_{\omega'} M^{h(\omega')} \leq C \quad \text{ and } \quad \rho_\omega \leq C M^{-h(\omega)} \leq CM^{-h(\varpi_1)}, \]
the estimation is completed as follows, 
\begin{align}
\sum^{[a]} \rho_{\omega} \rho_{\omega'} 2^{-m[\omega, \omega', \vartheta]} &\Bigl[ 2^{\nu(\omega') + \nu(\vartheta) + \nu(\omega)} M^{h(\omega')} \Bigr] \nonumber \\&\leq C \sum_{\varpi_1} M^{-h(\varpi_1)} 2^{\nu(\varpi_1)}\Bigl[\sum_{\begin{subarray}{c} \varpi_2 \\ \varpi_2 \subseteq \varpi_1 \end{subarray}} 2^{-\nu(\varpi_2)} \Bigr] \times \Bigl[\sum_{\begin{subarray}{c} \varpi_3 \\ \varpi_3 \subseteq \varpi_1 \end{subarray}} 2^{-\nu(\varpi_3)} \Bigr] \nonumber \\
&\leq C \sum_{\varpi_1} M^{-h(\varpi_1)} 2^{\nu(\varpi_1)}\underset{\text{Lemma \ref{splitting vertex summation lemma} (\ref{splitting vertex summation 1}) twice}}{\underbrace{\bigl(N 2^{-\nu(\varpi_1)} \bigr)^2}} \nonumber \\
&\leq C N^2 \underset{\text{apply Lemma \ref{splitting vertex summation lemma}(\ref{splitting vertex summation 2})}}{\underbrace{\sum_{\varpi_1} M^{-h(\varpi_1)} 2^{-\nu(\varpi_1)}}} \leq CN^2. \nonumber 
\end{align} 
The same bound holds for $\sum^{[b]}$, and is proved along similar lines: 
\begin{align} 
\sum^{[b]} \rho_{\omega} \rho_{\omega'} 2^{-m[\omega, \omega', \vartheta]} &\Bigl[ 2^{\nu(\omega') + \nu(\vartheta) + \nu(\omega)} M^{h(\omega')} \Bigr] \nonumber \\ &\leq C\sum_{\begin{subarray}{c} \varpi_1, \varpi_2 \\ \varpi_2 \subseteq \varpi_1 \end{subarray}}  M^{-h(\varpi_1)} \sum_{\begin{subarray}{c} \varpi_3 \\ \varpi_3 \subseteq \varpi_2 \end{subarray}} 2^{-\nu(\varpi_3)}  \leq C \sum_{\begin{subarray}{c} \varpi_1, \varpi_2 \\ \varpi_2 \subseteq \varpi_1 \end{subarray}}  M^{-h(\varpi_1)} \underset{\text{Lemma \ref{splitting vertex summation lemma} (\ref{splitting vertex summation 1})}}{\underbrace{\left[ N2^{-\nu(\varpi_2)}\right]}} \nonumber \\ 
&\leq CN \sum_{\varpi_1}  M^{-h(\varpi_1)} \sum_{\varpi_2 \subseteq \varpi_1} 2^{-\nu(\varpi_2)} \leq  CN \sum_{\varpi_1}  M^{-h(\varpi_1)}  \underset{\text{Lemma \ref{splitting vertex summation lemma} (\ref{splitting vertex summation 1})}}{\underbrace{\left[ N2^{-\nu(\varpi_1)}\right]}} \nonumber \\ 
&\leq CN^2 \underset{\text{apply Lemma \ref{splitting vertex summation lemma}(\ref{splitting vertex summation 2})}}{\underbrace{\sum_{\varpi_1} M^{-h(\varpi_1)} 2^{-\nu(\varpi_1)}}} \leq CN^2.   \nonumber
\end{align}
This completes the estimation of $\mathfrak S_{42}^{\ast}$.  

We briefly remark on the analysis of $\mathfrak S_{42}^{\circ}$. For $d \geq 3$, replacing the minima in \eqref{S_{42} type2b} by the trivial bounds $\varrho \rho_{\omega}$ and $\varrho \rho_{\omega'}$ results in an expression analogous to that of $\mathfrak S_{42}^{\ast}$: 
\begin{align*}
\mathfrak S_{42}^{\circ} &\leq \sum'_{t \subsetneq u' \subsetneq u}\rho_{\omega} \rho_{\omega'} M^{-2(d-1) h(t)} 2^{\mu(\omega, h(u)) + \mu(\omega', h(u')) + \mu(\vartheta, h(t)) - m[\omega, \omega', \vartheta]}.
\end{align*} 
This term is estimated exactly the same way as $\mathfrak S_{42}^{\ast}$, since Lemma \ref{root tree summation lemma}(\ref{beta>d}) applies as before with $\beta = 2(d-1) > d$ per our choice of $d$. The bound obtained is a constant multiple of $N$. These details are omitted to avoid repetition. We only present the case $d = 2$, where Lemma \ref{root tree summation lemma} does not give the desired consequence, and the treatment of which exhibits a slight departure from the norm so far.  For $d=2$, inserting the bound min$(\varrho \rho_{\omega}, M^{-h(t)}) \leq \varrho \rho_{\omega}$ into \eqref{S_{42} type2b} yields 
\begin{equation}  \begin{aligned} \mathfrak S_{42}^{\circ} &\leq \sum'_{t \subsetneq u' \subsetneq u} 2^{\mu(\omega, h(u)) + \mu(\omega', h(u')) + \mu(\vartheta, h(t)) - m[\omega, \omega', \vartheta]} \\ &\hskip1.5in \times \begin{cases} \rho_{\omega} \rho_{\omega'} M^{-2h(t)} &\text{ if } M^{-h(t)} \geq \varrho \rho_{\omega'},  \\ \varrho^{-1} \rho_{\omega} M^{-3h(t)} &\text{ if } M^{-h(t)} < \varrho \rho_{\omega'}.  \end{cases} \end{aligned} \label{S_{42} dim 2}\end{equation}
Further, Lemma \ref{E_{42} size lemma}(\ref{t smaller}) prescribes that $t$ cannot be arbitrarily placed inside $u'$, but must lie within the union of at most $2M$ thin rectangles of dimension $\varrho \rho_{\omega'} \times M^{-h(u')}$ each. Using this information, we sum the expression \eqref{S_{42} dim 2} in $t$ as follows: if $\sum_1$ and $\sum_2$ denote the summations in $t$ with $t \subseteq u'$ and $\mathcal E_{42} \ne \emptyset$ subject to the conditions $M^{-h(t)} \geq \varrho \rho_{\omega'}$ and $M^{-h(t)} < \varrho \rho_{\omega'}$ respectively, then 
\begin{align*}
\rho_{\omega} \rho_{\omega'} &\sum_{1}  M^{-2h(t)} 2^{\mu(\vartheta, h(t))} + \varrho^{-1} \rho_{\omega} \sum_2 M^{-3h(t)} 2^{\mu(\vartheta, h(t))} \\ 
&\leq C \rho_{\omega} \rho_{\omega'} \left[ 2^{\nu(\vartheta)} M^{-2h(u')}\right] + C \varrho^{-1} \rho_{\omega} \left[ 2^{\nu(\vartheta)} M^{-h(u')} \bigl(\varrho \rho_{\omega'} \bigr) \min \bigl[ \varrho \rho_{\omega'}, M^{-h(u')}\bigr] \right] \\ 
&\leq C \rho_{\omega} \rho_{\omega'} 2^{\nu(\vartheta)} M^{-2h(u')},   
\end{align*} 
where both sums have been evaluated using Lemma \ref{finer sum lemma} with $d = 2$, $r=1$, $\varpi = \vartheta$, $\beta = \epsilon = M^{-h(u')}$ and $\gamma = \varrho \rho_{\omega'}$. In particular, $\sum_1$ appeals to part (\ref{s+}) of this lemma with $\alpha = 2$ while $\sum_2$ uses part (\ref{s-}) with $\alpha = 3$. Incorporating this into \eqref{S_{42} dim 2}, we find that 
\begin{align} 
\mathfrak S_{42}^{\circ} &\leq \sum_{\omega, \omega', \vartheta} \rho_{\omega} \rho_{\omega'} 2^{\nu(\vartheta) - m[\omega, \omega', \vartheta]} \mathfrak S_{42}^{\circ}(\text{inner}), \quad \text{ where } \label{dim 2 S_{42} t sum done} \\ 
\mathfrak S_{42}^{\circ}(\text{inner}) &:= \sum_u 2^{\mu(\omega, h(u))} \underset{\text{apply Lemma \ref{root tree summation lemma}(\ref{beta=d})}}{\underbrace{\sum_{u'\subseteq u} M^{-2h(u')} 2^{\mu(\omega', h(u'))}}} \label{dim 2 summation 1}\\
&\leq C 2^{\nu(\varpi_2)}  h(\varpi_2) \underset{\text{apply the same lemma again}}{\underbrace{\sum_{u} M^{-2h(u)} 2^{\mu(\omega, h(u))} }} \label{dim 2 summation 2} \\ 
&\leq C 2^{\nu(\varpi_2) + \nu(\varpi_1)}h(\varpi_2) h(\varpi_1), \label{dim 2 inner sum completed}   
\end{align} 
We pause for a moment to explain these steps. In the first application of Lemma \ref{root tree summation lemma}(\ref{beta=d}) in \eqref{dim 2 summation 1} above we have used, in addition to $h(u') \leq h(\omega')$, the fact that 
\[ h(u') = h(D(t_1', t_2')) \leq h(t) = h(D(t_2, t_2')) \leq h(D(v_2,v_2')) = h(\vartheta), \] which is a consequence of stickiness. Since one of $\omega'$ and $\vartheta$ is contained in the other, this implies that $\mu(\omega', h(u')) = \mu(\vartheta, h(u'))$. Hence Lemma \ref{root tree summation lemma}(\ref{beta=d}), applied once with $\varpi = \omega'$ and again with $\varpi = \vartheta$, yields
\begin{align*}  \sum_{u'\subseteq u} M^{-2h(u')} 2^{\mu(\omega', h(u'))} &\leq C M^{-2h(u)}\min \left[ 2^{\nu(\vartheta)} h(\vartheta), 2^{\nu(\omega')} h(\omega')\right] \\ &\leq C h(\varpi_2) 2^{\nu(\varpi_2)} M^{-2h(u)}. \end{align*}
The second application of Lemma \ref{root tree summation lemma}(\ref{beta=d}) in \eqref{dim 2 summation 2} uses a similar argument relying on the fact that $h(u) \leq h(\varpi_1)$. Inserting \eqref{dim 2 inner sum completed} into \eqref{dim 2 S_{42} t sum done},  the estimation of $\mathfrak S_{42}^{\circ}$ can now be completed in the same way as for $\mathfrak S_{42}^{\ast}$:
\begin{align*}
\mathfrak S_{42}^{\circ} &\leq C \sum_{\omega, \omega', \vartheta} \rho_{\omega} \rho_{\omega'} h(\varpi_2) h(\varpi_1) 2^{\nu(\vartheta) + \nu(\varpi_1) + \nu(\varpi_2)-m[\omega, \omega', \vartheta]} \\ &\leq C \sum_{\omega, \omega', \vartheta} M^{-h(\varpi_1) - h(\varpi_2)} h(\varpi_1) h(\varpi_2) 2^{\nu(\vartheta) + \nu(\varpi_1) + \nu(\varpi_2)-m[\omega, \omega', \vartheta]} \\ &\leq C \sum^{[a]} M^{-\frac{1}{2}h(\varpi_1) - \frac{1}{2}h(\varpi_2)}  2^{-\nu(\varpi_3) - \nu(\varpi_2) + \nu(\varpi_1)}+ \sum^{[b]}  M^{-\frac{1}{2}h(\varpi_1) - \frac{1}{2}h(\varpi_2)} 2^{-\nu(\varpi_3)} \\ &\leq CN,   
\end{align*} 
where the symbols $\sum^{[a]}$ and $\sum^{[b]}$ carry the same meaning as they did in the estimation of $\mathfrak S_{42}^{\ast}$ and the last step involves several summations all of which have used appropriate parts of Lemma \ref{splitting vertex summation lemma}. The estimation of $\mathfrak S_{42}$ is complete.
\end{proof} 

\subsubsection{Expected value of $\mathfrak S_{43}$} \label{S_{43} estimation section}
\begin{lemma}
The estimate in \eqref{S_{4i} expectations} holds for $i=3$. 
\end{lemma}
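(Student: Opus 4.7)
The plan is to follow the template established for $\mathfrak{S}_{41}$ and $\mathfrak{S}_{42}$, combining Lemma \ref{four point type 3 lemma} (for the probability of a prescribed sticky slope assignment), Lemma \ref{E_{43} size lemma} (for the geometric count on intersecting tube quadruples of type 3), and the summation machinery of Section \ref{summation section}. After the now-familiar preliminary reduction using Lemma \ref{intersection size lemma} and Corollary \ref{Two distances comparable}, we arrive at
\begin{equation*}
\mathbb{E}_{\mathbb{X}}[\mathfrak{S}_{43}] \leq CM^{-2(d+1)J} \sum{}' \frac{1}{\rho_\omega \rho_{\omega'}} \cdot \#(\mathcal{E}_{43}) \cdot \Bigl(\tfrac{1}{2}\Bigr)^{4N-\mu(\omega,h(u))-\mu(\vartheta_1,h(s_1))-\mu(\vartheta_2,h(s_2))},
\end{equation*}
where $\sum'$ ranges over all vertex tuples $(u,s_1,s_2)$ of the root tree and $(\omega,\omega',\vartheta_1,\vartheta_2)$ of the slope tree compatible with Lemma \ref{four point type 3 lemma} and Corollary \ref{four point type 3 structure corollary} (in particular $\rho_\omega\leq\rho_{\omega'}$, so $\Delta=\varrho\rho_\omega$). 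Inserting the bound on $\#(\mathcal{E}_{43})$ from Lemma \ref{E_{43} size lemma}(iv) yields $\mathbb{E}_{\mathbb{X}}[\mathfrak{S}_{43}]\leq CM^{-4R}\mathfrak{S}_{43}^{\ast}$ with
\begin{equation*}
\mathfrak{S}_{43}^{\ast} := \sum{}' \varrho^{-4}(\rho_\omega\rho_{\omega'})^{-1} \prod_{i=1}^{2}\bigl[\min(\varrho\rho_\omega,M^{-h(s_i)})\min(\varrho\rho_{\omega'},M^{-h(s_i)})\bigr] M^{-2(d-1)h(s_2)} \cdot 2^{E},
\end{equation*}
where $E=\mu(\omega,h(u))+\mu(\vartheta_1,h(s_1))+\mu(\vartheta_2,h(s_2))-m[\omega,\omega',\vartheta_1,\vartheta_2]$. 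The objective is to show $\mathfrak{S}_{43}^{\ast}\leq CN^2$.

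The next step is the innermost sum in $s_2$, which is where the geometric restrictions from Lemma \ref{E_{43} size lemma}(ii)--(iii) become essential. Fix all other vertices and split according to whether $\Delta\leq M^{-h(s_1)}$ or $\Delta>M^{-h(s_1)}$. In the former regime, $s_2$ is confined to a union of $O(1)$ slab-like parallelepipeds with one short direction of length $\Delta$ and the rest of length $M^{-h(s_1)}$, so Lemma \ref{finer sum lemma} applies with $\varpi=\vartheta_2$, $\beta=M^{-h(s_1)}$, $\gamma=\Delta$, $r=1$, and exponent $\alpha\in\{2(d-1),2d-1,2d\}$ depending on whether the two minima on $s_2$ saturate the $\varrho\rho$ or the $M^{-h(s_2)}$ side; in each case part (i) or (ii) of that lemma yields a clean power of $M^{-h(s_1)}$ times $2^{\nu(\vartheta_2)}$. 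In the regime $\Delta>M^{-h(s_1)}$, the vertex $s_2$ is constrained by \eqref{cylinder1}--\eqref{cylinder2} to a thin tube of length $\varrho M^{-h(\omega)}$ and cross-section $CM^{-h(s_1)}$, in which case the minima in the summand both simplify to $M^{-h(s_i)}$ and the count on $s_2$ proceeds directly from the volume of this tube.

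Once the $s_2$-sum is done, I would perform the $s_1$-sum exploiting Lemma \ref{E_{43} size lemma}(i) (so that $s_1$ lies in a $\Delta$-neighbourhood of the boundary of its enclosing child of $u$, again amenable to Lemma \ref{finer sum lemma}), followed by the $u$-sum via Lemma \ref{root tree summation lemma} exactly as in the treatment of $\mathfrak{S}_{42}^{\ast}$. The final sum over the slope tree is then carried out using Lemma \ref{splitting vertex summation lemma}, after splitting according to the rearrangement $\{\varpi_1,\varpi_2,\varpi_3\}$ of the (at most three) distinct vertices among $\{\omega,\omega',\vartheta_1,\vartheta_2\}$ provided by Lemma \ref{slope vertex counting lemma}(\ref{how many distinct pairwise ancestors}), and the two possible formulas for $m[\omega,\omega',\vartheta_1,\vartheta_2]$ in \eqref{defn m}. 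In every branch, the trivial bounds $\rho_\omega\leq CM^{-h(\omega)}$, $\rho_{\omega'}\leq CM^{-h(\omega')}$, together with the $N$ factors produced by Lemma \ref{splitting vertex summation lemma}(\ref{splitting vertex summation 1}) with $\alpha=1$, combine to give at most two powers of $N$, as required.

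The main obstacle, as in the treatment of $\mathfrak{S}_{42}^{\circ}$ in dimension two, is bookkeeping: the four-fold product of minima in Lemma \ref{E_{43} size lemma}(iv) and the collapsing possibilities among $\{\omega,\omega',\vartheta_1,\vartheta_2\}$ force a multi-case split, and in the low-dimensional case ($d=2$) the exponents $2(d-1)=2=d$ in the $s_2$-sum land at the borderline of Lemma \ref{root tree summation lemma}(\ref{beta=d}), producing an extra logarithmic factor of height that must be absorbed by the slope-tree summation via Lemma \ref{splitting vertex summation lemma}(\ref{splitting vertex summation 2})---exactly the mechanism that rescued $\mathfrak{S}_{42}^{\circ}$ in dimension two. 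Once every branch is checked, the bound $\mathfrak{S}_{43}^{\ast}\leq CN^2$, and hence \eqref{S_{4i} expectations} for $i=3$, follows.
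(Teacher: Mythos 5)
Your proposal is correct and follows essentially the same route as the paper's proof: the same split of $\sum'$ into the regimes $\Delta\leq M^{-h(s_1)}$ and $\Delta>M^{-h(s_1)}$ (the paper's $\mathfrak S_{43}^{\ast}$ and $\mathfrak S_{43}^{\circ}$), the same use of Lemma \ref{E_{43} size lemma} to confine $s_2$ and $s_1$ to slabs or tubes, the same applications of Lemma \ref{finer sum lemma} and Lemma \ref{root tree summation lemma} in the order $s_2$, $s_1$, $u$, and the same final case analysis on $m[\omega,\omega',\vartheta_1,\vartheta_2]$ via Lemma \ref{splitting vertex summation lemma}. The only minor imprecision is that the borderline exponent $2(d-1)=d$ for $d=2$ is encountered in the $u$-sum (where the paper picks up the $h(\varpi_1)$ factor) rather than in the $s_2$-sum, but the absorption mechanism you describe is exactly the one the paper uses.
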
 
\begin{proof}
After the usual initial processing of $\mathfrak S_{43}$ which we omit, we reduce to the following estimate:
\begin{align*}
\mathbb E_{\mathbb X} \bigl(\mathfrak S_{43} \bigr) &\leq C M^{-2(d+1)J} \sum' \frac{\#(\mathcal E_{43})}{\rho_{\omega} \rho_{\omega'}} \left( \frac{1}{2}\right)^{4N - \mu(\omega, h(u)) - \mu(\vartheta, h(s_1)) - \mu(\vartheta_2, h(s_2))} \\
&\leq C \sum' \bigl(\rho_{\omega} \rho_{\omega'})^{-1} \bigl( \varrho \rho_{\omega'}\bigr)^2 M^{-2(d-1)h(s_2)} \prod_{i=1}^{2} \Bigl[ \min[ \varrho \rho_{\omega}, M^{-h(s_i)}]\Bigr] \\ &\hskip1.2in \times 2^{-m[\omega, \omega', \vartheta_1, \vartheta_2] + \mu(\omega, h(u)) + \mu(\vartheta_1, h(s_1)) + \mu(\vartheta_2, h(s_2))} \\ 
&\leq C M^{-4R} \bigl[ \mathfrak S_{43}^{\ast} + \mathfrak S_{43}^{\circ} \bigr],
\end{align*} 
where $\sum'$ denotes the sum over all tuples $(u,s_1, s_2)$ in the root tree and $(\omega, \omega', \vartheta_1, \vartheta_2)$ in the slope tree such that $s_1, s_2 \subseteq u$, $h(u) \leq h(s_1) \leq h(s_2)$, $\rho_\omega \leq \rho_{\omega'}$ and for which $\mathcal E_{43}$ is nonempty. The second inequality displayed above uses the estimate on $\#(\mathcal E_{43})$ obtained in Lemma \ref{E_{43} size lemma}, with an additional simplification resulting from min$(\varrho \rho_{\omega'}, M^{-h(s_i)}) \leq \varrho \rho_{\omega'}$. The quantities $\mathfrak S_{43}^{\ast}$ and $\mathfrak S_{43}^{\circ}$ refer to the subsum of $\sum'$ under the additional constraints of $M^{-h(s_1)} \geq \varrho \rho_{\omega}$ and $M^{-h(s_1)} < \varrho \rho_{\omega}$ respectively. Thus
\begin{align}
\mathfrak S_{43}^{\ast} &= \varrho^{-1} \sum'_{M^{-h(s_1)} \geq \varrho \rho_{\omega}} \rho_{\omega'} \min[\varrho \rho_{\omega}, M^{-h(s_2)}] M^{-2(d-1)h(s_2)} \nonumber \\ &\hskip1.2in \times 2^{-m[\omega, \omega', \vartheta_1, \vartheta_2] + \mu(\omega, h(u)) + \mu(\vartheta_1, h(s_1)) + \mu(\vartheta_2, h(s_2))} \nonumber \\ &=: \varrho^{-1} \sum_{\omega, \omega', \vartheta_1, \vartheta_2} \rho_{\omega'} 2^{-m[\omega, \omega', \vartheta_1, \vartheta_2]} \mathfrak S_{43}^{\ast}(\text{inner}), \quad \text{ and } \label{S_{43} star}\\
\mathfrak S_{43}^{\circ} &= \varrho^{-2} \sum'_{M^{-h(s_1)} < \varrho \rho_{\omega}} \frac{\rho_{\omega'}}{\rho_{\omega}} \min[\varrho \rho_{\omega}, M^{-h(s_2)}] M^{-2(d-1)h(s_2) - h(s_1)} \nonumber \\ &\hskip1.2in \times 2^{-m[\omega, \omega', \vartheta_1, \vartheta_2] + \mu(\omega, h(u)) + \mu(\vartheta_1, h(s_1)) + \mu(\vartheta_2, h(s_2))} \nonumber\\  &=: \varrho^{-2} \sum_{\omega, \omega', \vartheta_1, \vartheta_2} \frac{\rho_{\omega'}}{\rho_{\omega}} 2^{-m[\omega, \omega', \vartheta_1, \vartheta_2]} \mathfrak S_{43}^{\circ}(\text{inner}).  \label{S_{43} circle}
\end{align}  
For the purpose of simplifying $\mathfrak S_{43}^{\ast}(\text{inner})$, we recall from Lemma \ref{E_{43} size lemma}(\ref{Delta small}) that $s_2 \subsetneq u$ has sidelength no more than $M^{-h(s_1)}$, and moreover, is constrained to lie in the union of at most $2^dM$ parallelepipeds with $(d-1)$ long directions and one short direction, of dimensions $M^{-h(s_1)}$ and $\varrho \rho_{\omega}$ respectively. Denoting by $\sum_{s_2}^{\ast}$ the summation over all such cubes $s_2$, we find that 
\begin{align}
\sum_{s_2}^{\ast} &2^{\mu(\vartheta_2, h(s_2))} M^{-2(d-1)h(s_2)} \min \bigl[ \varrho \rho_{\omega}, M^{-h(s_2)}\bigr] \nonumber \\ &\leq \varrho \rho_{\omega} \sum^{\ast}_{M^{-h(s_2)} \geq \varrho \rho_{\omega}} M^{-2(d-1)h(s_2)}2^{\mu(\vartheta_2, h(s_2))} + \sum^{\ast}_{M^{- h(s_2)} < \varrho \rho_{\omega}} M^{-(2d-1)h(s_2)} 2^{\mu(\vartheta_2, h(s_2))} \nonumber \\ &\leq \varrho \rho_{\omega} \mathfrak s_{+} + \mathfrak s_{-} \nonumber \\ &\leq C \Bigl[ \varrho \rho_{\omega} 2^{\nu(\vartheta_2)} M^{-2(d-1)h(s_1)} +  2^{\nu(\vartheta_2)}\bigl(\varrho \rho_{\omega} \bigr)^d M^{-(d-1)h(s_1)}\Bigr] \nonumber \\ &\leq C \varrho \rho_{\omega} 2^{\nu(\vartheta_2)} M^{-2(d-1)h(s_1)}, \label{S_{43} inner sum 1}
\end{align}  
where $\mathfrak s_{\pm}$ are defined as in \eqref{defn spm}, and estimated according to Lemma \ref{finer sum lemma}, with the parameters being set at $\epsilon = \beta = M^{-h(s_1)}$, $\gamma = \varrho \rho_{\omega}$, $\varpi = \vartheta_2$ for both. The value of $\alpha$ is $2(d-1)$ for $\mathfrak s_{+}$ and $(2d-1)$ for $\mathfrak s_{-}$.  A similar argument applies for the summation in $s_1$ with $M^{-h(s_1)} \geq \varrho \rho_{\omega}$. According to Lemma \ref{E_{43} size lemma}(\ref{s_1 s_2 location}), $s_1$ has to lie in $u$ and within a distance at most $C\Delta$ from the boundary of some child of $u$. Hence the range of $s_1$ lies within the union of at most $dM$ parallelepipeds, each of dimension $M^{-h(u)}$ in $(d-1)$ directions and $C\Delta$ in the remaining one. Denoting by $\sum_{s_1}^{\ast}$ the relevant sum, and applying Lemma \ref{finer sum lemma} again with $\alpha = 2(d-1)$, $r=1$, $\epsilon = \beta = M^{-h(u)}$, $\gamma = \varrho \rho_{\omega}$, $\varpi = \vartheta_1$, 
\begin{equation} \label{S_{43} inner sum 2}
\sum_{s_1}^{\ast} M^{-2(d-1)h(s_1)} 2^{\mu(\vartheta_1, h(s_1))} \leq \mathfrak s_{+} \leq 2^{\nu(\vartheta_1)} M^{-2(d-1)h(u)}. 
 \end{equation}  
Inserting the estimates \eqref{S_{43} inner sum 1} and \eqref{S_{43} inner sum 2}, we arrive at the following bound on $\mathfrak S_{43}^{\ast}(\text{inner})$: 
\begin{align}
\mathfrak S_{43}^{\ast}(\text{inner}) &= \sum_{u} \sum_{s_1}^{\ast} 2^{\mu(\omega, h(u)) + \mu(\vartheta_1, h(s_1))} \nonumber \\ &\hskip1.5in \times \Bigl[\sum_{s_2}^{\ast} 2^{\mu(\vartheta_2, h(s_2))} M^{-2(d-1)h(s_2)} \min\bigl[ \varrho \rho_{\omega}, M^{-h(s_2)} \bigr] \Bigr] \nonumber \\
&\leq C \sum_{u, s_1} 2^{\mu(\omega, h(u)) + \mu(\vartheta_1, h(s_1))} \Bigl[ 2^{\nu(\vartheta_2)} \varrho \rho_{\omega} M^{-2(d-1)h(s_1)}\Bigr] \nonumber  \\
&\leq \varrho \rho_{\omega} 2^{\nu(\vartheta_2)} \sum_u 2^{\mu(\omega, h(u))} \sum_{s_1}^{\ast} M^{-2(d-1)h(s_1)} 2^{\mu(\vartheta_1, h(s_1))} \nonumber \\
&\leq  \varrho \rho_{\omega} 2^{\nu(\vartheta_2)} \sum_u 2^{\mu(\omega, h(u))} \Bigl[ M^{-2(d-1)h(u)} 2^{\nu(\vartheta_1)} \Bigr] \nonumber \\
&\leq \varrho \rho_{\omega} 2^{\nu(\vartheta_2) + \nu(\vartheta_1)} \sum_u 2^{\mu(\omega, h(u))} M^{-2(d-1)h(u)} \nonumber \\ 
&\leq  \varrho \rho_{\omega} 2^{\nu(\vartheta_2) + \nu(\vartheta_1) + \nu(\varpi_1)} h(\varpi_1), \label{S_{43} inner final}
\end{align} 
where $\varpi_1$ is the youngest common ancestor of $\omega, \omega', \vartheta_1, \vartheta_2$, and hence $h(\varpi_1) \geq h(u)$. The last estimate follows from Lemma \ref{root tree summation lemma}, invoking part (\ref{beta>d}) if $d \geq 3$ and part(\ref{beta<d}) if $d = 2$. An analogous sequence of steps, the details of which are left to the reader, can be executed to estimate $\mathfrak S_{43}^{\circ}(\text{inner})$, the only distinction being that the space restrictions are now dictated by Lemma \ref{E_{43} size lemma}(\ref{Delta large}), so that the summation in $s_2$ invokes Lemma \ref{finer sum lemma} with $r = d-1$, $\beta = \varrho \min(M^{-h(\omega)}, M^{-h(\omega')})$, $\gamma = M^{-h(s_1)}$. The outcome of this is that 
\begin{equation} \label{S_{43} inner circle final}
\mathfrak S_{43}^{\circ}(\text{inner}) \leq \varrho^2 \rho_{\omega} \min(M^{-h(\omega)}, M^{-h(\omega')})  2^{\nu(\vartheta_2) + \nu(\vartheta_1) + \nu(\varpi_1)} h(\varpi_1). 
\end{equation}  
Substituting \eqref{S_{43} inner final} into \eqref{S_{43} star} and \eqref{S_{43} inner circle final} into \eqref{S_{43} circle} leads to the following simpler sum over slope vertices:
\[ \mathfrak S_{43}^{\ast} + \mathfrak S_{43}^{\circ} \leq C \sum_{\omega, \omega', \vartheta_1, \vartheta_2} M^{-h(\omega) - h(\omega')} 2^{-m[\omega, \omega', \vartheta_1, \vartheta_2] + \nu(\varpi_1) + \nu(\vartheta_1) + \nu(\vartheta_2)}. \]
In order to complete the summation, let us recall that the sum, ostensibly over four parameters, in fact ranges over at most three vertices $\{ \varpi_1, \varpi_2, \varpi_3 \}$, which is a rearrangement of the quadruple $\{\omega, \omega', \vartheta_1, \vartheta_2 \}$ satisfying \eqref{youngest common ancestor relations}. However, it is not apriori possible to assign a unique correspondence between these two sets of vertices. Indeed, as already indicated in the last paragraph of Section \ref{probability estimation section}, the configuration type of the slopes (which does not in general mimic the configuration type of the roots) dictates which vertex or vertices of the quadruple $\{\omega, \omega', \vartheta_1, \vartheta_2 \}$ represents $\varpi_i$ after the rearrangement. A careful analysis of the possible structures of $\omega, \omega', \vartheta_1, \vartheta_2$, as depicted in Figure~\ref{Fig: slope configurations}, shows that 
\begin{equation*} 
\begin{aligned}
M^{-h(\omega) - h(\omega')} h(\varpi_1) &2^{-m[\omega, \omega', \vartheta_1, \vartheta_2] + \nu(\varpi_1) + \nu(\vartheta_1) + \nu(\vartheta_2)} \\ 
&\leq M^{-2h(\varpi_1)} h(\varpi_1) \times \begin{cases} 2^{-\nu(\varpi_3) - \nu(\varpi_2) + \nu(\varpi_1)} &\text{ if } \varpi_3 \not\subseteq \varpi_2 \\ 2^{-\nu(\varpi_3)} &\text{ if } \varpi_3 \subseteq \varpi_2. \end{cases} 
\end{aligned} 
\end{equation*} 
The expression on the right hand side is of the type already considered in the estimation of $\mathfrak S_{42}^{\ast}$ and $\mathfrak S_{42}^{\circ}$. In particular, it is summable in $\varpi_1, \varpi_2, \varpi_3$ using repeated applications of Lemma \ref{splitting vertex summation lemma} and yields the desired bound of $CN^2$. 

\begin{figure}[h!]
\setlength{\unitlength}{0.7mm}
\begin{picture}(-55,-10)(-118,-38)

        \allinethickness{0.1mm}\path(-100,-40)(-100,-100)(71,-100)(71,-40)(-100,-40)
	\path(-43,-40)(-43,-100)
	\path(14,-40)(14,-100)
	\path(-100,-40)(71,-40)

	\special{sh 0.99}\put(-71,-50){\ellipse{2}{2}}
	\put(-68,-60){\tiny\shortstack{$\vartheta_1=\vartheta_2 = \varpi_1$}}
	\special{sh 0.99}\put(-71,-60){\ellipse{2}{2}}
	\put(-97,-70){\tiny\shortstack{$\omega = \varpi_2$}}
	\special{sh 0.99}\put(-81,-70){\ellipse{2}{2}}
	\put(-58,-77){\tiny\shortstack{$\omega' = \varpi_3$}}
	\special{sh 0.99}\put(-61,-77){\ellipse{2}{2}}
	\put(-89,-96){\tiny\shortstack{$v_1$}}
	\special{sh 0.99}\put(-87,-90){\ellipse{2}{2}}
	\put(-79,-96){\tiny\shortstack{$v_2$}}
	\special{sh 0.99}\put(-78,-90){\ellipse{2}{2}}
	\put(-64,-96){\tiny\shortstack{$v_1'$}}
	\special{sh 0.99}\put(-62,-90){\ellipse{2}{2}}
	\put(-54,-96){\tiny\shortstack{$v_2'$}}
	\special{sh 0.99}\put(-52,-90){\ellipse{2}{2}}

	\path(-71,-50)(-71,-60)
	\path(-81,-70)(-71,-60)(-61,-77)
	\path(-87,-90)(-81,-70)(-78,-90)
	\path(-62,-90)(-61,-77)(-52,-90)
	\put(-97,-47){\shortstack{(1)}}


	\special{sh 0.99}\put(43,-50){\ellipse{2}{2}}
	\special{sh 0.99}\put(43,-60){\ellipse{2}{2}}
	\put(46,-61){\tiny\shortstack{$\omega = \vartheta_1 = \varpi_1$}}
	\special{sh 0.99}\put(43,-70){\ellipse{2}{2}}
	\put(46,-71){\tiny\shortstack{$\omega' = \varpi_2$}}
	\special{sh 0.99}\put(52,-80){\ellipse{2}{2}}
	\put(55,-81){\tiny\shortstack{$\vartheta_2 = \varpi_3$}}
	\put(25,-96){\tiny\shortstack{$v_1$}}
	\special{sh 0.99}\put(27,-90){\ellipse{2}{2}}
	\put(35,-96){\tiny\shortstack{$v_1'$}}
	\special{sh 0.99}\put(36,-90){\ellipse{2}{2}}
	\put(51,-96){\tiny\shortstack{$v_2'$}}
	\special{sh 0.99}\put(52,-90){\ellipse{2}{2}}
	\put(61,-96){\tiny\shortstack{$v_2$}}
	\special{sh 0.99}\put(62,-90){\ellipse{2}{2}}

	\path(43,-50)(43,-70)
	\path(43,-60)(27,-90)
	\path(36,-90)(43,-70)(52,-80)(52,-90)
	\path(52,-80)(62,-90)
	\put(17,-47){\shortstack{(3)}}


	\special{sh 0.99}\put(-14,-50){\ellipse{2}{2}}
	\special{sh 0.99}\put(-14,-60){\ellipse{2}{2}}
	\put(-11,-61){\tiny\shortstack{$\omega=\vartheta_1 = \varpi_1$}}
	\special{sh 0.99}\put(-14,-70){\ellipse{2}{2}}
	\put(-11,-71){\tiny\shortstack{$\vartheta_2 = \varpi_2$}}
	\special{sh 0.99}\put(-5,-80){\ellipse{2}{2}}
	\put(-2,-81){\tiny\shortstack{$\omega' = \varpi_3$}}
	\put(-31,-96){\tiny\shortstack{$v_1$}}
	\special{sh 0.99}\put(-29,-90){\ellipse{2}{2}}
	\put(-23,-96){\tiny\shortstack{$v_2$}}
	\special{sh 0.99}\put(-21,-90){\ellipse{2}{2}}
	\put(-6,-96){\tiny\shortstack{$v_1'$}}
	\special{sh 0.99}\put(-5,-90){\ellipse{2}{2}}
	\put(4,-96){\tiny\shortstack{$v_2'$}}
	\special{sh 0.99}\put(5,-90){\ellipse{2}{2}}

	\path(-14,-50)(-14,-70)
	\path(-14,-60)(-30,-90)
	\path(-21,-90)(-14,-70)(-5,-80)(-5,-90)
	\path(-5,-80)(5,-90)
	\put(-40,-47){\shortstack{(2)}}

\end{picture}
\vspace{4.5cm}
\caption{\label{Fig: slope configurations} A partial list of 4-slope configurations for 4 roots of type 3, with distinct $\{\varpi_1, \varpi_2, \varpi_3 \}$. Other configurations (where partial coincidences may arise) are possible after permutation of $\{ v_1, v_1', v_2, v_2' \}$ in these diagrams.} 
\end{figure}
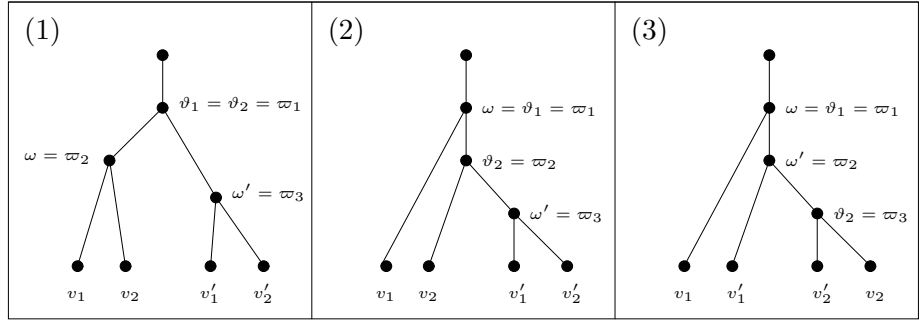

\end{proof} 

\section{Appendix: Percolation on trees} \label{percolation on trees section}  

As in \cite{{BatemanKatz}, {Bateman}, {KrocPramanik}}, the argument of Section \ref{UpperBoundSection} requires the use of a special probabilistic process on certain trees called a (bond) percolation.  More precisely, given some tree $\mathcal{T}$ with edge set $\mathcal{E}$, we define an \textit{edge-dependent Bernoulli (bond) percolation process} to be a collection of independent random variables $\{X_e : e\in\mathcal{E}\}$, where $X_e$ is Bernoulli$(p_e)$ with $p_e<1$.  If the random variables $\{X_e : e\in\mathcal{E}\}$ are mutually independent and identically distributed Bernoulli$(p)$ random variables, with a constant $p < 1$ independent of the edge $e$, then the process is called a \textit{standard Bernoulli$(p)$ percolation}.  We are concerned with only standard Bernoulli$(\frac 12)$ percolation in this paper.  The interested reader may consult~\cite{Grimmett} for a discussion of percolation processes in much greater generality.

Given a percolation process on a tree $\mathcal{T}$, we think of the event $\{X_e=0\}$ as the event that we \textit{remove} the edge $e$ from the edge set $\mathcal{E}$, and the event $\{X_e=1\}$ as the event that we \textit{retain} this edge.  Thus, for a given edge $e\in\mathcal{E}$, we think of $p = \text{Pr}(X_e=1)$ as the probability that we retain this edge after percolation.  Survival of the tree is defined to be the event that at least one ray remains from the root of the tree to its bottommost level. The probability of this event is referred to as the \textit{survival probability} of the corresponding percolation process.  This probability can be estimated by visualizing percolation on a tree as a certain electrical network, as first imagined by Lyons in \cite{Lyons1}.  The natural electrical network is defined as follows: we truncate the tree $\mathcal{T}$ at height $N$ and place the positive node of a battery at the root of $\mathcal{T}_N$.  Then, for every ray in $\partial\mathcal{T}_N$, there is a unique terminating vertex; we connect each of these vertices to the negative node of the battery.  A resistor is placed on every edge $e$ of $\mathcal{T}_N$ with resistance $R_e$ defined by
\begin{equation}\label{resistance}
	\frac 1{R_e} = \frac 1{1-p_e}\prod_{\begin{subarray}{c} e' \in \mathcal E \\ v(e)\subseteq v(e')\end{subarray}} p_{e'},
\end{equation}
where $v(e)$ is the vertex in $\mathcal{T}$ at which $e$ terminates.  Notice that the resistance for the edge $e$ is essentially the reciprocal of the probability that a path remains from the root of the tree to the vertex $v(e)$ after percolation.  For standard Bernoulli$(\frac 12)$ percolation, we have 
\begin{equation}\label{resistances}
	R_e = 2^{h(v(e))-1}.
\end{equation}
A seminal result of Lyons~\cite[Theorem 2.1]{Lyons2}, says that for quite general trees the total resistance of an electrical network is comparable to the inverse of the survival probability of the associated Bernoulli percolation process.  For our purposes, a reasonable upper bound on the survival probability of Bernoulli$(\frac{1}{2})$ percolation on a rooted labelled subtree of the $M$-adic tree suffices.  We state the necessary result in a form convenient for our usage. 

\begin{proposition}[Lyons \cite{Lyons2}]\label{survival prob}
	Let $M\geq 2$ and let $\mathcal{T}_N$ be a subtree of height $N$ of the full $M$-adic tree of the same height in dimension $d$.  For a Bernoulli$(\frac{1}{2})$ percolation process defined on $\mathcal T$, let $R(\mathcal{T}_N)$ be the total resistance of the electrical network on $\mathcal{T}_N$ defined via \eqref{resistance}. If $\text{Pr}(\mathcal{T}_N)$ denotes the survival probability after percolation on $\mathcal{T}_N$, then 
\begin{equation}\label{survival}
	\text{Pr}(\mathcal{T}_N) \leq \frac 2{1+R(\mathcal{T}_N)}.
\end{equation}
\end{proposition}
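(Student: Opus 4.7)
The plan is to prove Proposition \ref{survival prob} by induction on the number of edges of $\mathcal{T}_N$, leveraging the fact that both the survival probability and the effective resistance respect the recursive series-parallel decomposition of a tree at its root. Let the root $\rho$ of $\mathcal{T}_N$ have children $v_1, \ldots, v_m$, and for each $i$ let $\mathcal{T}^{(i)}$ denote the subtree of $\mathcal{T}_N$ rooted at $v_i$. Write $f(\cdot)$ for the survival probability and $R(\cdot)$ for the effective resistance, and set $g(\cdot) := 2/(1 + R(\cdot))$. Our goal is $f(\mathcal{T}_N) \leq g(\mathcal{T}_N)$.

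First I would record the two recursions. Since the first-level edges $(\rho, v_i)$ are retained independently with probability $\tfrac{1}{2}$, and the subtrees $\mathcal{T}^{(i)}$ percolate independently, the survival event is the union of the independent events that edge $(\rho, v_i)$ is retained \emph{and} $\mathcal{T}^{(i)}$ survives, yielding
\begin{equation*}
1 - f(\mathcal{T}_N) = \prod_{i=1}^{m} \Bigl( 1 - \tfrac{1}{2} f(\mathcal{T}^{(i)}) \Bigr).
\end{equation*}
On the other hand, by \eqref{resistances} each first-level edge carries resistance $R_{(\rho, v_i)} = 2^{0} = 1$, so the series-parallel rule gives
\begin{equation*}
\frac{1}{R(\mathcal{T}_N)} = \sum_{i=1}^{m} \frac{1}{1 + R(\mathcal{T}^{(i)})} = \sum_{i=1}^{m} \frac{g(\mathcal{T}^{(i)})}{2}.
\end{equation*}

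For the induction, the base case (a single edge from root to leaf) is immediate: $f = \tfrac{1}{2}$, $R = 1$, $g = 1$. For the inductive step, write $a_i := g(\mathcal{T}^{(i)})/2 \in [0,1]$, so $1/R(\mathcal{T}_N) = \sum a_i$ and a short computation gives
\begin{equation*}
g(\mathcal{T}_N) = \frac{2 \sum_i a_i}{1 + \sum_i a_i}.
\end{equation*}
By the induction hypothesis $f(\mathcal{T}^{(i)}) \leq g(\mathcal{T}^{(i)}) = 2 a_i$, and since $x \mapsto 1 - \prod_i (1 - x_i/2)$ is coordinatewise nondecreasing, we obtain $f(\mathcal{T}_N) \leq 1 - \prod_i (1 - a_i)$. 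Thus the inductive step reduces to the algebraic inequality
\begin{equation*}
\prod_{i=1}^{m} (1 - a_i) \; \geq \; \frac{1 - \sum_i a_i}{1 + \sum_i a_i}, \qquad a_i \in [0,1].
\end{equation*}

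The main step---really the only nontrivial one---is this last inequality. It is trivial when $S := \sum_i a_i \geq 1$ since the right-hand side is nonpositive. When $S < 1$, interpret $a_i$ as the probability of an independent event $A_i$, so that $\prod_i(1-a_i) = \Pr(\bigcap_i A_i^c) = 1 - \Pr(\bigcup_i A_i) \geq 1 - S$ by the union bound; combined with the elementary observation $(1-S)/(1+S) \leq 1 - S$ for $S \in [0,1]$, this yields the desired estimate. Degenerate vertices with a single child produce no branching in the recursions (they simply contribute $f \mapsto f/2$ and $R \mapsto 1+R$), and are absorbed into the induction without change. This is the classical Lyons argument specialized to Bernoulli$(\tfrac{1}{2})$ percolation; no deeper input (such as Thomson's principle or Nash--Williams cutsets) is required for the upper bound stated in \eqref{survival}.
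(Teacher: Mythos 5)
Your overall strategy (induction via the series--parallel decomposition at the root) is the standard route to Lyons' bound, and the paper itself only cites \cite{Bateman} and \cite{KrocPramanik} rather than giving a proof; but your inductive step contains a genuine gap in the resistance recursion. By \eqref{resistances} an edge terminating at height $k$ carries resistance $2^{k-1}$, so the subnetwork of $\mathcal{T}_N$ hanging below a first-generation vertex $v_i$ has every edge at \emph{twice} the resistance it would carry if $\mathcal{T}^{(i)}$ were normalized as a standalone tree in the sense of \eqref{resistance}. The induction hypothesis can only refer to the standalone normalization (the ``inherited'' version of the statement is strictly stronger and in fact false deeper in the tree: a single edge terminating at height $3$ has survival probability $\tfrac12$ but inherited resistance $4$, and $\tfrac12>\tfrac25$), and with that normalization the correct series--parallel identity is
\[
\frac{1}{R(\mathcal{T}_N)}=\sum_{i=1}^{m}\frac{1}{1+2R(\mathcal{T}^{(i)})},
\]
not $\sum_i (1+R(\mathcal{T}^{(i)}))^{-1}$. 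As written, your argument only yields $f(\mathcal{T}_N)\le 2/(1+\widetilde R)$ with $\widetilde R=\bigl(\sum_i(1+R(\mathcal{T}^{(i)}))^{-1}\bigr)^{-1}\le R(\mathcal{T}_N)$, which is weaker than \eqref{survival}; for a single ray of length $2$ your recursion gives $R=2$ while the true resistance is $1+2=3$. The same scaling error recurs in your remark that a degenerate vertex contributes ``$R\mapsto 1+R$''.

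Once the factor of $2$ is restored, setting $s_i=(1+2R(\mathcal{T}^{(i)}))^{-1}\in[0,1]$ the induction hypothesis gives exactly $1-\tfrac12 f(\mathcal{T}^{(i)})\ge \tfrac{1-s_i}{1+s_i}$, and the inductive step reduces to
\[
\prod_{i=1}^{m}\frac{1-s_i}{1+s_i}\;\ge\;\frac{1-\sum_i s_i}{1+\sum_i s_i},
\]
which is sharper than the inequality you proved, and your union-bound route no longer works: for $m=2$ and $s_1=s_2=0.4$ one has $\prod_i\tfrac{1-s_i}{1+s_i}=(3/7)^2<0.2=1-\sum_i s_i$, so the intermediate comparison with $1-\sum s_i$ fails. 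The displayed inequality is nonetheless true; for $m=2$,
\[
\frac{(1-s_1)(1-s_2)}{(1+s_1)(1+s_2)}-\frac{1-s_1-s_2}{1+s_1+s_2}=\frac{2s_1s_2(s_1+s_2)}{(1+s_1)(1+s_2)(1+s_1+s_2)}\ge 0,
\]
and the general case follows by iteration (the case $\sum_i s_i\ge1$ being trivial since the left-hand side is nonnegative). Note that the corrected recursion makes the single-child case an exact equality, which is precisely why the resistances in \eqref{resistances} double with height. With these two repairs your induction closes and does prove the proposition.
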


See~\cite{Bateman} or~\cite[Proposition 5.3]{KrocPramanik} for a proof of this result.  In light of Proposition~\ref{survival prob}, we see that to bound the survival probability after Bernoulli percolation it is sufficient to bound the resistance of the network from below.  To accomplish this, we need the useful fact that connecting any two vertices at a given height by an ideal conductor (i.e. one with zero resistance) only decreases the overall resistance of the circuit.  
\begin{proposition}\label{resistance prop}
	Let $\mathcal{T}_N$ be a truncated tree of height $N$ with corresponding electrical network generated by a standard Bernoulli$(\frac 12)$ percolation process.  Suppose at height $k<N$ we connect two vertices by a conductor with zero resistance.  Then the resulting electrical network has a total resistance no greater than that of the original network.
\end{proposition}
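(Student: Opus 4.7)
My plan is to deduce Proposition \ref{resistance prop} from Rayleigh's monotonicity principle for electrical networks, which states that shorting together any set of nodes can only decrease the effective resistance between two distinguished terminals. The proposition is exactly the statement of Rayleigh's principle applied to the two terminals consisting of the root of $\mathcal T_N$ (positive) and the ``super-sink'' formed by the union of the leaves (negative), with the shorted pair being the two vertices at height $k$.

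The cleanest route is via Thomson's principle, which characterizes the effective resistance $R(\mathcal T_N)$ variationally as
\[
R(\mathcal T_N) = \inf\Bigl\{ \sum_{e \in \mathcal E} R_e \, i(e)^2 \, : \, i \text{ is a unit flow from the root to the leaf set}\Bigr\},
\]
where a unit flow is any antisymmetric edge function satisfying Kirchhoff's node law at every internal vertex and net outflow $1$ at the root. First I would verify that the network in question, although it has multiple sink vertices connected to the negative terminal of the battery, fits this framework: one adds an auxiliary ``ground'' vertex joined to every leaf by a zero-resistance wire, and defines a unit flow from root to ground. Then $R(\mathcal T_N)$ equals the energy $\sum_e R_e i(e)^2$ of the actual current $i$.

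Next, let $\mathcal T_N^{\sharp}$ denote the network obtained from $\mathcal T_N$ by identifying two vertices $u_1, u_2$ at height $k$ into a single vertex $u^{\sharp}$ (equivalently, joining them by a zero-resistance wire). The key observation is that every unit flow $i$ on $\mathcal T_N$ projects to a unit flow $i^{\sharp}$ on $\mathcal T_N^{\sharp}$ with the same energy: the node law at the merged vertex $u^{\sharp}$ is automatic because it is the sum of the (already valid) node laws at $u_1$ and $u_2$, and no edge resistance changes. Hence the infimum defining $R(\mathcal T_N^{\sharp})$ is taken over a set at least as large as that defining $R(\mathcal T_N)$, giving $R(\mathcal T_N^{\sharp}) \leq R(\mathcal T_N)$.

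I do not anticipate any essential obstacle here; the result is a textbook application of Thomson's principle, and the only mildly delicate point is the careful bookkeeping needed to translate the multi-leaf network (with the battery at the bottom) into a single-sink network so that Thomson's principle applies verbatim. Once that translation is in place, the monotonicity is immediate from the observation that the class of admissible unit flows strictly grows under identification of nodes. I would conclude with a brief remark that the same conclusion follows from Dirichlet's principle applied to voltages $v$ that are constrained to take equal values at $u_1$ and $u_2$ after shorting, giving an alternative derivation for readers who prefer the voltage formulation.
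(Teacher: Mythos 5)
Your argument is correct: the proposition is the standard shorting law, and your derivation via Thomson's principle --- with the routine reduction of the multi-leaf sink to a single ground node joined by zero-resistance wires, and the observation that every unit flow on $\mathcal T_N$ pushes forward to a unit flow of equal energy on the shorted network --- is complete. The paper itself does not prove the proposition but defers to \cite[Proposition 5.1]{KrocPramanik}, which rests on essentially the same Rayleigh-monotonicity argument, so there is nothing to add.
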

For a proof of this fact, see~\cite[Proposition 5.1]{KrocPramanik}.  The main consequence of this observation that we draw upon  in Lemma~\ref{computing survival probability} is given by the following corollary.

\begin{corollary} \label{survival probability reduced}
	Given a subtree $\mathcal{T}_N$ of height $N$ contained in the full $d$-dimensional $M$-adic tree, let $R(\mathcal T_N)$ denote the total resistance of the electrical network that corresponds to standard Bernoulli$(\frac{1}{2})$ percolation on this tree, in the sense of the theorem of Lyons as given in Proposition \ref{survival prob}.   Then
\begin{equation}\label{ResistBound}
	R(\mathcal{T}_N) \geq \sum_{k=1}^N\frac {2^{k-1}}{n_k},
\end{equation}
where $n_k$ denote the number of its $k$th generation vertices in $\mathcal T_N$.
\end{corollary}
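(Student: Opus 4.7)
The plan is to exploit Proposition \ref{resistance prop} by short-circuiting the network level-by-level, reducing it to a simple series-parallel circuit whose resistance is directly computable. Specifically, for each $1 \leq k \leq N-1$, I would augment the electrical network by soldering together all vertices at height $k$ with ideal (zero-resistance) wires. By Proposition \ref{resistance prop} (applied iteratively, once at each height), this modification can only decrease the total resistance. Calling $\widetilde R(\mathcal T_N)$ the resistance of the modified network, we thus obtain $R(\mathcal T_N) \geq \widetilde R(\mathcal T_N)$, and it remains to show that $\widetilde R(\mathcal T_N) = \sum_{k=1}^N 2^{k-1}/n_k$.

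Next, I would observe that once every generation has been collapsed to a single node, the modified circuit consists of $N$ ``super-nodes'' $V_0, V_1, \ldots, V_N$ (with $V_0$ the root and $V_N$ the bundle of leaves attached to the negative battery terminal), wired in series in the sense that any current travelling from $V_0$ to $V_N$ must pass successively through bundles of parallel edges connecting consecutive $V_{k-1}$ and $V_k$. The bundle between $V_{k-1}$ and $V_k$ consists of exactly one edge for each vertex of $\mathcal T_N$ of height $k$, hence of $n_k$ edges in total. By the explicit formula \eqref{resistances}, every such edge has resistance $2^{k-1}$, independently of which vertex at height $k$ it terminates in. The standard parallel-resistance formula therefore gives the $k$th bundle a combined resistance of $2^{k-1}/n_k$.

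Finally, summing these bundle resistances in series yields
\[
\widetilde R(\mathcal T_N) \;=\; \sum_{k=1}^N \frac{2^{k-1}}{n_k},
\]
which combined with the inequality $R(\mathcal T_N) \geq \widetilde R(\mathcal T_N)$ established in the first step completes the proof of \eqref{ResistBound}.

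I do not anticipate a genuine obstacle here: the argument is essentially a routine application of Rayleigh's monotonicity principle (the content of Proposition \ref{resistance prop}), followed by an elementary series-parallel computation. The only point requiring mild care is to verify that the identification of same-height vertices indeed reduces the network to a clean series of parallel bundles, which is immediate from the tree structure once one notes that no edge of $\mathcal T_N$ connects two vertices at the same height.
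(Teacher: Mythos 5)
Your proposal is correct and follows essentially the same route as the paper: collapse each generation to a single node via Proposition \ref{resistance prop}, observe that the $n_k$ edges between consecutive super-nodes are parallel resistors of resistance $2^{k-1}$ each, and sum the resulting bundle resistances in series. No gaps.
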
 

\begin{proof}
To show this, we construct an auxiliary electrical network from the one naturally associated to our tree $\mathcal{T}_N$, as follows.  For every $k\geq 1$, we connect all vertices at height $k$ by an ideal conductor to make one node $V_k$.  Call this new circuit $E$.  The resistance of $E$ cannot be greater than the resistance of the original circuit, by Proposition~\ref{resistance prop}. 

Fix $k$, $1\leq k\leq N$, and let $R_k$ denote the resistance in $E$ between $V_{k-1}$ and $V_k$.  The number of edges between $V_{k-1}$ and $V_k$ is equal to the number $n_k$ of $k$th generation vertices in $\mathcal {T}_N$, and each edge is endowed with resistance $2^{k-1}$ by \eqref{resistances}.  Since these resistors are in parallel, we obtain 
\begin{equation}
	\frac 1{R_k} = \sum_{1}^{n_k} \frac{1}{2^{k-1}} = \frac{n_k}{2^{k-1}}.\nonumber
\end{equation}
This holds for every $1\leq k\leq N$.  Since the resistors $\{R_k\}_{k=1}^N$ are in series, $R(\mathcal{T}_N) \geq R(E) = \sum_{k=1}^{N} R_k$, establishing inequality \eqref{ResistBound}.
\end{proof}


\newpage

\noindent \author{\textsc{Edward Kroc}}\\
University of British Columbia, Vancouver, Canada. \\
Electronic address: \texttt{ekroc@math.ubc.ca}
\vskip0.2in 
\noindent \author{\textsc{Malabika Pramanik}}\\
University of British Columbia, Vancouver, Canada. \\
Electronic address: \texttt{malabika@math.ubc.ca}

}

\end{document}